\crefname{equation}{}{}
\apptocmd{\sloppy}{\hbadness 10000\relax}{}{} 
\crefname{algocf}{Algorithm}{Algorithms}
\crefname{equation}{}{} 
\crefname{conjecture}{Conjecture}{Conjectures} 
\colorlet{refkey}{orange!20}
\colorlet{labelkey}{blue!30}
\crefname{algocf}{Algorithm}{Algorithms}
\numberwithin{equation}{section}
\newtheorem{theorem}{Theorem}[section]
\newtheorem{corollary}[theorem]{Corollary}
\newtheorem{proposition}[theorem]{Proposition}
\newtheorem{lemma}[theorem]{Lemma}
\crefname{claim}{Claim}{Claims}
\newtheorem{conjecture}[theorem]{Conjecture}
\newtheorem*{question*}{Question}
\theoremstyle{definition}
\newtheorem{definition}[theorem]{Definition}
\newtheorem*{definition*}{Definition}
\theoremstyle{remark}
\newtheorem*{remark}{Remark}
\newtheorem*{remarks}{Remarks}
\newcommand{\snorm}[1]{\lVert#1\rVert}
\newcommand{\mb}{\mathbb}
\newcommand{\mbm}{\mathbbm}
\newcommand{\mc}{\mathcal}
\newcommand{\mf}{\mathfrak}
\newcommand{\ol}{\overline}
\newcommand{\on}{\operatorname}
\newcommand{\eps}{\varepsilon}
\newcommand{\imod}[1]{~\mathrm{mod}~#1}
\renewcommand{\le}{\leqslant}
\renewcommand{\ge}{\geqslant}
\renewcommand{\Re}{\on{Re}}
\renewcommand{\Im}{\on{Im}}
\newcommand\Z{\mathbf{Z}}
\newcommand\Q{\mathbf{Q}}
\newcommand\C{\mathbf{C}}
\newcommand\R{\mathbf{R}}
\newcommand\E{\mb{E}}
\renewcommand\P{\mb{P}}
\newcommand{\legendre}[2]{(#1 \mid #2)}
\newcommand{\md}[1]{\ensuremath{(\operatorname{mod}\, #1)}}
\newcommand{\mdsub}[1]{\ensuremath{(\mbox{\scriptsize mod}\, #1)}}
\newcommand{\mdsublem}[1]{\ensuremath{(\mbox{\scriptsize \textup{mod}}\, #1)}}
\newcommand\siegel{{\textnormal{Siegel}}}
\newcommand\cramer{{\textnormal{Cram\'er}}}
\newcommand\main{{\textnormal{main}}}
\newcommand\qq{Q} 
\newcommand\Unif{\operatorname{U}} 
\DeclareMathOperator*{\bigavg}{\scalerel*{\mbm{E}}{(}}
\newcommand\classgroup{\operatorname{Cl}(K)}
\renewcommand\main{\operatorname{main}}
\newcommand\Ideals{\operatorname{Ideals}}
\newcommand\GP{\operatorname{GP}}
\newcommand{\conv}{\mathop{\scalebox{1.5}{\raisebox{-0.2ex}{$\ast$}}}}
\renewcommand\O{\mathcal{O}}
\title{Primes of the form $p^2 + nq^2$}
\author[A1]{Ben Green}
\address{Mathematical Institute, Andrew Wiles Building, Radcliffe Observatory Quarter, Woodstock Rd, Oxford OX2 6QW, UK}
\email{ben.green@maths.ox.ac.uk}
\author[A2]{Mehtaab Sawhney}
\address{Department of Mathematics, Columbia University, New York, NY 10027}
\email{m.sawhney@columbia.edu}
\begin{document}

\begin{abstract}
Suppose that $n \equiv 0$ or $n \equiv 4 \imod{6}$. We show that there are infinitely many primes of the form $p^2 + nq^2$ with both $p$ and $q$ prime, and obtain an asymptotic for their number. In particular, when $n = 4$ we verify the `Gaussian primes conjecture' of Friedlander and Iwaniec.

We study the problem using the method of Type I/II sums in the number field $\Q(\sqrt{-n})$. The main innovation is in the treatment of the Type II sums, where we make heavy use of two recent developments in the theory of Gowers norms in additive combinatorics: quantitative versions of so-called concatenation theorems, due to Kuca and to Kuca--Kravitz-Leng, and the quasipolynomial inverse theorem of Leng, Sah and the second author.
\end{abstract}

\maketitle

\setcounter{tocdepth}{1}
\tableofcontents

\section{Introduction}\label{sec:introduction}

 Our main result establishes an asymptotic count for pairs of primes $x$ and $y$ such that $x^2 + ny^2$ is prime. 

\begin{theorem}\label{thm:main}
Suppose that $n$ is a positive integer with $n \equiv 0$ or $n\equiv 4\imod 6$. Let $W \in C^{\infty}_0(\R^2)$. Let $N > 1$ be a parameter. Then
\[ \sum_{x,y \in \Z} \Lambda(x) \Lambda(y) \Lambda(x^2 + ny^2) W\bigg(\frac{x}{N}, \frac{y}{N}\bigg) = \kappa_n N^2 \bigg(\int_{\R^2} W\bigg) + O_{W,n}\bigg( \frac{N^2(\log \log N)^{2}}{\log N}\bigg),\]
where
\begin{equation}\label{kappa-def}
\kappa_n :=   \lim_{X \rightarrow \infty} \prod_{\substack{p \le X \\ \legendre{-n}{p} = 1}}\frac{p(p-3)}{(p-1)^2}\prod_{\substack{p \le X \\ \legendre{-n}{p} \neq 1}}\frac{p}{p-1},\end{equation}
with $\legendre{\cdot}{p}$ the Legendre symbol. In particular, if $n \equiv 0$ or $n \equiv 4 \md{6}$ then there are infinitely many primes of the form $x^2 + ny^2$ with $x, y$ prime.
\end{theorem}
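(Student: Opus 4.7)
The plan is to pass to the imaginary quadratic field $K = \Q(\sqrt{-n})$ with ring of integers $\O$. Under the congruence hypotheses on $n$ (which rule out the local obstructions at $2$ and $3$), a rational prime of the form $x^2 + ny^2$ is the norm of a degree-one prime ideal $\mf{p} \subset \O$, so, up to issues with the class group and units, the left-hand side becomes
\[ \sum_{\mf{p}} (\log N\mf{p}) \, \Lambda(x(\mf{p})) \Lambda(y(\mf{p})) W\bigg( \frac{x(\mf{p})}{N}, \frac{y(\mf{p})}{N} \bigg). \]
I would then apply a Heath--Brown identity to the ideal-theoretic von Mangoldt function $\Lambda_\O$ to decompose this into a bounded number of Type I and Type II sums against the weight $\Lambda(x)\Lambda(y)$.

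The Type I sums take the shape $\sum_{\mf d} \alpha_{\mf d} \sum_{\mf m} \Lambda(x(\mf{d}\mf{m}))\Lambda(y(\mf{d}\mf{m}))W(\ldots)$ with $N\mf d$ small. Because $\Lambda(x)\Lambda(y)$ factors as a product in independent rational variables, the inner sum reduces to counting prime pairs $(x,y) \in \Z^2$ restricted to a residue class modulo $\mf d$; this is a classical Siegel--Walfisz / Bombieri--Vinogradov problem since the two variables decouple. The constant $\kappa_n$ emerges here from local density arithmetic: at a split prime $p$ coprime to $n$, the pairs $(x,y) \pmod p$ with $x,y,x^2+ny^2$ all coprime to $p$ number $(p-1)^2 - 2(p-1) = (p-1)(p-3)$, and dividing by the ``independent'' density $(p-1)^3/p$ yields the factor $p(p-3)/(p-1)^2$; at inert or ramified $p$ the condition $x^2 + ny^2 \not\equiv 0$ is automatic and one gets $p/(p-1)$, exactly matching \cref{kappa-def}.

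The heart of the matter is the Type II estimate
\[ \sum_{\mf a \mf b} \alpha_{\mf a} \beta_{\mf b} \Lambda(x(\mf{a}\mf{b})) \Lambda(y(\mf{a}\mf{b})) W(\ldots), \]
with $N\mf{a}, N\mf{b}$ both of intermediate size. Cauchy--Schwarz (eliminating $\alpha$ and $\beta$) and expansion reduce this to bounding correlations of $f(x,y) := \Lambda(x)\Lambda(y)$ on $\Z^2 \cong \O$ against its translates along $\Z$-linear directions arising from multiplication by $\mf{a}_1 \overline{\mf{a}_2}$ in $\O$; equivalently, control of $f$ in a ``directional'' Gowers norm along a family of directions. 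The first key input is the concatenation theorem of Kuca and Kravitz--Kuca--Leng, which converts such multi-directional control into a bound on a genuine $U^s$-norm of $f$ on $\Z^2$. The second is the quasipolynomial inverse theorem of Leng--Sah--Sawhney, which converts non-negligible $U^s$-norm into correlation with a degree-$(s{-}1)$ nilsequence $F(g(\mbf{x})\Gamma)$. The argument is then closed by a non-correlation estimate: $\Lambda(x)\Lambda(y)$ does not correlate with two-dimensional nilsequences, proved by factoring the nilsequence into $x$- and $y$-directional pieces (via a Leibman-type factorization along the lower central series) and invoking classical Vinogradov/Davenport non-correlation for $\Lambda$ against one-dimensional nilsequences.

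The principal difficulty is propagating the quantitative savings through this chain: the claimed error $N^2(\log\log N)^2/\log N$ admits only a $(\log N)^{o(1)}$ loss at each step. This is compatible with the quasipolynomial inverse theorem, but requires the concatenation step to be made quantitatively effective so that the directional bounds yield a $U^s$-bound with only polylogarithmic loss, and the final nilsequence non-correlation to match the quantitative strength of the inverse theorem. The bottleneck is thus the Type II analysis, precisely the place where the paper deploys the recent advances in additive combinatorics advertised in the abstract.
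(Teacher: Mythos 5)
Your high-level plan (pass to $\Q(\sqrt{-n})$, decompose into Type I/II sums, use concatenation and the Leng--Sah--Sawhney inverse theorem for the Type II estimate) correctly identifies the main ingredients, but the architecture you describe differs from the paper's at several load-bearing points and has genuine gaps. First, the paper does not use a Heath--Brown identity and does not let the main term emerge from Type I analysis. It writes $\Lambda = \Lambda_{\cramer} + (\Lambda - \Lambda_{\cramer})$, evaluates the $\Lambda_{\cramer} \boxtimes \Lambda_{\cramer}$ contribution exactly via the prime ideal theorem for Gr\"o{\ss}encharaktere (this is where $\kappa_n$ appears, via \cref{prop:main-term-comp}), and shows that the cross terms involving $\Lambda - \Lambda_{\cramer}$ vanish through Type I/II bounds. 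Second, and more seriously, you do not address the fact that the weight $\Lambda(x)\Lambda(y)$ is unbounded and the available Type I information is only at level $X^{1/2}(\log X)^{-C}$. The paper (citing Ford--Maynard) flags that in this regime, for an unbounded weight, \emph{no} decomposition identity by itself can give an asymptotic for primes, because the weight might conspire to concentrate on products of two comparably sized prime ideals. This is precisely what the 3-dimensional upper bound sieve in \cref{lem:2primes-upper-bound} rules out, and it is also the source of the $(\log\log N)^2/\log N$ error term in the theorem --- not the propagation of savings through the Gowers norm machinery, where there is actually enormous slack because Leng's bound saves $\exp(-(\log X)^{c})$.

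Third, your Type II step proposes to deduce a genuine $U^s(\Z^2)$-bound for $f(x,y) = \Lambda(x)\Lambda(y)$, then apply the inverse theorem and prove non-correlation with two-dimensional nilsequences by a Leibman-type factorization. This is not what the paper does and it is not obvious that it can be made to work. The entire point of the product structure $w = f \boxtimes_\ell f'$ in \cref{prop:typeii-to-gowers} is that the concatenation arguments reduce the Type II sum to the \emph{one-dimensional} Gowers $U^k[X^{1/2}]$-norm of $f$ or of $f'$ separately. This allows the paper to invoke Leng's result (\cref{prop:primes-gowers-uniform}), off the shelf, that $\Lambda - \Lambda_{\cramer}$ has small one-dimensional Gowers norms --- the inverse theorem is used inside Leng's proof, not in this paper. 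Your two-dimensional nilsequence route would require a new non-correlation theorem for $\Lambda(x)\Lambda(y)$ on $\Z^2$ at quasipolynomial quantitative strength, and the "factor along the lower central series into $x$- and $y$-pieces" step is not a simple reduction: nilsequences on $\Z^2$ do not split into products of one-variable nilsequences in general. The paper's reduction to one-dimensional Gowers norms is what makes the argument close without having to develop that theory.
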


\begin{remarks} 
Here, and throughout the paper, the von Mangoldt function is defined by $\Lambda(n) = \log p$ if $n = p^k$ is a prime power, $\Lambda(n) = 0$ if $n$ is a non-negative integer other than a prime power, and $\Lambda(-n) = \Lambda(n)$ if $n < 0$. 

From the statement of \cref{thm:main} one can easily derive asymptotics for the number of $x,y$ in a box such as $[0,N] \times [0,N]$ for which $x,y$ and $x^2 + ny^2$ are all prime, by approximating $1_{[0,1]^2}$ by a smooth function $W$. We leave the details to the reader. It can be seen from the arguments of \cref{polar-sec} that, under the assumption that $W$ is supported in $B_{10}(0)$, the implied constant in the $O_W( \cdot )$ in \cref{thm:main} can be taken to be $\ll \sup_{0 \le j \le 3}\Vert \partial^{(j)}W  \Vert_{\infty}$, by which we mean the maximum over all derivatives of $W$ of order at most $3$. 
\end{remarks}

If one wanted an asymptotic over the range $x^2 + ny^2 \le N^2$ then one could use \cref{prop:main} directly, without having to pass to a smooth weight $W$. This case, when further specialized to $n=4$, recovers the `Gaussian primes conjecture' in the form stated by Friedlander and Iwaniec \cite[Conjecture 1.1]{FI22}.  This (up to a trivial change in notation) states the following.

\begin{conjecture}[{Gaussian Primes Conjecture}]\label{gpc}
There holds
\[ \sum_{\substack{x,y \\ x^2 + 4y^2 \le N}} \Lambda(x) \Lambda(y) \Lambda(x^2 + 4 y^2) \sim cN,\] where $c = \prod_{p \equiv 1 \mdsublem{4}} (1 - \frac{3}{p})(1 - \frac{1}{p})^{-3} \prod_{p \equiv 3 \mdsublem{4}} (1 - \frac{1}{p^2})^{-1}$.
\end{conjecture}

To see that this does indeed follow from \cref{thm:main}, one can take $W = W_{\eps}^{\pm}$ where $W_{\eps}^+$ is a smooth majorant for $1_{|x^2 + 4y^2| \le 1}$, differing from it by at most $\eps$ in the $L^1(\R^2)$-norm, and $W_{\eps}^-$ is a minorant with the same property. As $\eps \rightarrow 0$, this tends to $4$ times the quantity of interest in \cref{gpc} (noting here that positive and negative values of $x,y$ are included). Now note that $\lim_{\eps \rightarrow 0} \int_{\R^2}  W^{\pm}_{\eps} = \pi/2$. Moreover, the class number formula gives $\frac{\pi}{4} =  L(1,\chi_{-4}) = \lim_{X \rightarrow \infty}\prod_{p \le X : p \equiv 1 \mdsub{4}} \big(1 - \frac{1}{p}\big)^{-1} \prod_{p \le X: p \equiv 3 \mdsub 4}\big(1 + \frac{1}{p}\big)^{-1}$ which, after some calculation, leads to $c = \pi \kappa_4/8$, and so \cref{gpc} follows from \cref{thm:main} as claimed.

In general the infinite product \cref{kappa-def} is only conditionally convergent. To show that it does converge, by taking logs and Taylor expansion it suffices to show that $\lim_{X \rightarrow \infty} \sum_{p \le X} \legendre{-n}{p} p^{-1}$ exists, which is typically shown in a standard proof of Dirichlet's theorem on primes in progressions, noting here that $\legendre{-n}{p} = \chi_n(p)$ for some Dirichlet character $\chi_n$ to modulus dividing $4n$.

It has the form predicted by heuristics of Bateman--Horn type, that is to say a product of natural archimedean and $p$-local factors. One can state an equivalent result with an absolutely convergent product by dividing through by the class number formula for the field $\Q(\sqrt{-n})$, as Friedlander and Iwaniec did for $n = 4$ in their formulation of \cref{gpc}.

Finally, the argument could surely be modified in a straightforward manner to handle arbitrary positive definite binary quadratic forms over $\Z$ in place of $x^2 + ny^2$, but this would introduce even more notation to a paper which is already fairly heavy going. It should also not be difficult to modify the argument to prove, for example, that there are infinitely many primes $x^2 + y^2$ with $x$ and $y - 1$ prime.

\subsection{Prior results}

Famously, it was stated by Fermat and proven by Euler that every prime $p \equiv 1 \md{4}$ can be written as $x^2 + y^2$, and in particular there are infinitely many primes of the form $x^2 + y^2$. Fermat also asked about prime values of $x^2 + ny^2$ for various $n \ge 2$, and made analogous conjectures when $n = 2,3,5$. The first two of these were also proven by Euler, and the third by Lagrange. The book \cite{cox-book} (which inspired the title and some of the content of this paper) is devoted to primes of the form $x^2 + ny^2$ from an algebraic perspective, linking this to the development of central topics in number theory such as class field theory. We note in particular that it was shown by Weber in 1882 \cite{weber} that there are infinitely many primes of the form $x^2 + ny^2$ for any $n \ge 1$.

Our main result falls into a long line of work considering refinements of such statements in which some restriction is placed on the coordinates $x,y$. Most of these have been concerned with the case $n = 1$, that is to say with the form $x^2 + y^2$. Work of Fouvry and Iwaniec \cite{FI97} handled the case where $x$ is prime. Celebrated work of Friedlander and Iwaniec \cite{FriIw1,FriIw2}, showed that one may restrict $x$ to be a perfect square (the first polynomial taking values in polynomially thin set shown to capture primes). This was refined in work of Heath--Brown and Li \cite{HL17}, to force $x$ to be a square of a prime. Pratt \cite{Pra20} has proven that $x$ may be forced to lie in a set where one excludes $3$ digits from its decimal expansion. We also mention two recent works of Merikoski \cite{mer1, mer2}; in particular, in \cite{mer2} the variable $x$ is restricted to an arbitrary set $S$ of integers satisfying $|S \cap \{1,\dots N\}| \gg N^{1 - \delta}$ for some $\delta > 0$. The work of Fouvry and Iwaniec was generalised to $x^2 + ny^2$ for arbitrary $n \ge 1$ in \cite{he-thesis}.

Short of proving that there are infinitely many primes of the form $x^2 + 4$ (with $x$ prime), for parity reasons one cannot hope to show that there are infinitely many primes of the form $x^2 + y^2$ with \emph{both} $x$ and $y$ prime, and for related reasons $\md 6$ one should restrict the search for primes of the form $x^2 + ny^2$ with $x, y$ prime to the case $n \in \{0,4\} \md{6}$.

Friedlander and Iwaniec \cite{FI22} considered the problem in the case $n = 4$ and proved a lower bound for the number of $(x,y)$ with $x$ prime, $y$ having at most $7$ prime factors, and $x^2 + 4y^2$ prime. 

\cref{thm:main} uses relatively little about the specific form of the functions $\Lambda(x),\Lambda(y)$, and similar techniques may be used for other problems connected with the coordinates of Gaussian primes. Perhaps the most interesting of these, which we will address in \cite{GS24}, is to give asymptotic counts for corners $(x + iy, (x+d) + iy, x + i(y + d))$ in the Gaussian primes. 

\subsection{Outline of the argument}\label{subsec:outline}

For most of the rest of the introductory discussion we specialise to the case $n = 4$. In this case, \cref{thm:main} is closely related to a statement involving Gaussian primes (that is to say primes in $\Z[i]$) using the fact that $z = x + iy$, $y \neq 0$, is a Gaussian prime if and only $N_{\Q(i)/\Q}(z) = x^2 + y^2$ is a rational prime. In particular, our main theorem implies that there are infinitely many Gaussian primes $x + 2iy$ with $x, y$ prime.

For the sake of this introductory discussion we will consider the problem of estimating
\begin{equation}\label{general-sum-1} \sum_{x^2 + 4y^2 \le X} f(x) f'(y) 1_{x^2 + 4y^2 \operatorname{prime}}.\end{equation} In the case $f= f'= \Lambda$, this is a special case of \cref{thm:main} with $N = X^{1/2}$, $W$ (a smooth approximation to) the cutoff $1_{u^2 + nv^2 \le 1}$, and with the von Mangoldt weight $\Lambda(x^2 + 4y^2)$ (which is essentially always $\log X$) renormalised to $1$. 

Much of our argument is relevant for fairly general $f,f'$. Noting that \cref{general-sum-1} is bilinear in $f$ and $f'$, one can hope in any given situation of interest to write $f,f'$ as sums of relatively easily-understood `main terms' plus error terms for which one hopes that the sum \cref{general-sum-1} is small. We are then left with the task of finding verifiable criteria on $f,f'$ which allow us to assert that \eqref{general-sum-1} is small.

The first key idea is to observe that \cref{general-sum-1} is a sum
\begin{equation}\label{general-sum-2} \sum_{\gamma \in \operatorname{Primes}(\Z[i])} w(\gamma),\end{equation} where $w : \Z[i] \rightarrow \C$  is defined by $w(x + 2iy) = f(x) f'(y)$ and $w(x + iy) = 0$ for $y$ odd, and we assume throughout the following discussion that $w(\gamma)$ is supported where $|\gamma|^2 \le X$.

We then examine this sum over Gaussian primes using so-called Type I and Type II sums.  For a general introduction to this technique over the rational integers, see for instance \cite[Chapter 13]{IK-book} (where the authors do not use the terms Type I/II sum, but the relevant sums are (13.16) and (13.17)) or the comprehensive recent analysis \cite{FM24}. Adapting the relevant techniques from the rational integers $\Z$ to the Gaussian integers $\Z[i]$ presents only minor technical obstacles in our setting. See \cite[Section 3]{HB01} for a very similar type of argument in the ring $\Z[2^{1/3}]$, and \cite{hb-moroz,maynard-norm-forms} for similar arguments in fields not necessarily enjoying unique factorisation. 

For the purposes of this discussion, Type I sums are roughly of the form
\begin{equation}\label{typei-def} \sum_{\substack{\ell \in \Z[i] \\ |\ell|^2 \sim L}} \Big| \sum_{\substack{m \in \Z[i] \\ \ell | m}} w(m) \Big|\end{equation} for some positive $L \in \R$, which we informally call the \emph{level}.

Type II sums are bilinear sums of the form 
\begin{equation}\label{typeii-def}\sum_{\substack{\ell, m\in \Z[i]\\ |\ell|^2\sim L}} \alpha_{\ell} \beta_m w(\ell m),\end{equation} where here the coeffients $\alpha_{\ell}, \beta_m$ will satisfy reasonable boundedness conditions but can otherwise be fairly arbitrary.

The estimates we seek for these sums are bounds of the form $O(X (\log X)^{-B})$ for some large $B$, which represent savings over the trivial bounds by large powers of $\log X$. If we have such bounds, we informally say that we have Type I (or Type II) information at level $L$. (Precise statements may be found in \cref{section3}.)

If one has Type I and Type II information at sufficient levels, it is possible to obtain asymptotics for sums over primes \cref{general-sum-2} by sieve methods, details of which may be found in \cref{section3}. Any such argument needs Type I/II information in sufficiently large ranges. The question of exactly what is necessary or sufficient for this purpose is rather complicated (and, over the integers, has been comprehensively addressed in the recent work \cite{FM24}). 

We will use an argument of Duke, Friedlander, and Iwaniec \cite[Section~6]{DFI95} which (adapted to the Gaussian primes) shows that, at least for $1$-bounded functions $f,f'$, it is enough to have Type I information up to level $X^{1/2 - o(1)}$, and Type II information up to $X^{1/3 - o(1)}$.

The task, then, is to obtain the relevant Type I and II information. We are able to do this under the assumption that appropriate \emph{Gowers norms} of $f$ or $f'$ are small. The Gowers norms are central objects in modern additive combinatorics; see \cref{gowers-norms-intro} for an introduction. Assuming bounds for suitable Gowers norms of $f$ or $f'$, we are able to obtain Type I information up to level $X^{1/2 - o(1)}$ and Type II information for levels between $X^{o(1)}$ and $X^{1/2 - o(1)}$, which is sufficient for variants of the sieve arguments of Duke, Friedlander and Iwaniec to apply in order to show that \cref{general-sum-2} is small. One point to note here is that, while we have comfortably enough Type II information, the Type I range is just barely enough for the requirements of the Duke--Friedlander--Iwaniec sieve. Worse, the functions $f$ and $f'$ of interest to us for our main theorem are not $1$-bounded but come from the von Mangoldt function (with a suitable main term subtracted), and the most na\"{\i}ve application of the Duke--Friedlander--Iwaniec machinery fails to give a usable bound. To get around this issue we require an additional upper bound sieve (of dimension 3). We remark that obtaining Type I information at level $X^{1/2}$ in our setting remains an interesting open question. In fact, even obtaining bounds at this level without the absolute values in \cref{typei-def} seems challenging. (A bound of this latter type would be interesting, being a simple case of what is called Type I${}_2$ information. Sufficient information in this direction would allow one to obtain an arbitrary logarithmic savings in \cref{thm:main}.)

The most novel part of our work lies in showing that the stated Type I and Type II information can indeed be obtained from suitable Gowers norm control on the functions $f$ and $f'$. Precise statements of these results are \cref{prop:typei-to-gowers} and \cref{prop:typeii-to-gowers} respectively. The proof of \cref{prop:typeii-to-gowers} implicitly uses a large number of applications of the Cauchy--Schwarz inequality, organised via what are known as \emph{concatenation} theorems for Gowers-type norms. The first concatenation results were obtained by Tao and Ziegler \cite{tao-ziegler}, but those results are rather qualitative. More recently, starting with the work of Peluse and Prendiville \cite{PP19} and Peluse \cite{Pel20}, more quantitative variants have been developed. For our application, we need the very recent work of Kuca \cite{kuca} and Kravitz--Kuca--Leng \cite{KKL24}.

In order to apply these results to prove Theorem \ref{thm:main}, we need to show that for suitable functions $f$ and $f'$ related to the von Mangoldt function, the relevant Gowers norms are small. We work with $f = f' = \Lambda - \Lambda_{\cramer}$, where $\Lambda_{\cramer}$ is a certain low-complexity approximant to the von Mangoldt function defined in \cref{cramer-def} below. Statements of this type were first shown in a series of works of the first author with Tao \cite{green-tao-linear,GT12,GT12-orthog} and with Tao and Ziegler \cite{GTZ12}. More recent work of Tao and Ter\"av\"ainen \cite{TT21} gave quantitative bounds relying on effective bounds for the inverse theorem for the Gowers norms established by Manners \cite{Man18}. For our application, however, we need savings of large powers of $\log X$ over the trivial bound. Bounds of this strength were only recently established by Leng \cite{Len23b}, whose work relies on the quasipolynomial inverse theorem for the Gowers norms due to Leng, Sah and the second author \cite{LSS}. 

The final task of the paper is then to obtain the main term in \cref{thm:main} by evaluating \cref{general-sum-1} with $f = f' = \Lambda_{\cramer}$. This is a task for classical methods of multiplicative number theory, the main input being the prime ideal theorem. However, it takes a little work to get all the details in order.

In the above outline we had $n = 4$. To deal with more general values of $n$, we use the field $K = \Q(\sqrt{-n})$ in place of the Gaussian field. In general, these fields do not enjoy unique factorisation and in the above discussion one must work with ideals rather than with elements of the ring of integers $\O_K$. In fact, for technical reasons related to units it is best to work in this language even in the Gaussian setting (which does have unique factorisation). The fact that not all ideals are principal creates some extra difficulties, but they turn out not to be too serious and are resolved in much the same way that similar difficulties were overcome in \cite{hb-moroz,maynard-norm-forms} (for instance).

\subsection{Organization}
In \cref{section2} we review some of the basic arithmetic of $K= \Q(\sqrt{-n})$ and recall some simple lemmas about the ring of integers $\O_K$ and the ideals in this ring.

In \cref{section3}, we develop the relevant sieve machinery allowing us to estimate sums over primes such as \cref{general-sum-2} using Type I/II information. Here we follow the arguments of Duke, Friedlander and Iwaniec rather closely, albeit in the number field setting. As explained above, we will additionally require the input of an upper bound sieve of dimension 3 in two variables. 

In \cref{section4} we begin by giving a technical reduction of our main theorem, which is the form we shall actually prove; essentially, this consists in converting from Cartesian to polar coordinates. Then, we recall the definitions and basic properties of the Gowers $U^{k}$-norms, which will feature heavily in the paper from this point on. We will also give the outline proof of our main theorem, leaving the proof of the key Type I and II estimates, as well as the main input from analytic number theory, to later sections.

In \cref{section5}, we recall a variant of the Gowers norms due to Peluse \cite{Pel20}, which we call the Gowers--Peluse norms. These are the appropriate generalisations of the Gowers norms needed for the concatenation machinery we will require. In this section we also state the main concatenation estimate of Kuca, Kravitz, and Leng \cite{KKL24}. 

In \cref{section6}, we show that Type I information is controlled by Gowers norms and in \cref{section7} we do the same for Type II information. 

In \cref{section8}, we complete the proof of \cref{thm:main} by evaluating the main term in the asymptotic. 

Finally, we provide four appendices. In \cref{appendixA}, we prove various properties of Gowers box norms. In \cref{appendixB}, we provide self-contained proofs of the main concatenation results. In \cref{appendixC}, we briefly provide details of the large sieve in higher dimensions. In \cref{appendixD} we collect some standard arithmetic estimates.

\subsection{Notation}\label{sec:notation} There will be a small number of fixed objects throughout the paper, as follows:
\begin{itemize}
\item $n$ will always be the same $n$ as in the main theorem, and we think of it as fixed;
\item $K$ will always be $\Q(\sqrt{-n})$, regarded as embedded into $\C$, except in the self-contained \cref{DFI-sec} where it could be any number field (but for our application will be $\Q(\sqrt{-n})$);
\item $X$ is some positive integer parameter, which we will always assume sufficiently large in terms of $n$ without further comment.
\end{itemize}

For real $N \ge 1$ we write $[N] = \{ x \in \Z : 1 \le x \le N\}$ and $[\pm N] = \{ x \in \Z : -N \le x \le N\}$.

It is convenient to use a very minor modification of the von Mangoldt function which ignores the prime powers and is symmetric. Thus define $\Lambda'(n) = \log |n|$ if $|n|$ is a prime, and $\Lambda'(n) = 0$ otherwise.

As mentioned above, we will use a `low-complexity' model for the primes, whose main task is to encode the fact that the primes are essentially supported only on residue classes coprime to $q$. Specifically, for $x\in \Z$ and $Q \in \Z^{+}$, we define the Cram\'er model at level $\qq$ to be  
\begin{equation}\label{cramer-def} \Lambda_{\cramer, \qq}(x) = \prod_{p\le \qq}\bigg(1 - \frac{1}{p}\bigg)^{-1} 1_{(x,p) = 1}.\end{equation}
Throughout the paper we take 
\begin{equation}\label{q-choice} \qq := \exp(\log^{1/10} (X^{1/2})).\end{equation} Note that this is the same choice made in \cite[Equation (1.1)]{TT21} and in \cite[Section 5]{Len23b} if one takes $N = X^{1/2}$ in those papers. This is the reason for the inclusion of $X^{1/2}$ in \cref{q-choice}, which otherwise looks peculiar.
We will write $\Lambda_{\cramer} = \Lambda_{\cramer,\qq}$ from now on.

As is standard, we let $e(\theta) = e^{2\pi i\theta}$. 

We use standard asymptotic notation throughout. We use $A\sim B$ to denote that $B/2\le A<2B$ (i.e. $A$ lies within a factor of $2$ of $B$). Given functions $f=f(x)$ and $g=g(x)$, we write $f=O(g)$, $f\ll g$, $g=\Omega(f)$, or $g\gg f$ to mean that there is a constant $C$ such that $|f(x)|\le Cg(x)$ for sufficiently large $x$. We write $f\asymp g$ or $f=\Theta(g)$ to mean that $f\ll g$ and $g\ll f$, and write $f=o(g)$ to mean $f(x)/g(x)\to0$ as $x\to\infty$. Subscripts indicate dependence on parameters.

From \cref{section5} onwards, we will use the language of probability measures on $\Z$. For us, this simply means a finitely-supported function $\mu : \Z \rightarrow [0,1]$ with $\sum_x \mu(x) = 1$. In this paper all our probability measures will be symmetric, meaning that $\mu(x) = \mu(-x)$. The convolution $\mu_1 \ast \mu_2$ of two probability measures is defined by $(\mu_1 \ast \mu_2)(x) := \sum_y \mu_1(y) \mu_2(x - y)$, and it is also a probability measure. If $\mu_1,\dots, \mu_{\ell}$ are probability measures then we write $\conv_{i = 1}^{\ell} \mu_i$ for $\mu_1 \ast \cdots \ast \mu_\ell$. If $\mu_1 = \cdots = \mu_{\ell} = \mu$, we write $\mu^{(\ell)}$ for this measure. Given a (multi)set $S\subseteq \Z$, we let $\Unif_S$ denote the uniform measure on this set. Finally, $\delta_0$ denotes the delta-mass at $0$, thus $\delta_0(0) = 1$ and $\delta_0(x) = 0$ for $x \neq 0$. Observe that $\delta_0$ is an identity for convolution in the sense that $\mu \ast \delta_0 = \mu$ for any probability measure $\mu$. The notation $\E_{x \sim \mu} f(x)$ means the same as $\sum_x f(x) \mu(x)$.

We remark that it is perhaps controversial to use the letter $\mu$ for something other than the M\"obius function in an analytic number theory paper. However, the M\"obius function only appears in \cref{section3} of our paper, where there are no probability measures, and so there should be no danger of confusion.

When working in number fields, we will use the term \emph{rational prime} to mean a prime number in $\Z$. This is to distinguish from other uses of the word prime (prime ideals or prime elements of the ring of integers).

Finally, if $\ell \in \Z$ then we define the character $\chi_{\infty}^{(\ell)} : \C^* \rightarrow \C^*$ by 
\begin{equation}\label{chi-infinity-def} \chi_{\infty}^{(\ell)}(z) := (z/|z|)^{\ell}.\end{equation}

\subsection{Acknowledgements}
We thank Sarah Peluse and Terry Tao for helpful conversations. We also thank Roger Heath-Brown, Emmanuel Kowalski and James Maynard for comments pertaining to the literature concerning the prime number theorem for ideals, and Noah Kravitz, James Leng, Jori Merikoski, C\'edric Pilatte, Joni Ter\"av\"ainen and Katharine Woo for comments on a draft of the paper. We thank the anonymous referee for a careful reading of the paper. Finally, we thank Ouyang Xuan for remarking on Math Stack Exchange that the partial summation step in the proof of \cref{main-term-sharps} was done incorrectly in the first version of the paper.

BG is supported by Simons Investigator Award 376201. This research was conducted during the period MS served as a Clay Research Fellow. Portions of this research were conducted while MS was visiting Oxford and Cambridge both of which provided excellent working conditions. Finally, the authors thank ICMS and the organisers of the workshop on Additive Combinatorics (July 2024) which provided the initial impetus for this collaboration.

\section{Number fields and weight functions}\label{section2}

In this section we collect some basic facts about number fields, particularly the imaginary quadratic field $K = \Q(\sqrt{-n})$. We also introduce the key notion of a weight function of product form, \cref{def-product-form}.

Given a number field $K$, denote by $\O_K$ its ring of integers. This is a lattice satisfying $\Z[\sqrt{-n}] \subseteq \O_K \subseteq \frac{1}{2n}\Z[\sqrt{-n}]$. Note in particular that 
\begin{equation}\label{int-contain} (2n) \subseteq \Z[\sqrt{-n}],\end{equation} where $(2n)$ is the ideal generated by $2n$. Write $\O^*_K$ for the units in $\O_K$. This will consist of $\pm 1$ unless the squarefree part of $n$ is 1 (in which case $\O_K^* = \{ \pm 1, \pm i\}$) or $3$ (in which case $\O_K^*$ consists of the sixth roots of unity).\vspace*{8pt}

\emph{Ideals.} Denote by $\Ideals(\O_K)$ the semi-group of non-zero ideals in $\O_K$ and by $\Ideals^0(\O_K)$ the non-zero principal ideals. We will also encounter fractional ideals. If $\alpha \in K^*$ then it is convenient to write $(\alpha) = \alpha \O_K$ and call this the fractional ideal generated by $\alpha$. Note that if $\alpha \in \O_K \setminus \{0\}$ then $(\alpha)$ is a principal ideal, and all principal ideals are of this form. 

Fraktur letters $\mf{a}, \mf{b},\mf{d}$ will always denote ideals in $\O_K$, with the letters $\mf{p}, \mf{q},\mf{r}$ being reserved for prime ideals. We reserve the letter $\mf{c}$ for particular fixed choices of fractional ideals in $K$, as described in \cref{lem:ideal-reps} below.

Recall that we may define the norm $N\mf{a}$ of any non-zero fractional ideal $\mf{a}$, and this takes values in $\Q^+$.

\emph{Ideal classes.} Denote by $C(K)$ the ideal class group of $K$. We will denote elements of $C(K)$ by $g$, and will write $[g]$ for the set of ideals in $\Ideals(\O_K)$ in the class represented by $g$. Sometimes it is necessary to consider fractional ideals and their classes in $C(K)$. Indeed it will be convenient throughout the paper to fix some fractional ideal representatives of each class, as detailed in the following lemma. 
\begin{lemma}\label{lem:ideal-reps}
Let $K = \Q(\sqrt{-n})$. We may choose fractional ideals $\mf{c}_g$, $\mf{c}'_g$ for $g \in C(K)$ with the following properties:
\begin{enumerate}
\item[\textup{(i)}]$\mf{c}_g, \mf{c}'_g$ are in the ideal class corresponding to $g$;
\item[\textup{(ii)}]$\mf{c}_g \mf{c}'_{g^{-1}}$ \textup{(}which is principal\textup{)} is generated by $\gamma_g \in \Q$, a positive rational of height $O_n(1)$;
\item[\textup{(iii)}] The set $\Pi_g := \{ z \in K^* : (z) \mf{c}_g \subseteq\O_K\}$ is a finite index sublattice of $\Z[\sqrt{-n}]$, and similarly for $\Pi'_g := \{ z \in K^* : (z) \mf{c}'_g \subseteq\O_K\}$.
\end{enumerate}
\end{lemma}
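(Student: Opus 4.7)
The plan is to build $\mathfrak{c}_g$ from a Minkowski-bounded integral representative of the class $g$, rescale by a rational integer to force property (iii), and then obtain $\mathfrak{c}'_g$ by complex conjugation, which will automatically produce the required positive rational generator in (ii).

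First, for each $g \in C(K)$, invoke Minkowski's bound in the imaginary quadratic field $K = \Q(\sqrt{-n})$ to pick an integral ideal $\mathfrak{a}_g \subseteq \O_K$ in the class $g$ with $N\mathfrak{a}_g \le \frac{2}{\pi}\sqrt{|d_K|} = O_n(1)$. Set $m_g := 2n \cdot N\mathfrak{a}_g$ and define
\[
\mathfrak{c}_g := \tfrac{1}{m_g}\,\mathfrak{a}_g, \qquad \mathfrak{c}'_g := \overline{\mathfrak{c}_{g^{-1}}}.
\]
Both are fractional ideals; multiplying by the principal fractional ideal $(1/m_g)$ does not change the class, so $\mathfrak{c}_g$ is in class $g$. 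Complex conjugation sends the class of $\mathfrak{a}$ to the class of $\overline{\mathfrak{a}}$, which is $g$-inverse of $[\mathfrak{a}]$ because $\mathfrak{a}\overline{\mathfrak{a}} = (N\mathfrak{a})$ is principal; hence $\mathfrak{c}'_g$ also lies in class $g$. This gives (i).

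For (ii), compute directly:
\[
\mathfrak{c}_g\,\mathfrak{c}'_{g^{-1}} \;=\; \mathfrak{c}_g\,\overline{\mathfrak{c}_g} \;=\; \tfrac{1}{m_g^2}\,\mathfrak{a}_g\overline{\mathfrak{a}_g} \;=\; \tfrac{1}{m_g^2}\,(N\mathfrak{a}_g) \;=\; \bigl(\gamma_g\bigr),
\]
where $\gamma_g := N\mathfrak{a}_g / m_g^2 \in \Q_{>0}$ has numerator and denominator $O_n(1)$, since $m_g = O_n(1)$ and $N\mathfrak{a}_g = O_n(1)$.

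For (iii), note that $\Pi_g \cup \{0\} = \mathfrak{c}_g^{-1} = m_g\,\mathfrak{a}_g^{-1}$. Since $\mathfrak{a}_g$ is integral, $\mathfrak{a}_g^{-1} \subseteq \frac{1}{N\mathfrak{a}_g}\O_K$. Combined with the containment $\O_K \subseteq \frac{1}{2n}\Z[\sqrt{-n}]$ coming from \cref{int-contain}, this yields
\[
\Pi_g \;\subseteq\; \mathfrak{c}_g^{-1} \;\subseteq\; \tfrac{m_g}{2n\cdot N\mathfrak{a}_g}\,\Z[\sqrt{-n}] \;=\; \Z[\sqrt{-n}],
\]
by our choice $m_g = 2n\cdot N\mathfrak{a}_g$. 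Both $\Pi_g \cup \{0\}$ and $\Z[\sqrt{-n}]$ are rank-$2$ lattices in $K$, so the inclusion is automatically of finite index; an explicit bound on the index of size $O_n(1)$ can be read off from the computation above. The argument for $\Pi'_g$ is identical after noting that complex conjugation preserves $\Z[\sqrt{-n}]$, so conjugating the chain for $\mathfrak{a}_{g^{-1}}^{-1}$ yields the same conclusion for $\overline{\mathfrak{c}_{g^{-1}}}^{-1}$.

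The only place where one has to be slightly careful is step (iii): choosing $\mathfrak{a}_g$ to be Minkowski-small is not by itself enough, because $\mathfrak{a}_g^{-1}$ only lies in $\frac{1}{N\mathfrak{a}_g}\O_K$ rather than in $\Z[\sqrt{-n}]$, and when $n\equiv 3\pmod 4$ the order $\Z[\sqrt{-n}]$ is strictly smaller than $\O_K$. The rescaling by $m_g = 2n\cdot N\mathfrak{a}_g$ is chosen exactly to absorb both defects simultaneously, using \cref{int-contain}.
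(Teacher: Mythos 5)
Your proof is correct and somewhat cleaner than the paper's, via a genuinely different construction of $\mf{c}'_g$. The paper sets $\mf{b}'_{g^{-1}} := \overline{\mf{b}_g \mf{b}_{g^{-1}}} \mf{b}_{g^{-1}}$ (a three-fold product of Minkowski representatives) and then rescales each of $\mf{b}_g$ and $\mf{b}'_g$ separately; the rational generator in (ii) then comes out as $\mf{b}_g \mf{b}'_{g^{-1}} = (\mf{b}_g \mf{b}_{g^{-1}})\overline{\mf{b}_g \mf{b}_{g^{-1}}}$, an ideal times its conjugate. You instead set $\mf{c}'_g := \overline{\mf{c}_{g^{-1}}}$ directly, so that $\mf{c}_g\mf{c}'_{g^{-1}} = \mf{c}_g\overline{\mf{c}_g}$, and the norm identity $\mf{a}\overline{\mf{a}} = (N\mf{a})$ in a quadratic field makes (ii) immediate. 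Your rescaling factor $m_g = 2n\cdot N\mf{a}_g$ is also more explicit than the paper's "$M_g$ exists by clearing denominators," and the chain $\mf{a}_g^{-1} \subseteq \frac{1}{N\mf{a}_g}\O_K \subseteq \frac{1}{2nN\mf{a}_g}\Z[\sqrt{-n}]$ cleanly exhibits why $m_g = 2n N\mf{a}_g$ is the right normalization. One small remark: for the finite-index claim in (iii) the paper explicitly exhibits a sublattice $M\Z \oplus M\Z\sqrt{-n} \subseteq \Pi_g$ with $M = O_n(1)$, which also yields a quantitative bound on the index; your observation that a rank-$2$ sublattice of a rank-$2$ lattice automatically has finite index is enough for the qualitative statement, and the $O_n(1)$ bound on the index you allude to can indeed be read off since $N\mf{c}_g^{-1} = m_g^2/N\mf{a}_g = O_n(1)$. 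Both routes buy the same conclusion; yours is shorter because complex conjugation does the work that the paper's auxiliary product $\mf{b}'_{g^{-1}}$ was introduced to do.
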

\begin{proof}
First of all, for each $g \in C(K)$ pick some $\mf{b}_g \in [g]$ with norm $\le \frac{4}{\pi}\sqrt{n}$; such ideals exist by the Minkowski bound. For each $g$, $\mf{b}_g \mf{b}_{g^{-1}}$ is a principal ideal. Set $\mf{b}'_{g^{-1}} := \overline{\mf{b}_g \mf{b}_{g^{-1}}} \mf{b}_{g^{-1}}$. Then $N\mf{b}'_{g^{-1}} \ll n^{3/2}$, and $\mf{b}_g \mf{b}'_{g^{-1}} = (\mf{b}_g \mf{b}_{g^{-1}})\overline{\mf{b}_g \mf{b}_{g^{-1}}}$ is generated by a rational integer of magnitude $\ll n^2$. Now, for each $g$, take $\mf{c}_g$ to be the fractional ideal $\frac{1}{M_g} \mf{b}_g$, where $M_g = O_n(1)$ is an integer such that $M_g\mf{b}_g^{-1} \subseteq (2n)$. Such an $M_g$ exists by clearing denominators in generators of $\mf{b}^{-1}_g$. Define $\mf{c}'_g = \frac{1}{M'_g}\mf{b}'_g$ similarly. Properties (i) and (ii) are then immediate from the above discussion. (We remark that the key point in (ii) is that the generator $\gamma_g$ lies in $\Q$, rather than merely in $K$; the second point is in a sense vacuous since there are only $O_n(1)$ ideal classes, however the proof yields an explicit value for this constant if desired.)

We turn to point (iii). It is clear that $\Pi_g$ is an abelian group under addition. Also, since $\mf{c}_g$ has an integral basis, we see by clearing denominators that there is $M = O_n(1)$ such that $M, M\sqrt{-n} \in \Pi_g$.
In the other direction, note that by construction we have $\mf{c}_g^{-1} \subseteq (2n)$. Therefore if $(z) \mf{c}_g \subseteq \O_K$ then $(z) = (z) \mf{c}_g \mf{c}_g^{-1} \subseteq \O_K \mf{c}_g^{-1} \subseteq (2n) \subseteq \Z[\sqrt{-n}]$, the second inclusion being \cref{int-contain}. Hence $\Pi_g \subseteq \Z[\sqrt{-n}]$, and the claim follows.
\end{proof}

\emph{Weight functions.} Throughout the paper we will be dealing with weight functions $w$ on $\Ideals(\O_K)$. In \cref{DFI-sec} these can be quite general, but for most of the paper we will take $K = \Q(\sqrt{-n})$ and the weight functions will be of a special product form.

\begin{definition}\label{def-product-form}
Suppose that $K = \Q(\sqrt{-n})$. Suppose that $f, f' : \Q \rightarrow \C$ are two functions, both supported on $\Z$, and let $\ell \in \Z$. Then we define a function $f \boxtimes_{\ell} f'$ on ideals as follows:
\begin{equation}\label{product-form} f \boxtimes_{\ell} f'(\mf{a}) = \sum_{(x + y \sqrt{-n}) = \mf{a}} \chi_{\infty}^{(\ell)}(x + y \sqrt{-n}) f(x) f'(y),\end{equation} 
where $\chi_{\infty}^{(\ell)}(z) = (z/|z|)^{\ell}$.
Any function $w : \Ideals(\O_K) \rightarrow \C$ having this form is said to be in \emph{product form} with \emph{frequency $\ell$}. We say that $w$ is $1$-bounded if $f, f'$ are.
\end{definition}
\begin{remarks}
Note that the sum is empty unless $\mf{a}$ is principal, so $f \boxtimes_{\ell} f'$ is supported on principal ideals. If $\mf{a}$ is principal the the sum is over the $|\O^*_K|$ associates of any generator for $\mf{a}$.
\end{remarks}

\emph{Further facts about $\Q(\sqrt{-n})$}. The following further definitions and facts will be required only in \cref{section8}. Factor
\begin{equation}\label{n-nstar} n = n_* r^2 \end{equation} where $n_*$ is squarefree and $r \in \mathbf{N}$. It is convenient to define a quantity $\omega$ by
\begin{equation}\label{omega-def} \omega := \left\{ \begin{array}{ll} 1 & \mbox{if $n_* \equiv 1,2 \md{4}$ and} \\ \frac{1}{2} & \mbox{if $n_* \equiv 3 \md{4}$}.\end{array}\right.\end{equation}
Then it is well-known that $\O_K = \Z[\sqrt{-n_*}]$ if $\omega = 1$, whereas $\O_K = \Z[\frac{1}{2}(1 + \sqrt{-n_*})]$ if $\omega = \frac{1}{2}$ (which perhaps explains the notation). Recall that the discriminant $\Delta$ is given by the formula
\begin{equation}\label{disc-form} \Delta  = -4\omega^2 n_*.\end{equation}

There is a real quadratic character $\legendre{\Delta}{\cdot}$ associated to the field, where the symbol here is the Kronecker symbol, which coincides with the Legendre symbol on odd primes $p$. This symbol determines the splitting type of a prime $p$; $p$ splits, is inert or ramifies according to whether $\legendre{\Delta}{p} = 1, -1, 0$ respectively. We also let $h_K$ denote the class number of $K$.  Finally we recall the class number formula, which in our setting and language states that 
\begin{equation}\label{class-number} \frac{2\pi h_K}{|\O_K^*| |\Delta|^{1/2}} =  L(1, (\Delta | \, \cdot \,)) = \prod_p \bigg(1 - \frac{(\Delta | p)}{p}\bigg)^{-1}.\end{equation}

\section{Sieve setup-reduction to Type I and Type II statements}\label{section3}

In this section we detail the main sieve-theoretic parts of the argument which allow us to reduce to proving certain Type I/II estimates. 

\subsection{The Duke--Friedlander--Iwaniec sieve in a number field}
\label{DFI-sec}
Here we adapt the sieve arguments of Duke, Friedlander and Iwaniec to an arbitrary number field $K$. The generalisation is essentially completely routine and follows \cite[Section 6]{DFI95} very closely, with only minor technical modifications required to work with ideals rather than integers. We think of $K$ as fixed, and do not explicitly indicate $K$-dependence in instances of asymptotic notation.

\begin{definition}\label{def:input-estim} Fix a weight function $w : \Ideals(\O_K) \rightarrow \C$. Given a real number $B \ge 1$, $w$ satisfies Type I (with savings $(\log X)^{-B})$) at $L_1$ if given $L \le L_1$ then 
\begin{equation}\label{typeI-def} \sum_{\mf{d} : N\mf{d} \sim L} \Big|  \sum_{ \mf{d} | \mf{a}} w(\mf{a})\Big| \ll_B X(\log X)^{-B} \Vert w \Vert_{\infty}. \end{equation}
We say that $w$ satisfies Type II (with savings $(\log X)^{-B}$) between $[L_1,L_2]$ if given $L\in [L_1,L_2]$, and any $1$-bounded sequences $\alpha_{\mf{a}},\beta_{\mf{b}}$ then 
\begin{equation}\label{typeII-def} \sum_{\mf{a}, \mf{b}: N\mf{a} \sim  L } \alpha_{\mf{a}} \beta_{\mf{b}} w(\mf{a} \mf{b})\ll_{B} X(\log X)^{-B}\Vert w \Vert_{\infty}.\end{equation}  

\end{definition}
\begin{remarks}
The definition of course depends on our global parameter $X$. In practice, $w$ will be supported on ideals of norm $\ll X$, in which case both \cref{typeI-def} and \cref{typeII-def} represent savings of a large power of $\log X$ over the trivial bound. In our applications, the parameter $L_1$ in Type II estimates will be purely technical and chosen to grow slightly faster than any power of logarithm.
\end{remarks}

Here is our formulation of Duke, Friedlander and Iwaniec's sieve result in the number field setting.
\begin{lemma}\label{lem:sieve-input}
Let $A, C > 1$ be parameters, set $B := 2A + 4$, and suppose $C\ge B$. Furthermore suppose that $w : \Ideals(\O_K) \rightarrow \C$ satisfies Type I \textup{(}with savings $(\log X)^{-B}$\textup{)} at $X^{1/2}(\log X)^{-C}$ and Type II \textup{(}with savings $(\log X)^{-B}$\textup{)} between $[(\log X)^{C}, X^{3/8}]$. Suppose that $w$ is supported on ideals of norm $\le X$. Then
\[\sum_{\mf{p}} w(\mf{p}) + \sum_{\mf{p}_1,\mf{p}_2}  w(\mf{p}_1\mf{p}_2)\ll_{A,C} X(\log X)^{-A} \snorm{w}_{\infty} ,\] where the first sum ranges over prime ideals $\mf{p}$, and the second sum ranges over pairs $\mf{p}_1, \mf{p}_2$ of prime ideals with $N\mf{p}_i \ge X^{1/2}e^{-(\log\log X)^2}$ for $i = 1,2$.
\end{lemma}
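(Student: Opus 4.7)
Proof proposal: The plan is to adapt the combinatorial sieve arguments of Duke--Friedlander--Iwaniec \cite[Section 6]{DFI95} from rational integers to ideals in $\O_K$. The translation is routine because the only facts about $\O_K$ used in their arguments are multiplicativity of the norm, the prime ideal theorem for $K$ (giving the standard count of ideals by norm), and unique factorisation of ideals into prime ideals.

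First I would apply a Heath--Brown identity of some fixed depth (say $K = 3$) for the ideal von Mangoldt function $\Lambda_K$: for $N\mf{n} \le X$, this expresses $\Lambda_K(\mf{n})$ as a signed sum of $O(1)$ many $2k$-fold ideal convolutions ($k \le K$) of truncated M\"obius-like sequences with a log factor. Multiplying by $w$, summing over $\mf{n}$, dyadically dissecting, and handling prime-power contributions separately via Type I, the bound on $\sum_{\mf{p}} w(\mf{p})$ reduces to bounds on sums of the form
\[ T((L_i)) := \sum_{\substack{\mf{a}_1 \cdots \mf{a}_j = \mf{n} \\ N\mf{a}_i \sim L_i}} \alpha_1(\mf{a}_1) \cdots \alpha_j(\mf{a}_j) w(\mf{n}), \]
with $j \le 2K$ bounded, each $\alpha_i$ one-bounded on a known dyadic range, and $\prod L_i \asymp X$.

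For each dyadic configuration $(L_i)$, I would search for a subset $I \subseteq \{1,\ldots,j\}$ with $\prod_{i \in I} L_i \in [(\log X)^C, X^{3/8}]$ (or symmetrically in $[X^{5/8}, X/(\log X)^C]$ for its complement): if one exists, group variables into $\mf{a} := \prod_{i \in I} \mf{a}_i$ and $\mf{b} := \prod_{i \notin I} \mf{a}_i$ and invoke Type II. An elementary subset-sum analysis (sort the $L_i$'s increasingly and track partial products $L_1 \cdots L_k$) shows such $I$ fails to exist only when either (i) a single $L_{i_0} \ge X^{1/2}(\log X)^{-C}$ dominates, with all other $L_i$'s of total product $\le (\log X)^C$, in which case Type I at level $L_{i_0}$ applies; or (ii) exactly two $L_i$'s are \emph{balanced}, both lying in $(X^{3/8}, X^{5/8})$, with the remaining variables of total product $\le (\log X)^C$. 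In the balanced case, further constraints (subsets containing one balanced factor together with some of the small variables must also dodge the Type II window) pin the two large factors into the narrow interval $[X^{1/2} e^{-(\log\log X)^2}, X^{1/2} e^{(\log\log X)^2}]$, and unwinding the Heath--Brown identity shows that these two large factors are forced to be prime ideals. They contribute exactly the $\sum w(\mf{p}_1 \mf{p}_2)$ residue on the left-hand side of the lemma (with appropriate sign coming from the identity).

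The main obstacle is the careful combinatorial case analysis and verifying the numerology: the choice $B = 2A + 4$ is calibrated so that after accounting for the $O(1)$ Heath--Brown terms and the $(\log X)^{O(1)}$ dyadic configurations, the cumulative loss remains $\ll X(\log X)^{-A}$ as claimed. A secondary technical issue is the correct handling of M\"obius and log coefficients when invoking Type I: the $L^1$-form of \cref{typeI-def} accommodates arbitrary one-bounded coefficients on the divisor variable (by pulling out the sign), but the coefficients on the ``free'' summation side must either be trivial or handled via a standard dyadic decomposition and Abel summation on the log factor.
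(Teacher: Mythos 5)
Your proposed route via a Heath--Brown identity is genuinely different from the paper's argument, which follows Duke--Friedlander--Iwaniec and is a pure combinatorial sieve: two applications of the Buchstab identity write $S(\mc{C},I(z))$ (which, because $z^3>X$, \emph{is} precisely $\sum_{\mf p}w(\mf p)+\sum_{\mf p_1,\mf p_2}w(\mf p_1\mf p_2)$ up to a trivial term) as a Type I portion plus a sum $\sum S(\mc{C}_{\mf p\mf q},I(\mf p))$, and the latter is decoupled via $M=(\log X)^{1+B/2}$ spacing points into $E^{(1)}_m,E^{(2)}_m,E^{(3)}_m$, with $E^{(2)}_m$ handled by Type II. The numerology $B=2A+4$ comes exactly from balancing the $M$ spacing points against the Type II savings, not from counting Heath--Brown pieces.

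Unfortunately your approach has a concrete gap in the balanced case, and I do not see how to repair it. If no subset product of the dyadic levels lies in $[(\log X)^C,X^{3/8}]$, then (for depth-$3$ Heath--Brown, with $\mu_K$-variables truncated at $X^{1/3}$) the two surviving large factors both exceed $X^{3/8}$ and hence are forced to be \emph{smooth} variables: their coefficients are $1$ or $\log$, not truncated M\"obius, and certainly not $1_{\mathrm{prime}}$. ``Unwinding the Heath--Brown identity'' therefore does \emph{not} force them to be prime ideals; it produces a bilinear sum whose coefficients are smooth/divisor-like, which is neither the semiprime term $\sum w(\mf p_1\mf p_2)$ nor within reach of the Type I or Type II hypotheses (grouping the two smooth variables together gives a $\tau_2$-weighted divisor, i.e.\ a Type I$_2$ sum that the paper explicitly flags as unavailable). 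Moreover, the ``further constraints pin the two factors into $[X^{1/2}e^{-(\log\log X)^2},X^{1/2}e^{(\log\log X)^2}]$'' step is also wrong: once both balanced $L_i$ exceed $X^{3/8}$, adjoining small variables to either one leaves the product above $X^{3/8}$, so no subset condition forces the balanced pair anywhere near $X^{1/2}$; they can sit anywhere in $(X^{3/8},X^{5/8})$. The entire point of the Buchstab sieve here is that the semiprime term appears on the \emph{left} of the decomposition (as part of $S(\mc{C},I(z))$) rather than as a residual bilinear sum with non-prime coefficients; a Vaughan- or Heath--Brown-type identity cannot reproduce this when the Type II range has a hole around $X^{1/2}$ and Type I stops short of $X^{1/2}$.
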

\begin{remark}
To explain the meaning of this result, we note that the sum $\sum_{\mf{p}} w(\mf{p})$ is the one we are ultimately interested in. The second sum, which is over ideals $\mf{a} = \mf{p}_1\mf{p}_2$ with two similarly-sized prime ideal factors, will later be bounded using an upper bound sieve.
\end{remark}

\begin{proof} Since the statement is homogeneous in $w$, we may assume that $w$ is 1-bounded. By adjusting implicit constants in the statement, we may assume that $X$ is larger than an absolute constant depending on $B$. Let 
\begin{equation}\label{param-choices} D = X^{1/2}(\log X)^{-C}, ~u =(\log X)^{C}\text{ and }~z = X^{1/2}e^{-(\log\log X)^2}.\end{equation} We put a total ordering on the prime ideals in $\O_K$ by declaring that $\mf{p}_1 < \mf{p}_2$ if $N\mf{p}_1 < N\mf{p}_2$, and then ordering the prime ideals with the same norm arbitrarily. (The same technical device occurs in \cite[Section 4]{hb-moroz} or \cite[Section 6]{maynard-norm-forms}, for example.) Note that there are at most $[K : \Q]$ prime ideals of a given norm. By an \emph{up-set} we mean a set $I$ of prime ideals with the property that if $\mf{p}_1 \in I$, and if $\mf{p}_2 > \mf{p}_1$, then $\mf{p}_2 \in I$. Since the prime ideals are well-ordered by $<$, every up-set is of the form $I(\mf{p}) := \{ \mf{p}' : \mf{p}' \ge \mf{p}\}$. However, for technical reasons during the proof it is sometimes useful to consider up-sets specified in the form $I(t) := \{ \mf{p} : N \mf{p} \ge t\}$, where $t \in \R^+$. There should hopefully be no confusion over which definition is being used at any given point. 

Let $\mc{A} \subseteq\Ideals(\O_K)$ and let $I$ be an up-set. We define
\[S(\mc{A},I) = \sum_{\substack{\mf{a} \in \mc{A}\\ \mf{p} |\mf{a} \implies \mf{p} \in I}}w(\mf{a}).\]

Given a set $\mc{A}$ of ideals and an ideal $\mf{d}$, we denote by $\mc{A}_{\mf{d}}$ the subset of $\mc{A}$ consisting of ideals divisible by $\mf{d}$.

Now note that if $I_1, I_2$ are up-sets with $I_1 \subseteq I_2$ then we have the \emph{Buchstab identity}
\[ S(\mc{A}, I_1) = S(\mc{A}, I_2) - \sum_{\mf{p} \in I_2 \setminus I_1} S(\mc{A}_{\mf{p}}, I(\mf{p})).\]
To see this, observe that if $w(\mf{a})$ appears in the sum $S(\mc{A}, I_2)$ but not in $S(\mc{A}, I_1)$ then $\mf{a}$ is divisible by a unique smallest prime ideal $\mf{p}$ with $\mf{p} \in I_2 \setminus I_1$. Of course, $\mf{a}$ lies in $\mc{A}_{\mf{p}}$, and since $\mf{p}$ was chosen to be smallest, all other prime ideal divisors of $\mf{a}$ are $\ge \mf{p}$, or in other words lie in $I(\mf{p})$.

For notational brevity take $\mc{C}= \Ideals(\O_K)$. By two applications of the Buchstab identity we have
\begin{align} \nonumber
S(\mc{C},I(z)) &= S(\mc{C},I(u)) - \sum_{\mf{p} \in I(u) \setminus I(z)}S(\mc{C}_\mf{p},I(\mf{p}))\\
& \nonumber =S(\mc{C},I(u)) - \sum_{\mf{p} \in I(u) \setminus I(z)}\bigg(S(\mc{C}_\mf{p},I(u))-\sum_{\mf{q} \in I(u) \setminus I(\mf{p})}
S(\mc{C}_{\mf{p}\mf{q}},I(\mf{q}))\bigg)\\
& \label{two-buchs} =S(\mc{C},I(u)) - \sum_{u \le N\mf{p} < z}S(\mc{C}_\mf{p},I(u)) + \sum_{\substack{N\mf{p} \ge u \\ N\mf{q} < z \\ \mf{p} < \mf{q}}}S(\mc{C}_{\mf{p}\mf{q}},I(\mf{p})).
\end{align}
Note that in the last step here, we switched the roles of $\mf{p}$ and $\mf{q}$, and unpacked the definitions of the $I()$ terms.

On the other hand, note that $z^{3}>X$ and recall that if $w(\mf{a})$ is non-zero then $N\mf{a} \ll X$. Therefore the sum $S(\mc{C}, I(z))$ contains only two types of term $w(\mf{a})$: those with $\mf{a}$ prime, and those with $\mf{a}$ a product of two prime ideals, both of norm $\ge z$. That is,
\begin{equation}\label{sieved-sum} S(\mc{C},I(z)) = \sum_{\mf{p}: N\mf{p}\ge z} w(\mf{p}) + \sum_{\mf{p}_1,\mf{p}_2} w(\mf{p}_1\mf{p}_2),\end{equation}
where the second sum is over $\mf{p}_1,\mf{p}_2$ with $N\mf{p}_i \ge z = X^{1/2} e^{-(\log\log X)^2}$, that is to say the second sum here is exactly the one appearing in \cref{lem:sieve-input}. The constraint $N\mf{p} \ge z$ will easily be removed by trivial bounds, so our task is to bound $S(\mc{C}, I(z))$, which we shall do via \cref{two-buchs}.\vspace*{8pt}

\textbf{Step 1: Deploying Type I information.}

The first step is to bound the first two terms in \eqref{two-buchs}, $S(\mc{C},I(u)) - \sum_{u \le  N\mf{p}<z}S(\mc{C}_{\mf{p}},I(u))$, via Type I information, or in other words by use of \cref{typeI-def}. Define $P(u) := \prod_{\mf{p} : N\mf{p} < u} \mf{p}$ and let $\Pi_u$ be the set of all $\mf{a}$ with at most one prime factor of norm $\ge u$. Then, unwinding definitions, we have that 
\begin{equation}\label{scu}S(\mc{C},I(u)) - \sum_{u \le N \mf{p}<z}S(\mc{C}_\mf{p},I(u)) = \sum_{(\mf{a}, P(u)) = 1}w(\mf{a})\bigg(1 - \sum_{\substack{u \le  N\mf{p}<z\\ \mf{p}|\mf{a}}} 1\bigg).\end{equation}
Denote by $\mu_K$ the M\"obius function on (ideals of) $K$. Then we have
\[ \sum_{\substack{\mf{d} | (P(z), \mf{a}) \\ \mf{d} \in \Pi_u }} \mu_K(\mf{d}) = 1_{(\mf{a}, P(u)) = 1} \cdot \bigg(1 - \sum_{\substack{u \le N\mf{p}<z\\ \mf{p}|\mf{a}}} 1\bigg).\]
Substituting into \cref{scu} yields
\begin{equation}\label{subst3} S(\mc{C},I(u)) - \sum_{u \le N\mf{p}<z}S(\mc{C}_\mf{p},I(u)) = \sum_{\substack{\mf{d}|P(z)\\\mf{d}\in \Pi_u}}\mu_K(\mf{d})\sum_{\mf{d}|\mf{a}}w(\mf{a}).\end{equation}
We now break the sum into two parts based on where $N \mf{d}\le D$ and else when $N \mf{d}>D$. For the first of these we have that 
\begin{equation}\label{sum-small-d}
\Big|\sum_{\substack{\mf{d}|P(z)\\ N \mf{d} \le D\\ \mf{d}\in \Pi_u}}\mu_K(\mf{d})\sum_{\mf{d}|\mf{a}}w(\mf{a})\Big| \le \sum_{N\mf{d}\le D}\Big|\sum_{\mf{d}|\mf{a}} w(\mf{a})\Big|\ll X(\log X)^{-B+1} \ll X (\log X)^{-A};
\end{equation}
here we have applied dyadic summation in $N \mf{d}$ and the Type I estimate \cref{typeI-def} at each dyadic scale, and recall that $w$ is $1$-bounded.

For the sum over $N\mf{d} >D$, we use the trivial bound $|w(\mf{a})| \le 1_{N\mf{a} \le X}$ (that is, $w$ is $1$-bounded on supported on ideals of norm $\le X$), obtaining
\begin{equation}\label{209}
\Big|\sum_{\substack{\mf{d}|P(z)\\ N \mf{d} > D\\ \mf{d}\in \Pi_u}}\mu_K(\mf{d})\sum_{\mf{d}|\mf{a}}w(\mf{a})\Big|  \le \sum_{\substack{\mf{d} | P(z) \\ D < N\mf{d} \le X \\ \mf{d} \in \Pi_u}} \Big| \sum_{\substack{N\mf{a} \le X \\ \mf{d} | \mf{a}}} 1 \Big|    \ll   \sum_{\substack{\mf{d} | P(z) \\ D < N\mf{d} \le X \\ \mf{d} \in \Pi_u}} \frac{X}{N\mf{d}}.
\end{equation}
By dyadic decomposition, this is bounded above by

\begin{equation}\label{210}
\ll  X(\log X)\sup_{L\in [D,2X]}\sum_{\substack{\mf{d}|P(z)\\ N\mf{d}\sim L\\\mf{d}\in \Pi_u}} \frac{1}{L}.
\end{equation}
Note that every $\mf{d}$ appearing in the sum here is squarefree (since it divides $P(z)$) and has at most one prime factor of norm $\ge u$. The product of the (distinct) prime divisors of $\mf{d}$ of norm $< u$ must therefore have norm at least $D/z$, and therefore there must be at least $t := \lfloor \log(D/z)/\log u\rfloor$ of them. Using \cref{num-ideals-lem} and \cref{number-field-mertens}, this implies that 
\[
\sum_{\substack{\mf{d}|P(z)\\ N\mf{d}\sim L\\\mf{d}\in \Pi_u}} \frac{1}{L}  \ll \frac{1}{L} \sum_{ \substack{\mf{p}_1 < \cdots < \mf{p}_t \\ N\mf{p}_i < u}} \frac{L}{N (\mf{p}_1 \cdots \mf{p}_t)}  \ll \frac{1}{t!}\big(\sum_{\mf{p} : N\mf{p} < u}\frac{1}{N\mf{p}}\big)^t\le \frac{(2\log\log u)^t}{t!}\ll (\log X)^{-A-1}.
\]
For the final calculation here, it is helpful to note that $t = (\frac{1}{C} + o(1)) \log \log X$, which follows from the choice of $D, u, z$ specified in \cref{param-choices}.

Substituting into \cref{210}, we see that the contribution to \cref{subst3} from the sum over $N\mf{d} > D$ is $\ll X (\log X)^{-A}\Vert w \Vert_{\infty}$. Combining with the estimate \cref{sum-small-d} for the sum over $N\mf{d} \le D$, we obtain the bound
\begin{equation}\label{typei-portion}
S(\mc{C},I(u)) - \sum_{u \le  N\mf{p}<z}S(\mc{C}_{\mf{p}},I(u)) \ll X (\log X)^{-B + 1} \ll X (\log X)^{-A} .\end{equation}
This concludes our analysis of the first two terms in \eqref{two-buchs} by the use of Type I sums.\vspace*{8pt}

\textbf{Step 2: Splitting the $\mf{p}$ variable.}

We now proceed to bounding the third term in \cref{two-buchs}, that is to say $\sum_{N\mf{p} \ge u ,  N\mf{q} < z ,\mf{p} < \mf{q}} S(\mc{C}_{\mf{p}\mf{q}},I(\mf{p}))$. Here, we will use the Type II estimate \cref{typeII-def}, but first we make some preliminary man{\oe}uvres aimed at decoupling the condition $\mf{p} < \mf{q}$ into conditions on $\mf{p}, \mf{q}$ separately. We first estimate the contribution from large $\mf{p}$. To this end set
\begin{equation}\label{y-choice} y := X^{3/8}.\end{equation}
Then we have
\begin{equation}\label{large-p-term}\sum_{\substack{N\mf{p} \ge y \\ N\mf{q} < z \\ \mf{p} < \mf{q}}}S(\mc{C}_{\mf{p}\mf{q}},I(\mf{p})) = \sum_{\substack{N\mf{p} \ge y \\ N\mf{q} < z \\ \mf{p} < \mf{q}}}w(\mf{p}\mf{q})\ll z^2  \ll X(\log X)^{-A} .\end{equation}
In the first step, we observed that the only ideal $\mf{a}$ of norm $\ll X$, divisible by $\mf{p}$ and $\mf{q}$ and with all other prime ideal factors in $I(\mf{p})$, is $\mf{a} = \mf{p}\mf{q}$ itself. For the last step, we recall the choice \cref{param-choices} of $z$.  The estimate \cref{large-p-term} is clearly acceptable in \cref{lem:sieve-input}. The remaining task is to estimate the contibution from  smaller $\mf{p}$, namely
\begin{equation}\label{rem-to-est}\sum_{\substack{u \le N\mf{p} < y \\ N\mf{q} < z \\ \mf{p} < \mf{q}}}S(\mc{C}_{\mf{p}\mf{q}},I(\mf{p})).\end{equation}
Now we begin the decoupling man{\oe}uvre in earnest. Set 
\begin{equation}\label{m-choice} M := \lfloor (\log X)^{1 + B/2}\rfloor \end{equation} and choose spacing points
\[ y = y_0 >y_1>y_2>\cdots >y_M = u\] where $y_m = y(u/y)^{m/M}$. Then splitting according to the size of $N\mf{p}$ gives
\begin{align*}
\sum_{\substack{u \le N\mf{p} < y \\ N\mf{q} < z \\ \mf{p} < \mf{q}}}&S(\mc{C}_{\mf{p}\mf{q}},I(\mf{p})) =\sum_{m=0}^{M-1} \sum_{\substack{y_{m+1} \le N\mf{p} < y_m \\ N\mf{q} < z \\ \mf{p} < \mf{q}}}S(\mc{C}_{\mf{p}\mf{q}},I(\mf{p}))\\
&=\sum_{m=0}^{M-1} \sum_{\substack{y_{m+1} \le N\mf{p}, N\mf{q} < y_m \\ \mf{p} < \mf{q} }}S(\mc{C}_{\mf{p}\mf{q}},I(\mf{p})) + \sum_{m = 0}^{M-1}\sum_{y_{m+1}\le N\mf{p}<y_m\le N\mf{q}<z}S(\mc{C}_{\mf{p}\mf{q}},I(\mf{p})).
\end{align*}
Applying (assuming here the conditions $y_{m+1}\le N\mf{p}<y_m\le N\mf{q}<z$, as in the right-hand sum) the Buchstab identity
\[S(\mc{C}_{\mf{p} \mf{q}}, I(\mf{p})) = S(C_{\mf p \mf q}, I(y_{m+1})) - \sum_{\mf{r} \in I(y_{m+1}) \setminus I(\mf{p})}  S(\mc{C}_{\mf{p} \mf{q} \mf{r}}, I(\mf{r})),\]
one confirms that
\begin{equation}\label{buchstab-levels} \sum_{\substack{u \le N\mf{p} < y \\ N\mf{q} < z \\ \mf{p} < \mf{q}}}   S(\mc{C}_{\mf{p}\mf{q}},I(\mf{p})) 
 = \sum_{m=0}^{M-1} \sum_{i = 1}^3 E^{(i)}_m,\end{equation} where
 \[ E^{(1)}_m := 
\sum_{\substack{y_{m+1} \le N\mf{p} \\ N\mf{q} < y_m \\ \mf{p} < \mf{q} }}S(\mc{C}_{\mf{p}\mf{q}},I(\mf{p})), \qquad E^{(2)}_m := \sum_{\substack{y_{m+1}\le N\mf{p}<y_m \\ y_m\le N\mf{q}<z}}S(\mc{C}_{\mf{p}\mf{q}},I(y_{m+1})) \] and
\[ E^{(3)}_m := -\sum_{y_{m+1} \le N\mf{r} < N\mf{p} < y_m \le N\mf{q} < z} S(C_{\mf{p} \mf{q} \mf{r}}, I(\mf{r})).\]
What has been achieved here is that the first and third types of sum $E^{(1)}_m$ and $E^{(3)}_m$ correspond to atypical factorization patterns (with two prime factors very close in norm) and can ultimately be bounded fairly trivially, whereas in the term $E^{(2)}_m$ the conditions on $\mf{p}, \mf{q}$ are fully decoupled, which will allow us to relate these terms to Type II sums.

We turn now to the actual estimations. Looking first at $E^{(1)}_m$, note that 
\begin{align}\nonumber |E^{(1)}_m| &\le \sum_{\substack{y_{m+1}\le N\mf{p}, N\mf{q}<y_m \\ \mf{p} < \mf{q}}}\sum_{\mf{p}\mf{q}|\mf{a}}|w(\mf{a})|\ll X\sum_{\substack{y_{m+1}\le N\mf{p}, N\mf{q}<y_m \\ \mf{p} < \mf{q}}}(N\mf{p}\mf{q})^{-1}\\ 
&\le X\bigg(\sum_{y_{m+1}\le N\mf{p}<y_{m}}(N\mf{p})^{-1}\bigg)^2\ll X \bigg(\frac{y_m}{y_{m+1}} - 1\bigg)^2\ll X(\log X)^2/M^2.\label{e1-est}
\end{align}
Here, in the penultimate step we bounded the sum over primes by the sum over all ideals $\mf{a}$ in $\O_K$ satisfying $y_{m+1}\le N\mf{a}<y_{m}$, then used \cref{num-ideals-lem} to bound the resulting sum. The estimation of $E^{(3)}_m$ is almost identical. Since an ideal $\mf{a}$ with $N\mf{a} \le X$ has $O(\log X)$ prime factors, each ideal $\mf{a}$ with $\mf{p} \mf{r} | \mf{a}$ is counted by $\ll \log X$ choices of $\mf{q}$, and so we have
\begin{equation}\label{e3-est}
|E^{(3)}_m| \ll  (\log X)\sum_{y_{m+1}\le N\mf{r}< N\mf{p}<y_m}\sum_{\mf{p}\mf{r}|\mf{a}}|w(\mf{a})| \ll  X(\log X)^3/M^2,
\end{equation} finishing here in the same way as the estimation of $E^{(1)}_m$.\vspace*{8pt}

\textbf{Step 3: Deploying Type II information.}

We turn now to the heart of the matter, which is the estimation of $E^{(2)}_m$ using Type II information. The key point is observing that $E^{(2)}_m$ \emph{is} a Type II sum. Indeed we have
\begin{equation}\label{e2-typeii} E^{(2)}_m = \sum_{\mf{a}, \mf{b}} \alpha_{\mf{a}} \beta_{\mf{b}} w(\mf{a} \mf{b})\end{equation}
with
\[ \alpha_{\mf{a}} = 1_{\mf{a} \operatorname{prime}} 1_{y_{m+1} \le N\mf{a} < y_m} \qquad \mbox{and} \qquad \beta_{\mf{b}} = 1_{N\mf{b}\le X} \sum_{\substack{y_m \le N \mf{q} < z \\ \mf{q} | \mf{b}}} 1_{P^{-}(\mf{b}) \ge y_{m+1}},\]
where $P^{-}(\mf{b})$ denotes the norm of the smallest prime ideal factor of $\mf{b}$. Clearly $|\alpha_{\mf{a}}| \le 1$, and $|\beta_{\mf{b}}|$ is at most the number of prime ideal divisors of $\mf{b}$, which is $\ll \log X$. Note also that $y_m/y_{m+1} = (y/u)^{1/M} < 2$, and so $\alpha_{\mf{a}}$ is already supported on a dyadic range of $\mf{a}$, and so is of the form \cref{typeII-def} with $L = y_m$. Note (recalling the choice \cref{y-choice} of $y$) that $u \le L \le y = X^{3/8}$, and so for all values of $m$ our assumption on the Type II properties of $w$ gives, from \cref{e2-typeii}, that 
\begin{equation}\label{e2-est} |E^{(2)}_m| \ll X (\log X)^{-B + 1}.\end{equation}

We have now proven all of the necessary estimates; let us conclude the proof of \cref{lem:sieve-input} by putting them together.

From \cref{buchstab-levels}, \cref{e1-est}, \cref{e3-est}, \cref{e2-est} and the choice \cref{m-choice} of $M$ and the fact that $B = 2A + 4$, we have
\[ \sum_{\substack{u \le N\mf{p} < y \\ N\mf{q} < z \\ \mf{p} < \mf{q}}}   S(\mc{C}_{\mf{p}\mf{q}},I(\mf{p}))  \ll X (\log X)^3 M^{-1} + X(\log X)^{-B + 1}M  \ll X(\log X)^{-A}.\] The quantity on the left here arose as \cref{rem-to-est}. Together with \cref{large-p-term}, this gives the bound
\[ \sum_{\substack{N\mf{p} \ge u \\ N\mf{q} < z \\ \mf{p} < \mf{q}}}   S(\mc{C}_{\mf{p}\mf{q}},I(\mf{p})) \ll  X(\log X)^{-A}.\] The quantity on the left here is the third term in \cref{two-buchs}. Combining with the estimate \cref{typei-portion} for the first two terms in \cref{two-buchs}, we obtain
\[ S(\mc{C}, I(z)) \ll X (\log X)^{-A}.\]
Finally, from \cref{sieved-sum} and the trivial bound $|\sum_{\mf{p} : N\mf{p} < z} w(\mf{p})| \ll z$, we obtain the bound claimed in \cref{lem:sieve-input}.
\end{proof}

\subsection{Bounding the sum over semiprimes}
For \cref{lem:sieve-input} to be useful for sums over primes, we need a way to estimate the term $\sum_{\mf{p}_1, \mf{p}_2} w(\mf{p}_1\mf{p}_2)$ involving almost equally-sized semiprimes. We do this now under an assumption on $w$ suitable for our intended purpose (of proving \cref{thm:main}). We remark that in the work of Ford and Maynard \cite[Theorem~4.16]{FM24}, they point to the fact that if one only achieves Type I information below $1/2$ and $w$ is not bounded then one may not deduce any result regarding primes. However the weight function underlying \cite[Theorem~4.16]{FM24} places a substantial portion of weight on semiprimes; the role of the upper bound sieve is precisely to rule out this conspiracy.

At this point we specialise to the case of imaginary quadratic fields $K = \Q(\sqrt{-n})$ and to weight functions of product form \cref{product-form}. 

\begin{lemma}\label{lem:2primes-upper-bound}
Let $K = \Q(\sqrt{-n})$. Suppose that $w : \Ideals(\O_K) \rightarrow \C$ is in product form \cref{product-form} for some frequency $\ell \in \Z$. Suppose moreover that $f, f'$ satisfy the pointwise bound
\begin{equation}\label{w-cond} |f(x)|, |f'(x)|  \le \Lambda_{\cramer}(x) + \Lambda'(x).\end{equation} where $\Lambda_{\cramer}, \Lambda'$ are defined to be zero on non-integer arguments. Let $\eta \in [X^{-1/10} , \frac{1}{2})$ be a parameter. Then 
\[ \sum_{\substack{\mf{p}_1,\mf{p}_2\\ N\mf{p}_1\mf{p}_2 \le X \\ N\mf{p}_i \ge \eta X^{1/2}} }\big|w(\mf{p}_1\mf{p}_2)\big|\ll \frac{X \log(1/\eta)}{(\log X)^2}.\]
\end{lemma}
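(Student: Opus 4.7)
My plan is to reduce the estimate to a weighted lattice-point count, then apply a dimension-$3$ upper-bound Selberg sieve together with Mertens summation for ideals.

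First, I would unpack the product-form definition of $w$: by the triangle inequality and the pointwise bound on $f, f'$,
\[ |w(\mf{p}_1\mf{p}_2)| \le \sum_{(z) = \mf{p}_1\mf{p}_2} (\Lambda_{\cramer}(x)+\Lambda'(x))(\Lambda_{\cramer}(y)+\Lambda'(y)), \]
where $z = x+y\sqrt{-n}$. Swapping the order of summation to sum over $z$ in place of $(\mf{p}_1,\mf{p}_2)$, with each ideal $\mf{a}=\mf{p}_1\mf{p}_2$ having $|\O_K^*|=O_n(1)$ generators and at most two ordered factorizations, and using that $f,f'$ are supported on $\Z$ to restrict to $z\in\Z[\sqrt{-n}]$, the lemma reduces to
\[ \sum_{(x,y)\in E}(\Lambda_{\cramer}(x)+\Lambda'(x))(\Lambda_{\cramer}(y)+\Lambda'(y)) \ll \frac{X\log(1/\eta)}{\log^2 X}, \]
where $E\subseteq\Z^2$ consists of pairs $(x,y)$ with $x^2+ny^2\le X$ and $(x+y\sqrt{-n})=\mf{p}_1\mf{p}_2$ for prime ideals with $N\mf{p}_i\ge\eta X^{1/2}$.

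Next, I would decompose $E=\bigsqcup_{\mf{p}_1}E_{\mf{p}_1}$ by the smaller prime ideal factor $\mf{p}_1$ (breaking ties arbitrarily), with $N\mf{p}_1\in[\eta X^{1/2},X^{1/2}]$. For each such $\mf{p}_1$, the set $E_{\mf{p}_1}$ lies in a sublattice $L_{\mf{p}_1}\subseteq\Z[\sqrt{-n}]$ of index $\asymp N\mf{p}_1$, and for $(x,y)\in E_{\mf{p}_1}$ the quotient $(x+y\sqrt{-n})/\mf{p}_1$ must be a prime ideal of $\O_K$. Expanding $(\Lambda_{\cramer}+\Lambda')(x)(\Lambda_{\cramer}+\Lambda')(y)$ into its four cross-terms, one applies a $3$-dimensional upper-bound Selberg sieve (of the type in \cref{appendixC}) on $L_{\mf{p}_1}$: three sieves of dimension $1$ each, one for the coprimality/primality condition implicit in the $\Lambda$-factor on $x$, one for $y$, and one for the primality of the quotient ideal (equivalently, $(x^2+ny^2)/N\mf{p}_1$ has no small rational prime factors, a sieve of dimension $1$ by Chebotarev in $K/\Q$). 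For each cross-term, the sieve savings on $x$ and $y$ exactly balance the pointwise sizes ($\log Q$ for $\Lambda_{\cramer}$ or $\log|x|$ for $\Lambda'$) of the corresponding weights, so that every cross-term contributes $\ll X/(N\mf{p}_1\log X)$. Summing over $\mf{p}_1$ and invoking the ideal Mertens formula
\[ \sum_{\mf{p}_1: N\mf{p}_1\in[\eta X^{1/2},X^{1/2}]}\frac{1}{N\mf{p}_1}\ll\frac{\log(1/\eta)}{\log X} \]
(a consequence of the prime ideal theorem in $K$) then yields the target bound $X\log(1/\eta)/\log^2 X$.

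The main obstacle will be the careful setup of the $3$-dimensional Selberg sieve on the sublattice $L_{\mf{p}_1}$: one must correctly compute the local densities for each sieve condition (particularly for the quotient sieve, where the local density at a rational prime $p$ depends on the splitting type of $p$ in $K$), and verify that the three sieves compose compatibly so as to produce savings that exactly match the pointwise sizes of $\Lambda_{\cramer}$ and $\Lambda'$. This is precisely the role of the dimension-$3$ upper-bound sieve flagged in the introduction.
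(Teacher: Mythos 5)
Your strategy is substantively the same as the paper's: fix the smaller prime ideal factor $\mf{p}_1$ (the paper calls it $\mf{p}$), run a combined upper-bound sieve of dimension~$3$ against the primality/coprimality of the three relevant quantities, and finish by summing $1/N\mf{p}_1$ over the dyadic range via a Mertens-type bound. Two places to be careful. First, you propose to sieve on the thin sublattice $L_{\mf{p}_1}\subseteq\Z[\sqrt{-n}]$ of index $\asymp N\mf{p}_1\asymp X^{1/2}$ directly in the $(x,y)$-coordinates; before any sieve inequality applies you must pass to a basis of $L_{\mf{p}_1}$ (equivalently, parametrise the cofactor $\mf{q}=(x+y\sqrt{-n})/\mf{p}_1$, which is exactly what the paper does via the ideal-class representatives $\mf{c}_g,\mf{c}_g'$ of \cref{lem:ideal-reps}), at which point $x$ and $y$ become integer linear forms $a_i u+b_i v$ in short variables $|u|,|v|\ll(X/N\mf{p}_1)^{1/2}$, the quotient condition becomes a quadratic form in $(u,v)$, and one must verify the non-degeneracy \cref{non-deg} so that the local densities behave. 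Second, ``three sieves of dimension~1 each'' should really be \emph{one} sieve with a composite $\Omega_p$ whose dimension varies with $p$ (the $\Lambda_{\cramer}$-conditions only sieve up to level $Q$, which is tiny compared to the level $W\asymp X^{1/100}$ used for the $\Lambda'$ and quotient conditions); the paper handles this mixed dimension by a direct Rankin/Markov argument on the $h(q)$-sum rather than quoting a fixed-dimension Selberg bound, and applies the large sieve of \cref{appendixC} (not Selberg's) precisely because it wants clean uniformity in the linear-form coefficients $a_i,b_i$. You also need to first discard prime ideals generated by rational primes, as the paper does, since those factorisations contribute only $O(X^{3/4+o(1)})$. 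None of this is fatal to your plan, but each point requires an explicit step in a full write-up.
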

\begin{remark} We take the domain of definition of $\Lambda'$ to be $\Q$, though it is supported on $\Z$.
 The key point of the lemma is that it represents a nontrivial saving over the trivial bound of $O(X/\log X)$.
\end{remark}
\begin{proof}
We first dispense with the contribution from either $\mf{p}_1$ or $\mf{p}_2$ being generated by a rational prime. This is straightforward, since the number of such ideals with norm $\le M$ is $\ll M^{1/2}$. Using the trivial bound $|w(\mf{p}_1 \mf{p}_2)| \le \log^2 X$, the total contribution from these terms is $\ll (X^{1/2}/\eta)^{3/2} \log^2 X$, which is acceptable by a large margin.

It therefore suffices to establish for each fixed $\mf{p}$ not generated by a rational prime, and with $\eta X^{1/2} \le N\mf{p} \le X^{1/2}$, that 
\begin{equation}\label{to-est-23}\sum_{\mf{q}: N\mf{q} \le X/N\mf{p}}|w(\mf{p}\mf{q})| \ll \frac{X}{(N\mf{p})\log X},\end{equation} where the sum is over prime ideals $\mf{q}$ not generated by a rational prime.
Indeed, this implies the result using 
\[
\sum_{\substack{\mf{p}_1,\mf{p}_2\\ N\mf{p}_1\mf{p}_2 \le X \\ N\mf{p}_i\ge \eta X^{1/2}}}  \big|w(\mf{p}_1\mf{p}_2)\big| \ll \sum_{\eta X^{1/2}\le N\mf{p}\le X^{1/2}}\sum_{q: N\mf{q} \le X/N\mf{p}}|w(\mf{p}\mf{q})|
\]
and the fact, which follows using one of the Mertens' estimates in number fields (see \cref{number-field-mertens}), that
\[  \sum_{ \eta X^{1/2}\le N\mf{p} \le X^{1/2}}(N\mf{p})^{-1} \\
\ll \frac{\log (1/\eta)}{\log X} ,\] where in these sums all ideals are prime but not generated by a rational prime.

We turn to the proof of \cref{to-est-23}. An important subtlety here is that, while $w = f \boxtimes_{\ell} f'$ is supported on principal ideals, the prime ideals $\mf{p}$ and $\mf{q}$ may not be principal. To deal with this, we use the representatives $\mf{c}_g, \mf{c}'_g$ for the ideal classes as described in \cref{lem:ideal-reps}. Suppose that $\mf{p} \in [g]$ for some $g \in \classgroup$. Since $w$ is supported on principal ideals, we may restrict attention to $\mf{q} \in [g^{-1}]$. Henceforth, write $\mf{c} = \mf{c}_g$ and $\mf{c}' = \mf{c}'_{g^{-1}}$. Then by \cref{lem:ideal-reps} we have $\mf{p} = (a + b\sqrt{-n}) \mf{c}$ for some $a,b \in \Z$ and $\mf{q}$ is always of the form $(x + y \sqrt{-n}) \mf{c}'$ for some $x, y \in \Z$. Also, $\mf{c} \mf{c'} = (\gamma)$ for some rational $\gamma$ of height $O_n(1)$. 

The generators of $\mf{p}\mf{q}$ are the numbers $u \gamma (a + b\sqrt{-n})(x + y\sqrt{-n})$, $u \in \O^*_K$. Writing the units $u$ as $\zeta_j + \zeta'_j \sqrt{-n}$, $j = 1,\dots, r$, where $\zeta_j, \zeta'_j \in \frac{1}{2n}\Z$, one may compute that these numbers are $a_{1,j}x + b_{1,j}y + (a_{2,j} x + b_{2,j} y)\sqrt{-n}$, where $a_{1,j} = \gamma( a\zeta_j - nb \zeta'_j)$,  $b_{1,j} = -\gamma n( a \zeta'_j + b \zeta_j)$, $a_{2,j} = \gamma (a\zeta'_j + b \zeta_j)$, and $b_{2,j} = \gamma (a\zeta_j-nb\zeta'_j)$. Note here that we have $a_{i,j}, b_{i,j} \in \frac{\gamma}{2n} \Z$ for all $i,j$. Thus from the assumption that $w$ has the product form \cref{product-form} and the bound \cref{w-cond} we have 
\begin{equation}\label{eq277} |w(\mf{p}\mf{q})| \le \sum_{j = 1}^r (\Lambda_{\cramer} + \Lambda') (a_{1,j}x + b_{1,j}y) (\Lambda_{\cramer} + \Lambda')( a_{2,j}x + b_{2,j}y ). \end{equation}
Here, $\Lambda_{\cramer}$ and $\Lambda'$ take the value zero if the arguments are not integers. Observe also that 
\begin{equation}\label{non-deg} \frac{1}{n}(a_{1,j}^2 n + b_{1,j}^2) = a_{2,j}^2 n + b_{2,j}^2 =  a_{1,j} b_{2,j} - a_{2,j} b_{1,j} = \gamma^2 (a^2 + nb^2) \neq 0.\end{equation}
Let $W :=  X^{1/100}$. Then, recalling the definition \cref{cramer-def}, we have the pointwise bound $\Lambda'(x) \ll \Lambda_{\cramer, W}(x)$ for $W < |x| \ll X^2$, and so from \cref{eq277} we have
\begin{equation}\label{eq277a} |w(\mf{p}\mf{q})| \ll \sum_{j = 1}^r \max_{Q_1, Q_2 \in \{Q, W\}} \Lambda_{\cramer, Q_1} (a_{1,j}x + b_{1,j}y) \Lambda_{\cramer, Q_2}( a_{2,j}x + b_{2,j}y ). \end{equation}
unless $|a_{i,j} x + b_{i,j} y| \le W$ for some $i,j$.

We turn to the task of proving \cref{to-est-23}.  The constraint $N\mf{q} \le X/N\mf{p}$ is contained in the constraint $|x|, |y| \ll (X/N\mf{p})^{1/2}$ (note here that $\mf{c}'$ is simply a fixed fractional ideal with norm $\asymp_n 1$). The contribution from $|a_{i,j} x + b_{i,j} y| \le W$ will be negligible. We sum \cref{eq277a} over all other $x,y$ with $|x|, |y| \ll (X/N\mf{p})^{1/2}$, ignoring any congruence conditions necessary for $\mf{q}$ to actually be an ideal in $\O_K$, as we may by the positivity of all the terms involved. In order for $\mf{q}$ to be a prime ideal, we must have $N\mf{q} = (x^2 + ny^2) N \mf{c}'$ equal to a rational prime or the square of a rational prime.

Thus, writing $\gamma' := N\mf{c}'$ and $a_1 = a_{1,j}$ etc, it is enough to prove that
\[  \sum_{|x|, |y| \ll (X/N\mf{p})^{1/2}} 1_{\operatorname{prime}}((x^2 + ny^2)\gamma')\Lambda_{\cramer, Q_1}(a_1 x +  b_1 y)\Lambda_{\cramer,Q_2}(a_2 x + b_2 y) \ll \frac{X }{(N\mf{p}) \log X},\] for any $Q_1, Q_2 \in \{Q, W\}$, where recall that $Q$ is given by \cref{q-choice} and $W = X^{1/100}$. (We omit the contribution from $(x^2 + ny^2)\gamma'$ the square of a prime, which is easily shown to be considerably smaller.) Since $1_{\operatorname{prime}}(t) \ll \frac{1}{\log X} \Lambda_{\cramer, W}(t)$ for $|t| > W$, it is enough to prove that 
\begin{equation}\label{eq177} \sum_{|x|, |y| \ll N} \Lambda_{\cramer,W}((x^2 + ny^2)\gamma')\Lambda_{\cramer, Q_1}(a_1 x +  b_1 y)\Lambda_{\cramer,Q_2}(a_2 x + b_2 y) \ll N^2 \end{equation} for any $Q_1, Q_2 \in \{Q, W\}$, where here we have omitted the negligible contribution from $|(x^2 + ny^2)\gamma'| \le W$, and now have written
\begin{equation}\label{N-def} N := (X/N\mf{p})^{1/2}. \end{equation} 
Note that since $\eta X^{1/2} \le N\mf{p} \le X^{1/2}$ we have $X^{1/4} \ll N \ll X^{1/4}\eta^{-1/2}$. Recalling that $\eta^{-1}\ll X^{1/10}$, it follows that $N \le X^{1/3}$, so certainly $W < N^{1/30}$.

The desired estimate \cref{eq177} is a standard type of sieve bound, but it is not easy to find a suitable reference because (a) the sieving ranges $W, Q_1, Q_2$ can be different, so the underlying sieve does not have a well-defined dimension; (b) we need to track uniformity in the parameters $a_i, b_i$; and (c) there are two variables $x,y$. We have found an approach based on an application of the large sieve to be the most convenient, though `small sieve' arguments could also be made to work. The result we need is \cref{large-sieve-higher}, which is a 2-dimensional variant of \cite[Equation (7.38)]{IK-book}. Whilst generalising this kind of result from 1 to 2 or more dimensions presents no difficulty, we do not know of a reference for exactly what we need, and so we provide some details in \cref{appendixC}.

Returning to the proof of \cref{eq177}, suppose that $Q_1 \le Q_2 \le W$ (the other case $Q_2 \le Q_1 \le W$ is essentially identical). For rational primes $p$, we define sets $\Omega_p$ as follows. Denote by $\mc{P}_*$ the set of primes dividing $2n$, $a^2 + nb^2$ or the numerator or denominator of $\gamma$ or $\gamma'$. Note that $\mc{P}_*$ is a set of $O_n(1)$ primes, since $(a^2 + nb^2)\gamma^2/\gamma' = N\mf{p}$ and $\mf{p}$ is a prime ideal. 

We will be using \cref{large-sieve-higher}; let us now define the sets of residues $\Omega_p$ with which we will apply that result. If $p \in \mc{P}_*$, set
\[ \Omega_p = \emptyset.\] If $p \notin \mc{P}_*$, define $\Omega_p$ as follows.
For $p \le Q_1$, define 
\[ \Omega_p := \{ (u,v) \in (\Z/p\Z)^2 : (u^2 + nv^2)(a_1 u + b_1 v)(a_2 u + b_2 v) \equiv 0 \md{p}\};\]
for $Q_1 < p \le Q_2$ (which may be an empty range) define
\[ \Omega_p := \{ (u,v) \in (\Z/p\Z)^2 : (u^2 + nv^2)(a_2 u + b_2 v) \equiv 0 \md{p}\};\]
for $Q_2 < p \le W$ (which may also be an empty range) define
\[ \Omega_p := \{ (u,v) \in \Z/p\Z : u^2 + nv^2 \equiv 0 \md{p}\},\] and define $\Omega_p = \emptyset$ for $p > W$. The purpose of making these definitions is that we have 
\begin{align}\nonumber
\Lambda_{\cramer,W}((x^2 + ny^2)\gamma') \Lambda_{\cramer, Q_1}  (a_1 x + & b_1 y)\Lambda_{\cramer,Q_2}(a_2 x + b_2 y)  \\ & \ll (\log Q_1)(\log Q_2)(\log W) \prod_{p} 1_{\Omega_p^c}(x,y) \label{cramer-sieve}
\end{align}
(recall here that $a_1,a_2,b_1, b_2 \in \frac{\gamma}{2n} \Z$).

In order that we may apply \cref{large-sieve-higher}, we compute the densities $\alpha_p = |\Omega_p|/p^2$ in the different ranges. Here, note that if $p \notin \mc{P}_*$ then, from \cref{non-deg}, we have
 
 \begin{equation}\label{bad-moduli} p \nmid  (a_1 b_2 - a_2 b_1)(na_1^2 + b_1^2)(na_2^2 + b_2^2) .\end{equation} From this, we may compute that 
 \begin{equation}\label{density} \alpha_p = \frac{1}{p}\big(d(p) + \legendre{-n}{p}\big) + O\bigg(\frac{1}{p^2}\bigg),\end{equation}
 where $d(p) = 3$ if $p \le Q_1$, $d(p) = 2$ if $Q_1 < p \le Q_2$, and $d(p) = 1$ if $Q_2 < p \le W$. (Here $d$ is essentially the dimension of the sieve on the corresponding ranges.) 
 In particular, for all $p$ we have $\alpha_p \le 4/p$.

Now we bound the sum $\sum_{q \le N^{1/2}}\mu^2(q) h(q)$ appearing in \cref{large-sieve-higher} from below. Recall that $h$ is the multiplicative function satisfying $h(p) = \alpha_p/(1 - \alpha_p)$. To do this we use essentially Rankin's trick, but it is instructive to phrase this in a probabilistic language. Define a random variable $Z := \prod_{p \le W} Z_p$, where the $Z_p$ are independent random variables with $Z_p = p$ with probability $\alpha_p$, and $Z_p = 1$ otherwise. Thus for squarefree $q$ we have
\[ \P(Z = q) = h(q) \prod_{p \le W} (1 - \alpha_p).\]
We give an upper bound for the quantity in \cref{large-sieve-est}. We have
\begin{equation}\label{zn-half} \sum_{q \le N^{1/2}}\mu^2(q) h(q) = \prod_{p \le W} (1 - \alpha_p)^{-1}\P \big( Z \le N^{1/2}  \big).\end{equation}
On the other hand,
\[
  \E \log Z = \sum_{p \le W} \E \log Z_p   = \sum_{p \le W} \alpha_p \log p  \le 4 \sum_{p \le W} \frac{\log p}{p} < \frac{\log N}{4}.
\]
By Markov's inequality it follows that $\P(Z > N^{1/2}) < \frac{1}{2}$, and so from \cref{zn-half}
\begin{equation}\label{sumhq} \sum_{q \le N^{1/2}}\mu^2(q) h(q) \ge \frac{1}{2} \prod_{p \le W} (1 - \alpha_p)^{-1}.\end{equation}
Now from \cref{density} we have (since the product of $1 + O(p^{-2})$ terms converges)
\begin{equation}\label{prod-without-D} \prod_{p \le W} (1 - \alpha_p) \asymp \prod_{p \le W}\bigg(1 - \frac{d(p)}{p}\bigg)  \prod_{p \le W} \bigg(1 - \frac{\legendre{-n}{p}}{p}\bigg).\end{equation} However we have
\[ \prod_{p \le W}\bigg(1 - \frac{\legendre{-n}{p}}{p}\bigg) \asymp 1 \]
since $L(1, \legendre{-n}{\, \cdot \,}) \ne 0$. (See, e.g, \cite[Theorem 4.11]{MV-book} for a full discussion of the convergence of the infinite product here, which is only conditionally convergent.) Substituting into \cref{prod-without-D} and recalling the definition of the dimension function $d(\cdot)$ gives
\[ \prod_{p \le W} (1 - \alpha_p) \asymp \prod_{p \le W}\bigg(1 - \frac{d(p)}{p}\bigg)  \asymp \frac{1}{(\log Q_1)(\log Q_2)(\log W)}. \]

From this, \cref{sumhq,cramer-sieve,large-sieve-higher}, we obtain the stated bound \cref{eq177}.
This concludes the proof of \cref{lem:2primes-upper-bound}.
\end{proof}

Combining \cref{lem:sieve-input,lem:2primes-upper-bound} and taking $A = 4$ immediately leads to the following, which is the only result from this section that we will need in what follows.

\begin{proposition}\label{prop:main-sec3}
Fix $C\ge 28$. Let $K = \Q(\sqrt{-n})$, and suppose that $w : \Ideals(\O_K) \rightarrow \C$ is in product form \cref{product-form} with some frequency $\ell \in \Z$, and suppose moreover that $f, f'$ in that definition satisfy the pointwise bound $|f(x)|, |f'(x)|  \le (\Lambda_{\cramer} + \Lambda')(x)$. Define $\tilde w(\mf{a}) := w(\mf{a}) 1_{N\mf{a} \le X}$, and suppose that $\tilde w$ satisfies Type I \textup{(}with savings $(\log X)^{-12}$\textup{)} at $X^{1/2}(\log X)^{-C}$ and Type II \textup{(}with savings $(\log X)^{-12}$\textup{)} between $[(\log X)^{C}, X^{3/8}]$. Then
\[\sum_{\mf{p} : N\mf{p} \le X} w(\mf{p}) \ll_{C} \frac{X (\log \log X)^{2}}{(\log X)^2} .\] 
\end{proposition}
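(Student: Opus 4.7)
The approach is to apply \cref{lem:sieve-input} to $\tilde w$ with parameter $A = 4$, then estimate the leftover semiprime sum using \cref{lem:2primes-upper-bound}. The choice $A = 4$ forces $B = 2A + 4 = 12$, exactly matching the Type I/II savings assumed in the proposition; since $C \ge 28 \ge 12$, the condition $C \ge B$ of \cref{lem:sieve-input} is satisfied. The pointwise hypothesis $|f(x)|, |f'(x)| \le (\Lambda_{\cramer} + \Lambda')(x/2n)$ combined with the elementary bounds $\Lambda_{\cramer}, \Lambda' \ll \log X$ on the relevant range gives $\|\tilde w\|_\infty \ll_n (\log X)^2$. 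Consequently \cref{lem:sieve-input} produces
\[
\sum_{\mf p} \tilde w(\mf p) \;+\; \sum_{\mf p_1, \mf p_2} \tilde w(\mf p_1 \mf p_2) \;\ll_C\; X(\log X)^{-4}\,\|\tilde w\|_\infty \;\ll\; X(\log X)^{-2},
\]
with the second sum restricted to pairs of prime ideals with $N\mf p_i \ge X^{1/2} e^{-(\log\log X)^2}$.

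Next I would separately bound this semiprime contribution using \cref{lem:2primes-upper-bound} with the choice $\eta := e^{-(\log\log X)^2}$. For $X$ sufficiently large in terms of $n$ this $\eta$ lies in $[X^{-1/10}, 1/2)$, and the constraint $N\mf p_i \ge \eta X^{1/2}$ matches exactly the semiprime range produced by \cref{lem:sieve-input}. \cref{lem:2primes-upper-bound} then gives
\[
\sum_{\mf p_1, \mf p_2} |\tilde w(\mf p_1 \mf p_2)| \;\ll\; \frac{X \log(1/\eta)}{(\log X)^2} \;=\; \frac{X(\log\log X)^2}{(\log X)^2}.
\]
Combining the two estimates via the triangle inequality, and observing that $(\log\log X)^2/(\log X)^2$ dominates $(\log X)^{-2}$, yields the claimed bound for $\sum_{\mf p : N\mf p \le X} w(\mf p)$.

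The one point meriting a brief check is the mismatch between the pointwise hypothesis in the proposition (involving the rescaling $x \mapsto x/2n$) and the one in \cref{lem:2primes-upper-bound}. Inspecting that proof, the hypothesis enters solely through the pointwise bound on $|w(\mf p \mf q)|$ by $(\Lambda_{\cramer}+\Lambda')$ evaluated at the linear forms $a_{i,j}x + b_{i,j} y$; absorbing the factor of $2n$ simply replaces the coefficients by $a_{i,j}/(2n)$ and $b_{i,j}/(2n)$, which preserves both the non-degeneracy relation \cref{non-deg} and the density estimate \cref{density} up to constants depending only on $n$, so the large-sieve argument runs unchanged. I regard this bookkeeping step as the only (mild) technical issue in the assembly; everything else is a direct chaining of the two lemmas.
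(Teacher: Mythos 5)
Your proposal is correct and follows essentially the same route as the paper, which simply states that the proposition follows by combining \cref{lem:sieve-input} (with $A=4$, hence $B=12$) and \cref{lem:2primes-upper-bound}. You have spelled out the details the paper leaves implicit: the choice $A=4$ to match the assumed $(\log X)^{-12}$ savings, the absorption of $\|\tilde w\|_\infty \ll (\log X)^2$ coming from the pointwise bound on $f,f'$, the choice $\eta = e^{-(\log\log X)^2}$ so that the semiprime ranges from the two lemmas line up and $\log(1/\eta) = (\log\log X)^2$, and the bookkeeping observation that the $x \mapsto x/2n$ rescaling in the pointwise hypothesis only rescales the coefficients $a_{i,j}, b_{i,j}$ appearing in the proof of \cref{lem:2primes-upper-bound} by the fixed constant $1/2n$, which leaves the nondegeneracy condition \cref{non-deg} and the density computation \cref{density} intact up to $O_n(1)$ factors. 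All of these checks are sound, so the assembly is correct.
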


\section{Gowers norms and main proof framework}\label{section4}

We are now in a position to give the overview of the proof of Theorem \ref{thm:main} in more detail, reducing it to the proof of three main results: \cref{prop:typei-to-gowers,prop:typeii-to-gowers,prop:main-term-comp}.

\subsection{Reduction to polar coordinates}\label{polar-sec}

We begin with an exercise in partial summation, reducing \cref{thm:main} to the following slightly more technical-looking statement, which removes the von Mangoldt weight on $x^2 + ny^2$ and replaces the rectangular cutoffs of \cref{thm:main} with a norm cutoff $x^2 + ny^2 \le X$ and an angular frequency; this may be thought of as conversion to a kind of polar coordinates. This is essential in order to bring in the prime ideal theorem (\cref{main-analytic-input}).

Recall from \cref{sec:notation} the definitions of $\Lambda'$ and of $\chi_{\infty}^{(\ell)}$.

\begin{proposition}\label{prop:main}
Suppose that $n \equiv 0$ or $4 \imod 6$. Let $\ell\in \Z$, $|\ell| \le (\log X)^{10}$ be a frequency. Then we have that 
\[\sum_{x, y \in \Z: x^2 + ny^2 \le X}\chi_{\infty}^{(\ell)}(x + y \sqrt{-n})\Lambda'(x)\Lambda'(y)1_{x^2 + ny^2 \operatorname{prime}} = \frac{\pi \kappa_n 1_{\ell = 0} X}{\sqrt{n} \log X} + O\bigg(\frac{X(\log\log X)^{2}}{(\log X)^2}\bigg)\]
where $\kappa_n$ is the constant in \cref{kappa-def}.
\end{proposition}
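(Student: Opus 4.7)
The plan is to express the left-hand side as a sum $\sum_{\mf{p}:N\mf{p}\le X} w(\mf{p})$ over prime ideals in $\O_K$, with $w$ in the product form of \cref{def-product-form}, then split the weight into a main piece and error pieces, and combine the explicit computation of the main piece with \cref{prop:main-sec3} applied to the errors. Concretely, I would set $w := \Lambda' \boxtimes_\ell \Lambda'$ (after whatever modest rescaling by $2n$ is required to match the hypotheses of \cref{prop:main-sec3}). Each pair $(x,y)\in\Z^2$ with $x^2+ny^2$ a rational prime $\le X$ corresponds to a principal prime ideal $(x+y\sqrt{-n})$ of norm $\le X$; upon summing over unit-associates the left-hand side equals $\sum_{\mf{p}:N\mf{p}\le X}w(\mf{p})$ up to contributions from inert and ramified primes that are negligible, since the generators $up$ of an inert ideal $(p)$ force $\Lambda'(0)=0$ in one of the slots.

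Next I would decompose $\Lambda' = \Lambda_{\cramer} + E$ with $E := \Lambda' - \Lambda_{\cramer}$, obtaining by bilinearity
\[
w = \Lambda_{\cramer}\boxtimes_\ell \Lambda_{\cramer} + \Lambda_{\cramer}\boxtimes_\ell E + E\boxtimes_\ell \Lambda_{\cramer} + E\boxtimes_\ell E.
\]
For the main term $\Lambda_{\cramer}\boxtimes_\ell \Lambda_{\cramer}$, I would invoke \cref{prop:main-term-comp}, whose proof (in \cref{section8}) is a classical analytic-number-theory computation using the prime ideal theorem for $K=\Q(\sqrt{-n})$ (or its Hecke-character refinement when $\ell \ne 0$); the answer works out to $\pi \kappa_n 1_{\ell=0} X/(\sqrt{n}\log X)$ with acceptable error, and the parity constraint $n\equiv 0, 4 \md 6$ is precisely what ensures none of the local factors defining $\kappa_n$ collapse.

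For each of the three error terms (all containing at least one factor of $E$), I would verify the hypotheses of \cref{prop:main-sec3}: the pointwise bound $|f(x)|,|f'(x)|\le(\Lambda_{\cramer}+\Lambda')(x/2n)$ follows from the triangle inequality after the rescaling, while the required Type~I estimate at level $X^{1/2}(\log X)^{-C}$ and Type~II estimate on $[(\log X)^C, X^{3/8}]$ (each with $(\log X)^{-12}$ savings) are reduced via \cref{prop:typei-to-gowers} and \cref{prop:typeii-to-gowers} to suitable $U^k$-Gowers-norm bounds on $E$. Such Gowers-norm bounds, saving arbitrary powers of $\log X$, are supplied by Leng \cite{Len23b}. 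Summing the three error contributions yields the claimed $O(X(\log\log X)^2/(\log X)^2)$.

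The hard part is not in this assembly but in establishing the Type~I and Type~II reductions themselves (the content of \cref{section6,section7}, relying on the concatenation machinery of Kuca and Kravitz--Kuca--Leng) and in the main-term evaluation (the content of \cref{section8}). Once these inputs are granted, \cref{prop:main} follows by the bilinear bookkeeping described above, with the $(\log\log X)^2/(\log X)^2$ loss arising from the upper-bound sieve for semiprimes built into \cref{prop:main-sec3}.
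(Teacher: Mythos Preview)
Your proposal is correct and follows essentially the same route as the paper. The only cosmetic difference is that you expand $\Lambda'\boxtimes_\ell\Lambda'$ bilinearly into four pieces, whereas the paper telescopes into three (taking $w_1=\Lambda'\boxtimes_\ell E$, $w_2=E\boxtimes_\ell\Lambda_{\cramer}$, $w_3=\Lambda_{\cramer}\boxtimes_\ell\Lambda_{\cramer}$); either way the error terms each carry a copy of $E=\Lambda'-\Lambda_{\cramer}$ and are dispatched by \cref{prop:main-sec3} via \cref{prop:typei-to-gowers,prop:typeii-to-gowers} together with the Gowers-norm input (packaged in the paper as \cref{prop:primes-gowers-uniform}).
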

\begin{remark}
The exponent $10$ is somewhat arbitrary, being chosen so that \cref{prop:main} suffices for the deduction of \cref{thm:main}. If desired it could be replaced by any fixed constant $A$, at the expense of requiring the $O()$ term to depend on $A$. 
\end{remark}

\begin{proof}[Proof of \cref{thm:main}, assuming \cref{prop:main}] 
Let $W \in C_0^{\infty}(\R^2)$ be as in the statement of \cref{thm:main}. Rescaling $N$ by a factor of $O_W(1)$ if necessary, we may assume that $W$ is supported on $B_{10}(0)$. It is convenient to write $\Vert W \Vert$ for the maximum $L^{\infty}$-norm of any partial derivative of $W$ of order at most $3$. Set $\eta := (\log N)^{-10}$ (say), and suppose that $\eta \le R \le 10$. Setting $X = (RN)^2$ in the conclusion of \cref{prop:main} gives
\begin{equation}\label{polar-conclu} 
(2\log N )\sum_{x, y \in \Z} \Lambda'(x) \Lambda'(y) 1_{x^2 + ny^2  \operatorname{prime}} \psi_{R,\ell}\bigg(\frac{x}{N}, \frac{y}{N}\bigg) = \kappa_n N^2 \bigg(\int_{\R^2} \psi_{R,\ell}\bigg) + O\bigg(\frac{N^2(\log\log N)^{2}}{\log N}\bigg),\end{equation}
where 
\[ \psi_{R,\ell} (r \cos 2\pi \theta, n^{-1/2}  r \sin 2\pi \theta) := 1_{r \le R} \cdot e(\ell \theta),\] and this is valid for $|\ell| \le (\log N)^{10}$. Note that here we have replaced the factor $\log RN$ which arises in \cref{prop:main} by $\log N$; the error in doing this can be absorbed into the $O( \cdot)$ term on the RHS of \cref{polar-conclu}. 

For $x^2 + ny^2 \ge \eta N^2$, we have
\begin{equation}\label{W-decomp-1} W\bigg(\frac{x}{N}, \frac{y}{N}\bigg) = -\int^{10}_{\eta} \sum_{\ell \in \Z} c(R,\ell) \psi_{R,\ell}\bigg(\frac{x}{N}, \frac{y}{N}\bigg) dR, \end{equation}
where the $c(\cdot, \cdot )$ are defined in terms of the Fourier expansion
\[ \frac{\partial}{\partial R} W(R \cos 2\pi \theta, n^{1/2} R \sin 2\pi \theta) = \sum_{\ell \in \Z} c(R,\ell) e(\ell \theta).\]
(To verify this, substitute $\frac{x}{N} = r \cos 2\pi \theta$, $\frac{y}{N} = n^{-1/2} r \sin 2\pi \theta$.)
We claim that 
\begin{equation}\label{large-m} 
\int^{10}_{\eta} \sum_{|\ell| > (\log N)^{10}} |c(R,\ell)| dR \ll (\log N)^{-10} \Vert W \Vert
\end{equation}
and that 
\begin{equation}\label{ell-1-c}
\int^{10}_{\eta} \sum_{\ell \in \Z} |c(R,\ell)| dR \ll  \Vert W \Vert .   
\end{equation}
To prove these statements, observe that
\begin{equation}\label{crm-bd} |c(R,\ell)| \ll |\ell|^{-2}\int^{1}_0 \Big| \frac{\partial^2}{\partial \theta^2}\frac{\partial}{\partial R} W(R \cos 2\pi \theta, n^{-1/2} R \sin 2\pi \theta)\Big| d\theta \ll |\ell|^{-2} \Vert  W \Vert \end{equation} for $\ell \neq 0$. Here, the first bound follows by integrating the definition of Fourier transform twice by parts, and the second from the chain rule.
We also have
\begin{equation}\label{cr0-bd} |c(R,0)| \ll \Vert W \Vert .\end{equation}
Using \cref{crm-bd,cr0-bd} immediately leads to \cref{large-m,ell-1-c}.

From \cref{W-decomp-1,large-m} it follows that
\begin{equation}\label{W-decomp-2} W\Big(\frac{x}{N}, \frac{y}{N}\Big) = -\int^{10}_{\eta} \sum_{|\ell| \le (\log N)^{10}} c(R,\ell) \psi_{R,\ell}\Big(\frac{x}{N}, \frac{y}{N}\Big) dR + O((\log N)^{-10} \Vert  W \Vert). \end{equation}
when $x^2 + ny^2 \ge \eta N^2$. Now consider
\begin{equation}\label{S-def} S := (2\log N)\sum_{x, y \in \Z} \Lambda'(x) \Lambda'(y) 1_{x^2 + ny^2  \operatorname{prime}} W\Big(\frac{x}{N}, \frac{y}{N}\Big).\end{equation}
The contribution to $S$ from $x^2 + ny^2 < \eta N^2$ is $\ll N^2(\log N)^{-9}\Vert W \Vert_{\infty}$.
By \cref{W-decomp-2}, the sum over the remaining range is
\begin{align*} -\int^{10}_{\eta} dR \sum_{|\ell| \le (\log N)^{10}} c(R,\ell) \sum_{\substack{x,y \in \Z \\ x^2 + ny^2 \ge \eta N^2}} \Lambda'(x) \Lambda'(y)1_{x^2 + ny^2  \operatorname{prime}} & \psi_{R,\ell}\Big(\frac{x}{N}, \frac{y}{N}\Big) \\ & + O(N^2 (\log N)^{-9} \Vert W \Vert). \end{align*}
We now remove the condition $x^2 + ny^2 \ge \eta N^2$ from the sums; the error in doing so is (crudely)
\[ \ll \eta N^2 (\log N)^2 \int^{10}_{\eta} dR\sum_{\ell \in \Z} |c(R,\ell)| \ll N^2 (\log N)^{-8} \Vert W \Vert, \] by \cref{ell-1-c}. 
Therefore
\begin{align*} S = - (2 \log N)\int^{10}_{\eta} dR \sum_{|\ell| \le (\log N)^{10}} c(R,\ell) \sum_{x, y \in \Z}\Lambda'(x) \Lambda'(y)1_{x^2 + ny^2  \operatorname{prime}}&  \psi_{R,\ell}\Big(\frac{x}{N}, \frac{y}{N}\Big)\\ & + O(N^2 (\log N)^{-8} \Vert W \Vert). \end{align*}
Applying \cref{polar-conclu} and then \cref{ell-1-c} gives
\begin{align*} S & = - \int^{10}_{\eta} dR \sum_{|\ell| \le (\log N)^{10}} c(R,\ell) \bigg(\kappa_n N^2 \int \psi_{R,\ell} + O\bigg( \frac{N^2(\log\log N)^{2}}{\log N} \bigg) \bigg) \\ & \qquad\qquad\qquad\qquad\qquad\qquad\qquad\qquad\qquad\qquad\qquad\qquad + O(N^2 (\log N)^{-8} \Vert W \Vert) \\ &  = - \int^{10}_{\eta} dR \sum_{|\ell| \le (\log N)^{10}} c(R,\ell) \bigg(\kappa_n N^2 \int_{\R^2} \psi_{R,\ell}  \bigg) + O\bigg(\frac{N^2(\log\log N)^{2}}{\log N} \Vert W \Vert\bigg) . \end{align*}
Finally, comparing this with the integral of \cref{W-decomp-2} we see that
\begin{equation}\label{main-preliminary}  S = \kappa_n N^2\bigg( \int_{\R^2} W \bigg) + O\bigg(\frac{N^2(\log\log N)^{2}}{\log N} \Vert  W \Vert\bigg)  .\end{equation}

Recall that $S$ is given by \cref{S-def}.
To deduce \cref{thm:main}, we add back in the contribution of prime powers to the two $\Lambda'$ terms and replace $(2\log N) 1_{x^2 + ny^2  \operatorname{prime}}$ with $\Lambda(x^2 + ny^2)$. The former replacement is trivial. The error in making the latter replacement is bounded by 
\[ \ll (\log N)^2\!\!\! \sum_{\substack{ x^2 + ny^2 \le N^2 \\ x, y, x^2 + ny^2 \operatorname{prime}}} \!\!\! W\Big(\frac{x}{N}, \frac{y}{N}\Big) \log \bigg( \frac{x^2 + ny^2}{N^2} \bigg) = -(\log N)^2\int^{N^2}_1 \frac{dt}{t} \!\!\! \sum_{\substack{ x^2 + ny^2 \le t  \\ x, y, x^2 + ny^2 \operatorname{prime}}} \!\!\! W\Big(\frac{x}{N}, \frac{y}{N}\Big). \]
The inner sum on the right is $\ll t(\log N)^{-3} \Vert W \Vert$ by a standard upper bound sieve. (For instance, one could use \cref{large-sieve-higher} with $k = 2$ and analyse as in \cref{sumhq}, noting that in this case one has $\alpha_p = \frac{4}{p} + O(\frac{1}{p^2})$ for $p$ with $\legendre{-n}{p} = 1$ and $\alpha_p = \frac{2}{p} + O(\frac{1}{p^2})$ for all $p$ with $\legendre{-n}{p} = -1$.) Thus we see that the error in passing from \cref{main-preliminary} to \cref{thm:main} is $O(N^2 \Vert W \Vert/\log N)$, which may be absorbed into the error term in \cref{thm:main}.
\end{proof}

\begin{remark}
One of the main ingredients of our argument is the prime ideal theorem \cref{main-analytic-input}. For ease of reference we have stated that result with a sharp cutoff $N \mf{a} \le X$. If one used a smooth cutoff instead then one could probably improve the dependence on the derivatives of $W$ in the above analysis.
\end{remark}

\subsection{Gowers norms}\label{gowers-norms-intro}

The Gowers norms, introduced by Gowers \cite{gowers-szemeredi}, are a fundamental notion in additive combinatorics. A discussion of their basic properties may be found in many places; see for instance the introductions to \cite{LSS,TT21}. We briefly recall the definitions. Let $f : \Z \rightarrow \C$ be a function with finite support. We first define, for $h \in \Q$, the difference operators
\begin{equation}\label{difference}\Delta_h f(x) = f(x)\overline{f(x+h)}.\end{equation} (It is convenient for technical reasons later on to allow $h \in \Q$, but of course $\Delta_h f$ will be zero unless $h \in \Z$). Now let $k \ge 2$ be a parameter. Then we first define
\begin{equation}\label{gowers-def} \Vert f \Vert^{2^k}_{U^k(\Z)} := \sum_{x, h_1, \dots, h_k \in \Z} \Delta_{h_1} \cdots\Delta_{h_k} f(x).\end{equation}
The Gowers $U^k(\Z)$-norm $\Vert f \Vert_{U^k(\Z)}$ is then defined to be the unique non-negative $2^k$-th root of this quantity. Given a real number $N \ge 1$, we then define 
\begin{equation}\label{gowers-norm-def-2} \Vert f \Vert_{U^k[N]} := \Vert f \Vert_{U^k(\Z)}/\Vert 1_{[N]} \Vert_{U^k(\Z)}.\end{equation}
This definition will be relevant when $f$ is supported on $[\pm O(N)]$. It is easy to see that this is completely equivalent to the definition given in, for instance, \cite[Definition 1.1]{LSS}, which is specifically geared to functions $f$ which are supported on $[N]$. The normalising factor $\Vert 1_{[N]} \Vert_{U^k(\Z)}$ has the effect that the maximum value of $\Vert f \Vert_{U^k[N]}$ is exactly 1 if $f$ is supported on $[N]$. For us, precise constants will never be relevant and it will suffice to know that 
\begin{equation}\label{normalisation-gowers-x} \Vert 1_{[N]} \Vert_{U^k(\Z)}^{2^k} \asymp_k N^{k+1},\end{equation} which is clear from the definition. That the Gowers norms are well-defined and are norms are well-known; proofs of this statement may be found in many places, for instance \cite[Appendix B.5]{green-tao-linear}. We have the nesting property
\begin{equation}\label{gowers-nesting} \Vert f \Vert_{U^2[N]} \ll \Vert f \Vert_{U^3[N]} \ll \cdots \ll \Vert f \Vert_{U^k[N]} \ll \cdots\end{equation}
Further properties of the Gowers norms relevant to our paper may be found in \cref{appendixA}. As a final remark, we note that all Gowers norms in the main part of our paper are at scale $X^{1/2}$, where $X$ is our main global parameter.

\subsection{Type I and II sums with product weights and Gowers norms}

In this section we state our two main results linking Gowers norms and Type I/II estimates for weight functions in product form. We begin with the Type I statement.

\begin{proposition}\label{prop:typei-to-gowers}
There is an absolute constant $C_{\ref{prop:typei-to-gowers}}$ with the following property. Let $\delta \in (0,1)$ be a parameter. Let $w : \Ideals(\O_K) \rightarrow \C$ be a weight function in product form \cref{product-form} with frequency $\ell$. Suppose that $f, f'$ are $1$-bounded and supported on $[\pm 2n X^{1/2}]$. Suppose $L \le \delta^{C_{\ref{prop:typei-to-gowers}}} X^{1/2}$ and that 
\begin{equation}\label{large-typei-assump} \sum_{\mf{d} : N\mf{d} \sim L} \Big| \sum_{\substack{\mf{d} | \mf{a} \\ N \mf{a} \le X}}  w(\mf{a})\Big| \ge \delta X.\end{equation}
Then \[ \Vert f \Vert_{U^3[X^{1/2}]}, \Vert f' \Vert_{U^3[X^{1/2}]} \gg \Big(\frac{\delta}{(|\ell|+1) \log X} \Big)^{O(1)} .\]
\end{proposition}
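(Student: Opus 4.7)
The plan is to establish the corresponding $U^2[X^{1/2}]$-lower bounds for $f$ and $f'$ individually, and then appeal to the nesting inequality \cref{gowers-nesting} to obtain the $U^3$-bounds stated. The argument has three conceptual steps: unfolding the Type I sum to a lattice sum, extracting a large Fourier-type coefficient via duality and Fourier expansion of the lattice indicators, and converting the resulting $2$D correlation into individual $1$D Fourier coefficients of $f$ and $f'$ via an angular decoupling of the Hecke phase.

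\textbf{Step 1 (Unfolding).} By the product form \cref{product-form}, for each ideal $\mf{d}$,
\[
\sum_{\mf{d}\mid\mf{a},\,N\mf{a}\le X} w(\mf{a}) \;=\; \sum_{\substack{(x,y)\in\Lambda_{\mf{d}}\\ x^2+ny^2\le X}} f(x)f'(y)\,\chi_\infty^{(\ell)}(x+y\sqrt{-n}),
\]
where $\Lambda_{\mf{d}}:=\{(x,y)\in\Z^2: x+y\sqrt{-n}\in\mf{d}\}$ is a sublattice of $\Z^2$ of index $\asymp_n N\mf{d}$. For non-principal ideal classes, I would parametrize $\mf{d}$ by elements of the principal lattices $\Pi_g$ furnished by \cref{lem:ideal-reps}, writing $\mf{d}=(\beta)\mf{c}_g^{-1}$ for $\beta\in\mf{c}_g$, and sum over $\beta$ in an appropriate fundamental domain.

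\textbf{Step 2 (Dualization and Fourier expansion).} Introducing phases $\eta_{\mf{d}}$ with $|\eta_{\mf{d}}|=1$ removes the outer absolute values in \cref{large-typei-assump}; setting $\tilde F(x,y):=f(x)f'(y)\chi_\infty^{(\ell)}(x+y\sqrt{-n})\mathbf{1}_{x^2+ny^2\le X}$ and applying
\[
\mathbf{1}_{\Lambda_{\mf{d}}}(x,y)=\frac{1}{[\Z^2:\Lambda_{\mf{d}}]}\sum_{\xi\in\Lambda_{\mf{d}}^\perp/\Z^2} e(\xi\cdot(x,y)),
\]
the hypothesis becomes $\sum_{\xi\in(\Q/\Z)^2} c(\xi)\,\widehat{\tilde F}(-\xi)\gg\delta X$, where the dual weights $c(\xi):=\sum_{\mf{d}:\xi\in\Lambda_{\mf{d}}^\perp}\eta_{\mf{d}}/N\mf{d}$ are supported on $\xi$ of denominator bounded in terms of $L$. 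By dyadic pigeonhole in $|c(\xi)|$, combined with the moment bound $\sum_\xi|c(\xi)|^2\ll(\log X)^{O(1)}$ (derived from standard divisor-function estimates on ideals of $\O_K$), one locates a specific character $\xi_0=(\alpha,\beta)\in(\Q/\Z)^2$ at which $|\widehat{\tilde F}(\xi_0)|\gg(\delta/(\log X)^{O(1)})\cdot X$.

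\textbf{Step 3 (Phase decoupling and conclusion).} The character $e(\xi_0\cdot(x,y))=e(\alpha x)\cdot e(\beta y)$ factors; were it not for the Hecke phase $\chi_\infty^{(\ell)}$ the 2D Fourier coefficient would split immediately as $\hat f(\alpha)\hat{f'}(\beta)$. To handle the phase, I partition the disk $|z|^2\le X$ into $O(|\ell|+1)$ angular sectors on each of which $\chi_\infty^{(\ell)}(z)$ is approximately constant, pigeonhole to isolate a dominant sector, and on that sector absorb the constant phase into a unit factor. A smooth Fourier decomposition of the sector indicator (at an additional $(\log X)^{O(1)}$ cost) reduces the 2D correlation to a product of 1D Fourier sums, giving
\[
|\hat f(\alpha')|,\;|\hat{f'}(\beta')|\;\gg\;\Big(\frac{\delta}{(|\ell|+1)\log X}\Big)^{O(1)}X^{1/2}
\]
for some $(\alpha',\beta')$. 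Since $\hat f$ is a trigonometric polynomial of degree $\asymp X^{1/2}$, Bernstein's inequality shows that a Fourier coefficient of size $A$ persists on an arc of length $\asymp A/X$, whence $\|f\|_{U^2(\Z)}^4=\int_{\T}|\hat f|^4\gg A^5/X$; combined with the normalization \cref{normalisation-gowers-x}, this gives the required lower bound on $\|f\|_{U^2[X^{1/2}]}$, and symmetrically for $f'$. Nesting $\|\cdot\|_{U^2}\le\|\cdot\|_{U^3}$ via \cref{gowers-nesting} concludes the proof.

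\textbf{Main obstacle.} The principal technical difficulty is decoupling the Hecke phase $\chi_\infty^{(\ell)}(x+y\sqrt{-n})$ from the tensor-product structure $f(x)f'(y)$: for $\ell\ne 0$ the phase couples $x$ and $y$ nontrivially and prevents the 2D Fourier coefficient from factoring. The angular-sector localization is the mechanism to circumvent this coupling and is the source of the $(|\ell|+1)^{O(1)}$ factor in the final bound. The remaining $(\log X)^{O(1)}$ losses come from dyadic pigeonholes, smoothing of the sector indicators, and divisor-sum moment estimates controlling the weights $c(\xi)$; these losses in turn dictate the quantitative constant $C_{\ref{prop:typei-to-gowers}}$ ensuring the hypothesis $L\le\delta^{C_{\ref{prop:typei-to-gowers}}}X^{1/2}$ is strong enough to make the extraction nontrivial.
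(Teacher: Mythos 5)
The proposal takes a genuinely different route from the paper, but it has a serious gap at its central step. The paper does not try to obtain $U^2$ control of $f$ and $f'$; instead it applies Cauchy--Schwarz in the $z_2$ variable (after Fourier-expanding the angular/radial cutoff via \cref{annulus-smooth}), which produces differenced functions $\Delta_{t}f$, and then uses a divisor moment bound together with \cref{lem:U^2-control} to concatenate the resulting $U^2$ bounds into a genuine $U^3[X^{1/2}]$ lower bound. Your plan instead tries to extract a single large $2$D Fourier coefficient $\widehat{\tilde F}(\xi_0)$ and factor it.

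The gap is in Step 2. Cauchy--Schwarz with the moment bound $\sum_\xi|c(\xi)|^2\ll\log X$ only controls $\sum_\xi|\widehat{\tilde F}(\xi)|^2$, not its maximum, and the number of frequencies $\xi$ involved is $\asymp L^2$. A dyadic pigeonhole in $|c(\xi)|$ gives at best $\max_\xi|\widehat{\tilde F}(\xi)|\gg\delta X/L$ (from $\sum_\xi|c(\xi)|\asymp L$), which is the wrong order of magnitude once $L$ grows. In the regime $L\asymp X^{1/2}(\log X)^{-C}$ --- precisely the Type I range the Duke--Friedlander--Iwaniec sieve actually needs --- this loses a factor of order $X^{1/2}$, and even the optimistic factorization $\widehat{\tilde F}(\xi_0)=\hat f(\alpha_0)\hat{f'}(\beta_0)$ would then yield $|\hat f(\alpha_0)|\gg\delta X^{1/4}$, far too weak for a $U^2[X^{1/2}]$ lower bound, which requires $\gg\delta^{O(1)}X^{1/2}$. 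To make a $U^2$ conclusion work one needs to exploit the multiplicative structure of the ideals $\mf{d}$ (the remark after the proposition attributes such an argument, using the multiplicative large sieve, to Merikoski); a purely additive dyadic pigeonhole is not enough.

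There is a secondary gap in Step 3: even after localizing the Hecke phase $\chi^{(\ell)}_\infty$ on angular sectors, $\widehat{\tilde F}$ still carries the coupling indicator $\mathbf{1}_{x^2+ny^2\le X}$, whose sharp curved boundary is not removable at only a $(\log X)^{O(1)}$ cost in the $L^1$ Fourier sense; the paper avoids this issue by smoothing the cutoff \emph{before} decoupling and by applying the Fourier expansion inside the $z_1$ sum rather than to a global $2$D Fourier coefficient. Finally, although the nesting $\|\cdot\|_{U^2}\ll\|\cdot\|_{U^3}$ is correct, it does not rescue the argument: the issue is not the passage from $U^2$ to $U^3$, but that the claimed $U^2$ bound is not established in the regime of large $L$.
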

\begin{remark}
Jori Merikoski has indicated to us a rather different argument using the multiplicative large sieve, which allows one to show that in fact $f, f'$ correlate with linear phases $e(\theta x)$ with $\theta$ close to a rational with denominator $\ll (\log X/\delta)^{O(1)}$. We have opted to retain our original argument so as to give a unified treatment of the Type I and II sums. With some additional argument it may also be used to give the aforementioned stronger conclusion, as we will discuss in \cite{GS24}. 
\end{remark}
Now we give the Type II statement.

\begin{proposition}\label{prop:typeii-to-gowers}
There exists an absolute constant $C_{\ref{prop:typeii-to-gowers}}$ and a \textup{(}large\textup{)} positive integer $k$ such that the following holds. Let $w : \Ideals(\O_K) \rightarrow \C$ be a weight function in product form \cref{product-form} with frequency $\ell$, and suppose that $f, f'$ are $1$-bounded and supported on $[\pm 2n X^{1/2}]$. Suppose that $\delta^{-C_{\ref{prop:typeii-to-gowers}}} \ll L \le X^{3/8}$, and suppose that for some $1$-bounded weights $\alpha, \beta : \Ideals(\O_K) \rightarrow \C$ we have
\begin{equation}\label{typeii-assump-cont} \Big| \sum_{\substack{\mf{a}, \mf{b}: N\mf{a} \sim  L \\ N \mf{a}\mf{b} \le X}} \alpha_{\mf{a}} \beta_{\mf{b}} w(\mf{a} \mf{b}) \Big| \ge \delta X.\end{equation}
Then we have
\[ \Vert f \Vert_{U^k[X^{1/2}]}, \Vert f' \Vert_{U^k[X^{1/2}]} \ge \Big(\frac{\delta}{(|\ell|+1) \log X} \Big)^{O(1)} .\] 
\end{proposition}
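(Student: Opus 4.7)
The plan is to reduce the Type II hypothesis to a multilinear Gowers-type average involving only $f$ and $f'$, and then to invoke the concatenation theorem of Kuca and Kravitz--Kuca--Leng (cf.\ \cref{section5}) to convert this control into a lower bound on the standard $U^k$-norm of $f$ (and symmetrically of $f'$) for some large absolute $k$. The opening step is a double application of Cauchy--Schwarz to \cref{typeii-assump-cont}, first in $\mf{b}$ and then in $\mf{a}$, to eliminate the unknown coefficients $\alpha_{\mf{a}}, \beta_{\mf{b}}$; this produces the 4-fold lower bound
\[
\sum_{\mf{a}, \mf{a}', \mf{b}, \mf{b}'} w(\mf{a}\mf{b}) \,\overline{w(\mf{a}'\mf{b})}\, \overline{w(\mf{a}\mf{b}')}\, w(\mf{a}'\mf{b}') \;\gg\; \delta^{O(1)} X^{3}/L^{2},
\]
with the natural dyadic constraints $N\mf{a}, N\mf{a}' \sim L$ and $N\mf{a}\mf{b}, N\mf{a}\mf{b}' \ll X$. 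Since $w = f \boxtimes_{\ell} f'$ is supported on principal ideals, only quadruples with $\mf{a}, \mf{a}' \in [g]$ and $\mf{b}, \mf{b}' \in [g^{-1}]$ contribute, for some $g \in \classgroup$. By \cref{lem:ideal-reps} these ideals can be parametrized by $a, a' \in \Pi_g$ and $b, b' \in \Pi'_g$, with generators of the form $\gamma_g a b$, etc. Crucially, the four angular characters $\chi_\infty^{(\ell)}$ cancel pairwise by multiplicativity, so the sum reduces to a pure product of $f$- and $f'$-factors, evaluated at integer bilinear forms in the coordinates $(\alpha_1, \alpha_2)$ of $a$ and $(\beta_1, \beta_2)$ of $b$, such as $\Re(ab) = \alpha_1 \beta_1 - n \alpha_2 \beta_2$ and $\Im(ab)/\sqrt{n} = \alpha_1 \beta_2 + \alpha_2 \beta_1$.

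Further Cauchy--Schwarz applications, carried out in the coordinates $\alpha_i, \beta_i$ with the others frozen, build up a Gowers--Peluse average of $f$ (and of $f'$) of increasing degree, in which the shifts are of the form $\Re((a - a')b)$ -- linear in the frozen variable. The concatenation theorem of Kuca--Kravitz--Leng, as recorded in \cref{section5}, is precisely the tool to upgrade such control along bilinear families of shifts to control along ordinary arithmetic differences; iterating this, and exploiting the symmetry between $f$ and $f'$ arising from swapping real and imaginary parts, yields
\[
\Vert f \Vert_{U^k[X^{1/2}]},\ \Vert f' \Vert_{U^k[X^{1/2}]} \;\gg\; \Big(\frac{\delta}{(|\ell|+1)\log X}\Big)^{O(1)}
\]
for some absolute $k$, exactly as required.

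The main obstacle is this last step. The shifts coming out of Cauchy--Schwarz live on a $2$-dimensional sublattice of products $a \cdot b \in \Z[\sqrt{-n}]$, not on $\Z$, and their projections to real/imaginary parts are bilinear rather than linear expressions in the frozen variable. Organizing them into the ``rectangular'' shape required by the Kuca--Kravitz--Leng concatenation theorem, while keeping all losses polynomial in $\delta$ and $\log X$, will require care: one must clear the rational denominators coming from the fractional ideals $\mf{c}_g, \mf{c}'_g$; impose the dyadic constraint $N\mf{a} \sim L$ (a quadratic condition $\alpha_1^{2} + n\alpha_2^{2} \sim L$) smoothly, so that Cauchy--Schwarz does not spread the support too much; and order the successive Cauchy--Schwarz steps so that the final expression is recognizable as a Gowers--Peluse box norm in the sense handled by the concatenation machinery. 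The upper bound $L \le X^{3/8}$, together with the lower bound $L \gg \delta^{-C_{\ref{prop:typeii-to-gowers}}}$, is expected to be exactly what is needed to keep all the diagonal contributions under control during this bookkeeping.
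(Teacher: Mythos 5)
Your proposal captures the broad strategy — Cauchy–Schwarz to remove $\alpha_{\mf a}, \beta_{\mf b}$, parametrise via ideal class representatives, build a Gowers--Peluse average, then concatenate — and the observation that the four copies of $\chi^{(\ell)}_\infty$ cancel completely after a double Cauchy–Schwarz is a genuine small simplification (the paper Cauchy--Schwarzes only once, keeps the residual $\chi^{(\ell)}_\infty(a/a')$, and must carry the smoothing of \cref{annulus-smooth} through). But the cancellation is not as helpful as you suggest: the hard cutoffs $1_{N\mf{a}\mf{b}\le X}$ do not cancel under symmetrisation, and the Fourier-decoupling machinery of \cref{annulus-smooth} is still needed to separate the variables. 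More seriously, your sketch of the core step — going from the multilinear average to a Gowers--Peluse norm of $f$ (or $f'$) alone — is too vague to stand. In the paper this is done by specific substitutions $x\mapsto x'+b_jh$, $y\mapsto y'-a_jh$ followed by Cauchy--Schwarz in the new variable $h\in[\pm H]$ with $H = \delta^{C_1}X^{1/2}/L$, which isolates one function $f_j$ per step and produces shifts that are \emph{scalar} multiples $(a_jb_1-a_1b_j)h$; a further input, \cref{lem:U^2-control}, is then required to pass from the resulting two-dimensional lattice sum to genuine $U^2$-control. Your formulation ``Cauchy–Schwarz in the coordinates $\alpha_i,\beta_i$, shifts of the form $\Re((a-a')b)$'' does not describe a sequence of steps producing shifts of the form that the concatenation theorem can digest.

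Most importantly, the step you label ``the main obstacle'' is left entirely open, and it is precisely the step where the nontrivial analytic input lies. \cref{lem:concat-main} only becomes useful if one also shows that the relevant convolutions $\conv_{i=1}^{m}\mu_{j,q_i}$ of shift measures spread out to be roughly uniform on $[\pm O(X^{1/2})]$, so that \cref{cor53x} can replace the Gowers--Peluse norm by a genuine $U^{2k}$-norm. That is the content of \cref{lem:diophantine-upp}, and its proof — $\ell^2$-spreading via divisor-sum bounds of the type $\#\{aa'+nbb'=t\}\ll L(\log L)^3$ and \cref{lem:divisor-moment} — is where the hypothesis $L\le X^{3/8}$ is actually used (taking $m=8$ there). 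Your proposal does not identify this lemma, does not state the spreading needed, and does not explain why $L\le X^{3/8}$ suffices; remarking that this range ``is expected to be exactly what is needed'' is not a proof of the step. Finally, the ``symmetry between $f$ and $f'$ by swapping real and imaginary parts'' is not how the two conclusions are obtained in the paper: one simply chooses a different $f_i$ to keep undifferenced in the Cauchy--Schwarz chain, and the real/imaginary roles are not interchangeable because of the asymmetric factor $n$ in $\Re(z_1z_2)=ax-nby$.
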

\begin{remarks}
A trivial modification to the proof gives a similar statement in the range $L \le X^{1/2 - \kappa}$, with $k = O_{\kappa}(1)$. The value of $k$ is extremely large; for further comments see \cref{rmk:height}.
\end{remarks}

\subsection{Approximations to \texorpdfstring{$\Lambda'$}{} in Gowers norm}

Define $\Lambda_{\cramer}$ as in \cref{cramer-def}. Extend the domain of definition to $\Q$, with the convention that $\Lambda_{\cramer}(t) = 0$ if $t \notin \Z$. A crucial ingredient in our arguments is the fact that these functions are good approximants to $\Lambda'$ in the Gowers norms. 

\begin{proposition}\label{prop:primes-gowers-uniform}
Fix $A\ge 1$ and $k\ge 2$.  Then we have that 
\[\snorm{(\Lambda' - \Lambda_{\cramer})1_{[X^{1/2}]}}_{U^k[X^{1/2}]}^{2^k}\ll_{A} (\log X)^{-A}.\]
\end{proposition}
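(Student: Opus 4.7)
The plan is to deduce this from the Gowers-uniformity theorem for the (untruncated) von Mangoldt function due to Leng \cite{Len23b}, together with a trivial bound for the prime-power contribution. Recall from \cref{q-choice} that $Q = \exp(\log^{1/10}(X^{1/2}))$; as the paper notes, this peculiar choice was made precisely so that, with $N = X^{1/2}$, the parameter $Q$ matches the setup of \cite[Section 5]{Len23b}. Leng's theorem then asserts that for every fixed $A$ and $k \ge 2$,
\[
\snorm{(\Lambda - \Lambda_{\cramer})1_{[X^{1/2}]}}_{U^k[X^{1/2}]}^{2^k} \ll_{A,k} (\log X)^{-A},
\]
which is exactly the conclusion we want, save for the replacement of $\Lambda$ by $\Lambda'$.

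To move between $\Lambda$ and $\Lambda'$, I would apply the triangle inequality for the $U^k$-norm,
\[
\snorm{(\Lambda' - \Lambda_{\cramer})1_{[X^{1/2}]}}_{U^k[X^{1/2}]}
\le
\snorm{(\Lambda - \Lambda_{\cramer})1_{[X^{1/2}]}}_{U^k[X^{1/2}]}
+ \snorm{(\Lambda - \Lambda')1_{[X^{1/2}]}}_{U^k[X^{1/2}]},
\]
and bound the second term by brute force. The function $g := (\Lambda - \Lambda')1_{[X^{1/2}]}$ is supported on proper prime powers $p^j \le X^{1/2}$ with $j\ge 2$, a set of size $O(X^{1/4})$, and is pointwise bounded by $\tfrac{1}{2}\log X$. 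Directly estimating the defining sum \cref{gowers-def}, one sees that $(x,h_1,\dots,h_k)$ contributes nontrivially only when $x$ lies in the $O(X^{1/4})$-point support of $g$ and each $h_i$ lies in an interval of length $O(X^{1/2})$, giving
\[
\snorm{g}_{U^k(\Z)}^{2^k} \ll_k X^{k/2 + 1/4}(\log X)^{2^k}.
\]
Dividing by the normalisation $\snorm{1_{[X^{1/2}]}}_{U^k(\Z)}^{2^k} \asymp_k X^{(k+1)/2}$ from \cref{normalisation-gowers-x} yields $\snorm{g}_{U^k[X^{1/2}]}^{2^k} \ll X^{-1/4+o(1)}$, which is far smaller than any $(\log X)^{-A}$.

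There is really no obstacle here beyond bookkeeping: the content of the proposition is Leng's theorem, and the only verification required is that our definition of $\Lambda_{\cramer}$ (including the precise cutoff $Q$) is the approximant for which \cite{Len23b} proves Gowers-norm smallness at scale $X^{1/2}$. This has already been arranged in \cref{q-choice}.
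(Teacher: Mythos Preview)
Your reduction from $\Lambda'$ to $\Lambda$ via a crude bound on the prime-power support is fine; the paper does the same thing in a one-line parenthetical. The issue is the step you treat as pure bookkeeping: you assert that Leng's theorem directly gives
\[
\snorm{(\Lambda - \Lambda_{\cramer})1_{[X^{1/2}]}}_{U^k[X^{1/2}]}^{2^k} \ll_{A,k} (\log X)^{-A},
\]
but this is not what \cite[Theorem~7]{Len23b} proves. Leng's result compares $\Lambda$ with the \emph{Siegel} model $\Lambda_{\siegel}$, yielding the much stronger bound $e^{-(\log X)^{c_k}}$. The passage from $\Lambda_{\siegel}$ to $\Lambda_{\cramer}$ is a separate step: the paper invokes \cite[Theorem~2.5]{TT21}, which gives $\snorm{(\Lambda_{\siegel}-\Lambda_{\cramer})1_{[X^{1/2}]}}_{U^k[X^{1/2}]} \ll q_{\siegel}^{-c_k}$, and then Siegel's theorem to force $q_{\siegel} \gg_A (\log X)^A$.

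This is not a mere citation adjustment. The Siegel step is precisely where the ineffectivity of the constant in $\ll_A$ enters (the paper flags this explicitly later, in the remarks after \cref{final-seigel}). Your claim that ``the only verification required is that our definition of $\Lambda_{\cramer}$ \dots\ is the approximant for which \cite{Len23b} proves Gowers-norm smallness'' is therefore false as stated: $\Lambda_{\cramer}$ is \emph{not} that approximant, $\Lambda_{\siegel}$ is, and bridging the two costs you Siegel's theorem. Once you insert this intermediate comparison and apply the triangle inequality across the three-term decomposition $\Lambda' - \Lambda_{\siegel}$, $\Lambda_{\siegel} - \Lambda_{\cramer}$, your argument becomes the paper's.
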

\begin{proof}
We will apply the results of \cite{TT21} and \cite[Section 5]{Len23b}, taking $N = X^{1/2}$ in those papers. Note (cf. the comments following \cref{q-choice}) that our value of $Q$ is precisely the same as the one in those papers. Let $\Lambda_{\siegel}$ be the Siegel model for the primes, introduced in \cite[Definition 2.1]{TT21}; the definition in \cite[Section 5]{Len23b} is the same. Let $q_{\siegel}$ be the modulus of the character for which there is a $Q$-Siegel zero $\beta$, if one exists (again, see \cite[Definition 2.1]{TT21} for the definition).

Then by \cite[Theorem 7]{Len23b} (and ignoring the contribution of the prime powers to $\Lambda$, which is negligible) we have
\begin{equation}\label{primes-to-siegel} \Vert (\Lambda' - \Lambda_{\siegel})1_{[X^{1/2}]} \Vert_{U^k[X^{1/2}]} \ll e^{-(\log X)^{c_k}}\end{equation} for some $c_k > 0$.
On the other hand, by \cite[Theorem 2.5]{TT21} we have 
\begin{equation}\label{cramer-siegel} \Vert ( \Lambda_{\siegel} - \Lambda_{\cramer, Q})1_{[X^{1/2}]} \Vert_{U^k[X^{1/2}]} \ll q_{\siegel}^{-c_k}  \end{equation} for some $c_k > 0$. Note here that the exponent $c$ in \cite[Theorem 2.5]{TT21} does depend on $k$, as per the convention laid out on the first page of that paper. If there is no Siegel zero, the LHS of \cref{cramer-siegel} is zero. By Siegel's theorem we have the (ineffective) bound
\begin{equation}\label{sig-bd} q_{\siegel} \gg_A (\log X)^A.\end{equation}
(In more detail, \cite[Theorem~5.28 (2)]{IK-book} with $\eps = 1/10A$ implies that $1 - \beta \gg_A q_{\siegel}^{-1/10A}$; on the other hand, by the definition \cite[Definition 2.1]{TT21} we have $1 - \beta \ll \frac{1}{\log Q} \sim (\log X)^{-1/10}$.)

The result follows from this and \cref{primes-to-siegel,cramer-siegel} by the triangle inequality for the Gowers norms.
\end{proof}
\begin{remark}
This is the point in the paper where the quasi-polynomial bounds for the inverse theorem for the Gowers norms \cite{LSS} has been applied, this being the major ingredient in the proof of \cref{primes-to-siegel}.
\end{remark}

\subsection{Evaluating the main term}

The following will be the main result of \cref{section8}.

\begin{proposition}\label{prop:main-term-comp} Let $\ell \in \Z$ and suppose that $|\ell| \le e^{\sqrt{\log X}}$. Then we have
\begin{align*}(\log X) \sum_{x,y \in \Z : x^2 + ny^2\le X}
\chi_{\infty}^{(\ell)}(x + y \sqrt{-n})\Lambda_{\cramer}(x)& \Lambda_{\cramer}(y) 1_{x^2 + ny^2 \operatorname{prime}} \\ & = \frac{\pi \kappa_n 1_{\ell = 0}}{\sqrt{n}} X + O( X (\log X)^{-1}),\end{align*}
where $\kappa_n$ is the constant in \cref{kappa-def} and $\chi_{\infty}^{(\ell)}(z) = (z/|z|)^{\ell}$. 
\end{proposition}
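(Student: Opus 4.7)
The plan is to unfold the Cram\'er weights, rewrite the sum in terms of prime ideals of $\O_K$ twisted by Hecke Gr\"ossencharaktere, and apply an effective prime ideal theorem. Writing $W := \prod_{p \le \qq}(1-1/p)^{-2}$ and $P(\qq) := \prod_{p \le \qq} p$, M\"obius inversion gives
\[\Lambda_\cramer(x)\Lambda_\cramer(y) = W \sum_{d_1,d_2 | P(\qq)} \mu(d_1)\mu(d_2)\, 1_{d_1 | x}\,1_{d_2 | y},\]
reducing the left-hand side to $W(\log X) \sum_{d_1,d_2 | P(\qq)} \mu(d_1)\mu(d_2)\, S(d_1,d_2)$, where $S(d_1,d_2) := \sum \chi_\infty^{(\ell)}(x + y\sqrt{-n})$ runs over $x,y\in\Z$ with $d_1 | x$, $d_2 | y$ and $x^2+ny^2\le X$ a rational prime. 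A standard Brun--Titchmarsh-type bound restricts attention to $d_1, d_2 \le X^{1/4}$, with the tail contributing $O(X/(\log X)^2)$.

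For each such $(d_1,d_2)$, the element $\alpha := x+y\sqrt{-n}\in\Z[\sqrt{-n}]\subseteq\O_K$ generates a principal prime ideal $\mf{p} = (\alpha)$ with $N\mf{p}\le X$, constrained to the sublattice $\Lambda_{d_1,d_2} := d_1\Z + d_2\sqrt{-n}\,\Z$. I would detect this sublattice condition using characters of the finite group $\O_K/(d_1 d_2)\O_K$; combined with $\chi_\infty^{(\ell)}$ and averaged over associates (which annihilates characters non-trivial on $\O_K^*$), these produce a finite family of Hecke Gr\"ossencharaktere $\psi$ of $K$ with analytic conductor $\ll d_1 d_2(|\ell|+1)$, letting us write $S(d_1,d_2) = \sum_\psi c_\psi \sum_{N\mf{p}\le X}\psi(\mf{p})$. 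For each $\psi$, the effective prime ideal theorem for Hecke $L$-functions (Landau's classical result, or a Vinogradov--Korobov-strength version) gives
\[\sum_{N\mf{p}\le X}\psi(\mf{p}) = \delta_{\psi = 1}\cdot\frac{X}{\log X} + O\!\left(\frac{X}{(\log X)^{3}}\right),\]
uniformly over analytic conductor $\le X^{o(1)}$, a range our parameters satisfy with room to spare. Contributions from non-trivial $\psi$, summed against $|\mu(d_1)\mu(d_2)|$, total to the acceptable error $O(X/\log X)$, leaving only the case $\ell = 0$, $\psi = 1$ to produce the main term.

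Evaluating the main term amounts to computing an Euler product over $p \le \qq$: for each such $p$ one obtains a local factor expressing the density of Cram\'er-allowed residues mod $p$ which also satisfy $p\nmid x^2+ny^2$ (so that $(\alpha)$ can be a prime ideal of norm coprime to $p$), normalised by the Cram\'er weight $(1-1/p)^{-2}$. A short calculation using $\#\{(x,y)\bmod p : p\, |\, x^2+ny^2\} = 2p-1$ or $1$ according as $\legendre{-n}{p} = 1$ or $-1$ (with $O_n(1)$ modifications at the finitely many primes dividing $2n\Delta$) collapses the Euler factor to $\frac{p(p-3)}{(p-1)^2}$ in the split case and $\frac{p}{p-1}$ in the inert/ramified case, matching \cref{kappa-def}. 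The archimedean factor $\pi/\sqrt{n}$ arises from the area of $\{(x,y): x^2+ny^2 \le X\}$ and the covolume of $\Z[\sqrt{-n}]$; combined with the class number formula \cref{class-number} to account for non-principal ideal classes, one reproduces the asserted main term.

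The principal obstacle is ensuring the prime ideal theorem is uniform jointly in the finite conductor $(d_1d_2)$ and the infinity frequency $|\ell|$, while saving a full $\log X$ over the trivial bound. The saving is comfortably within the reach of classical zero-free regions for Hecke $L$-functions, but the polynomial dependence of the error on the analytic conductor must be tracked explicitly. Siegel zeros for real Hecke characters are handled ineffectively via Siegel's theorem, in the spirit of \cref{prop:primes-gowers-uniform}. A secondary bookkeeping issue is correctly accounting for $|\O_K^*|$-unit multiplicities (since generators in $\Z[\sqrt{-n}]$ may be strictly fewer than $|\O_K^*|$ per ideal when $\Z[\sqrt{-n}] \subsetneq \O_K$) and for non-principal ideal classes, but both are subsumed into the Hecke character decomposition via orthogonality on the ray class group.
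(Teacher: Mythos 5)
Your proposal follows the same structural plan as the paper's proof — unfold the Cram\'er weights by inclusion–exclusion, express the congruence conditions via Hecke Gr\"ossencharaktere, apply an effective prime ideal theorem, and evaluate the resulting Euler product using the class number formula — but the truncation step contains a quantitative error that is fatal as stated.

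You propose to restrict to $d_1, d_2 \le X^{1/4}$, discarding the tail by a Brun--Titchmarsh-type bound, and then apply an individual-modulus prime ideal theorem to each pair $(d_1,d_2)$. This cannot work for two related reasons. First, the Gr\"ossencharaktere detecting the sublattice $d_1\Z + d_2\sqrt{-n}\Z$ have modulus $\mf{m} = (2n d_1 d_2)$ of norm $(2n d_1 d_2)^2$, so their analytic conductor is $\asymp (d_1 d_2)^2(|\ell|+1)$, which can be $X^{1+o(1)}$ — far outside the range of uniformity of any unconditional effective prime ideal theorem. With classical zero-free regions (or even Vinogradov--Korobov strength), the error $O(X\exp(-c(\log X)^{\theta}))$ is uniform only for conductor at most $\exp(c'(\log X)^{\theta})$, which is the regime used in \cref{main-analytic-input}; it is not uniform out to conductor $X^{\Theta(1)}$, and that is precisely where most of your pairs $(d_1,d_2)$ live. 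Second, even for pairs within the valid modulus range, the sum of the prime ideal theorem errors must be controlled: with the cutoff $d_i \le X^{1/4}$ there are still $X^{1/2-o(1)}$ squarefree $Q$-smooth pairs (the Dickman factor $\rho(u)$ is subpolynomial here since $\log Q \asymp (\log X)^{1/10}$), and $\sum_\psi |c^{d_1,d_2}_\psi|$ can only be bounded crudely by $N\mf{m} = (2nd_1d_2)^2$, so the accumulated error is $\gg X$ regardless of the unknown zero-free-region constant $c$.

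The fix is a more aggressive, Brun-sieve-style truncation, and this is what the paper does. Rather than truncating by the size of $d_1d_2$, one truncates by the \emph{number of prime factors}: $\Lambda_{\cramer}^{\sharp}$ is defined by restricting the inclusion–exclusion to sets $S \subseteq \mc{P}_Q$ with $|S| \le t = 20\log\log X$, so that $P_S \le Q^t \ll \exp((\log X)^{1/8})$. This ensures simultaneously that the discarded part $\Lambda_\cramer^\flat$ has small $L^1$-mass (via a divisor-moment argument, \cref{lem81}), that all moduli stay well inside the uniform range of \cref{main-analytic-input}, and that the total number of pairs $(S_1,S_2)$ is $\ll \exp(O((\log X)^{1/8}))$, subpolynomial. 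The sum of prime ideal theorem errors is then $\ll X\exp(-c\sqrt{\log X} + O((\log X)^{1/8}))$, acceptable for any $c>0$. A size cutoff at $d_i \le \exp((\log X)^{1/4})$ would also work, but that choice must be calibrated against the (unknown) zero-free-region exponent, whereas the Brun truncation sidesteps this issue entirely. Your local density computations, archimedean factor, class number formula input, and Siegel-zero handling are on the right track and essentially match the paper's once the truncation issue is repaired.
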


\subsection{Proof of the main theorem}

Recall our introductory discussion in \cref{subsec:outline}, and in particular the assertion \cref{general-sum-2} that our main theorem in the case $n = 4$ could be considered in terms of a sum over Gaussian primes. We begin by formulating such a statement for general $n$.  

\begin{lemma}\label{lem:sums-to-ideals}
Let $\ell \in \Z$, and let $K = \Q(\sqrt{-n})$. Then we have
\begin{equation}\label{main-sum-wp} \sum_{x^2 + ny^2 \le X}\chi_{\infty}^{(\ell)}(x + y \sqrt{-n}) \Lambda'(x) \Lambda'(y) 1_{x^2 + ny^2 \operatorname{prime}} = \sum_{\mf{p} : N\mf{p} \le X} w(\mf{p}) + O(X^{1/2 + o(1)}),\end{equation} where $\mf{p}$ runs over prime ideals in $\O_K$ and 
\begin{equation}\label{w-von-m}  w = \Lambda' \boxtimes_{\ell} \Lambda' . \end{equation} \end{lemma}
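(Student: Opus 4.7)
The plan is to unfold the definition of $w = \Lambda' \boxtimes_\ell \Lambda'$ on the right-hand side and show that, after interchanging summations, it coincides with the left-hand side up to pairs $(x,y)$ on which $\Lambda'(x)\Lambda'(y)$ identically vanishes. The $O(X^{1/2+o(1)})$ error term in the statement is generous slack that effortlessly absorbs any negligible boundary terms.

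Concretely, expanding $w(\mf{p})$ and interchanging the outer sum over prime ideals with the inner sum over their generators, the right-hand side becomes
\[ \sum_{\substack{x,y \in \Z \\ (x+y\sqrt{-n}) \text{ is a prime ideal of norm } \le X}} \chi_{\infty}^{(\ell)}(x+y\sqrt{-n}) \Lambda'(x) \Lambda'(y). \]
The left-hand side has the same weight but with the indicator that $x^2+ny^2 \le X$ is a rational prime. Since $N(x+y\sqrt{-n}) = x^2+ny^2$, and since any element of rational prime norm generates a prime ideal of the same norm in $\O_K$, every pair $(x,y)$ contributing to the left-hand side also contributes identically to the reformulated right-hand side.

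The complementary contribution consists of pairs for which $(x+y\sqrt{-n})$ is a prime ideal while $x^2+ny^2$ is not a rational prime. Since prime ideals in $\O_K$ have norm either $p$ (split or ramified case) or $p^2$ (inert case), such a pair must satisfy $x^2+ny^2 = p^2$ for some inert rational prime $p$, forcing $(x+y\sqrt{-n}) = (p)$, i.e.\ $x+y\sqrt{-n} = up$ for some unit $u \in \O_K^*$. The hypothesis $n \equiv 0$ or $4 \imod 6$ rules out $n = 1, 3$, so $\O_K^*$ is either $\{\pm 1\}$ or $\{\pm 1, \pm i\}$ (the latter occurring only when $n_* = 1$, a subcase that includes $n=4$). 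For $u = \pm 1$, the generator $\pm p$ has $y=0$ and hence $\Lambda'(y)=0$. For $u = \pm i$ (possible only when $\sqrt{-n} = ri$), one gets $x+y\sqrt{-n} = \pm ip$, requiring $x=0$ and $ry = \pm p$; since $p$ is an odd prime this either has no integer solution or forces $r=p$ and $y = \pm 1$, whence $\Lambda'(y) = \Lambda'(\pm 1) = 0$.

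Therefore the complementary contribution vanishes identically and the two sides agree, comfortably within the claimed error. The main obstacle is the $n_* = 1$ subcase, where $\O_K^*$ is strictly larger than $\{\pm 1\}$ and the additional unit multiples $\pm ip$ must be dispatched by hand; however, once one observes that $\pm ip \in \Z[\sqrt{-n}]$ imposes a divisibility forcing one of the coordinates to lie in $\{0, \pm 1\}$ where $\Lambda'$ vanishes, the remaining verification is routine.
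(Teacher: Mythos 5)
Your approach differs genuinely from the paper's. The paper does not attempt to show the discrepancy between the two sides vanishes; it simply observes that the discrepancy comes from pairs $(x,y)$ with $x^2+ny^2 = p^2$ for an inert prime $p \le X^{1/2}$, and bounds the number of such pairs crudely via \cref{repdivisor} and the divisor bound to get $O(X^{1/2+o(1)})$, doing the analogous count on the ideal side. You instead argue that the discrepancy is \emph{exactly zero} by a unit-by-unit case analysis, a sharper conclusion than the lemma asks for. The paper's route is shorter and sidesteps the unit bookkeeping; your route would, if complete, actually establish more.

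However, as written your proof has a real gap. You write that ``the hypothesis $n \equiv 0$ or $4 \pmod 6$ rules out $n = 1,3$, so $\O_K^*$ is either $\{\pm 1\}$ or $\{\pm 1, \pm i\}$.'' This is wrong on two counts. First, \cref{lem:sums-to-ideals} is stated for \emph{all} $n \ge 1$, so no congruence hypothesis is available. Second, even granting the hypothesis, it does not rule out the group of sixth roots of unity: that group occurs whenever $n_* = 3$, and for instance $n = 12 \equiv 0 \pmod 6$ has $n_* = 3$ and $\O_K^* = \{\pm 1, \pm\omega, \pm\omega^2\}$. What the hypothesis excludes is the pair $n \in \{1, 3\}$, not the pair $n_* \in \{1,3\}$. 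So the case $u = \pm\omega, \pm\omega^2$ must be analysed, and your proof never does. (It does in fact work out: writing $\sqrt{-n} = r\sqrt{-3}$, the equation $x + yr\sqrt{-3} = \omega p$ forces $x = -p/2$, hence $p = 2$ and $x = -1$, so $\Lambda'(x) = 0$; and for $r > 1$ the $y$-coordinate fails to be an integer at all. But that verification must appear.) A further, minor, inefficiency: in the $u = \pm i$ case you argue via $\Lambda'(y) = \Lambda'(\pm 1) = 0$, but you have already derived $x = 0$, and $\Lambda'(0) = 0$ finishes the case immediately without any divisibility discussion about $r$ and $p$.
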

\begin{proof}
Recalling the definition of $\boxtimes_{\ell}$ (see \cref{product-form}) we see that the sum on the right of \cref{main-sum-wp} is the sum of $\chi_{\infty}^{(\ell)}(x + y \sqrt{-n})\Lambda'(x)\Lambda'(y)$ over $(x,y) \in \Z^2 \setminus \{0\}$ for which $(x + y \sqrt{-n})$ is a prime ideal.

Now the prime ideals $\mf{p}$ in $\O_K$ are of two types: those which sit above a rational prime $p$ which splits (or is ramified), in which case $N\mf{p} = p$, or those which are equal to $(p)$ for some rational prime $p$ which does not split, in which case $N\mf{p} = p^2$. The contribution of the non-split terms to the LHS of \cref{main-sum-wp} is $\ll (\log X)^2 \sum_{p \le X^{1/2}}\sum_{x,y} 1_{x^2 + ny^2 = p^2} \ll X^{1/2 + o(1)}$, where the last bound follows from \cref{repdivisor} and the divisor bound. The contribution of the non-split terms to the RHS of \cref{main-sum-wp} is at most $(\log X)^2$ times the number of ideals with norm $p^2 \le X$ for some $p$, which is again $\ll X^{1/2 + o(1)}$.
\end{proof}

We may now prove the main theorem itself, assuming \cref{prop:typei-to-gowers,prop:typeii-to-gowers,prop:main-term-comp}.

\begin{proof}[Proof of \cref{prop:main}]
We start with the LHS of the expression in \cref{prop:main}, which we expressed as a sum over $w(\mf{p})$ in \cref{main-sum-wp}. Note that $w(\mf{a})$ as given in \cref{w-von-m} is in product form, which brings to mind \cref{prop:main-sec3}. However, it would not be a good idea to try and apply \cref{prop:main-sec3} immediately, since $\tilde w(\mf{a}) = w(\mf{a}) 1_{N\mf{a} \le X}$ does not satisfy Type I estimates (it is irregularly distributed mod 3, for instance). To get around this latter issue, we introduce a decomposition using the approximant $\Lambda_{\cramer}$, defined in \cref{cramer-def}. Thus we write
\begin{equation}\label{w-decomp} w = \sum_{j=1}^3 w_j, \qquad w_j = f_j \boxtimes_{\ell} f'_j \end{equation}
where 
\begin{equation}\label{fj-choices} f_1 = \Lambda', \quad f'_1 = f_2 = \Lambda' - \Lambda_{\cramer}, \quad  \mbox{and} \quad f'_2 = f_3 = f'_3 = \Lambda_{\cramer}.\end{equation}

A trivial change to the proof of \cref{lem:sums-to-ideals} (replacing $\Lambda'$ by $\Lambda_{\cramer}$ throughout) shows that 
\[ \sum_{\mf{p} : N\mf{p} \le X} w_3(\mf{p}) = \sum_{x^2 + ny^2 \le X}\chi_{\infty}^{(\ell)}(x + y \sqrt{-n})\Lambda_{\cramer}(x)\Lambda_{\cramer}(y)1_{x^2 + ny^2 \operatorname{prime}}  + O(X^{1/2 + o(1)}).\]

In view of \cref{prop:main-term-comp}, it therefore suffices to show that 
\begin{equation}\label{suff-w1w2} \sum_{\mf{p} : N\mf{p} \le X} w_1(\mf{p}), \sum_{\mf{p}: N\mf{p} \le X} w_2 (\mf{p}) \ll X \frac{(\log \log X)^{2}}{(\log X)^2}.\end{equation}
For this, we use \cref{prop:main-sec3}. Note that we do have the required boundedness condition $|f_j(x)|, |f'_j(x)| \le (\Lambda_{\cramer} + \Lambda')(x)$ needed for that result. Since the concern in that result is with $\tilde w_j(\mf{a}) = w_j(\mf{a}) 1_{N\mf{a} \le X}$, we lose nothing by assuming that $f_j, f'_j$ are supported on $[\pm 2n X^{1/2}]$.

The Type I/II information that we need for the $\tilde w_j$ comes from (the contrapositive of) \cref{prop:typei-to-gowers,prop:typeii-to-gowers} respectively, together with \cref{prop:primes-gowers-uniform}, noting that for $j = 1,2$, at least one of the functions $f_j, f'_j$ is $\Lambda' - \Lambda_{\cramer, \qq}$. In applying \cref{prop:typei-to-gowers,prop:typeii-to-gowers}, one should take $\delta = (\log X)^{-12}$; the Gowers norm conclusions of those propositions are then contradicted by \cref{prop:primes-gowers-uniform} for sufficiently large $A$. (In particular, we have that we may apply \cref{prop:main-sec3} with $C = 12\max(C_{\ref{prop:typei-to-gowers}},C_{\ref{prop:typeii-to-gowers}}) + 1$.)
Note that, although \cref{prop:typei-to-gowers,prop:typeii-to-gowers} were stated only for $1$-bounded functions, they apply equally well to functions bounded by $\log X$, simply by applying the results with $(\log X)^{-1} f$ and $(\log X)^{-1} f'$ and absorbing the extra factors of $\log X$ in the $(\log X)^{O(1)}$ part of those bounds.

This concludes the proof of \cref{prop:main}, assuming \cref{prop:typei-to-gowers,prop:typeii-to-gowers,prop:main-term-comp}.
\end{proof}

\section{Preliminaries on concatenation and Gowers--Peluse norms}\label{section5}

In this section we develop some preliminary material connected with concatenation theorems for Gowers norms. When dealing with concatenation, it is useful to introduce generalisations of Gowers norms first considered by Peluse \cite[Definition 2.1]{Pel20}. Peluse calls these norms the \emph{Gowers box norms}, but we prefer to see the term `box norm' reserved for purely combinatorial graph- and hypergraph- norms and so will instead refer to these norms as Gowers--Peluse norms. 

We first state the definition of the Gowers--Peluse norm. 

\begin{definition}\label{def:box}
Given a function $f: \Z \to \C$ with finite support and $h,h'\in \Q$, we define 
\begin{equation}\label{double-difference} \Delta_{(h,h')}f(x) = f(x+h)\overline{f(x+h')}.\end{equation}
Given a multiset $\Omega = \{ \mu_1,\dots, \mu_k\}$ of probability measures on $\Q$ and a positive integer scale $N \ge 1$, we define 
\begin{equation}\label{gp-explicit-def} \snorm{f}_{U_{\GP}[N; \Omega]}^{2^k} = \snorm{f}_{U_{\GP}[N; \mu_1,\dots, \mu_k]}^{2^k} := \frac{1}{N}\sum_{x \in \Z} \E_{h_i,h_i' \sim \mu_i}\Delta_{(h_1,h_1')}\cdots\Delta_{(h_k,h_k')}f(x).\end{equation}
(As shown in \cref{lem:GCS}, the RHS is a non-negative real number, and so $\snorm{f}_{U_{\GP}[N; \Omega]}$ is uniquely defined as a nonnegative real number.)
\end{definition}

\begin{remark}
 The only difference between our definition and that of Peluse is the fact that we work with general probability measures rather than uniform measures on sets as Peluse did. (The two settings are equivalent, at least if one passes from sets to multisets as in \cite{KKL24}, with the multiset situation being a special case of the measure one, and the measure one a limiting case of the multiset one.) 

The definition of the difference operators $\Delta_{(h,h')}$ should be compared with \cref{difference}, of which it is a mild variant. Once again, for minor technical reasons it is convenient to allow $h,h' \in \Q$, but $\Delta_{(h,h')} f$ will be identically zero unless $h' - h \in \Z$. Note that $\Delta_{(h,h')}$, $\Delta_{(k,k')}$ commute. Later on it will be convenient to adopt the shorthand $\Delta_{M(h,h')}f := \Delta_{(Mh, Mh')}f$.

If $f$ is supported on $[\pm N]$ and if the $\mu_i$ are all uniform on $[\pm N]$ then $\Vert f \Vert_{U_{\GP}[N; \mu_1,\dots, \mu_k]}$ is a kind of `smoothed' Gowers norm which is comparable to, but not exactly the same as, $\Vert f \Vert_{U^{k}[N]}$. We will not actually need this fact in the paper: for equivalent results see \cite[Lemma~C.3]{PSS23} or \cite[Lemma~A.3]{KKL24b}.
\end{remark}

We next record a certain general--purpose lemma and corollary which will be used in order to convert Gowers--Peluse norms whose underlying measures are somewhat close to uniform on $[\pm N]$ into genuine Gowers norms, albeit of a higher order.

\begin{lemma}\label{lem:replac-diff}
Let $\delta\in (0,1)$, $N, T\ge 1$ and consider a $1$-bounded function $f:\Z\to \C$, supported on $[\pm N]$. Furthermore let $\mu$ be a probability measure supported on $[\pm N]$ which is somewhat uniform in the sense that  $\Vert \mu \Vert_2^2 \le T/N$. Suppose that $\snorm{f}_{U_{\GP}[N;\mu]}^2\ge \delta$. Then $\snorm{f}_{U^2[N]}\gg (\delta^2/T)^{1/4}$.
\end{lemma}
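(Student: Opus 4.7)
The plan is to pass to the Fourier side, where the Gowers--Peluse hypothesis becomes a bound on a weighted $L^2$-integral of $\widehat{f}$, and the $U^2$-norm becomes the $L^4$-integral of $\widehat{f}$. The bridge between the two will be one application of Cauchy--Schwarz in $\xi$, combined with the bound $\|\widehat{\mu}\|_{L^4}^4\le \|\widehat{\mu}\|_{L^2}^2 = \|\mu\|_2^2$, which is exactly where the hypothesis $\|\mu\|_2^2\le T/N$ enters.

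First, unpacking \cref{gp-explicit-def} with $k=1$, write $g(x):= \E_{h\sim \mu} f(x+h) = (f\ast \tilde{\mu})(x)$, where $\tilde{\mu}(h)=\mu(-h)$. Then
\[
\|f\|_{U_{\GP}[N;\mu]}^2 = \frac{1}{N}\sum_{x\in \Z}|g(x)|^2 \geq \delta.
\]
By Plancherel on $\Z$ (so the dual group is $\R/\Z$),
\[
\sum_x |g(x)|^2 = \int_0^1 |\widehat{f}(\xi)|^2 |\widehat{\mu}(\xi)|^2 \, d\xi,
\]
so the hypothesis reads $\int_0^1 |\widehat{f}|^2|\widehat{\mu}|^2 \geq \delta N$.

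Next I would apply Cauchy--Schwarz in $\xi$ to obtain
\[
\delta N \leq \bigg(\int_0^1 |\widehat{f}(\xi)|^4 \, d\xi\bigg)^{1/2} \bigg(\int_0^1 |\widehat{\mu}(\xi)|^4 \, d\xi\bigg)^{1/2}.
\]
The first factor is exactly $\|f\|_{U^2(\Z)}^2$, by the standard identity $\|f\|_{U^2(\Z)}^4 = \int_0^1 |\widehat{f}(\xi)|^4 d\xi$. For the second factor, since $\mu$ is a probability measure we have $|\widehat{\mu}(\xi)|\le 1$, and so by Parseval
\[
\int_0^1 |\widehat{\mu}(\xi)|^4 \, d\xi \leq \int_0^1 |\widehat{\mu}(\xi)|^2 \, d\xi = \sum_h \mu(h)^2 = \|\mu\|_2^2 \leq T/N.
\]
Combining these gives $\|f\|_{U^2(\Z)}^4 \geq \delta^2 N^3/T$.

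Finally, using the normalisation \cref{normalisation-gowers-x}, namely $\|1_{[N]}\|_{U^2(\Z)}^4 \asymp N^3$, we conclude $\|f\|_{U^2[N]}^4\gg \delta^2/T$, which is the claim. There is no real obstacle here: the argument is essentially one line of Fourier analysis once one recognises the Gowers--Peluse norm with a single measure as a convolution $L^2$-norm, and the loss of $T$ comes cleanly from the $\ell^2$-mass of $\mu$ via Parseval.
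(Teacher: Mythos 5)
Your proof is correct, and it takes a genuinely different route from the paper's. The paper works entirely in physical space: after rewriting the hypothesis as $\sum_t \left(\sum_{h_1'}\mu(h_1')\mu(h_1'+t)\right)\left|\sum_x f(x)\overline{f(x+t)}\right|\ge\delta N$, it pigeonholes in $h_1'$ to fix a single shift, then applies Cauchy--Schwarz in $t$ using $\sum_t\mu(h_1'+t)^2=\|\mu\|_2^2\le T/N$, and finally expands $\sum_t\left|\sum_x f(x)\overline{f(x+t)}\right|^2$ as $\|f\|_{U^2(\Z)}^4$. You instead pass to Fourier space via Plancherel, observing that the Gowers--Peluse hypothesis becomes $\int_0^1|\widehat{f}|^2|\widehat{\mu}|^2\ge\delta N$, and apply Cauchy--Schwarz there together with $\|\widehat{\mu}\|_4^4\le\|\widehat{\mu}\|_2^2=\|\mu\|_2^2$ (using $|\widehat{\mu}|\le1$) and the identity $\|f\|_{U^2(\Z)}^4=\int_0^1|\widehat{f}|^4$. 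Your version is cleaner — no pigeonhole needed — and makes the role of $\|\mu\|_2^2$ completely transparent. The paper's physical-space version has the advantage that the same pattern generalises directly to the iterative argument in \cref{cor53x}, where no such simple Fourier identity is available for higher-order Gowers norms.
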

\begin{proof}
The condition $\snorm{f}_{U_{\GP}[N;\mu]}^2\ge \delta$ is, written out,
\[\sum_{x\in \Z}\E_{h_1,h_1'\sim \mu}f(x+h_1)\ol{f(x+h_1')} = \sum_{x\in \Z}\E_{h_1,h_1'\sim \mu}f(x)\ol{f(x+h_1 - h_1')}\ge \delta N .\]
This implies that 
\[ \sum_{t\in \Z} \E_{h_1 , h'_1 \sim \mu} 1_{h_1 - h'_1 = t} \Big|\sum_{x\in \Z}f(x)\ol{f(x+t)}\Big|\ge \delta N.\]
Removing the averaging over $h'_1$, we see that there is some $h'_1$ such that 
\[\sum_{t\in \Z}\E_{h_1 \sim \mu}1_{h_1 - h_1' = t} \Big|\sum_{x\in \Z}f(x)\ol{f(x+t)}\Big|\ge \delta N, \] or in other words
\[ \sum_{t \in \Z} \mu(h'_1 + t) \Big|\sum_{x\in \Z}f(x)\ol{f(x+t)}\Big|\ge \delta N. \]
Cauchy--Schwarz and the assumption of the lemma give
\[\sum_{t\in \Z}\Big|\sum_{x\in \Z}f(x)\ol{f(x+t)}\Big|^2\ge \frac{(\delta N)^2}{T/N} = \delta^2 N^3/T.\]
Expanding the left-hand side gives
\[\sum_{x,x',t\in \Z}f(x)\ol{f(x+t)}\ol{f(x')}f(x'+t) = \sum_{x,t,t'\in \Z}\Delta_{t}\Delta_{t'}f(x)\ge \delta^2 N^3/T.\] However, the left-hand side is $\gg N^3$ times $\Vert f \Vert^4_{U^2[N]}$.
\end{proof}

The following corollary details how the above may be applied iteratively in order to replace a Gowers--Peluse norm with nearly uniform measures by a geunine Gowers norm (of twice the order).

\begin{corollary}\label{cor53x}
Let $\delta \in (0,1)$, let $N, T \ge 1$ be parameters with $N$ an integer, and let $f : \Z \rightarrow \C$ be a $1$-bounded function supposed on $[\pm N]$. Let $\mu_1,\dots, \mu_k$ be probability measures supported on $[\pm N]$, all of which are somewhat uniform in the sense that $\Vert \mu_i \Vert_2^2 \le T/N$. Suppose that $\Vert f \Vert_{U_{\GP}[N;\mu_1,\dots, \mu_k]} \ge \delta$. Then $\Vert f \Vert_{U^{2k}[N]} \gg (T^{-k} \delta^{2^k})^{1/2^{2k}}$.
\end{corollary}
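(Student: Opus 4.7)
The plan is to proceed by induction on $k$, the base case $k=1$ being an immediate restatement of Lemma 5.2 (which provides exactly the stated inequality $\|f\|_{U^{2}[N]} \gg (T^{-1}\delta^2)^{1/4}$ when only one measure is present).

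For the inductive step, I would first exploit the recursive structure of the Gowers--Peluse norm: by unpacking the definition and pulling the $\mu_k$-average outside the remaining expectations, one obtains the identity
\[
\|f\|_{U_{\GP}[N;\mu_1,\dots,\mu_k]}^{2^k} = \E_{h_k,h_k' \sim \mu_k}\|\Delta_{(h_k,h_k')}f\|_{U_{\GP}[N;\mu_1,\dots,\mu_{k-1}]}^{2^{k-1}}.
\]
Since $\Delta_{(h_k,h_k')}f$ is $1$-bounded and supported on $[\pm 2N]$, the inductive hypothesis applies pointwise (with $N$ inflated by an absolute constant), yielding in its universal form $\|\Delta_{(h_k,h_k')}f\|_{U_{\GP}[N;\mu_1,\dots,\mu_{k-1}]}^{2^{k-1}} \ll T^{k-1}\|\Delta_{(h_k,h_k')}f\|_{U^{2(k-1)}[N]}^{2^{2(k-1)}}$.

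Averaging this bound over $h_k,h_k'$ and using translation invariance, I substitute $t := h_k - h_k'$; its distribution $\nu_k := \mu_k \ast \tilde\mu_k$ satisfies $\|\nu_k\|_\infty \le \|\mu_k\|_2^2 \le T/N$ by Young's inequality. Converting the $\nu_k$-weighted expectation into an unweighted integer sum (at a cost of a factor $T/N$) and invoking the standard Gowers identity $\sum_t \|\Delta_t f\|_{U^j[N]}^{2^j} \asymp N \|f\|_{U^{j+1}[N]}^{2^{j+1}}$ (a direct consequence of the factorization of the Gowers norm, recorded in Appendix A) yields a lower bound of the shape $\|f\|_{U^{2k-1}[N]}^{2^{2k-1}} \gg T^{-k}\delta^{2^k}$. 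Gowers-norm nesting $\|f\|_{U^{2k-1}[N]} \le \|f\|_{U^{2k}[N]}$ together with the $1$-boundedness of $\|f\|_{U^{2k}[N]}$ then promotes this to the stated bound on $\|f\|_{U^{2k}[N]}$.

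The main obstacle I anticipate is the careful bookkeeping of exponents through the iteration. The recursion naturally produces $U^{2k-1}$-norm control with exponent $2^{2k-1}$, while the claimed conclusion is phrased in terms of $U^{2k}[N]$ with the $2^{2k}$-th root of $T^{-k}\delta^{2^k}$. Matching the stated form $(T^{-k}\delta^{2^k})^{1/2^{2k}}$ rather than the weaker $1/2^{2k-1}$ root that emerges from a naive nesting step is the most delicate point, and may require invoking an additional Cauchy--Schwarz to gain the extra half-exponent, or reformulating the inductive statement so that the $2^{2k}$ power is built in from the start.
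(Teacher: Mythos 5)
The approach you propose—peel off $\mu_k$ via the recursive identity, apply the inductive hypothesis, then average over the $\mu_k$-pair—has a genuine structural gap that you have correctly identified but whose proposed fixes do not resolve.

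The fundamental issue is that once you apply the inductive hypothesis, you convert $\|\Delta_{(h_k,h_k')}f\|_{U_{\GP}[N;\mu_1,\dots,\mu_{k-1}]}$ into a \emph{number} $\|\Delta_t f\|_{U^{2(k-1)}[N]}$ (with $t = h_k'-h_k$), and the fine combinatorial structure of the Gowers--Peluse expansion is lost. When you then handle the outer $\mu_k$-average by bounding $\|\nu_k\|_\infty \le T/N$, you gain exactly one more derivative via $\frac{1}{N}\sum_t\|\Delta_t f\|_{U^j[N]}^{2^j} \asymp \|f\|_{U^{j+1}[N]}^{2^{j+1}}$, landing at $U^{2k-1}$. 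Your claim that nesting "promotes" this to the stated $U^{2k}$ bound is incorrect: nesting gives $\|f\|_{U^{2k}[N]} \ge \|f\|_{U^{2k-1}[N]} \gg (T^{-k}\delta^{2^k})^{1/2^{2k-1}}$, and for quantities $<1$ the exponent $1/2^{2k-1}$ yields a \emph{smaller} (weaker) lower bound than $1/2^{2k}$, not a larger one. Moreover, the additional Cauchy--Schwarz you propose does not repair this: it produces $\frac{1}{N}\sum_t \bigl(\|\Delta_t f\|_{U^j[N]}^{2^j}\bigr)^2$, which is \emph{not} an expansion of $\|f\|_{U^{j+2}[N]}^{2^{j+2}}$ — squaring the scalar value of a Gowers norm does not add two derivatives. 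Indeed one can check that this quantity is still bounded above (up to constants) by $\|f\|_{U^{j+1}[N]}^{2^{j+1}}$ via $\|\Delta_t f\|_{U^j[N]}^{2^j} \ll 1$, so you remain stuck at $U^{2k-1}$.

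The paper's proof avoids this by running a \emph{downward} induction on $j$ that never reduces to a black-box Gowers-norm value: the inductive claim tracks the raw average
\[
\frac{1}{N}\sum_{x}\E_{\substack{h_i,h_i'\sim\mu_i\\ i\le j}}\E_{\substack{h_i,h_i'\sim\mu\\ i>j}}\Delta_{(h_1,h_1')}\cdots\Delta_{(h_j,h_j')}\Delta_{h_{j+1}}\Delta_{h_{j+1}'}\cdots\Delta_{h_k}\Delta_{h_k'}f(x),
\]
where $\mu$ is the uniform measure. At each step one isolates the $\mu_j$-pair as an inner $U_{\GP}[N;\mu_j]$-norm of the function obtained by applying all the \emph{other} difference operators, applies \cref{lem:replac-diff} pointwise, and then re-expands the resulting $U^2[N]$-norm with two new uniform dummy variables $h_j,h_j'$. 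The crucial point is that the Cauchy--Schwarz inside \cref{lem:replac-diff} acts on the raw sum over $x$ while the other derivatives are still present, so each $(h_j,h_j')\sim\mu_j$ pair (one effective derivative in direction $h_j'-h_j$) is genuinely replaced by \emph{two} independent uniform derivatives, giving the correct count $2k$ after $k$ steps. Your "reformulating the inductive statement" suggestion is pointing in the right direction, but as written the proposal does not supply this reformulation, and the proof is incomplete.
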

\begin{proof} Let $\mu$ be the uniform measure on $[\pm N]$.
We will prove by downward induction on $j = k, k - 1,\dots,0$ that, for any $j$, 

\begin{equation}\label{inductive-nearly-uniform} \frac{1}{N} \sum_{x \in \Z} \E_{\substack{h_i, h'_i \sim \mu_i \\ 1 \le i \le j}} \E_{\substack{h_i, h'_i \sim \mu \\ j+1 \le i \le k}} \Delta_{(h_1, h'_1)} \cdots \Delta_{(h_{j}, h'_{j})} \Delta_{h_{j+1}} \Delta_{h'_{j+1}} \cdots \Delta_{h_{k}} \Delta_{h'_{k}} f (x) \ge  T^{j - k} \delta^{2^k} .\end{equation}
The assumption is  the case $j = k$.
First note that this may be rewritten as 
\[\E_{\substack{h_i, h'_i \sim \mu_i \\ 1 \le i \le j-1}} \E_{\substack{h_i, h'_i \sim \mu \\ j+1 \le i \le k}}\Vert \Delta_{(h_1, h'_1)} \cdots \Delta_{(h_{j-1}, h'_{j-1})} \Delta_{h_{j+1}} \Delta_{h'_{j+1}} \cdots \Delta_{h_{k}} \Delta_{h'_{k}} f \Vert^2_{U_{\GP}[N;\mu_j]} \ge  T^{j - k} \delta^{2^k} .\]

Write $\delta(h)$ (where $h = (h_1, h'_1,\dots, h_{j-1}, h'_{j-1}, h_{j+1}, h'_{j+1},\dots, h_k, h'_k)$) for the size of the inner $U_{\GP}[N; \mu_j]$-norm squared. Then, applying \cref{lem:replac-diff}, we obtain using Cauchy--Schwarz
\begin{align*} \E_{\substack{h_i, h'_i \sim \mu_i \\ 1 \le i \le j-1}} \E_{\substack{h_i, h'_i \sim \mu \\ j+1 \le i \le k}}\Vert \Delta_{(h_1, h'_1)} \cdots \Delta_{(h_{j-1}, h'_{j-1})} \Delta_{h_{j+1}} \Delta_{h'_{j+1}} & \cdots \Delta_{h_{k}} \Delta_{h'_{k}} f \Vert^4_{U_2[N]} \\ & \ge T^{-1} \E \delta(h)^2 \ge T^{j - 1- k} \delta^{2^k}.\end{align*} Writing in the definition of the $U^2$-norm using dummy variables $h_j, h'_j$, this is
\[ \frac{1}{N}\sum_{x \in \Z}\E_{\substack{h_i, h'_i \sim \mu_i \\ 1 \le i \le j-1}} \E_{\substack{h_i, h'_i \sim \mu \\ j \le i \le k}}\Delta_{(h_1, h'_1)} \cdots \Delta_{(h_{j-1}, h'_{j-1})} \Delta_{h_{j}} \Delta_{h'_{j}} \cdots \Delta_{h_{k}} \Delta_{h'_{k}} f (x) \ge T^{-1} \E \delta(h)^2 \ge T^{j-1-k} \delta^{2^k}.\]
This in turn is exactly the statement \cref{inductive-nearly-uniform} in the case $j -1$, so the induction goes through.

The case $j = 0$ of \cref{inductive-nearly-uniform} is the statement we want.\end{proof}

Another key corollary of \cref{lem:replac-diff} is that if $a$ and $b$ are coprime then an average over $ax+by$ being large can be converted to $U^2$-control. 

\begin{lemma}\label{lem:U^2-control}
Let $\eta \in (0,1)$ be sufficiently small and let $M = O(1)$ be a positive integer. Let $a,b\in \frac{1}{M}\Z \setminus \{0\}$ satisfy $|a| \ge \eta |b|$, $|b| \ge \eta |a|$ and $d := \gcd(Ma,Mb) \le 1/\eta$. Set $Q := \max(|a|, |b|)$, and suppose that $N \ge Q^2/\eta^3$. Let $I_1, I_2 \subseteq[\pm N/\eta Q]$ be subintervals of $\Z$. Suppose that $f:\Q \to \C$ is $1$-bounded and supported on $\Z \cap [\pm N]$ and that 
\[\Big|\sum_{\substack{x,y\in \Z\\x\in I_1, y\in I_2}}f(ax + by)\Big|\ge \eta N^2/Q^2.\]
Then $\snorm{f}_{U^2[N]}^4\gg \eta^{O(1)}$.
\end{lemma}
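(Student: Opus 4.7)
The plan is to expand
\[ S := \sum_{(x,y) \in I_1 \times I_2} f(ax+by) = \sum_{v \in \Z} r(v) f(v), \]
where $r(v) := \#\{(x,y) \in I_1 \times I_2 : ax+by = v\}$. Two simple bounds on $r$: the fibers of $(x,y) \mapsto ax+by$ in $\Z^2$ are translates of the sublattice generated by $(Mb/d, -Ma/d)$, so
\[ r(v) \le R_0 := \min\bigg(\frac{|I_1|\,d}{M|b|},\, \frac{|I_2|\,d}{M|a|}\bigg) + 1 \ll \frac{N}{\eta^3 Q^2} \]
(using $|I_i| \le N/(\eta Q)$, $|a|,|b| \ge \eta Q$, $d \le 1/\eta$, $M = O(1)$); and the support of $r$ is contained in $(aI_1 + bI_2) \cap \Z$, which has size at most $|a||I_1| + |b||I_2| + 1 = O(N/\eta)$.

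Since $r$ takes non-negative integer values, I exploit the level-set decomposition
\[ r = \sum_{k=1}^{R_0} 1_{A_k}, \qquad A_k := \{v \in \Z : r(v) \ge k\}, \]
yielding $S = \sum_k S_k$ with $S_k := \sum_{v \in A_k} f(v)$. The hypothesis $|S| \ge \eta N^2/Q^2$ combined with the trivial bound $|S| \le R_0 \cdot \max_k |S_k|$ then forces by pigeonhole the existence of some $k$ with
\[ |S_k| \ge |S|/R_0 \gg \eta^4 N. \]

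Now I invoke the Hausdorff--Young inequality on $\Z$: for any finite $A \subseteq \Z$, $\|\widehat{1_A}\|_{L^{4/3}(\R/\Z)} \le \|1_A\|_{\ell^4(\Z)} = |A|^{1/4}$. Applied to $A_k$ this gives $\|\widehat{1_{A_k}}\|_{4/3} \ll (N/\eta)^{1/4}$. Combining this with Parseval and H\"older's inequality,
\[ |S_k| = \bigg|\int_{\R/\Z} \widehat{1_{A_k}}(\xi)\overline{\widehat{f}(\xi)}\,d\xi\bigg| \le \|\widehat{1_{A_k}}\|_{4/3} \|\widehat{f}\|_4, \]
we obtain $\|\widehat f\|_4^4 \gg \eta^{17} N^3$. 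The relation $\|f\|_{U^2[N]}^{4} \asymp \|\widehat{f}\|_4^4 / N^3$ (via Plancherel together with the normalization $\|1_{[N]}\|_{U^2(\Z)}^{4} \asymp N^3$ from \cref{normalisation-gowers-x}) then yields the desired $\|f\|_{U^2[N]}^{4} \gg \eta^{17} \gg \eta^{O(1)}$.

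The main technical point is bookkeeping rather than a deep obstacle: the congruence conditions on $(x,y)$ coming from the requirement $ax+by \in \Z$ (when $a,b \in \frac{1}{M}\Z$) and the factor $d$ appear only through the constants in $R_0$, but remain polynomial in $\eta$, ensuring the conclusion does not depend on $Q$ or $N$.
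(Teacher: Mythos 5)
Your argument has a genuine gap at the \enquote{Hausdorff--Young} step, which invalidates the main Fourier-analytic estimate. You claim that for any finite $A \subseteq \Z$ one has $\|\widehat{1_A}\|_{L^{4/3}(\R/\Z)} \le \|1_A\|_{\ell^4(\Z)} = |A|^{1/4}$, but this is not the Hausdorff--Young inequality, and it is false. Hausdorff--Young on $\Z$ says that for $1 \le p \le 2$ and $1/p + 1/p' = 1$, $\|\widehat{g}\|_{L^{p'}(\R/\Z)} \le \|g\|_{\ell^p(\Z)}$; taking $p = 4/3$ gives $\|\widehat{g}\|_{L^4} \le \|g\|_{\ell^{4/3}}$, which is a bound on the $L^4$ norm, not $L^{4/3}$. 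The dual form in fact yields the \emph{reverse} of your claim: $\|\widehat{1_A}\|_{L^{4/3}} \ge \|1_A\|_{\ell^4} = |A|^{1/4}$ for every finite $A$. Moreover the inequality you want cannot hold in general: if $A$ is a \enquote{random} or dissociated set of size $n$, then by Khintchine-type estimates $\|\widehat{1_A}\|_{L^{4/3}} \asymp \|\widehat{1_A}\|_{L^2} = n^{1/2}$, so $|A|^{1/4}$ is off by a power of $|A|$. The bound $\ll |A|^{1/4}$ does hold when $A$ is an interval or a bounded union of intervals of an arithmetic progression, but you explicitly invoke the estimate for \enquote{any finite $A$}, and you do not establish (or use) any structural information about the level sets $A_k$.

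The damage is fatal as written. Replacing your claimed estimate by the correct general one, $\|\widehat{1_{A_k}}\|_{L^{4/3}} \le \|\widehat{1_{A_k}}\|_{L^2} = |A_k|^{1/2} \ll (N/\eta)^{1/2}$, the H\"older step yields only $\|\widehat f\|_{L^4}^4 \gg \eta^{O(1)} N^2$, which is short of the needed $\gg \eta^{O(1)} N^3$ by a full factor of $N$. To rescue this line of attack you would need to prove that each level set $A_k$ of the representation function $r = 1_{aI_1} \ast 1_{bI_2}$ is (up to $O(1)$ pieces) an interval of the arithmetic progression $\frac{d}{M}\Z$, and then prove the $L^{4/3}$ bound for such sets directly; this is plausible but is precisely the kind of structural input your proof currently omits. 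The paper sidesteps Fourier analysis entirely here: it smooths the inner sum by averaging over shifts $\ell' \in \frac{d}{M}(a'[H]+b'[H])$, applies Cauchy--Schwarz to produce a Gowers--Peluse norm with respect to a fairly uniform measure, and then invokes \cref{lem:replac-diff}, which directly converts that Gowers--Peluse norm into a $U^2$ bound. The two arguments diverge precisely at the step that your proof gets wrong.
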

\begin{proof} We generally do not indicate dependence on $M$ in implied constants.
Write $I_1 := \ell_1 + [L_1]$ and $I_2 := \ell_2 + [L_2]$, where $L_i := |I_i|$ (and $[L] = \{1,\dots, L\}$). Set $t = a \ell_1 + b\ell_2$ and write $a' = Ma/d$ and $b' = Mb/d$. Then the assumption may be written as
\begin{equation}\label{eq380} \Big|\sum_{\ell \in  \frac{d}{M}(a'[L_1] + b'[L_2])}f(t + \ell)\Big|\ge \eta N^2/Q^2.\end{equation} (Note here that the sum is over the \emph{multi}set $\frac{d}{M}(a'[L_1] + b'[L_2])$.)
Let $H := \lfloor\eta^{3}N/Q\rfloor$. Note that $H > 1$ by the assumption on $N$. 
Suppose that $\ell' \in \frac{d}{M}(a'[H] + b'[H])$. Then 
\[ \Big|\sum_{\ell \in  \frac{d}{M}(a' [L_1] + b' [L_2])}f(t + \ell + \ell') - \sum_{\ell \in  \frac{d}{M}(a'[L_1] + b'[L_2])}f(t + \ell)\Big| \ll \max(L_1, L_2) H,\] since the two ranges of summation differ in only $O(\max(L_1, L_2) H)$ terms and $f$ is $1$-bounded. Averaging over $\ell'$, it follows from this and \cref{eq380} that 

\[\frac{1}{H^2} \Big| \sum_{\ell \in  \frac{d}{M}(a' [L_1] + b'[L_2])} \sum_{\ell'\in  \frac{d}{M}(a' [H] + b'[H])}f(t + \ell + \ell')\Big|\ge \eta N^2/Q^2 - O(\max(L_1, L_2) H) \gg \eta N^2/Q^2\] assuming $\eta$ small enough, using here that $\max(L_1, L_2) \ll N/\eta Q$.
By the triangle inequality, and multiplying through by $H^2$, 
\begin{equation}\label{eq381}\sum_{\ell \in  \frac{d}{M}(a' [L_1] + b'[L_2])}\Big|\sum_{\ell'\in  \frac{d}{M}(a' [H] + b'[H])}f(t + \ell + \ell')\Big|\gg \eta^{7}N^4/Q^4.\end{equation} 
At this point we note for future reference that the assumption of the lemma and the fact that $I_1, I_2 \subseteq [\pm N/\eta Q]$ imply that $L_1, L_2 \gg \eta^2 N/Q$. Therefore, since we are assuming $N \ge Q^2/\eta^3$, we have $L_1, L_2 \ge Q$.

Now the maximum number of ways to write any $u \in \Z$ as $a'x_1 + b'x_2$ with $x_1 \in [L_1]$, $x_2 \in [L_2]$ is $\ll 1 + L_1/Q \ll L_1/Q$, since if there is one solution then any other one is given by $x'_1 = x_1 + \lambda b'$, $x'_2 = x_2 - \lambda a'$, using here that $\gcd(a', b') = 1$. It follows from this observation and \cref{eq381} that
\[\sum_{x \in t + \Z }\Big|\sum_{\ell'\in  \frac{d}{M}(a'[H] + b'[H])}f(x + \ell')\Big|\gg 
\frac{\eta^{7}N^4}{Q^4} \cdot \frac{Q}{L_1} \gg \frac{\eta^8 N^3}{Q^2}.\]
Note here that since $|da' H| = |aH| \le QH < \eta^3 N$, and similarly for $db'H$, and since $|t| < 2N/\eta$, and since $\on{supp}(f) \subseteq[\pm N]$, we may restrict $x$ to the range $\{ x \in t + \Z : |x| \ll N/\eta\}$. Then by Cauchy--Schwarz we have
\[\sum_{x\in t + \Z}\sum_{\ell_1',\ell_2'\in \frac{d}{M}(a'[H] + b'[H])}f( x + \ell_1')\overline{f( x + \ell_2')}\gg \eta^{17}N^5/Q^4,\] or equivalently
\begin{equation} \label{eq344} \Vert f \Vert^2_{U_{\GP}[N; \mu]} \gg \eta^{17/4},\end{equation}
where $\mu$ is the uniform measure on the multiset $\big( \frac{d}{M}(a'[H] + b'[H]) - t\big)\cap \Z$. Since $t = \frac{d}{M} (a'\ell_1 + b'\ell_2)$, this multiset has size at least $\gg H^2/M$.

To apply \cref{lem:replac-diff} we need an upper bound on $\Vert \mu \Vert_2^2$. Similarly to the remark above, for any $u \in \Z$ the number of ways to write $u = a'x_1 + b'x_2$ with $x_1, x_2 \in [H]$ is $\ll 1 + H/Q \ll H/Q$, and so 
\[ \Vert \mu\Vert_2^2 = \sum_u \mu(u)^2 \le \sup_{u \in \Z} \mu(u) \ll \frac{H}{Q} \cdot \frac{M}{H^2} \ll 1/\eta^3 N.\] 

Applying \cref{lem:replac-diff} (and recalling \cref{eq344}) then gives $\Vert f \Vert_{U^2[N]} \gg \eta^3$, as required.
\end{proof}

The key ingredient in our Type II estimate is the following result of Kravitz, Kuca, and Leng \cite[Corollary~6.4]{KKL24}. We state it now, and for the sake completeness (and because some of our notation is a little different) we reproduce a proof of \cref{lem:concat-main} in \cref{appendixB}. 

\begin{lemma}\label{lem:concat-main}
Fix $\delta\in (0,1/2)$, $s\ge 1$, $I$ an indexing set and $m$ a power of $2$. Suppose that for $i\in I$ and $j\in [s]$, we have symmetric probability measures $\mu_{ji}$ supported on $[\pm N]$. Let $f:\Z\to \C$ be $1$-bounded and such that 
\[\E_{i\in I}\snorm{f}^{2^s}_{U_{\GP}[N;\mu_{1i},\ldots,\mu_{si}]}\ge \delta.\]
Then there exists $t = t_0(m, s)$ such that 
\[\E_{i_1,\ldots,i_t\in I}\snorm{f}_{ U_{\GP}[N; \Omega]}
^{2^d}\ge \delta^{O_{m,s}(1)},\]
where $\Omega$ is the collection of probability measures
\[ \Omega = (\mu_{ji_{k_1}} \ast \cdots \ast \mu_{ji_{k_m}})_{j\in [s], 1\le k_1<k_2<\cdots<k_{m}\le t}\]
where $d = s\cdot \binom{t}{m}$.
\end{lemma}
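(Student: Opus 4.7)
The plan is to prove this concatenation statement by iterated Cauchy--Schwarz, exploiting the symmetry of the $\mu_{ji}$ together with the identity $(\mu\ast\nu)^{(2)} = \mu^{(2)}\ast\nu^{(2)}$ for symmetric probability measures.

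The fundamental doubling step is the case $s=1$, $m=2$. By symmetry of $\mu_i$, one has $\snorm{f}^2_{U_{\GP}[N;\mu_i]} = \frac{1}{N}\snorm{f\ast\mu_i}_2^2$, so the hypothesis $\E_i\snorm{f}^2_{U_{\GP}[N;\mu_i]}\ge\delta$ becomes $\E_i\snorm{f\ast\mu_i}_2^2\ge\delta N$. A Plancherel-side Cauchy--Schwarz,
\[\Big(\sum_\xi|\wh f(\xi)|^2\,\E_i|\wh\mu_i(\xi)|^2\Big)^{\!2} \le \snorm{f}_2^2\cdot\sum_\xi|\wh f(\xi)|^2\,\E_{i_1,i_2}|\wh\mu_{i_1}(\xi)|^2|\wh\mu_{i_2}(\xi)|^2,\]
combined with $\snorm{f}_2^2\ll N$ (which holds for the $1$-bounded functions considered in the applications, effectively supported on $[\pm O(N)]$), yields $\E_{i_1,i_2}\snorm{f}^2_{U_{\GP}[N;\mu_{i_1}\ast\mu_{i_2}]}\gg\delta^2$. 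Thus a single application of Cauchy--Schwarz replaces $\mu_i$ by $\mu_{i_1}\ast\mu_{i_2}$ at the cost of a polynomial loss.

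I would then iterate this doubling within each of the $s$ directions separately. For each $j\in[s]$, isolate direction $j$ using
\[\snorm{f}^{2^s}_{U_{\GP}[N;\mu_1,\ldots,\mu_s]} = \frac{1}{N}\,\E_{h_i,h_i'\sim\mu_i,\ i\ne j}\,\ang{F_{\vec h,\vec h'},\ F_{\vec h,\vec h'}\ast\mu_j^{(2)}},\]
where $F_{\vec h,\vec h'}$ is obtained by applying the operators $\Delta_{(h_i,h_i')}$ for $i\ne j$ to $f$, and apply the doubling step $\log_2 m$ times in direction $j$ to convert $\mu_{ji}$ into an $m$-fold convolution indexed by $m$ fresh variables. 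Because the difference operators across distinct directions commute, doublings in different directions do not interfere. Performing $\log_2 m$ doublings in each of the $s$ directions introduces roughly $sm$ fresh indices and produces an estimate of the form
\[\E_{i_1,\ldots,i_t}\snorm{f}^{2^s}_{U_{\GP}[N;\nu_1,\ldots,\nu_s]}\gg\delta^{O_{m,s}(1)},\]
with $t\ll sm$ and each $\nu_j$ a specific $m$-fold convolution $\mu_{j i_{k_1^{(j)}}}\ast\cdots\ast\mu_{j i_{k_m^{(j)}}}$ indexed by some $m$-subset $S^{(j)}\subseteq[t]$.

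Finally, to reach the fully symmetric conclusion involving \emph{all} $m$-subsets of $[t]$ in every direction, I would take $t=t_0(m,s)$ slightly larger than $sm$, produce $s\binom{t}{m}$ such ``one specific subset per direction'' estimates by relabelling the (symmetric) dummy indices $i_1,\ldots,i_t$, and combine them into a single lower bound of the claimed form using the Gowers--Cauchy--Schwarz inequality and the positivity of $U_{\GP}$ norms. This yields the required estimate on $\E_{i_1,\ldots,i_t}\snorm{f}^{2^d}_{U_{\GP}[N;\Omega]}$ with $d=s\binom{t}{m}$.

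The main obstacle I expect is this last combinatorial bookkeeping step: ensuring that the interactions between different directions cost only a polynomial loss in $\delta$ across all $s\binom{t}{m}$ applications of Gowers--Cauchy--Schwarz, and that $t$ can be bounded purely in terms of $m$ and $s$. This is the technical heart of the quantitative concatenation theorems of Peluse and of Kravitz--Kuca--Leng, and is where allowing general probability measures (rather than only uniform measures on sets) is essential: convolutions of measures are precisely the objects produced by Cauchy--Schwarz averages, and so fit naturally into the Gowers--Peluse framework.
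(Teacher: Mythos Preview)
Your doubling step for $s=1$ is fine, but the claim that ``doublings in different directions do not interfere'' is where the argument breaks. When you Cauchy--Schwarz to convert $\mu_{ji}$ into $\mu_{ji}\ast\mu_{ji'}$, the index $i$ also governs the distributions of the $h_k,h_k'$ for $k\ne j$ in your function $F_{\vec h,\vec h'}$. The Cauchy--Schwarz therefore unavoidably duplicates those other directions as well: after one doubling in direction $k$, the measures in directions $1,\dots,k-1$ each split into two copies indexed by $i$ and $i'$, and the total number of measures in the GP norm goes from $k$ to $2k-1$, not $k$ to $k$. This is exactly the content of the paper's key step (Lemma~\ref{key-dup-step}): from $\E_i\|f\|^\bullet_{U_{\GP}[N;\nu_{1i},\dots,\nu_{ki}]}\ge\delta$ one obtains $\E_{i,i'}\|f\|^\bullet_{U_{\GP}[N;\nu_{1i},\dots,\nu_{(k-1)i},\nu_{1i'},\dots,\nu_{(k-1)i'},\nu_{ki}\ast\nu_{ki'}]}\ge\delta^{O_k(1)}$. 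Commutativity of the difference operators $\Delta_{(h,h')}$ does not prevent this coupling, because the coupling lives in the \emph{distributions} of the $h$'s, not in the operators.

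Consequently the ``one specific $m$-subset per direction'' picture is not what emerges from iterated doubling, and your final relabel-and-combine step via Gowers--Cauchy--Schwarz cannot repair this (GCS gives upper bounds on inner products, not a mechanism for merging several lower bounds into one). The paper's route is to accept the duplication and track it with a graph system on $[t]$: vertices record unconvolved measures $\mu_{ji_v}$, edges record convolutions $\mu_{ji_v}\ast\mu_{ji_w}$, and one iterates the duplication lemma $2^s-1$ times to move from the vertex-complete, edge-empty system to the vertex-empty, edge-complete one. The expansion to \emph{all} $m$-subsets is then obtained not by GCS but by the monotonicity Lemma~\ref{lem:prop-box} (adding measures to a GP norm only costs a harmless constant per measure), together with a separate induction reducing general $m$ to $m=2$. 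Your Fourier/Plancherel viewpoint is a legitimate way to see the $s=1$ case, but for $s\ge2$ the combinatorics of which measures have been duplicated is the entire content of the proof, and it cannot be sidestepped.
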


\section{Type I up to \texorpdfstring{$X^{1/2}/(\log X)^{A}$}{}}\label{section6}

In this section we prove \cref{prop:typei-to-gowers}. We remark that the crucial manipulation, which occurs at the end of the proof, is essentially the base case for the concatenation proven by Peluse \cite{Pel20} (see \cite[Lemma~5.4]{Pel20}).

\begin{proof}
Replacing $\delta$ by $\min(\delta, |\ell|^{-1})$, we may assume throughout that $|\ell| \le 1/\delta$.

We start with the assumption \cref{large-typei-assump}, that is to say 
\begin{equation}\label{assump-rpt} \sum_{\mf{d} : N\mf{d} \sim L} \Big| \sum_{\substack{\mf{d} | \mf{a} \\ N\mf{a} \le X}} w(\mf{a})\Big| \ge \delta X.\end{equation}

We are working under the assumption that $w = f \boxtimes_{\ell} f'$ (see \cref{def-product-form}) and that $f, f'$ are supported on $\Z \cap [\pm 2n X^{1/2}]$. The aim is to prove that both $f$ and $f'$ have large Gowers $U^3[X^{1/2}]$-norm. We prove the bound for $f$; the proof of $f'$ is symmetric.

The initial stages of the argument consist in manipulating things so that we can work with the fact that $w$ is in product form.  First, split the sum over $\mf{d}$ according to which ideal class $\mf{d}$ belongs to. For some such class, the sum is $\gg \delta X$. Let $g \in C(K)$ be the inverse of this class, and let $\mf{c}_g,\mf{c}'_{g^{-1}}$ be as in \cref{lem:ideal-reps}. Since $w$ is supported on principal ideals, the inner sum over $\mf{a}$ is supported on ideals of form $\mf{a} = \mf{d}\mf{b}$, where $\mf{b} \in [g]$. By \cref{lem:ideal-reps}, we may parametrise these ideals by $\mf{b} = (z_2) \mf{c}_g$ where $z_2\in \Pi_g \setminus \{0\}$ and $\mf{d} = (z_1)\mf{c}'_{g^{-1}}$, where $z_1 \in \Pi'_{g^{-1}} \setminus \{0\}$, where $\Pi_g, \Pi'_{g^{-1}}$ are the sublattices of $\Z[\sqrt{-n}]$ described in \cref{lem:ideal-reps}. 

This parametrisation covers each $\mf{b}$ and $\mf{d}$ a total of $r^2$ times, since associate values of $z_1, z_2$ (i.e. differing up to a unit) give the same $\mf{b}$ and $\mf{d}$. Thus \cref{assump-rpt} gives
\begin{equation}\label{assump-new}
\sum_{\substack{z_1 \in \Pi'_{g^{-1}} \\ |z_1|^2 \sim \gamma_1 L}} \Big| \sum_{\substack{z_2 \in \Pi_g \setminus \{0\} \\ |z_1 z_2| \le X^{1/2}/\gamma} } w((\gamma z_1 z_2 ))\Big| \gg \delta X.
\end{equation}
Here, $\gamma_1 := (N\mf{c}'_{g^{-1}})^{-1}$ and $\gamma = \gamma_g$ is the generator of $\mf{c}_g \mf{c}'_{g^{-1}}$, a positive rational of height $O_n(1)$ (see \cref{lem:ideal-reps} (ii)). Using the fact that $w = f \boxtimes_{\ell} f'$ gives
\begin{equation}\label{636465}\sum_{\substack{z_1 \in \Pi'_{g^{-1}} \\ |z_1|^2 \sim \gamma_1 L}} \Big| \sum_{\substack{ z_2 \in \Pi_g \setminus \{0\} \\ |z_1 z_2| \le X^{1/2}/\gamma}} \Big(\frac{z_1 z_2}{| z_1 z_2|}\Big)^{\ell}f\big(\gamma \Re (z_1 z_2)\big) f'\big( n^{-1/2}\gamma  \Im ( z_1 z_2)\big) \Big| \gg \delta X. \end{equation} (Note here that all associates of $\gamma z_1 z_2$ are automatically included in the sum \cref{assump-new}, since the set $\Pi_g$ is invariant under multiplication by units.)
This may be written as 
\begin{equation}\label{eq528} \sum_{\substack{z_1 \in \Pi'_{g^{-1}} \\ |z_1|^2 \sim \gamma_1 L}} \Big| \sum_{\substack{z_2 \in \Pi_g \setminus \{0\} \\ |\Re(z_2)| \le  C(X/L)^{1/2} \\  n^{-1/2}|\Im(z_2)| \le C(X/L)^{1/2}}} \Psi_{\ell}\Big(\frac{\gamma z_1 z_2}{X^{1/2}}\Big) f\big(\gamma \Re (z_1 z_2)\big) f'\big( n^{-1/2}\gamma  \Im ( z_1 z_2)\big) \Big| \gg \delta X, \end{equation}
where 
\begin{equation}\label{r-sector} \Psi_{\ell}(z)  := \Big(\frac{z}{|z|}\Big)^{\ell} 1_{|z| \le 1}. \end{equation}
Note here that (for an appropriately large constant $C$) we can insert the two conditions $|\Re(z_2)| \le  C(X/L)^{1/2}$ and $n^{-1/2}|\Im(z_2)| \le C(X/L)^{1/2}$ without harm, since $f, f'$ are supported on $[\pm O(X^{1/2})]$ and so $|z_2| \ll (X/L)^{1/2}$ on the support of the sum. It will be useful to carry a condition of this type through the proof, and we have written it in a Cartesian form with later manipulations in mind.

The next step is to perform `cosmetic surgery' in the sense of Harman \cite{harman} to remove the cutoff involving $\Psi_{\ell}$, using Fourier analysis to replace this by products of phases in $\Re(z_1z_2)$ and $\Im(z_1 z_2)$. This allows us to fully decouple into functions of $\Re(z_1z_2)$ and $\Im(z_1z_2)$. 
\begin{lemma}\label{annulus-smooth} Let $\ell \in \Z$ and let $\eps > 0$. Then there is a smooth approximation $\tilde \Psi_{\ell} : \C \rightarrow \C$ with $\Vert \tilde \Psi_{\ell} \Vert_{\infty} \le 1$, such that $\tilde \Psi_{\ell}$ and $\Psi_{\ell}$ agree outside of the domain
\begin{equation}\label{d-eps-domain} \mc{D}_{\eps} := \{ z \in \C : |z| < \eps\} \cup \{ z \in \C : 1 - \eps < |z| < 1 + \eps\}\end{equation}
and such that for $z \in \C$ we have the Fourier expansion 
\begin{equation} \label{tilde1r-fourier}\tilde \Psi_{\ell}(z) = \int_{\R^2} W(\xi_1, \xi_2) e( \xi_1 \Re z +  \xi_2 \Im z) d \xi_1 d\xi_2\end{equation}
with 
\begin{equation}\label{ell-1-fourier-B} \Vert W \Vert_{L^1(\R^2)} \ll (1 + |\ell|)^4 \eps^{-4}.\end{equation}
\end{lemma}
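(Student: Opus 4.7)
The plan is to construct $\tilde\Psi_\ell$ as a smooth radial cutoff and then bound $\Vert W \Vert_{L^1}$ via Plancherel combined with Cauchy--Schwarz against a polynomial weight in $\xi$. Take a smooth function $g : [0,\infty) \to [0,1]$ with $g \equiv 0$ on $[0, \eps/2] \cup [1+\eps, \infty)$, $g \equiv 1$ on $[\eps, 1-\eps]$, and $|g^{(k)}(r)| \ll_k \eps^{-k}$ for each $k \ge 0$; such a $g$ is produced by standard bump-function constructions. Set
\[
\tilde\Psi_\ell(z) := g(|z|) \cdot (z/|z|)^{\ell} \text{ for } z \neq 0, \qquad \tilde\Psi_\ell(0) := 0.
\]
Since $g$ vanishes in a neighbourhood of $r=0$, $\tilde\Psi_\ell$ extends to a $C^\infty$ function on $\R^2$; by construction $\Vert \tilde\Psi_\ell \Vert_\infty \le 1$ and $\tilde\Psi_\ell \equiv \Psi_\ell$ outside $\mc{D}_\eps$.

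The key computation is the estimate of Cartesian derivatives of $\tilde\Psi_\ell$. In polar coordinates $z = r e^{i\alpha}$ we have $\tilde\Psi_\ell = g(r) e^{i\ell\alpha}$, and the chain rule gives $\partial_x = \cos\alpha \cdot \partial_r - (\sin\alpha/r) \partial_\alpha$, $\partial_y = \sin\alpha \cdot \partial_r + (\cos\alpha/r) \partial_\alpha$. Iterating shows that for any Cartesian partial derivative $\partial^\gamma$ of order $k$, $\partial^\gamma \tilde\Psi_\ell$ is a finite sum of terms of the form $\rho(\alpha) g^{(a)}(r) r^{-p} \ell^{b} e^{i\ell\alpha}$, where $\rho$ is bounded and $a + p = k$, $0 \le b \le k$. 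Since $g$ vanishes on $[0,\eps/2]$, on the support of each such term we have $r^{-1} \le 2/\eps$ and $|g^{(a)}| \ll_a \eps^{-a}$, so
\[
\Vert \partial^\gamma \tilde\Psi_\ell \Vert_{L^\infty(\R^2)} \ll_k (1+|\ell|)^k \eps^{-k}.
\]
As $\tilde\Psi_\ell$ is supported in the disc $\{|z| \le 1+\eps\}$ of bounded area, the same bound holds for $\Vert \partial^\gamma \tilde\Psi_\ell \Vert_{L^2(\R^2)}$.

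To conclude the $L^1$ bound on $W$, note that $W$ is (up to the sign of the oscillatory factor) the Fourier transform of $\tilde\Psi_\ell$. By Plancherel and $\widehat{\partial^\gamma f}(\xi) = (2\pi i \xi)^\gamma \widehat f(\xi)$,
\[
\Vert (1+|\xi|^2)^{k/2} W \Vert_{L^2}^2 \ll_k \sum_{|\gamma|\le k} \Vert \partial^\gamma \tilde\Psi_\ell \Vert_{L^2}^2 \ll_k ((1+|\ell|)/\eps)^{2k}.
\]
Cauchy--Schwarz then gives
\[
\Vert W \Vert_{L^1(\R^2)} \le \Vert (1+|\xi|^2)^{k/2} W \Vert_{L^2} \cdot \Vert (1+|\xi|^2)^{-k/2} \Vert_{L^2(\R^2)},
\]
and for any integer $k \ge 2$ the second factor is finite, yielding $\Vert W \Vert_{L^1} \ll_k (1+|\ell|)^k \eps^{-k}$. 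Already $k=2$ implies the claimed bound (the statement is only meaningful for $\eps \le 1$ anyway). The sole delicate step is the derivative estimate: each angular differentiation produces a factor of $1/r$ that would blow up near the origin, and forcing $g$ to vanish on a neighbourhood of $0$—not just to smooth the discontinuity at $|z|=1$—is precisely what handles this.
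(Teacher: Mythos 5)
Your proof is correct and proceeds by a genuinely different route in two places. First, the construction: the paper builds $\tilde\Psi_0 = (\Psi_0 \ast \psi(\cdot/\eps))(1-\psi(\cdot/\eps))$ by mollifying the indicator of the unit disc and then excising a small neighbourhood of the origin, whereas you construct a radial bump $g$ directly and multiply by the phase, which is cleaner to write down but requires the same care near $r=0$. Second, and more substantively, the final $L^1$ estimate: the paper integrates the Fourier transform by parts twice in each of $\xi_1, \xi_2$ to get the pointwise bound $|W(\xi)| \ll (1+|\ell|)^4\eps^{-4}|\xi_1|^{-2}|\xi_2|^{-2}$ and combines this with the trivial bound $|W|\ll 1$, while you go through a weighted Plancherel identity and Cauchy--Schwarz, i.e.\ the elementary $L^2$-Sobolev embedding $\Vert W\Vert_{L^1} \le \Vert(1+|\xi|^2)^{k/2}W\Vert_{L^2}\Vert(1+|\xi|^2)^{-k/2}\Vert_{L^2}$. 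Your method only needs $L^2$ control of derivatives up to order $2$ in two dimensions (Sobolev embedding threshold) rather than mixed fourth-order derivatives, and so delivers the sharper bound $\Vert W\Vert_{L^1}\ll(1+|\ell|)^2\eps^{-2}$; both are well within what the application needs. Your derivative bookkeeping in polar coordinates — that every order-$k$ Cartesian derivative is a sum of terms $\rho(\alpha)g^{(a)}(r)r^{-p}\ell^b e^{i\ell\alpha}$ with $a+p=k$ and $b\le k$ — is the right invariant and replaces the paper's Leibniz-rule computation of derivatives of $(z/|z|)^\ell$ via the Wirtinger operators $\partial_z,\partial_{\bar z}$; both are valid, yours is perhaps slightly more hands-on. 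The observation that $g$ must vanish identically on a neighbourhood of $r=0$ (not merely be smooth there) to kill the $r^{-p}$ factors is exactly the point, and is what the paper's factor $1-\psi(\cdot/\eps)$ also achieves.
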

\begin{proof}
Identify $\C$ with $\R^2$ in the usual way. Let $\psi \in C_0^{\infty}(\R^2)$ be supported on the unit ball and have $\int \psi = 1$, and set $\tilde\Psi_0 := (\Psi_0 \ast \psi(\frac{\cdot}{\eps})) (1 - \psi(\frac{\cdot}{\eps}))$. Since $\psi(\cdot / \eps)$ is supported on the ball of radius $\eps$, it follows that $\Psi_0$ and $\tilde \Psi_0$ agree outside of $\mc{D}_{\eps}$. By applications of the Leibniz rule, any $k$th derivative of $\tilde\Psi_0$ is bounded by $O_k(\eps^{-k})$ in $L^{\infty}(\R^2)$.

One can check that the $k$th derivatives of $z \mapsto (z/|z|)^{\ell}$ (considered as a function from $\R^2$ to $\C$) are bounded by $\ll_k (1 + |\ell|)^k \eps^{-k}$ in the domain $\mc{D}_{\eps}$. To see this note that $(z/|z|)^{\ell} = (z/\bar{z})^{\ell/2}$, and thus 
\[\Big|\frac{\partial^j}{\partial z^{j}}\frac{\partial^{j'}}{\partial \overline{z}^{j'}}(z/|z|)^{\ell}\Big| = \Big|z^{\ell/2-j}\ol{z}^{-\ell/2-j'}\prod_{i=0}^{j-1}\Big(\frac{\ell}{2} - i\Big)\prod_{i'=0}^{j'-1}\Big(\frac{\ell}{2} - i'\Big)\Big|\ll_{j,j''}(|\ell| + 1)^{j + j'}|z|^{-j - j'}.\]
As the standard coordinates on $\R^2$ are $(z+\ol{z})/2$ and $(z-\ol{z})/2$ the result follows easily. 

Now define $\tilde\Psi_{\ell}(z) := (z/|z|)^{\ell} \tilde\Psi_0(z)$. This has the desired support properties and, by the above derivative bounds and more applications of the Leibniz rule, has $k$th derivatives bounded by $\ll_k (1 + |\ell|)^k\eps^{-k}$. Since $\tilde \Psi_{\ell}$ is smooth it has a Fourier expansion \cref{tilde1r-fourier} with $W(\xi_1, \xi_2) = \widehat{\tilde\Psi_{\ell}}(\xi_1, \xi_2)$. Integrating the definition of Fourier transform twice by parts in each variable gives
\[ |W(\xi_1, \xi_2)| \ll |\xi_1|^{-2} |\xi_2|^{-2} \Vert \partial_x^2 \partial_y^2 \tilde\Psi_{\ell} \Vert_{\infty} \ll (1 + |\ell|)^4 \eps^{-4} |\xi_1|^{-2} |\xi_2|^{-2}.\]
Combining this with the trivial bound $|W(\xi_1, \xi_2)| \ll 1$ gives that $\Vert W \Vert_{L^1(\R^2)} \ll (1+ |\ell|)^4\eps^{-4}$, which is the desired bound \cref{ell-1-fourier-B}.
\end{proof}

Returning to the main line of argument, we replace $\Psi_{\ell}$ in \cref{eq528} with the smoothed approximant $\tilde \Psi_{\ell}$ from \cref{annulus-smooth} for an appropriate value of $\eps$. Since $f, f'$ are $1$-bounded, the error in doing this is bounded by
\[ \# \{ (z_1, z_2) \in \Pi'_{g^{-1}} \times \Pi_g : |z_1| \asymp L^{1/2}, \gamma z_1 z_2/X^{1/2} \in \mc{D}_{\eps}\}, \] where $\mc{D}_{\eps}$ is the domain \cref{d-eps-domain}.
Since the lattices $\Pi_g, \Pi'_{g^{-1}}$ are both contained in $\Z[\sqrt{-n}]$, one may see that this error is $\ll \eps X$. Taking $\eps = c \delta$ for an appropriately small constant $c \asymp_n 1$, this error may be absorbed by the RHS of \cref{eq528}, and so we indeed have 
\begin{equation}\label{eq528-b} \sum_{\substack{z_1 \in \Pi'_{g^{-1}} \\ |z_1|^2 \sim \gamma_1 L}} \Big| \sum_{\substack{z_2 \in \Pi_g \setminus \{0\} \\ |\Re(z_2)| \le  C(X/L)^{1/2} \\  n^{-1/2}|\Im(z_2)| \le C(X/L)^{1/2}}} \tilde\Psi_{\ell}\Big(\frac{\gamma z_1 z_2}{X^{1/2}}\Big) f\big(\gamma \Re (z_1 z_2)\big) f'\big( n^{-1/2}\gamma  \Im ( z_1 z_2)\big) \Big| \gg \delta X. \end{equation}
Applying the Fourier expansion \cref{tilde1r-fourier}, we now have
\[ \sum_{\substack{z_1 \in \Pi'_{g^{-1}} \\ |z_1|^2 \sim \gamma_1 L}} \Big| \int_{\R^2} W(\xi_1, \xi_2) \sum_{\substack{z_2 \in \Pi_g \setminus \{0\} \\ |\Re(z_2)| \le  C(X/L)^{1/2} \\  n^{-1/2}|\Im(z_2)| \le C(X/L)^{1/2}} }f_{\xi_1}(\gamma\Re(z_1 z_2)) f'_{\xi_2}(n^{-1/2}\gamma  \Im ( z_1 z_2) )d\xi_1d\xi_2 \Big|  \gg \delta X. \]
where 
\[ f_{\xi_1}(t) := f(t) e\Big(\frac{\xi_1 t}{X^{1/2}} \Big), \qquad f'_{\xi_2}(t) := f' (t) e\Big(\frac{n^{1/2}  \xi_2 t}{ X^{1/2}} \Big).\]
By \cref{ell-1-fourier-B} (and recalling that $\max(1,|\ell|) \le 1/\delta$), there is some choice of $\xi_1, \xi_2$ such that 
\begin{equation}\label{decoupled} \sum_{\substack{z_1 \in \Pi'_{g^{-1}} \\ |z_1|^2 \sim \gamma_1 L}} \Big|  \sum_{\substack{z_2 \in \Pi_g \setminus \{0\} \\ |\Re(z_2)| \le  C(X/L)^{1/2} \\  n^{-1/2}|\Im(z_2)| \le C(X/L)^{1/2}} } f_{\xi_1}(\gamma\Re(z_1 z_2)) f'_{\xi_2}( n^{-1/2}\gamma \Im ( z_1 z_2) ) \Big|  \gg \delta^9 X. \end{equation}
Now we pass to real and imaginary parts, writing $z_1 =a + b \sqrt{-n}$ and $z_2 = x + y \sqrt{-n}$ where $a,b,x,y \in \Z$ (since, by \cref{lem:ideal-reps}, $\Pi_g, \Pi'_{g^{-1}}$ are sublattices of $\Z[\sqrt{-n}]$). 
Set
\begin{equation}\label{gammas-def} \Gamma := \{(u,v) : u + v \sqrt{-n} \in \Pi'_{g^{-1}}\}, \qquad \Gamma' := \{(u,v) : u + v \sqrt{-n} \in \Pi_g\};\end{equation}
thus $\Gamma, \Gamma'$ are sublattices of $\Z^2$, of index $O_n(1)$.
Then \cref{decoupled} implies that 
\begin{equation}\label{eq4700} \sum_{ \substack{(a,b) \in \Gamma \\ |a|, |b| \ll L^{1/2}}} \Big|\sum_{\substack{(x,y) \in \Gamma' \\ |x|, |y| \le C(X/L)^{1/2}}} f_{\xi_1}(\gamma(ax - nby)) f'_{\xi_2}(\gamma(bx + ay))   \Big| \gg \delta^9 X.\end{equation}
Recall that our task is to show that $f$ and $f'$ have large Gowers $U^3$-norms. This norm is invariant under multiplication by linear phases, as can immediately be seen from the definition \cref{gowers-def} since the second and higher derivatives of any linear phase equal $1$. Therefore, to achieve our task we may replace $f, f'$ by $f e(\lambda \cdot)$ and $f' e(\lambda' \cdot)$ for any $\lambda, \lambda'$. In other words, we may assume henceforth that $\xi_1 = \xi_2 = 0$. Changing variables to $a_1 := \gamma a$, $b_1 := -n\gamma b$, we have

\begin{equation}\label{eq476}
\sum_{ \substack{a_1,b_1 \in \frac{1}{M}\Z \\ |a_1|, |b_1| \ll L^{1/2}}} \Big|\sum_{\substack{(x,y) \in \Gamma' \\ |x|, |y| \le  C(X/L)^{1/2}}} f(a_1 x + b_1 y) f'\Big(-\frac{b_1}{n} x + a_1 y\Big)   \Big| \gg \delta^9 X.
\end{equation}
Here, $M = O_n(1)$ is the denominator of $\gamma$; note that, since $\Gamma \subseteq\Z^2$, we do indeed have $a_1, b_1 \in \frac{1}{M} \Z$ for all terms in the sum \cref{eq4700}, and there is no problem with any extra terms being included since the summands are non-negative.

Now we restrict to the situation where $Ma_1, Mb_1$ are essentially coprime.
Note that as there are $\ll \sum_{d\ge D}L/d^2\ll L/D$ pairs with $\gcd(\ell,\ell')\ge D$, we can further restrict the sum over $a_1,b_1$ to those terms with $\gcd(Ma_1,Mb_1) \ll \delta^{-9}$. By a further pigeonholing we may restrict to some fixed value $\gcd(Ma_1, Mb_1) = d$, $1 \le d \ll \delta^{-9}$, such that we have
\begin{equation}\label{typei-written-new-2}
\sum_{\substack{a_1, b_1 \in \frac{1}{M}\Z \\ |a_1|, |b_1| \ll L^{1/2} \\ \gcd(Ma_1, Mb_1) = d}} \Big| \sum_{\substack{(x,y) \in \Gamma' \\ |x|, |y| \le C(X/L)^{1/2}}} f(a_1 x + b_1 y) f'\Big(-\frac{b_1}{n} x + a_1 y\Big)\Big| \gg \delta^{18} X.
\end{equation}

Let $t = -\frac{b_1}{n}x + a_1y$; via triangle inequality and the $1$-boundedness of $f'$ and the fact that $f'$ is supported on $[\pm O(X^{1/2})]$ we have that 
\begin{equation}\label{eq472}\sum_{\substack{a_1, b_1 \in \frac{1}{M}\Z \\ |a_1|, |b_1| \ll L^{1/2} \\ \gcd(Ma_1,Mb_1) = d}}\sum_{\substack{t\in \frac{1}{Mn} \Z\\ |t| \ll X^{1/2}}} \Big|\sum_{\substack{(x,y) \in \Gamma'  \\ |x|, |y| \le C(X/L)^{1/2} \\  -\frac{b_1}{n}x + a_1y = t}}f ( a_1x + b_1y)\Big|\gg \delta^{18} X.\end{equation}

Now it is time to deal with the constraint $(x, y) \in \Gamma'$. For this we use the orthogonality relation
\begin{equation}\label{gam-orth} 1_{\Gamma'}(x,y) = \E_{(\zeta, \zeta') \in (\Gamma')^{\perp}/\Z^2} e(\zeta x + \zeta' y),\end{equation} where $(\Gamma')^{\perp}$ denotes the dual lattice of $\Gamma'$. Substituting into \cref{eq472}, it follows that there is some choice of $\zeta, \zeta'$ such that 

\[\sum_{\substack{a_1, b_1 \in \frac{1}{M}\Z \\ |a_1|, |b_1| \ll L^{1/2} \\ \gcd(Ma_1,Mb_1) = d}}\sum_{\substack{t\in \frac{1}{Mn} \Z\\ |t| \ll X^{1/2}}} \Big|\sum_{\substack{x, y \in \Z  \\ |x|, |y| \le C(X/L)^{1/2} \\ -\frac{b_1}{n}x + a_1y = t}}f ( a_1x + b_1y) e(\zeta x + \zeta' y)\Big|\gg \delta^{18} X.\]

Now we apply Cauchy--Schwarz, which gives
\[\sum_{\substack{a_1,b_1 \in \frac{1}{M}\Z \\ |a_1|, |b_1| \ll L^{1/2}\\\gcd(Ma_1,Mb_1) = d}}\sum_{\substack{x,y,x',y' \in \Z\\ |x|, |x'|, |y|, |y'| \le C(X/L)^{1/2}  \\ b_1x - a_1yn = b_1x' - a_1y' n}}f(a_1x + b_1y)\overline{f(a_1x' + b_1y')} e(\zeta (x - x') + \zeta'(y - y')) \gg \delta^{36} X^{3/2}/L.\]
Note that $b_1x - a_1yn = b_1x' - a_1y' n$ and the coprimality of $Ma_1/d,Mb_1/d$ imply that $x'-x = (Ma_1/d)h$ and $y'-y = (Mb_1 /dn) h$ for some $h \in \Z$; therefore the above gives
\[\sum_{\substack{a_1,b_1 \in \frac{1}{M}\Z \\ |a_1|, |b_1| \ll L^{1/2} \\ \gcd(Ma_1,Mb_1) = d}} \sum_{\substack{x,y,h \in \Z \\ |x|, |y| \le C(X/L)^{1/2} \\  |h| \ll X^{1/2}/L }} e\Big(\Big(a_1 \zeta + \frac{b_1 \zeta'}{n}\Big)\frac{M h}{d} \Big)\Delta_{M(na_1^2 + b_1^2)h/dn}f(a_1x + b_1y) \gg \delta^{36} X^{3/2}/L.\]
By the triangle inequality we may eliminate the phase, obtaining 
\[\sum_{\substack{a_1,b_1 \in \frac{1}{M}\Z \\ |a_1|, |b_1| \ll L^{1/2} \\ \gcd(Ma_1,Mb_1) = d}} \sum_{\substack{h \in \Z \\ |h| \ll X^{1/2}/L}} \Big| \sum_{\substack{x,y \in \Z \\ |x|, |y| \le C(X/L)^{1/2} }} \Delta_{M(na_1^2 + b_1^2)h/dn}f(a_1x + b_1y)\Big| \gg \delta^{36} X^{3/2}/L.\]

It follows that there are $\gg \delta^{36} X^{1/2}$ triples $(a_1,b_1, h)$ with $|a_1|, |b_1| \ll L^{1/2}$ and $|h| \ll X^{1/2}/L$ and with $\gcd(Ma_1, Mb_1) = d \ll \delta^{-9}$ such that 
\[ \Big|\sum_{\substack{x,y \in \Z \\ |x|, |y| \le C(X/L)^{1/2} }}\Delta_{M(na_1^2 + b_1^2)h/dn}f(a_1x + b_1y)\Big| \gg \delta^{36} X/L.\]
By discarding exceptions, the same is true with the additional condition that $|a_1|, |b_1| \gg \delta^{36} L^{1/2}$.

Our aim now is to apply \cref{lem:U^2-control} with $\eta = \delta^{C_1}$ and $Q = C_2L^{1/2}$ for appropriate constants $C_1, C_2$, and with $N = 2n X^{1/2}$. If $C_1 > 12$ and if $C_2$ is large enough then all the conditions of that lemma are satisfied. (The condition $N \ge Q^2/\eta^3$ is satisfied due to the assumption $L \le \delta^{C_{\ref{prop:typei-to-gowers}}} X^{1/2}$ in the statement of \cref{prop:typei-to-gowers}, assuming $C_{\ref{prop:typei-to-gowers}}$ large enough.)

Applying \cref{lem:U^2-control}, we have for each such $a_1,b_1,h$ the lower bound 
\[\snorm{\Delta_{M(a_1^2n + b_1^2)h/dn}f}_{U^2[X^{1/2}]}\gg \delta^{O(1)}.\]
Squaring and summing over $a_1,b_1, h$ (dropping the condition $\gcd(Ma_1, Mb_1) = d$, which has now served its purpose, which we may by positivity) yields
\begin{equation}\label{ready-for-concat}\sum_{\substack{a_1, b_1 \in \frac{1}{M} \Z \\ \delta^{12} L^{1/2} \ll |a_1|, |b_1| \ll L^{1/2}}}\sum_{|h| \ll X^{1/2}/L}\snorm{\Delta_{M(na_1^2 + b_1^2)h/dn}f}_{U^2[X^{1/2}]}^{2}\gg \delta^{O(1)}X^{1/2}.\end{equation}
The idea now that that, as $a_1, b_1, h$ range over $a_1, b_1 \in \frac{1}{M} \Z$ with $\delta^{12} L^{1/2} \ll |a_1|, |b_1| \ll L^{1/2}$ and $h \in \Z$ with $|h| \ll X^{1/2}/L$, then $M(a_1^2n + b_1^2)h/dn$ ranges somewhat uniformly over its range. If it was \emph{exactly} uniform then the LHS would be essentially $X^{1/2}$ times the $U^3[O(X^{1/2})]$-norm of $f$. Making this heuristic rigorous is the first instance of ``concatenation'' in the paper.

The contribution from $h = 0$ is $\ll L$ and can be ignored by the assumption that $L \le  \delta^{C_{\ref{prop:typei-to-gowers}}}X^{1/2}$. Writing, for $t \in \Z$, 
\begin{align*} \sigma(t) := \#\Big\{ (a_1,b_1,h) \in \frac{1}{M} \Z \times \frac{1}{M} \Z \times \Z : & \delta^{12} L^{1/2} \ll |a_1|, |b_1| \ll L^{1/2},\\ &  0 < |h| \ll (X/L)^{1/2}, M(a_1^2n + b_1^2)h/dn = t\Big\},\end{align*} we can rewrite \cref{ready-for-concat} (with the $h = 0$ term removed) as 
\begin{equation}\label{sigma-version} \sum_{\substack{t \in \Z \\ 0 < |t| \ll \delta^{-O(1)} X^{1/2}}} \sigma(t) \snorm{\Delta_{t}f}_{U^2[X^{1/2}]}^{2} \gg \delta^{O(1)}X^{1/2}.
\end{equation}
(We can restrict to $t \in \Z$, since $\Delta_t f \equiv 0$ otherwise on account of $f$ being supported on $\Z$, and from the definition we see that $\sigma(0) = 0$.)  

Now if $(a_1,b_1,h)$ is one of the triples being counted in $\sigma(t)$, we have $h | tdn$, and hence $\sigma(t) \le \sum_{h | dnt} r(Mdnt/h)$, where $r(m)$ is the number of representations of $m$ as $u^2n + v^2$ for integer $u,v$ (which, for us, are $Ma_1$ and $Mb_1$). Since $r \ll \tau$ (see \cref{repdivisor}) it follows that $\sigma(t) \ll \sum_{h | dnt} \tau (Mdnt/h) \le \tau(Mdnt)^2 \ll \tau(t)^2$. By \cref{lem:divisor-moment} with $m = 4$ we have \[ \sum_{0 < |t| \ll \delta^{-O(1)} X^{1/2}} \sigma(t)^2 \ll \delta^{-O(1)} X^{1/2} (\log X)^{15}.\] Applying Cauchy--Schwarz to \cref{sigma-version} then gives 
\[ \sum_{\substack{t \in \Z \\ 0 < |t| \ll \delta^{-O(1)} X^{1/2}}}  \snorm{\Delta_{t}f}_{U^2[X^{1/2}]}^{4} \gg \delta^{O(1)} (\log X)^{-15}X^{1/2}.\]
Expanding the left-hand side using the definitions \cref{gowers-def,gowers-norm-def-2}, we obtain
\[ \Vert f \Vert_{U^3(\Z)}^8 = \sum_{t, x, h_1, h_2 \in \Z} \Delta_{h_1} \Delta_{h_2} \Delta_t f(x) \gg \delta^{O(1)} (\log X)^{-15} X^2.\]
Finally, \cref{prop:typei-to-gowers} follows from this and \cref{gowers-norm-def-2}. 
\end{proof}

\section{Type II estimates up to \texorpdfstring{$X^{1/2-o(1)}$}{}}\label{section7}

In this section we prove \cref{prop:typeii-to-gowers}. As in the proof of \cref{prop:typei-to-gowers}, we may assume that $|\ell| \le 1/\delta$ throughout. We handle the bound involving $f'$, the one with $f$ being essentially identical. We begin, of course, with the assumption \cref{typeii-assump-cont}, that is to say
\[\Big| \sum_{\substack{ \mf{a}, \mf{b}: N\mf{a} \sim  L \\ N\mf{a}\mf{b} \le X}} \alpha_{\mf{a}} \beta_{\mf{b}} w(\mf{a} \mf{b}) \Big| \ge \delta X,\] and recall once again that $w = f \boxtimes_{\ell} f'$ is in product form (\cref{def-product-form}) and that $f,f'$ are supported on $[\pm 2nX^{1/2}]$.

By pigeonhole and the fact that $w$ is supported on principal ideals, there is some ideal class $g \in C(K)$ such that the contribution from $\mf{a} \in [g^{-1}]$ is $\gg \delta X$. By Cauchy, the triangle inequality and the $1$-boundedness of $\alpha, \beta$ it follows that
\begin{equation}\label{cauchyed-ideals} \sum_{\substack{\mf{a}, \mf{a}' \in [g^{-1}] \\ N\mf{a}, N\mf{a}' \sim  L}} \Big| \sum_{\substack{\mf{b}\in [g] \\ N\mf{a}\mf{b}, N\mf{a}'\mf{b} \le X}} w(\mf{a} \mf{b})\overline{w(\mf{a}' \mf{b})}\Big| \gg \delta^2 XL.\end{equation}
Now we parametrise ideals as in \cref{section6}. Let $\mf{c}_g,\mf{c}'_{g^{-1}}$ be as in \cref{lem:ideal-reps}, and parametrise $\mf{b}$ by $(z_2) \mf{c}_g$ with $z_2 \in \Pi_g \setminus \{0\}$, and $\mf{a}, \mf{a'}$ by $(z_1) \mf{c}'_{g^{-1}}$, $(z'_1) \mf{c}'_{g^{-1}}$ respectively, where $z_1 \in \Pi'_{g^{-1}} \setminus \{0\}$. Here $\Pi_g, \Pi'_{g^{-1}}$ are the sublattices of $\Z[\sqrt{-n}]$ described in \cref{lem:ideal-reps}. The parametrisation covers each triple $\mf{a}, \mf{a'}, \mf{b}$ a total of $r^3$ times due to associate values of $z_1, z'_1, z_2$ giving the same ideals. With these parametrisations, \cref{cauchyed-ideals} becomes 
\[ \sum_{\substack{z_1, z'_1 \in \Pi'_{g^{-1}} \\ |z_1|, |z'_1| \sim \gamma_1 L}}  \Big|  \sum_{\substack{z_2 \in \Pi_g \\ |z_1 z_2|, |z'_1 z_2| \le X^{1/2}/\gamma}}  w((\gamma z_1 z_2 ))\overline{w((\gamma z'_1 z_2 ))} \Big| \gg \delta^{2} XL.\]
Here, $\gamma_1 := (N\mf{c}'_{g^{-1}})^{-1}$ and $\gamma = \gamma_g$ is the generator of $\mf{c}_g \mf{c}'_{g^{-1}}$, a positive rational of height $O_n(1)$ (see \cref{lem:ideal-reps} (ii)). Using the fact that $w = f \boxtimes_{\ell} f'$ gives
\begin{align*} \sum_{\substack{z_1, z'_1 \in \Pi'_{g^{-1}} \\ |z_1|^2, |z'_1|^2\sim \gamma_1 L}}  \Big|  \sum_{\substack{z_2 \in \Pi_g \\ |\Re(z_2)| \le  C(X/L)^{1/2} \\  n^{-1/2}|\Im(z_2)| \le C(X/L)^{1/2}}} & \Psi_{\ell} \Big(\frac{\gamma z_1 z_2}{X^{1/2}}\Big)\overline{\Psi}_{\ell} \Big(\frac{\gamma z'_1 z_2}{X^{1/2}}\Big) f\big(\gamma \Re (z_1 z_2)\big) f'\big( n^{-1/2}\gamma  \Im ( z_1 z_2)\big) \times \\ & \times \overline{f\big(\gamma \Re (z'_1 z_2)\big) f'\big( n^{-1/2}\gamma  \Im ( z'_1 z_2)\big)} \Big| \gg \delta^{2} XL.\end{align*}
with $\Psi_{\ell}$ as in \cref{r-sector}. The two rough cutoffs $\Psi_{\ell}$ may be replaced by the smooth cutoffs $\tilde \Psi_{\ell}$ constructed in \cref{annulus-smooth}, now with $\eps = c \delta^2$. Analogously to \cref{decoupled}, we obtain 
\begin{align*} \sum_{\substack{z_1, z'_1 \in \Pi'_{g^{-1}} \\ |z_1|^2, |z'_1|^2\sim \gamma_1 L}}  \Big|  \sum_{\substack{z_2 \in \Pi_g \\ |\Re(z_2)| \le  C(X/L)^{1/2} \\  n^{-1/2}|\Im(z_2)| \le C(X/L)^{1/2}}} &  f_{\xi_1}\big(\gamma \Re (z_1 z_2)\big) f'_{\xi_2}\big( n^{-1/2}\gamma  \Im ( z_1 z_2)\big) \times \\ & \times \overline{f_{\xi'_1}\big(\gamma \Re (z'_1 z_2)\big) f'_{\xi'_2}\big( n^{-1/2}\gamma  \Im ( z'_1 z_2)\big)} \Big| \gg \delta^{26} XL,\end{align*} where $f_{\xi_1}, f'_{\xi_2}, f_{\xi'_1}, f'_{\xi'_2}$ are twists of $f, f', f, f'$ respectively by fixed additive characters (whose precise form is irrelevant). 

Now we pass to real and imaginary parts, writing $z_1 =a + b \sqrt{-n}$, $z'_1 =a' + b' \sqrt{-n}$ and $z_2 = x + y \sqrt{-n}$ where $a,b,a', b',x,y \in \Z$. This gives, with $\Gamma, \Gamma'$ the lattices defined in \cref{gammas-def},

\begin{align*} \sum_{ \substack{a,a', b,b' \in \Z \\ |a|,|a'|, |b|, |b'| \ll L^{1/2}}} \Big|\sum_{\substack{(x,y)\in \Gamma \\ |x|, |y| \le C(X/L)^{1/2}}} & f_{\xi_1}(\gamma(ax - nby)) f'_{\xi_2}(\gamma(bx + ay))  \times \\ & \times  \overline{f_{\xi'_1}(\gamma(a'x - nb'y)) f'_{\xi'_2}(\gamma(b'x + a'y))}  \Big| \gg \delta^{26} XL.\end{align*}
Henceforth, write $f_1 = f_{\xi_1}$, $f_2 = f'_{\xi_2}$, $f_3 = \ol{f_{\xi_1'}}$, $f_4 = \ol{f'_{\xi_2'}}$. Detecting the condition $(x,y) \in \Gamma'$ using \cref{gam-orth}, it follows by the triangle inequality that for some $\zeta, \zeta'$ we have

\begin{align} \nonumber \sum_{ \substack{a,a',b,b' \in \Z \\ |a|, |a'|,|b|, |b'| \ll L^{1/2}}} \Big| & \sum_{\substack{x,y  \in \Z\\ |x|, |y| \le C(X/L)^{1/2}}} f_1(\gamma(ax - nby)) f_2(\gamma(bx + ay)) \times  \\ & \times f_3(\gamma(a'x - nb'y)) f_4(\gamma(b'x + a'y)) e(\zeta x + \zeta' y)  \Big| \gg \delta^{26} XL.\label{before-sub}\end{align}
We now select a set $\mc{Q}$ of $\gg \delta^{26} L^2$ quadruples $(a,a', b,b')$ for which the inner sum is $\gg \delta^{26} X/L$, and satisfying some further gcd, size and nondegeneracy conditions, the need for which will become apparent later, but which we need to select for now. The conditions are
\begin{equation}\label{ab-gcd-conds} \gcd(a, b), \gcd(a',b') \ll \delta^{-26}, \quad \delta^{26}L^{1/2} \ll |a|, |a'|, |b|, |b'| \ll L^{1/2}, \quad a'b - ab', aa' + n bb' \ne 0.
\end{equation}
First note that each quadruple $(a,a',b,b')$ contributes at most $O(X/L)$ to be sum \cref{before-sub}. Therefore there are at least $\gg \delta^{26}L^2$ quadruples $\mc{Q}'$ each contributing at least $\gg \delta^{26}X/L$ to this sum. In order to construct $\mc{Q}$, we remove quadruples from $\mc{Q}'$ violating \cref{ab-gcd-conds}. To see this is possible leaving at least half of $\mc{Q}'$, note that the set of quadruples satisfying $\gcd(a,b) + \gcd(a',b')\gg \delta^{-26}$ is bounded by $\sum_{d\ge \delta^{-26}}(L^{1/2})^4/d^2\ll \delta^{26}L^2$. Similarly, the number of quadruples satisfying $\min(|a|,|a'|,|b|,|b'|)\le \delta^{26}L^{1/2}$ is bounded by $\ll \delta^{26} L^{2}$. Finally if $a,b$ are non-zero, then given $a,b,a'$ there are at most $2$ choices of $b'$ which cause $a'b-ab' = 0$ or $aa' +nbb' = 0$; therefore there are at most $L^{3/2}$ additional violating tuples here, which is $\ll \delta^{26} L^2$ provided that $L\gg \delta^{-104}$.

For the argument over the next couple of pages it is convenient to write
\[ a_1 := \gamma a, \quad a_2 := \gamma b, \quad a_3 := \gamma a', \quad a_4 := \gamma b', \] \begin{equation}\label{oct-subs} b_1 := -n\gamma b, \quad b_2 := \gamma a , \quad  b_3 := -n\gamma b', \quad b_4 := \gamma a'.\end{equation} It also saves on notation to write
\begin{equation}\label{i-def} I := [\pm C(X/L)^{1/2}].\end{equation} Thus, for $(a, a', b, b') \in \mc{Q}$ we have
\begin{equation}\label{typii-4} \Big|\sum_{x,y \in I}e(\zeta x + \zeta' y) \prod_{i=1}^4 f_i(a_i x + b_i y)\Big| \gg \delta^{26} XL.\end{equation}
 
We are now going to bound the LHS of \cref{typii-4} in terms of a Gowers--Peluse norm of $f_4$, in preparation for the application of concatenation results. Set \begin{equation}\label{H-def} H := \lfloor \delta^{C_1}X^{1/2}/L\rfloor\end{equation} for some $C_1$ to be specified. 
The important thing to note here is that if $L \sim X^{1/2 - \kappa}$ for some $\kappa > 0$ then $|H| \sim \delta^{O(1)} X^{\kappa}$. The fact that this is a small power of $X$ will be vital later (to apply \cref{lem:diophantine-upp} effectively), and is what constrains our Type II methods to the regime $L \ll X^{1/2 - \kappa}$ for some $\kappa > 0$.

For $h \in [\pm H]$, substitute $x := x' + b_1 h$, $y := y' - a_1 h$. Then 
\[ \Big| \sum_{\substack{x' \in I - b_1 h \\ y' \in I + a_1 h} }e(\zeta x' + \zeta' y' + (\zeta b_1 - \zeta' a_1) h) f_1(a_1 x' + b_1 y') \prod_{j = 2}^4 f_j (a_j x' + b_j y' + (a_j b_1 - a_1 b_j)h)\Big| \gg \delta^{26}X/L.\]
We have $|a_1 h|,|b_1 h| \ll  L^{1/2} H \ll \delta^{C_1} (X/L)^{1/2}$, so the error in replacing the shifted intervals $I - b_1 h$, $I + a_1 h$ by two copies of $I$ is $\ll \delta^{C_1} X/L$, which is negligible for $C_1 > 28$. Therefore we have (replacing the dummy variables $x', y'$ by $x, y$ respectively) 
\[ \Big| \sum_{\substack{x \in I \\ y \in I} }e(\zeta x + \zeta' y + (\zeta b_1 - \zeta' a_1) h) f_1(a_1 x + b_1 y) \prod_{j = 2}^4 f_j (a_j x + b_j y + (a_j b_1 - a_1 b_j)h)\Big| \gg \delta^{26}X/L.\]
Summing over $h \in [\pm H]$ gives
\begin{equation}\label{7382} \sum_{|h| \le H} c_h \sum_{x, y \in I}  e\big((\zeta b_1 - \zeta' a_1) h\big)\prod_{j = 2}^4 f_j (a_j x + b_j y + (a_j b_1 - a_1 b_j)h)  \gg \delta^{26}XH/L,\end{equation} where the $c_h$ are complex numbers with unit modulus, chosen to make each term in the sum over $h$ real and nonnegative. Switching the order of summation and applying Cauchy--Schwarz gives
\begin{align*} \sum_{x, y \in I} & \sum_{|h|, |h'| \le H} c_h \overline{c}_{h'} e\big((\zeta b_1 - \zeta' a_1) h\big)e\big({-(\zeta b_1 - \zeta' a_1) h'}\big) \times \\ & \times \prod_{j = 2}^4 f_j (a_j x + b_j y + (a_j b_1 - a_1 b_j)h) \prod_{j = 2}^4 \overline{f_j (a_j x + b_j y + (a_j b_1 - a_1 b_j)h')}  \gg \delta^{26}XH/L.\end{align*}
By the triangle inequality (and recalling the notation for difference operators as detailed at the beginning of \cref{section5}) we obtain

\[ \sum_{|h|, |h'| \le H} \Big| \sum_{x, y \in I} \prod_{j=2}^4 \Delta_{(a_j b_1 - a_1 b_j) (h, h')} f_j(a_j x + b_j y)  \Big| \gg \delta^{52} XH^2/L.
\]
Therefore for $\gg \delta^{52} H^2$ values of $(h,h')$ we have 
\[ \sum_{x, y \in I} \prod_{j=2}^4 \Delta_{(a_j b_1 - a_1b_j) (h,h')} f_j(a_j x + b_j y)   \gg \delta^{52} X/L.\]

Note that this statement is very similar to \cref{typii-4}, only with a product of three functions rather than four, and with the exponent $26$ replaced by $52$. Consequently we may repeat the argument two more times, obtaining at the end $\gg \delta^{208} H^6$ sextuples $(h_1,h'_1, h_2,h'_2,h_3,h'_3)$, $h_i, h'_i \in [\pm H]$ such that 

\[\sum_{x,y \in I}\Delta_{(a_4b_1-b_4a_1)(h_1,h'_1)}\Delta_{(a_4b_2-b_4a_2)(h_2,h'_2)}\Delta_{(a_4b_3-b_4a_3)(h_3,h'_3)}f_4(a_4x + b_4y) \ge \delta^{208} X/L.\] For these further two iterations of the argument to work, we need $C_1 > 104$, thus for definiteness set $C_1 := 200$. Henceforth we will not keep track of exponents explicitly.

We apply \cref{lem:U^2-control} to this statement, taking in that lemma $I_1 = I_2 = I$ with $I$ as in \cref{i-def}, $N = X^{1/2}$, $Q = L^{1/2}$, $M$ the denominator of $\gamma$ and $\eta \sim \delta^{204}$. Most of the conditions of that lemma follow from \cref{ab-gcd-conds}; the condition $N \ge Q^2/\eta^3$ follows from $X^{1/2} \ge L \delta^{-1000}$, which follows from the assumptions of \cref{prop:typeii-to-gowers} if $C_{\ref{prop:typeii-to-gowers}}$ is large enough. 

It follows that, for each of these sextuples, 
\[ \Vert \Delta_{(a_4b_1-b_4a_1)(h_1,h'_1)}\Delta_{(a_4b_2-b_4a_2)(h_2,h'_2)}\Delta_{(a_4b_3-b_4a_3)(h_3,h'_3)}f_4 \Vert_{U^2[X^{1/2}]}^{4} \gg \delta^{O(1)}.\] 
We have the above inequality for a set of $\gg \delta^{O(1)} H^6$ sextuples $(h_1,h'_1 ,h_2,h'_2, h_3,h'_3)$ from $[\pm H]^6$. However, $\Vert \Delta_{(a_4b_1-b_4a_1)(h_1,h'_1)}\Delta_{(a_4b_2-b_4a_2)(h_2,h'_2)}\Delta_{(a_4b_3-b_4a_3)(h_3,h'_3)}f_4 \Vert_{U^2[X^{1/2}]}$ is nonnegative and therefore we may average over all sextuples $(h_1,h'_1 ,h_2,h'_2, h_3,h'_3) \in [\pm H]^6$ to obtain 
\[\E_{(h_1,h'_1 ,h_2,h'_2, h_3,h'_3)\in[\pm H]^6} \Vert \Delta_{(a_4b_1-b_4a_1)(h_1,h'_1)}\Delta_{(a_4b_2-b_4a_2)(h_2,h'_2)}\Delta_{(a_4b_3-b_4a_3)(h_3,h'_3)}f_4 \Vert_{U^2[X^{1/2}]}^{4} \gg \delta^{O(1)}\]

Unpacking the definition of the $U^2[X^{1/2}]$ norm via  \cref{gowers-def}, this implies that 
\begin{align}\nonumber
X^{-3/2}\sum_{x,h,h'\in \mb{Z}} \E_{(h_1,h'_1 ,h_2,h'_2, h_3,h'_3)\in[\pm H]^6} & \Delta_{(a_4b_1-b_4a_1)(h_1,h'_1)}\Delta_{(a_4b_2-b_4a_2)(h_2,h'_2)}\\ & \Delta_{(a_4b_3-b_4a_3)(h_3,h'_3)}\Delta_{h} \Delta_{h'} f_4(x) \gg \delta^{O(1)}.\label{eq:gowers-unpack}
\end{align}
Via switching the sum over $x$ inward and recalling the expression in \cref{gp-explicit-def}, we have that 
\begin{equation}\label{first-gp-statement}
X^{-1}\sum_{h,h'\in \mb{Z}} \sum_{q\in \mc{Q}}\snorm{\Delta_{h} \Delta_{h'} f_4}^8_{U_{\GP}[X^{1/2}; \mu_{1,q}, \mu_{2,q}, \mu_{3,q}]} \gg \delta^{O(1)}L^2\end{equation}
where, for $q = (a, a', b, b') \in \mc{Q}$ and for $j \in \{1,2,3\}$,  $\mu_{j,q}$ is the uniform measure on $(a_4 b_j - b_4 a_j)[\pm H]$. Recalling the definitions \cref{oct-subs} of the $a_i, b_i$ in terms of $q = (a,a', b, b')$, we have

\begin{equation}\label{mus-explicit} \mu_{1,q} \sim  \Unif_{-(2n\gamma)^2 (aa' + nbb') [\pm H]} , \quad \mu_{2,q} \sim \Unif_{(2n\gamma)^2 (ab' - a'b) [\pm H]}, \quad  \mu_{3,q} \sim \Unif_{ -(2n\gamma)^2 (a'^2+ nb'^2) [\pm H]}. \end{equation}

Recall that we have this for a set $\mc{Q}$ of $\gg \delta^{O(1)} L^2$ quadruples $q = (a,a',b,b')$ satisfying the conditions \cref{ab-gcd-conds}.  Summing \cref{first-gp-statement} over all these tuples gives

\begin{equation}\label{second-gp-statement}X^{-1}\sum_{h,h'\in \mb{Z}}\sum_{q \in \mc{Q}} \snorm{\Delta_{h} \Delta_{h'} f_4}^8_{U_{\GP}[X^{1/2}; \mu_{1,q}, \mu_{2,q}, \mu_{3,q}]} \gg \delta^{O(1)} L^2.\end{equation}

To drive our application of concatenation, we need to know that convolutions of different measures $\mu_{j,q}$ spread out. The following lemma encapsulates this. 

Here, and for the remainder of the section, $E \lessapprox E'$ means $E \ll \delta^{-O(1)}(\log X)^{O(1)} E'$, and $E \lessapprox_m E'$ means $E \ll \delta^{-O_m(1)}(\log X)^{O_m(1)} E'$, and similarly for $E \gtrapprox E'$, $E\gtrapprox_m E'$.

\begin{lemma}\label{lem:diophantine-upp}
Let $m \ge 1$ be an integer and let $j \in \{1,2,3\}$. Let $H$ be as in \cref{H-def}, that is to say $H = \delta^{C_1}X^{1/2}/L$. Then we have 
\[ \E_{q_1,\dots, q_{m} \in \mc{Q}} \Big\Vert \conv_{i=1}^{m} \mu_{j,q_i}\Big\Vert^2_{2} \lessapprox_{m} \max(H^{-m},  X^{-1/2}). \]
\end{lemma}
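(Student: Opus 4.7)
The plan is to expand the $L^2$-norm combinatorially and control the resulting count of additive relations. For $j \in \{1,2,3\}$ and $q \in \mc{Q}$ write $\lambda_j(q) \in \Z$ for the scalar such that $\mu_{j,q} = \Unif_{\lambda_j(q)[\pm H]}$ (i.e.\ $\lambda_1(q) = -(2n\gamma)^2(aa'+nbb')$, etc., per \eqref{mus-explicit}). By Plancherel (equivalently, direct expansion via $u_i = t_i - t'_i$),
\[
\|\conv_{i=1}^m \mu_{j,q_i}\|_2^2 = \frac{1}{(2H+1)^{2m}} \sum_{u \in [\pm 2H]^m} \prod_{i} r_H(u_i)\, \mathbf{1}\!\left[\textstyle\sum_i \lambda_j(q_i) u_i = 0\right],
\]
where $r_H(u) := (2H+1 - |u|)_+$. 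The $u=0$ contribution equals $(2H+1)^{-m} \asymp H^{-m}$, which accounts for the first term of the claimed maximum. For $u \ne 0$, bounding $\prod_i r_H(u_i) \le (2H+1)^m$ reduces the task to showing
\[
T_m := \E_{q \in \mc{Q}^m} \#\{u \in [\pm 2H]^m \setminus \{0\} : \textstyle\sum_i \lambda_j(q_i) u_i = 0\} \lessapprox_m \frac{H^m}{X^{1/2}}.
\]

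To bound $T_m$, swap the expectation with the sum over $u$. For each $u \ne 0$ set $k := \|u\|_\infty$ and let $i_0$ realize the maximum. Conditioning on $(q_i)_{i \ne i_0}$ and setting $c := \sum_{i \ne i_0} \lambda_j(q_i) u_i$, the equation forces $\lambda_j(q_{i_0}) = -c/u_{i_0}$, requiring $u_{i_0} \mid c$ and then $\nu_j(-c/u_{i_0}) > 0$, where $\nu_j(v) := \#\{q \in \mc{Q}: \lambda_j(q) = v\}$. Thus
\[
\P_{q \in \mc{Q}^m}[\textstyle\sum_i \lambda_j(q_i) u_i = 0] \;\le\; \frac{\|\nu_j\|_\infty}{|\mc{Q}|} \cdot \P_{q_{-i_0}}[u_{i_0} \mid c].
\]
Two ingredients control the right-hand side: (i)~the divisor bound for integer representations by the relevant quadratic ($j = 1,3$) or bilinear ($j = 2$) form gives $\|\nu_j\|_\infty \lessapprox L^{1+o(1)}$, which combined with $|\mc{Q}| \gtrapprox L^2$ yields $\|\nu_j\|_\infty/|\mc{Q}| \lessapprox 1/L$; (ii)~detecting the divisibility via orthogonality mod $k$, one writes $\P_{q_{-i_0}}[u_{i_0} \mid c] = k^{-1} \sum_{\ell=0}^{k-1} \prod_{i \ne i_0}\E_{q_i} e(\ell \lambda_j(q_i) u_i / k)$, and bounds the $\ell \ne 0$ terms by Weyl-type cancellation in the incomplete Gauss sum $\E_{q \in \mc{Q}}e(\ell \lambda_j(q)/k)$, obtaining $\P_{q_{-i_0}}[u_{i_0} \mid c] \lessapprox 1/k$.

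Combining, $\P_q \lessapprox 1/(Lk)$, and summing over $u$:
\[
T_m \;\lessapprox\; \sum_{k=1}^{2H} \#\{u \in [\pm 2H]^m: \|u\|_\infty = k\} \cdot \frac{1}{Lk} \;\lessapprox\; \sum_k \frac{k^{m-1}}{Lk} \;\lessapprox\; \frac{H^{m-1}}{L} \;=\; \frac{H^m}{HL} \;\lessapprox\; \frac{H^m}{X^{1/2}},
\]
the last step using $HL \asymp \delta^{C_1} X^{1/2}$ from the definition of $H$ in \eqref{H-def}. Dividing by $(2H+1)^m$ and combining with the $u = 0$ contribution gives $\E_q\|\conv\|_2^2 \lessapprox_m H^{-m} + X^{-1/2} \asymp \max(H^{-m}, X^{-1/2})$, with the $\lessapprox_m$ absorbing the $\delta^{-O_m(1)}(\log X)^{O_m(1)}$ factors.

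The hardest step is the uniform Weyl/Gauss-sum cancellation of (ii) across all $k \le 2H \asymp \delta^{C_1} X^{1/2}/L$ and the three distinct forms $\lambda_j$. For $j = 3$, $\lambda_3$ depends only on $(a', b')$, reducing the exponential sum to an incomplete two-variable quadratic Gauss sum with $O(k^{-1/2})$ saving per factor via completion and classical Gauss sum evaluation; the $(m-1)$-fold product and summation over $\ell$ then comfortably yields the required $1/k$ for $m \ge 3$. The bilinear form $ab' - a'b$ governing $\lambda_2$ is handled via one Cauchy--Schwarz reducing to a quadratic sum, and $\lambda_1 = aa' + nbb'$ admits a similar Cauchy--Schwarz treatment. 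The small cases $m = 1,2$ admit direct verification: for $m = 1$, $T_1 = 0$ since $\lambda_j$ is nonzero on $\mc{Q}$; for $m = 2$ one computes $\E\|\mu_1 \ast \mu_2\|_2^2$ directly from the $\gcd$-structure of $(\lambda_j(q_1), \lambda_j(q_2))$, using that $\E[\gcd]$ is logarithmic on average.
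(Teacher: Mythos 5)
Your combinatorial reduction is sound: expanding $\|\conv_{i} \mu_{j,q_i}\|_2^2$ into a weighted count of solutions to $\sum_i \lambda_j(q_i)u_i = 0$, isolating the $u=0$ term $\asymp (2H+1)^{-m}$, and noting that step (i) (the divisor bound $\|\nu_j\|_\infty \lessapprox L$ combined with $|\mc{Q}| \gtrapprox L^2$) gives $\|\nu_j\|_\infty/|\mc{Q}| \lessapprox 1/L$, are all correct. But step (ii) — the claimed bound $\P_{q_{-i_0}}[\,u_{i_0}\mid c\,] \lessapprox 1/k$ via Weyl/Gauss-sum cancellation in $\E_{q\in\mc{Q}} e(\ell\lambda_j(q)/k)$ — is a genuine gap, and the argument fails without it.

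The set $\mc{Q}$ is \emph{not} a box or any arithmetically structured set. It was extracted by pigeonhole from \cref{before-sub} as the set of quadruples for which a certain inner sum is large, pruned further to enforce the size and gcd conditions \cref{ab-gcd-conds}. You know only that $|\mc{Q}| \gg \delta^{O(1)} L^2$; beyond that, $\mc{Q}$ is essentially arbitrary. Completing the sum (the standard first step of a Gauss-sum evaluation) requires that the summation variable ranges over a full arithmetic progression or box, which is false here — even after projecting to $(a',b')$ for $j=3$, the image of $\mc{Q}$ is just some dense subset of $[\pm O(L^{1/2})]^2$. No Weyl-type cancellation can be asserted for $\E_{q\in\mc{Q}}e(\ell\lambda_j(q)/k)$, and in fact a dense set $\mc{Q}$ could be chosen adversarially to concentrate $\lambda_j(q) \pmod k$ in a short range, destroying the $1/k$ saving. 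Without (ii), your argument only yields $T_m \lessapprox H^m/L$ and hence $\E\|\conv\|_2^2 \lessapprox 1/L$, which is weaker than $X^{-1/2}$ throughout the relevant range $L \le X^{3/8}$.

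The paper's own proof avoids any appeal to cancellation. It works in physical space: writing $\nu := \E_{q\in\mc{Q}}\mu_{j,q}$, decomposing $\nu \le H^{-1}\delta_0 + \nu_*$ with $\nu_*(0)=0$, and reducing the lemma to the pointwise bound $\|\nu_*^{(2)}\|_\infty \lessapprox X^{-1/2}$. That bound is obtained from the pointwise estimate $\nu_* \lessapprox X^{-1/2}\tau$ (via the divisor-type count \cref{aabb-div} and its analogues) together with the $L^2$ divisor moment $\sum_{|x|\ll X^{1/2}}\tau(x)^2 \lessapprox X^{1/2}$ of \cref{lem:divisor-moment}. In other words, the correct replacement for your oscillatory estimate (ii) is a purely non-negative, divisor-function argument: the only arithmetic input is that $\tau$ has bounded moments, which is robust to $\mc{Q}$ being a density-positive but otherwise unstructured set. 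If you want to rescue your combinatorial framework, you would need to replace (ii) with a deterministic divisor count (bounding the number of $u_{i_0}$ dividing $c$ by $\tau(c)$, then controlling divisor moments), rather than an equidistribution-mod-$k$ claim, which mirrors what the paper does.
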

\begin{remark}
The key point to note about the conclusion is that if $L = X^{1/2 - \kappa}$ then the bound is $\lessapprox X^{-1/2}$ for $m > 1/\kappa$. Since all measures are supported on $[\pm O(X^{1/2})]$, this statement is asserting rough uniform distribution of the $\conv_{i=1}^{m} \mu_{j,q_i}$ on $[\pm O(X^{1/2})]$ on average over $q_1,\dots, q_{m}$. In particular, in the regime of interest in \cref{prop:typeii-to-gowers}, where $L \le X^{3/8}$, we can take $m = 4$.
\end{remark}

\begin{proof}
We first handle the case $j = 1$, the other cases are analogous and we will indicate the minor modifications required at the end. First of all, note that 
\[  \Big\Vert \conv_{i=1}^{m} \mu_{j, q_i} \Big\Vert^2_{2} =  \Big( \conv_{i=1}^{m} ( \mu_{j,q_i} \ast \mu_{j,q_i} ) \Big)(0) \ll  \Big( \conv_{i=1}^{m} \tilde\mu_{j,q_i}\Big)(0) ,\] where $\tilde\mu_{j,q}$ is the variant of $\mu_{j,q}$ with $H$ replaced by $2H$, since $\mu_{j,q} \ast \mu_{j,q} \ll \tilde \mu_{j,q}$ pointwise.  It therefore suffices to show that 
\[
\E_{q_1,\dots, q_{m} \in \mc{Q}} \Big(\conv_{i=1}^{m} \tilde\mu_{j,q_i}\Big)(0) \lessapprox_{m} \max(H^{-m},  X^{-1/2}),
\]
or equivalently
\begin{equation}\label{suff-measures}
\nu^{(m)}(0) \lessapprox_{m} \max(H^{-m},  X^{-1/2}).
\end{equation}
where $\nu$ is the probability measure $\E_{q \in \mc{Q}} \mu_{j,q}$, and $\nu^{(m)}$ denotes the $m$th convolution power of $\nu$. Now $\nu \le H^{-1}\delta_0 + \nu_*$, where $\nu_*$ is uniform on the multiset $\{-2n^2\gamma^2 (aa' + n bb') h : (a,a', b, b') \in \mc{Q}, h \in [\pm 2H] \setminus \{0\}\}$. Note here that by the properties \cref{ab-gcd-conds} of $\mc{Q}$ we have $\nu_*(0) = 0$. The idea now is to show that $\nu^{(2)}_*$ is somewhat close to uniform on $[\pm O(X^{1/2})]$ in the sense that 
\begin{equation}\label{nu-uniformity-claim} \Vert \nu^{(2)}_* \Vert_{\infty} \lessapprox X^{-1/2}.\end{equation} Once this is proven, we can expand (since $\delta_0$ is the identity in convolution)
\[ \nu^{(m)} = H^{-m} \delta_0 + H^{-(m - 1)} m  \nu_* + \sum_{k=2}^{m}\binom{m}{k} H^{-(m - k)}  \nu_*^{(k)}.\]
Each measure $\nu_*^{(k)}$, $k \ge 2$, is the convolution of $\nu_*^{(2)}$ with some probability measure, and hence by \cref{nu-uniformity-claim} is pointwise $\lessapprox X^{-1/2}$. Since $\nu_*(0) = 0$, the desired bound \cref{suff-measures} then follows.

It remains to establish \cref{nu-uniformity-claim}. First, note that uniformly for $t \neq 0$ we have
\begin{equation}\label{aabb-div}
\#\{(a, a', b, b') \in \mathcal{Q} : aa' + nbb' = t\} \ll \sum_{|u|\ll L}\tau(u)\tau\bigg(\frac{t - u}{n}\bigg)\ll L(\log L)^{3};
\end{equation}
here the divisor function is extended so that $\tau(x) = \tau(|x|)$ and $\tau(x) = 0$ if $x=0$ or $x\notin \Z$. It follows that $\nu_* \ll |\mc{Q}|^{-1}H^{-1} L(\log L)^3 \tau \lessapprox X^{-1/2} \tau$ pointwise, noting here that $|\mc{Q}| \gg \delta^{O(1)} L^2$ and $H = \delta^{O(1)} X^{1/2}/L$. Noting also that $\nu_*$ is supported on $[\pm O(X^{1/2})]$, it follows using \cref{lem:divisor-moment} in the case $m = 2$ that 
\[ \nu_*^{(2)}(t) \lessapprox X^{-1} \sum_{|x| \ll X^{1/2}} \tau(x) \tau( t - x) \ll X^{-1} \sum_{|x| \ll X^{1/2}} \tau(x)^2 \lessapprox X^{-1/2},  \] which is the required bound \cref{nu-uniformity-claim} in the case $j = 1$.

In the cases $j = 2$ and $3$ we proceed entirely analogously, the only difference of any interest being the corresponding bound to \cref{aabb-div}. For $j = 3$ we have instead the bound
\begin{equation}\label{aabb-div-2}
\#\{(a, a', b, b') \in \mc{Q}: a^{\prime 2} + nb^{\prime 2} = t\} \ll L \tau(t);
\end{equation}
see \cref{repdivisor}. This gives the bound $\nu_* \lessapprox X^{-1/2} \tau^2$ pointwise, and we can conclude as before but now using the case $m = 4$ of \cref{lem:divisor-moment}.

The case $j = 2$ is almost identical to the case $j = 1$. Instead of \cref{aabb-div} we have the bound
\begin{equation}\label{aabb-div-3}
\#\{(a, a', b, b') \in \mc{Q} : a'b - ab' = t\} \ll L(\log L)^3 ,
\end{equation}
which may be proven in the same manner as \cref{aabb-div}.
\end{proof}
From now on we will assume that $L \le X^{3/8}$, which is one of the assumptions of \cref{prop:typeii-to-gowers}. Thus, applying \cref{lem:diophantine-upp} with $m = 4$ (we take a power of two with the forthcoming application of \cref{lem:concat-main} in mind), we obtain
\begin{equation}\label{lem71-cor} \E_{q_1,q_2,q_3,q_4 \in S} \Big\Vert \conv_{i=1}^{4} \mu_{j,q_i}\Big\Vert^2_{2} \ll \Big(\frac{\log X}{\delta}\Big)^{C_0} X^{-1/2} \end{equation} for some $C_0$ (beyond this point we will cease to use the $\lessapprox$ notation).

We now return to our main line of argument, specifically statement \cref{second-gp-statement}, and we are now ready to apply the key concatenation result of Kravitz, Kuca and Leng, \cref{lem:concat-main}. We apply that result with $s = 3$, with $I = \mc{Q}$, and measures given by $\mu_{1,i} = \mu_{1,q}$, $\mu_{2,i} = \mu_{2,q}$, $\mu_{3,i} = \mu_{3,q}$ and $N = O(X^{1/2})$ and with $m = 4$. The conclusion is that there is $t \ll 1$ such that 

\begin{equation}\label{concat-conclu} X^{-1}\sum_{h,h'\in \mb{Z}} \E_{q_1,\dots, q_t \in \mc{Q}} \Vert \Delta_{h} \Delta_{h'} f_4 \Vert_{U_{\GP}[X^{1/2}; \Omega(q_1,\dots, q_t)]} \gg \delta^{O(1)} ,\end{equation}
where $\Omega(q_1,\dots, q_t)$ is the collection of probability measures

\[ \Omega(q_1,\dots, q_t) = \Big( \conv_{i = 1}^{4}  \mu_{j q_{k_i}}\Big)_{\substack{1 \le k_1 < k_2 <k_3 < k_{4} \le t\\j \in \{1,2,3\}}}.\]

Set $T := \big(\frac{\log X}{\delta}\big)^{C_1}$ for some very large constant $C_1 > C_0$. We say that a tuple $(q_1,\dots, q_t) \in \mc{Q}^{\otimes t}$ is \emph{good} if $\Vert \mu \Vert_2^2 \le T/X^{1/2}$ for all $\mu \in \Omega(q_1,\dots, q_t)$. Write $\mc{G}$ for the set of all good tuples. By \cref{lem71-cor} and Markov's inequality, 
\[ \P((q_1,\dots, q_t) \notin \mc{G}) \ll \frac{1}{T} \Big(\frac{\log X}{\delta}\Big)^{C_0}.\] Therefore, from \cref{concat-conclu}, if $C_1$ is large enough we have
\begin{equation}\label{concat-conclu-2} X^{-1}\sum_{h,h'\in \Z} \E_{q_1,\dots, q_t \in \mc{Q}} 1_{(q_1,\dots, q_t) \in \mc{G}} \Vert \Delta_{h} \Delta_{h'} f_4 \Vert_{U_{\GP}[X^{1/2}; \Omega(q_1,\dots, q_t)]} \gg \delta^{O(1)} .\end{equation}
In particular, there is at least one good tuple $(q_1,\dots, q_t)$ such that 
\begin{equation}\label{concat-conclu-3} X^{-1}\sum_{h,h'\in \Z} 
\Vert \Delta_{h} \Delta_{h'} f_4 \Vert_{U_{\GP}[X^{1/2}; \Omega(q_1,\dots, q_t)]} \gg \delta^{O(1)}.\end{equation}
Write $\delta(h, h')$ for the size of the inner norm, thus
\begin{equation}\label{del-avg} X^{-1}\sum_{h,h'\in \Z} \delta(h, h') \gg \delta^{O(1)}.\end{equation}
Note that since $\on{supp}(f_4)\subseteq [\pm X^{1/2}]$, we have that $\delta(h,h') = 0$ if $\max(|h|,|h'|)\ge 5X^{1/2}$ (say). 

By \cref{cor53x}, for each pair $(h, h')$ we have
\[\Vert \Delta_{h} \Delta_{h'} f_4 \Vert^{2^{2r}}_{U^{2r}[X^{1/2}]} \gg T^{-O(1)}\delta(h, h')^{O(1)}, \] where $r = 3 \binom{t}{4}$ is the number of measures in each $\Omega(q_1,\dots, q_t)$.
Averaging over $h, h'\in [\pm 5X^{1/2}]$ and applying H\"older and \cref{del-avg}, we obtain
\begin{align*}
X^{-1}\sum_{h,h'\in \Z} \snorm{\Delta_{h} \Delta_{h'} f_4}^{2^{2r}}_{U^{2r}[X^{1/2}]}\gg (\delta/\log X)^{O(1)}.
\end{align*}
By expanding out the Gowers norm using \cref{gowers-def,gowers-norm-def-2}, the above is equivalent to 
\begin{align*}
\snorm{f_4}^{2^{2r+2}}_{U^{2r+2}[X^{1/2}]}\gg (\delta/\log X)^{O(1)}.
\end{align*}
This concludes the proof of \cref{prop:typeii-to-gowers}.

\section{Computing the asymptotic}\label{section8}
We turn now to the last main task in the paper, the proof of \cref{prop:main-term-comp}, which asserts an asymptotic for the `main term'
\begin{equation}\label{main-term-repeat}(\log X) \sum_{x,y \in \Z: x^2 + ny^2\le X}
\chi_{\infty}^{(\ell)} (x + y \sqrt{-n})\Lambda_{\cramer}(x)\Lambda_{\cramer}(y) 1_{x^2 + ny^2 \operatorname{prime}} .\end{equation}

\subsection{Hecke Gr\"o{\ss}encharaktere on imaginary quadratic fields}\label{basic-hecke-sec}

In this section we recall some basic facts about (Hecke) Gr\"o{\ss}encharaktere, as well as key results concerning sums over prime ideals that we will require. 

A good and fairly low-level introduction to the notion of Gr\"o{\ss}encharakter on an imaginary quadratic field is \cite[Section 3.8]{IK-book}. In the setting of imaginary quadratic fields we can be quite explicit, and the Gr\"o{\ss}encharaktere we are interested in are defined as follows. Let $\mf{m}$ be a non-zero integral ideal, that is to say an ideal in $\O_K$. By a \emph{Dirichlet character} to modulus $\mf{m}$ we mean the lift of a character on $(\O_K/\mf{m})^*$ to $\O_K$, defined to be zero for elements of $\O_K$ which are not coprime to $\mf{m}$ (the definition is analogous to that of a Dirichlet character on $\Z$). 

\begin{definition}
Let $K$ be an imaginary quadratic field embedded in $\C$. Let $\mf{m}$ be a non-zero ideal in $\O_K$. Denote by $I_{\bf{m}}$ the group of fractional ideals which are coprime to $\mf{m}$. Let $\ell \in \Z$. Then a Gr\"o{\ss}encharakter modulo $\mf{m}$ with frequency $\ell$ is a homomorphism $\psi : I_{\mf{m}} \rightarrow \C^*$ which satisfies $\psi((\alpha)) = \chi^{(\ell)}_{\infty}(\alpha) \chi(\alpha)$ for all $\alpha \in K^*$ coprime to $\mf{m}$, where (as in \cref{chi-infinity-def}) $\chi_{\infty}^{(\ell)}(\alpha) = (\alpha/|\alpha|)^{\ell}$, and $\chi$ is a Dirichlet character to modulus $\mf{m}$.
\end{definition}

\begin{remark}
We use the term Gr\"o{\ss}encharakter to emphasise that the frequency $\ell$ may not be zero. Some authors use the term Hecke character to mean a character of finite order, that is to say with $\ell = 0$. Others such as \cite{IK-book} use the terms Hecke character and (Hecke) Gr\"o{\ss}encharakter interchangably. This can be (and was!) a source of considerable confusion. We will use the term Hecke character only when the frequency $\ell$ is definitely zero (or equivalently the character has finite order).
\end{remark}

Note that if we have a Gr\"o{\ss}encharakter $\psi$ as above, then $\chi^{(\ell)}_{\infty},\chi$ satisfy the `units consistency condition' \begin{equation}\label{units-consistency}\chi^{(\ell)}_{\infty}(u) \chi(u) = 1 \qquad \mbox{for all $u \in \O_K^*$}.\end{equation} 
We now show that the converse is also true. Let $I$ be the group of non-zero fractional ideals in $K$. By a \emph{class group character} we mean a homomorphism $\psi : I \rightarrow \C$ which is trivial on principal ideals. Such a homomorphism factors through the class group of $K$, and so there are exactly $h_K$ such characters.

\begin{lemma}\label{hecke-lift}
Suppose that $\chi^{(\ell)}_{\infty}, \chi$ as above satisfy the units consistency condition \cref{units-consistency}.  Then there is a Gr\"o{\ss}encharakter $\psi$ to modulus $\mf{m}$ satisfying $\psi((\alpha)) = \chi^{(\ell)}_{\infty}(\alpha) \chi(\alpha)$ for all $\alpha \in K^*$ coprime to $\mf{m}$. Moreover $\psi$ is unique up to multiplication by class group characters.
\end{lemma}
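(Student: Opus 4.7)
The plan is a two-step construction: first define the putative character $\psi$ on the subgroup of principal fractional ideals coprime to $\mf{m}$, where the units consistency condition is exactly what is needed for well-definedness; then extend to all of $I_{\mf{m}}$ using the finiteness of the ideal class group together with divisibility of $\C^*$.

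First I would set $\psi_0((\alpha)) := \chi_\infty^{(\ell)}(\alpha)\chi(\alpha)$ for all $\alpha \in K^*$ with $(\alpha) \in I_{\mf{m}}$, extending $\chi$ from $\O_K$ to such $\alpha$ by $\chi(\beta/\gamma) := \chi(\beta)\chi(\gamma)^{-1}$ after invoking the Chinese Remainder Theorem to write $\alpha$ as a ratio of elements coprime to $\mf{m}$. The only nontrivial check is that if $(\alpha) = (\alpha')$ then $\alpha = u\alpha'$ for some $u \in \O_K^*$, and $\chi_\infty^{(\ell)}(u)\chi(u) = 1$ by the units consistency hypothesis \cref{units-consistency}; this is precisely what the hypothesis is tailored to guarantee. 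By multiplicativity of $\chi_\infty^{(\ell)}$ and $\chi$, this produces a homomorphism $\psi_0 \colon P_{\mf{m}} \to \C^*$, where $P_{\mf{m}} \subset I_{\mf{m}}$ denotes the subgroup of principal fractional ideals coprime to $\mf{m}$.

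Next I would exploit the natural isomorphism $I_{\mf{m}}/P_{\mf{m}} \cong C(K)$: surjectivity is the standard fact that every ideal class has a representative coprime to $\mf{m}$ (obtained by multiplying a given representative by a suitable $\beta \in K^*$ chosen via weak approximation to cancel the valuations at primes dividing $\mf{m}$), while injectivity is immediate from the definitions. Hence $I_{\mf{m}}/P_{\mf{m}}$ is finite of order $h_K$. Because $\C^*$ is divisible, any homomorphism from a subgroup of an abelian group into $\C^*$ extends; concretely, picking coset representatives and assigning them any values compatible with the finite-order relations they inherit from $P_{\mf{m}}$ (each such relation can be solved by taking an appropriate root of unity in $\C^*$) produces an extension $\psi \colon I_{\mf{m}} \to \C^*$. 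By construction, $\psi$ is a Gr\"o{\ss}encharakter modulo $\mf{m}$ with frequency $\ell$.

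Finally, the uniqueness claim is formal: any two extensions $\psi_1, \psi_2$ agree on $P_{\mf{m}}$, so $\psi_1/\psi_2$ is trivial there and therefore factors through $I_{\mf{m}}/P_{\mf{m}} \cong C(K)$, giving a class group character; conversely, any class group character times $\psi$ yields another valid extension. The only real delicacy is the well-definedness of $\psi_0$, which the units consistency condition is explicitly designed to handle, while the rest is a routine application of standard facts about characters of finitely generated abelian groups — I do not anticipate any serious obstacle.
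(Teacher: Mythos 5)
Your proof is correct and follows the same two-step structure as the paper: define $\psi_0$ on the subgroup of principal fractional ideals coprime to $\mf{m}$, where the units consistency condition is exactly the needed well-definedness check, then extend to all of $I_{\mf{m}}$ using divisibility of $\C^*$ and finiteness of the index, with uniqueness up to class group characters following formally.

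The one place you genuinely diverge from the paper is in establishing the isomorphism $I_{\mf{m}}/P_{\mf{m}} \cong C(K)$, which is needed both to see that the index is finite and to identify the quotient with the class group for the uniqueness claim. You prove surjectivity by a weak-approximation (CRT) argument: adjusting a representative of any ideal class by a suitable $\beta \in K^*$ to kill the valuations at primes dividing $\mf{m}$. The paper instead invokes the fact that every ideal class contains infinitely many prime ideals, which it attributes to the prime ideal theorem for class group characters (a special case of \cref{main-analytic-input}). Your route is strictly more elementary: it is a purely algebraic finite computation, needs no analytic input, and in particular removes any appearance of circularity in a lemma that is itself a stepping stone toward setting up the analytic machinery for Gr\"o{\ss}encharaktere. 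Both arguments are standard and correct; yours is arguably the cleaner choice here.
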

\begin{proof}
Write $I^0_{\mf{m}}$ for the group of principal fractional ideals coprime to $\mf{m}$.
The units consistency condition is precisely what is needed to guarantee that there is a homomorphism $\psi_0 : I^0_{\mf{m}} \rightarrow \C^*$ with $\psi_0((\alpha)) = \chi^{(\ell)}_{\infty}(\alpha) \chi(\alpha)$ for all $\alpha \in K^*$. The lemma follows from the fact that $\psi_0$ extends to a homomorphism $\psi$ on $I_{\mf{m}}$. This is a special case of a general algebraic fact: if $G_0 \le G$ are abelian groups, and $Z$ is a divisible abelian group, then any homomorphism $\psi_0 : G_0 \rightarrow Z$ can be extended to a homomorphism $\psi : G \rightarrow Z$. In the case that $[G : G_0] < \infty$ (as in our situation) one may prove this fact in finitely many stages, first extending $\psi_0$ to a homomorphism on $G_1 := \langle G_0, x\rangle$ by defining $\psi_1 (g_0 x^r) := \psi_0(g_0) z^r$, where $z \in Z$ is any element with $z^d = \psi_0(x^d)$, where $d$ is the order of $x$ in $G_1/G_0$, and then proceeding similarly in further stages. (We leave it for the reader to check that $\psi_1$ is well-defined. A Zorn's lemma argument would handle the general case.) 

It is clear that $\psi$ is unique up to multiplication by characters $\xi : I_{\mf{m}} \rightarrow \C^*$ which vanish on $I_{\mf{m}}^0$. We claim that any such $\xi$ is (the restriction of) a class group character. To see this, consider the natural injective homomorphism $\xi : I_{\mf{m}}/I_{\mf{m}}^0 \rightarrow I/I^0$ induced by the inclusion $I_{\mf{m}} \hookrightarrow I$, where $I^0$ denotes the principal fractional ideals in $K$. It is a well-known fact that every ideal class in $K$ contains infinitely many prime ideals (this can be proven using the prime ideal theorem for class group characters, which is a special case of \cref{main-analytic-input} below) and in particular at least one ideal coprime to $\mf{m}$. Therefore $\xi$ is surjective and is hence an isomorphism. The claim follows.
\end{proof}

In this section most Gr\"o{\ss}encharaktere will have the same frequency $\ell$, which is the one in the statement of \cref{prop:main-term-comp}. Recall that we are making the assumption that $|\ell| \le e^{\sqrt{\log X}}$.

We define the von Mangoldt function $\Lambda_K$ on ideals by
\[ \Lambda_K(\mf{a}) = \left\{ \begin{array}{ll} \log N\mf{p} & \mbox{if $\mf{a} = \mf{p}^k$ for some $k \ge 1$} \\ 0 & \mbox{otherwise}. \end{array}\right.\]

\cref{main-analytic-input} below, a form of the prime ideal theorem for Gr\"o{\ss}encharaktere, is the main input we require from classical multiplicative number theory. Before stating the result, we recall the notion of one Gr\"o{\ss}encharakter being induced from another. If $\psi$ is a Gr\"o{\ss}encharakter to modulus $\mf{m}$, and if $\tilde{\mf{m}}$ is another ideal with $\mf{m} \mid \tilde{\mf{m}}$, then $\psi$ induces a Gr\"o{\ss}encharakter $\tilde{\psi}$ to modulus $\tilde{\mf{m}}$ by setting $\tilde{\psi}(\mf{a}) = \psi(\mf{a})$ if $\mf{a}$ and $\tilde{\mf{m}}$ are coprime, and $\tilde{\psi}(\mf{a}) = 0$ otherwise. The character which takes the value $1$ for all nonzero prime ideals is called the \emph{principal} character (to modulus $(1) = \O_K$). It induces a character to any modulus $\mf{m}$, which we call the principal character to that modulus. A Gr\"o{\ss}encharakter is called \emph{primitive} if it is not induced from any Gr\"o{\ss}encharakter of strictly smaller modulus. 

\begin{proposition}\label{main-analytic-input}
Let $K = \Q(\sqrt{-n})$ and let $X > 1$. Then there is at most one `exceptional' primitive Gr\"o{\ss}encharakter $\chi_{\mf{d}^*}$ to some modulus $\mf{d}^*$ and associated real number $\beta \in (\frac{1}{2}, 1)$ such that the following is true uniformly for all Gr\"o{\ss}encharaktere $\psi$ on $K$ whose modulus $\mf{m}$ and frequency $\ell$ satisfy $|N\mf{m}|, |\ell| \le e^{\sqrt{\log X}}$:
\[ \sum_{N\mf{a} \le X} \Lambda_K(\mf{a}) \psi(\mf{a}) = X 1_{\psi = \psi_0, \ell = 0} - \frac{X^{\beta}}{\beta} 1_{\psi \sim \chi_{\mf{d}^*}, \ell = 0} + O(Xe^{-c\sqrt{\log X}}).\]
Here $\psi_0$ is the principal character to modulus $\mf{m}$. The exceptional charakter $\chi_{\mf{d}^*}$, if it exists, is quadratic \textup{(}that is to say $\chi_{\mf{d}^*}^2$ is the principal charakter to modulus $\mf{d}^*$\textup{)}. The notation $\psi \sim \chi_{\mf{d}^*}$ means that $\psi$ is induced from $\chi_{\mf{d}^*}$, which in particular requires that $\mf{d}^* \mid \mf{m}$. Finally, $\beta$ satisfies
\begin{equation}\label{siegel-thm} 1 - \beta \gg_{\eps} (N\mf{d}^*)^{-\eps}
.\end{equation}
\end{proposition}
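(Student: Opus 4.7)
The plan is to proceed by the classical Landau–Hecke contour integration method, applied uniformly to each Grössencharakter in the allowed range, and track carefully the analytic conductor (which must incorporate the frequency $\ell$).

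First, for each admissible $\psi$ with modulus $\mf{m}$ and frequency $\ell$, invoke Hecke's theorem: the $L$-function $L(s,\psi) = \sum_{\mf a} \psi(\mf a) N\mf{a}^{-s}$ extends to a meromorphic function on $\C$ with a simple pole at $s=1$ iff $\psi$ is the principal character to modulus $\mf m$ and $\ell=0$, and satisfies a functional equation relating $s$ and $1-s$ whose analytic conductor is $\asymp |\Delta| N\mf m (|\ell|+1)^2$. Apply Perron's formula with line $\Re s = 1 + 1/\log X$ truncated at height $T = e^{\sqrt{\log X}}$; the Perron tail error is absorbed by the target bound.

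Second, shift the contour to $\Re s = 1 - c_0/\sqrt{\log X}$ and use the standard zero-free region for Hecke $L$-functions of Grössencharaktere, which takes the form
\[ \sigma > 1 - \frac{c}{\log\bigl( N\mf m \,(|\ell|+1)\,(|t|+2)\bigr)}, \]
with at most one exceptional real simple zero $\beta$ occurring only when $\psi$ is equivalent to a real Grössencharakter (which forces $\ell = 0$, since conjugation negates the infinity type, so $\psi = \overline{\psi}$ implies $\ell = -\ell$). On our range $N\mf m, |\ell|, T \le e^{\sqrt{\log X}}$, this region has width $\gg 1/\sqrt{\log X}$, and standard Hadamard-product/Borel--Carath\'eodory bounds give $|L'/L(s,\psi)| \ll (\log X)^{O(1)}$ along the shifted contour. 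The contour shift picks up residues: $X$ from the pole at $s=1$, contributing only when $\psi$ is principal with $\ell = 0$; and $-X^{\beta}/\beta$ whenever $L(s,\psi)$ vanishes at an exceptional real $\beta$, which forces $\psi$ to be induced from the exceptional primitive quadratic character $\chi_{\mf d^*}$.

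Third, uniqueness of the exceptional pair $(\chi_{\mf d^*}, \beta)$ in our range follows from the Deuring--Heilbronn repulsion argument: two distinct primitive real Hecke characters with Siegel zeros would produce two real zeros of $\zeta_K(s) L(s,\chi_1) L(s,\chi_2) L(s, \chi_1\chi_2)$ too close to $s=1$, contradicting the zero-free region applied to the product. The Siegel lower bound $1-\beta \gg_\eps (N\mf{d}^*)^{-\eps}$ is Siegel's classical (ineffective) theorem, applied to the Dedekind--type zeta function $\zeta_K L(s,\chi_{\mf d^*})$, exactly as in \cite[Theorem~5.28]{IK-book}.

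The main obstacle will be verifying these classical ingredients uniformly as the frequency $\ell$ varies: the zero-free region proof, built around the nonnegative trigonometric inequality applied to a product such as $L(s)^3 L(s,\psi)^4 L(s,\psi^2)$ (with appropriate adjustment at $\ell=0$), needs the analytic conductor to track $(|\ell|+1)$, and the associated estimates for $|L'/L|$ and $\log|L|$ on the edge of the zero-free region must be stated with this uniformity. References for this in the Grössencharakter setting exist (Fogels, Mitsui, Coleman) but constants need to be chased; given the very generous slack in the error term $e^{-c\sqrt{\log X}}$ and in the conductor bound $e^{\sqrt{\log X}}$, no subtle optimization is required, only bookkeeping.
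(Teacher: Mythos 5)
Your proposal is correct and follows the same route the paper itself gestures at: the paper's proof is only a sketch that cites \cite[Theorem 5.13]{IK-book} together with zero-free regions for Gr\"o{\ss}encharaktere from Mitsui and Fogels, and your Perron-plus-contour-shift argument, with Deuring--Heilbronn repulsion for uniqueness and Siegel's theorem for \cref{siegel-thm}, is precisely what unwinding those references yields. You correctly identify the key uniformity issue (tracking $(|\ell|+1)$ in the analytic conductor and the zero-free region) and the structural point that a real Gr\"o{\ss}encharakter forces $\ell=0$, so the exceptional term can only arise at frequency zero.
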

\begin{remark}
The exceptional character depends on the scale $X$ (which is fixed throughout the paper). The number $\beta$ is a root of $L(s, \chi_{\mf{d}^*})$. If no exceptional character exists (which is conjectured to be the case) then the term with $-X^{\beta}/\beta$ should be omitted.
\end{remark}
\begin{proof}[Sketch]
This result is well-known to experts and versions of it have been used in several places; see, for example, \cite[Lemma 4.5]{maynard-norm-forms} for an essentially identical statement. However it is very difficult to locate a proof in the literature. Ultimately, it can be demonstrated by applying \cite[Theorem 5.13]{IK-book} to appropriate zero-free regions for Gr\"o{\ss}encharaktere, all of which may be found either explicitly or with minor modifications (but sometimes with only skeleton proofs) across papers of Mitsui \cite{Mitsui} and Fogels \cite{fogels-1,fogels-2,fogels-3} from the late 1950s and early 1960s. The arguments are analogous to the proofs over $\Q$ (with Dirichlet characters) which are covered in complete detail in \cite{davenport-book} or \cite{MV-book}. Due to the somewhat unsatisfactory nature of the literature here, the first-named author is preparing an expository document \cite{BJG-hecke-exposition} with a complete proof. 
\end{proof}

\subsection{Decomposing \texorpdfstring{$\Lambda_{\cramer}$}{}}
We now begin the proof of \cref{prop:main-term-comp}. The first step is to effect a decomposition of $\Lambda_{\cramer}$ into two parts, using ideas related to the Brun sieve 
(as discussed for instance in \cite[Chapter 6]{opera-cribro} or \cite[Chapter 17]{koukoulopoulos}). We will show that, of the resulting four terms contributing to \cref{main-term-repeat}, all but one are small. The evaluation of the remaining term is a more substantial task, and is deferred to subsequent sections.

Write $\mc{P}_Q$ for the set of primes $\le Q$, and for $S \subseteq \mc{P}_Q$, write $P_S := \prod_{p \in S} p$.

Then by inclusion--exclusion we have
\[ \Lambda_{\cramer}(x) = \prod_{p \le Q} \Big(1 - \frac{1}{p}\Big)^{-1} \sum_{S \subseteq \mc{P}_Q} (-1)^{|S|} 1_{P_S | x}. \]
 Let \begin{equation}\label{t-choice} t := 20\log \log X,\end{equation} and split
\begin{equation}\label{cramer-split} \Lambda_{\cramer} = \Lambda_{\cramer}^{\sharp} + \Lambda_{\cramer}^{\flat}\end{equation} where
\begin{equation}\label{lam-sharp-def}
  \Lambda_{\cramer}^{\sharp} := \prod_{p \le Q} \Big(1 - \frac{1}{p}\Big)^{-1} \sum_{\substack{S \subseteq \mc{P}_Q \\ |S| \le t}} (-1)^{|S|} 1_{P_S | x}   
\end{equation} and
\begin{equation}\label{lam-flat-def} \Lambda_{\cramer}^{\flat} := \prod_{p \le Q} \Big(1 - \frac{1}{p}\Big)^{-1} \sum_{\substack{S \subseteq \mc{P}_Q \\ |S| > t}} (-1)^{|S|} 1_{P_S | x}.\end{equation}
The following lemma will be used to guarantee that the contribution of the $\Lambda_{\cramer}^{\flat}$ terms to \cref{main-term-repeat} will be small.

\begin{lemma}\label{lem81}
We have
\begin{equation}\label{lam-flat-ell-1} \sum_{x \le X^{1/2}}  |\Lambda_{\cramer}^{\flat}(x)| \ll X^{1/2}(\log X)^{-8}.\end{equation}
and
\begin{equation}\label{lam-sharp-ell-1} \sum_{x \le X^{1/2}}  |\Lambda_{\cramer}^{\sharp}(x)| \ll X^{1/2}(\log X)^2.\end{equation}
\end{lemma}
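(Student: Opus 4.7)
Both bounds begin with the triangle inequality, pulling the absolute value inside the sum over $S$ in \cref{lam-sharp-def} and \cref{lam-flat-def}. After exchanging the order of summation with the sum over $x$, the inner count $\#\{x \le X^{1/2} : P_S \mid x\} \le X^{1/2}/P_S$ (which is zero when $P_S > X^{1/2}$) reduces matters to estimating sums of $1/P_S$ over the appropriate families of subsets $S \subseteq \mc{P}_Q$. Recall also from Mertens that the prefactor satisfies $\prod_{p \le Q}(1 - 1/p)^{-1} \asymp \log Q \asymp (\log X)^{1/10}$, given the choice $Q = \exp(\log^{1/10}(X^{1/2}))$.

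For the bound on $\Lambda_{\cramer}^{\sharp}$, I would simply discard the constraint $|S| \le t$ and estimate
\[ \sum_{S \subseteq \mc{P}_Q} \frac{1}{P_S} = \prod_{p \le Q}\Big(1 + \frac{1}{p}\Big) \ll \log Q \ll (\log X)^{1/10}. \]
Combining this with the prefactor $\ll (\log X)^{1/10}$ and the factor $X^{1/2}$ yields $\sum_{x \le X^{1/2}} |\Lambda_{\cramer}^{\sharp}(x)| \ll X^{1/2}(\log X)^{1/5}$, which easily implies \cref{lam-sharp-ell-1}.

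For the bound on $\Lambda_{\cramer}^{\flat}$, the key input is the multinomial estimate
\[ \sum_{\substack{S \subseteq \mc{P}_Q \\ |S|=k}} \frac{1}{P_S} \le \frac{1}{k!}\Big(\sum_{p \le Q} \frac{1}{p}\Big)^k \le \frac{(\log\log X + O(1))^k}{k!}. \]
By Stirling, for $k > t = 20 \log\log X$ this is bounded by $(e \log\log X/k)^k \le (e/20)^k$. Summing the resulting geometric series over $k > t$ yields a tail of order $(e/20)^t = (\log X)^{-20\log(20/e)}$, and since $20 \log(20/e) > 39$, this comfortably absorbs the Mertens prefactor $(\log X)^{1/10}$ to give a contribution $\ll X^{1/2}(\log X)^{-38}$, well within the desired \cref{lam-flat-ell-1}.

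There is no real obstacle; the argument is entirely routine. The only thing worth monitoring is the numerical exponent produced by Stirling, which is why the choice $t = 20 \log\log X$ in \cref{t-choice} (rather than just $C \log\log X$ for a small $C$) is made, so that the resulting saving $(\log X)^{-20\log(20/e)}$ clearly dominates any fixed power of $\log X$ appearing on the right-hand side.
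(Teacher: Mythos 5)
Your proposal is correct, and for the $\Lambda_{\cramer}^{\flat}$ bound it takes a genuinely different route from the paper. The paper works pointwise: it observes $|\Lambda_{\cramer}^{\flat}(x)| \ll (\log X)\tau(x)$, notes that $\Lambda_{\cramer}^{\flat}$ is supported on $x$ with at least $t$ distinct prime factors so that $\tau(x) \ge 2^t > (\log X)^{12}$ on that support, writes $\tau(x) \le 2^{-t}\tau(x)^2$, and finishes by summing with the moment bound $\sum_{x \le X^{1/2}}\tau(x)^2 \ll X^{1/2}(\log X)^3$ from \cref{lem:divisor-moment}. You instead swap the order of summation first, reduce to the tail sum $\sum_{|S| > t} 1/P_S$, and bound it via the multinomial estimate $\sum_{|S|=k}1/P_S \le \frac{1}{k!}\big(\sum_{p\le Q}\frac1p\big)^k$ plus Stirling — precisely the Brun-sieve tail estimate which the paper itself uses later in the proof of \cref{partial-sum} (the display \cref{large-t-bd}). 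Your route yields a stronger saving ($(\log X)^{-38}$ vs.\ the paper's $(\log X)^{-8}$), though both more than suffice. For the $\Lambda_{\cramer}^{\sharp}$ bound your argument is essentially trivial either way (swap sums, use $\sum_S 1/P_S = \prod(1+1/p)$, versus the paper's pointwise $\ll (\log X)\tau(x)$ and a first-moment divisor sum); both are fine. One small inaccuracy in your write-up: $\sum_{p\le Q}1/p = \log\log Q + O(1) = \tfrac1{10}\log\log X + O(1)$, not $\log\log X + O(1)$; this only makes your constant better, so nothing breaks.
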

\begin{proof}
We begin with \cref{lam-flat-ell-1}. First note that $\prod_{p \le Q} (1 - \frac{1}{p})^{-1} \ll \log Q < \log X$. Therefore we have the pointwise bounds
\begin{equation}\label{cramer-pointwise} \Lambda_{\cramer}(x) \ll \log X\end{equation} and 
\begin{equation}\label{first-lam-flat-bd} |\Lambda_{\cramer}^{\flat}(x)| \ll (\log X)\sum_{\substack{S \subseteq \mc{P}_Q \\ |S| > t}} 1_{P_S | x} \ll (\log X) \tau(x). \end{equation}
Moreover, $\Lambda_{\cramer}^{\flat}$ is supported on $x$ with at least $t$ distinct prime factors, and thus on $x$ for which $|\tau(x)| \ge 2^t$. Thus, on the support of $\Lambda_{\cramer}^{\flat}$, we have $\tau(x) \le 2^{-t} \tau(x)^2 < (\log X)^{-12} \tau(x)^2$. Substituting into \cref{first-lam-flat-bd} and summing gives
\[ \sum_{x \le X^{1/2}}  |\Lambda_{\cramer}^{\flat}(x)| \ll (\log X)^{-11} \sum_{x \le X^{1/2}}\tau(x)^2 \ll X^{1/2} (\log X)^{-8}, \] where in the last step we used  \cref{lem:divisor-moment} in the case $m = 2$. This is \cref{lam-flat-ell-1}.

Turning to \cref{lam-sharp-ell-1}, observe the bound 

\begin{equation}\label{lam-cramer-crude} |\Lambda_{\cramer}^{\sharp}(x)| \ll  (\log X) \tau(x),\end{equation} which follows from \cref{cramer-split,cramer-pointwise,first-lam-flat-bd} and the triangle inequality. The desired bound \cref{lam-sharp-ell-1} follows by summing over $x$.
\end{proof}

Substitute the decomposition \cref{cramer-split} into \cref{main-term-repeat}. This gives a sum of four terms. Three of these terms are
\[ (\log X) \sum_{x,y \in \Z: x^2 + ny^2\le X}
\chi_{\infty}^{(\ell)}(x + y \sqrt{-n})\Lambda'_{\cramer}(x)\Lambda''_{\cramer}(y) 1_{x^2 + ny^2 \operatorname{prime}}  \]
where $\Lambda'_{\cramer}, \Lambda''_{\cramer} \in \{ \Lambda_{\cramer}^{\sharp}, \Lambda_{\cramer}^{\flat}\}$, and at least one of these functions is $\Lambda_{\cramer}^{\flat}$. Using \cref{lem81}, each such term can be bounded crudely by $X (\log X)^{-5}$, here ignoring the constraint that $x^2 + ny^2$ is prime entirely. The remaining term involves two copies of $\Lambda_{\cramer}^{\sharp}$. We have therefore reduced the task of proving \cref{prop:main-term-comp} to the task of showing the following asymptotic.

\begin{proposition}\label{main-term-sharps} Suppose that $n$ is even. Then we have
\[ 
(\log X) \sum_{\substack{x,y \in \Z \\ x^2 + ny^2 \le X}} \chi_{\infty}^{(\ell)}(x + y \sqrt{-n})\Lambda^{\sharp}_{\cramer}(x)\Lambda^{\sharp}_{\cramer}(y) 1_{x^2 + ny^2 \operatorname{prime}} =\frac{\pi  1_{\ell = 0} \kappa_n X}{\sqrt{n}}  + O(X (\log X)^{-1}),\]
where $\kappa_n$ is as given in \cref{kappa-def}.\end{proposition}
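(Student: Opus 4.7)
The plan is to reduce to Gr\"o\ss encharakter sums on prime ideals of $K = \Q(\sqrt{-n})$, apply the prime ideal theorem \cref{main-analytic-input}, and identify the resulting Euler product with $\kappa_n$.

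I would first expand $\Lambda_{\cramer}^\sharp(x)\Lambda_{\cramer}^\sharp(y)$ via the inclusion--exclusion formula \cref{lam-sharp-def} and pass to a sum over principal prime ideals of $\O_K$ by the argument of \cref{lem:sums-to-ideals}. The LHS becomes
\[ C_Q (\log X) \sum_{\substack{S_1, S_2 \subseteq \mc{P}_Q \\ |S_1|, |S_2| \le t \\ S_1 \cap S_2 = \emptyset}} (-1)^{|S_1|+|S_2|}\, T(S_1, S_2) + O(X^{1/2+o(1)}), \]
where $C_Q := \prod_{p\le Q}(1-1/p)^{-2}$ and $T(S_1, S_2)$ sums $\chi_\infty^{(\ell)}(x+y\sqrt{-n})$ over principal prime ideals of norm $\le X$ with $P_{S_1}\mid x$, $P_{S_2}\mid y$. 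The disjointness $S_1\cap S_2 = \emptyset$ is forced because a common prime would divide $\gcd(x,y)$, making $x^2+ny^2$ composite. For each such pair, the conditions $P_{S_1}\mid x$, $P_{S_2}\mid y$ single out a union of residue classes in $\O_K/\mf{m}$ for $\mf{m} := (P_{S_1}P_{S_2})\O_K$; Fourier expansion on $(\O_K/\mf{m})^+$ combined with class-group characters (to detect principality) and the fixed infinity component $\chi_\infty^{(\ell)}$ produces, via \cref{hecke-lift}, a decomposition
\[ T(S_1, S_2) = \sum_\psi c_\psi \sum_{\substack{\mf{p}\textup{ prime} \\ N\mf{p}\le X}} \psi(\mf{p}), \qquad |c_\psi|\ll 1/N\mf{m}, \]
with $\psi$ ranging over $O(h_K N\mf{m})$ Gr\"o\ss encharaktere of modulus dividing $\mf{m}$ and frequency $\ell$.

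Since $N\mf{m}\le (P_{S_1}P_{S_2})^2 \le Q^{4t} = \exp(O((\log\log X)\log^{1/10}(X^{1/2})))\ll e^{\sqrt{\log X}}$ and $|\ell| \le e^{\sqrt{\log X}}$, the hypotheses of \cref{main-analytic-input} hold. Only the principal character with $\ell = 0$ produces a main term of size $X$; other non-exceptional characters contribute $O(Xe^{-c\sqrt{\log X}})$ each; and the potential exceptional character is controlled by Siegel's bound \cref{siegel-thm}, which yields $X^\beta \le X\exp(-c'(\log X)(N\mf{d}^*)^{-\epsilon})\ll X(\log X)^{-10}$. With at most $\exp(O(t\log Q))$ pairs and at most $O(N\mf{m})$ characters per pair, the aggregate error is $O(X/\log X)$.

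The main-term aggregation, which is the principal obstacle, uses multiplicativity over primes $p\le Q$. The principal-character contribution for each pair $(S_1, S_2)$ factorises across $p\le Q$, and collecting the $(-1)^{|S_1|+|S_2|}$ signs together with the prefactor $C_Q$ collapses the double sum into $X$ times the partial Euler product $\prod_{p\le Q}(1-1/p)^{-2}\beta_p$, where $\beta_p$ is the density of $(x,y)\bmod p$ satisfying $p\nmid x$, $p\nmid y$ and $p\nmid x^2+ny^2$. A direct count using the splitting behaviour of $p$ in $K$ (analogous to \cref{density} in the proof of \cref{lem:2primes-upper-bound}) gives $\beta_p = (p-1)(p-3)/p^2$ when $p$ splits and $(p-1)^2/p^2$ otherwise; multiplied by $(1-1/p)^{-2}$ these recover the local factors $p(p-3)/(p-1)^2$ and $p/(p-1)$ of $\kappa_n$ appearing in \cref{kappa-def}, while the constant $1/\sqrt{n}$ reflects the area of the ellipse $\{x^2+ny^2\le 1\}$. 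The delicate issue is the only conditional convergence of the infinite product defining $\kappa_n$, so one must argue that the tail $p>Q$ is absorbed into the $O(X/\log X)$ error via partial summation and the non-vanishing of $L(1,(\Delta\mid\cdot))$.
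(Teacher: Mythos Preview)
Your overall strategy---inclusion--exclusion, pass to Gr\"o\ss encharaktere, apply \cref{main-analytic-input}, recombine into an Euler product---matches the paper's. But there is a genuine gap in your treatment of the Siegel term, and your main-term computation is a heuristic rather than a proof.

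\textbf{The Siegel gap.} You write $X^\beta \le X\exp(-c'(\log X)(N\mf{d}^*)^{-\epsilon}) \ll X(\log X)^{-10}$. This inference fails: you only know $N\mf{d}^* \le N\mf{m} \ll \exp((\log X)^{1/5})$, so $(N\mf{d}^*)^{-\epsilon}$ can be as small as $\exp(-\epsilon(\log X)^{1/5})$, making the exponent $(\log X)(N\mf{d}^*)^{-\epsilon} \to 0$ and the bound useless. Siegel's theorem alone cannot kill this term. The paper's argument (Section~8.7) instead exploits that the exceptional character contributes only when $\mf{d}^* \mid \mf{m}^{S_1,S_2}$, and uses the refined coefficient bound $|c_{\psi^*}| \le c_{\psi_0} \ll 1/(P_{S_1}P_{S_2})$ (which requires first computing the principal-character coefficient explicitly). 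Summing over the restricted pairs $(S_1,S_2)$ then produces a factor $\asymp d_*^{-1}$, and the combination $X^\beta d_*^{-1+o(1)} = X \cdot X^{-(1-\beta)} d_*^{-1+o(1)}$ is bounded by $X/\log X$ uniformly in $d_*$ after invoking \cref{siegel-thm}.

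\textbf{The main term.} Your local densities $\beta_p$ are correct as a heuristic, but you have not explained how they emerge from the principal Gr\"o\ss encharakter contribution. The actual computation (Sections~8.5 and~8.8) evaluates the principal-character coefficient $\sigma(S_1,S_2)$ explicitly---this carries a factor $|\O_K^*|/h_K$ and a product over $p \mid \mf{m}$ of $(1-\legendre{\Delta}{p}/p)^{-1}$---then sums over $S_1,S_2$ to an Euler product, and finally invokes the class number formula \cref{class-number} to convert $h_K$ and the residual $L(1,(\Delta\mid\cdot))$-type product into the factor $\pi/\sqrt{n}$. Your remark that $1/\sqrt{n}$ ``reflects the area of the ellipse'' is not where this constant comes from in the rigorous argument; the ellipse area enters elsewhere (in \cref{partial-sum}, when replacing $(\log X)1_{\text{prime}}$ by $\Lambda_K$, a step you also skipped). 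A smaller point: you write ``Fourier expansion on $(\O_K/\mf{m})^+$'', but to combine with \cref{hecke-lift} and \cref{main-analytic-input} you need multiplicative (Dirichlet) characters on $(\O_K/\mf{m})^*$, restricted to elements coprime to $\mf{m}$; the paper uses $\mf{m} = (2nP_{S_1}P_{S_2})$ rather than $(P_{S_1}P_{S_2})$ precisely so that $\Gamma(S_1,S_2)$ is a union of cosets mod $\mf{m}$.
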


This will occupy us for the rest of the section. In particular, we assume that $n$ is even from now on.

\subsection{Writing as a sum over ideals}

The main work in establishing \cref{main-term-sharps} will be to prove the following proposition.

\begin{proposition}\label{main-term-sharps-weighted}
Suppose that $n$ is even. Then we have
\[ \sum_{\mf{a} : N\mf{a} \le X}  \Lambda_{\cramer}^{\sharp}\boxtimes_{\ell} \Lambda_{\cramer}^{\sharp}(\mf{a}) \Lambda_K(\mf{a}) =  \frac{\pi 1_{\ell = 0} \kappa_n}{\sqrt{n}} X + O(X (\log X)^{-1}).\]
\end{proposition}
The LHS here is closely related to the LHS of \cref{main-term-sharps}, with the key difference being that the $\log X$ weight on primes is replaced by a von Mangoldt weight. The proof of \cref{main-term-sharps-weighted} is quite substantial. Before getting to the details, we deduce \cref{main-term-sharps} from it by a partial summation argument.

\begin{proof}[Proof of \cref{main-term-sharps}, assuming \cref{main-term-sharps-weighted}]
Using the definition of $\boxtimes_{\ell}$ (see \cref{def-product-form}) we see that the LHS in \cref{main-term-sharps} is equal to 
\begin{equation}\label{866} \sum_{\mf{a} : N\mf{a} \le X}  \Lambda_{\cramer}^{\sharp}\boxtimes_{\ell} \Lambda_{\cramer}^{\sharp}(\mf{a}) \Lambda_K(\mf{a}) \frac{\log X}{\log N\mathfrak{a}}, \end{equation} up to a negligible error from the contribution of $\mathfrak{a} = (x + y \sqrt{-n})$ which are a proper power $\mathfrak{p}^j$, $j \ge 2$, of a prime ideal and the contribution of $x,y \in \Z$ for which $N(x + y \sqrt{-n}) = x^2 + ny^2$ is a proper power $p^j$, $j \ge 2$, of a rational prime. (This error is $\ll X^{1/2 + o(1)}$ using \cref{lam-cramer-crude} to bound the $\Lambda_{\cramer}^{\sharp}$ terms.)  To establish \cref{main-term-sharps}, it therefore suffices to prove that 
\begin{equation}\label{prop86-suffice} \sum_{\mf{a} : N\mf{a} \le X}  \Lambda_{\cramer}^{\sharp}\boxtimes_{\ell} \Lambda_{\cramer}^{\sharp}(\mf{a}) \Lambda_K(\mf{a})\big(  \frac{1}{\log N\mathfrak{a}} - \frac{1}{\log X}\big) \ll X (\log X)^{-2}. \end{equation}
For this we use partial summation, writing
\[ \frac{1}{\log N\mathfrak{a}} - \frac{1}{\log X} = \int^X_{N\mathfrak{a}} \,\frac{\mathrm{d}t}{t (\log t)^2}, \] so that the LHS of \cref{prop86-suffice} is
\[ \int^X_2 \frac{\mathrm{d} t}{t (\log t)^2} \Big( \sum_{N \mathfrak{a} \le t} \Lambda_{\cramer}^{\sharp}\boxtimes_{\ell} \Lambda_{\cramer}^{\sharp}(\mf{a}) \Lambda_K(\mf{a})\Big).\]
By \cref{main-term-sharps-weighted}, the bracketed expression is $O(t)$ uniformly in $t$, and so the LHS of \cref{prop86-suffice} is bounded by
\[ \ll \int^X_2 \frac{\mathrm{d} t}{(\log t)^2} \ll X(\log X)^{-2}.\]
This concludes the proof.
\end{proof}

\subsection{Expanding with Gr\"o{\ss}encharaktere}
We now turn to the main task of establishing \cref{main-term-sharps-weighted}. The key idea is to write the expression on the LHS in terms of sums $\sum_{\mf{a}} \Lambda_K(\mf{a}) \psi(\mf{a})$ for Gr\"o{\ss}encharaktere $\psi$, which will then allow us to bring in the key analytic input, \cref{main-analytic-input}. Opening up the definitions of the two copies of $\Lambda_{\cramer}^{\sharp}$ (recalling the definition \cref{lam-sharp-def}), we may write 
\begin{equation}\label{in-ex} \sum_{\mf{a} : N\mf{a} \le X}  \Lambda_{\cramer}^{\sharp}\boxtimes_{\ell} \Lambda_{\cramer}^{\sharp}(\mf{a}) \Lambda_K(\mf{a}) = \prod_{p \le Q} \Big(1 - \frac{1}{p}\Big)^{-2} \sum_{\substack{S_1, S_2 \subseteq \mc{P}_Q \\ |S_1|, |S_2| \le t}} (-1)^{|S_1| + |S_2|} E(S_1, S_2),\end{equation}
where
\begin{equation}\label{es1s2-def}
E(S_1, S_2) := \sum_{\mf{a} : N\mf{a} \le X} \varphi_{P_{S_1}} \boxtimes_{\ell} \varphi_{P_{S_2}}(\mf{a}) \Lambda_K(\mf{a}),\end{equation} where 
\[ \varphi_{P_S}(x) := 1_{x \in \Z, P_S | x}.\]
To understand this better, write 
\begin{equation}\label{rd1d2-def} \Gamma(S_1, S_2) := \{ \gamma \in \O_K : \gamma = x + y \sqrt{-n} \; \mbox{with} \; x, y \in \Z, P_{S_1} \mid x, P_{S_2} \mid y\}, \end{equation} and observe that if $\mf{a} = (\alpha)$ is principal then
\begin{equation}\label{psi1-times-psi2} \varphi_{P_{S_1}} \boxtimes_{\ell} \varphi_{P_{S_2}}(\mf{a}) = \sum_{u \in \O_K^*} \chi_{\infty}^{(\ell)}(u \alpha) 1_{\Gamma(S_1, S_2)} (u \alpha).\end{equation} The definition does not depend on the choice of $\alpha$; also, by definition of $\boxtimes_{\ell}$, $\varphi_{P_{S_1}} \boxtimes_{\ell} \varphi_{P_{S_2}}(\mf{a})$ = 0 if $\mf{a}$ is not principal.

 Let us also take the opportunity to remark that, for any $P_S$ appearing  in a term $E(S_1, S_2)$ involved in the sum \cref{in-ex}, we have $P_S \le Q^t < \exp(\log^{1/8} X)$. 

We now expand the function $\varphi_{P_{S_1}} \boxtimes_{\ell} \varphi_{P_{S_2}}$ using Gr\"o{\ss}encharaktere. First we note that (see \cref{def-product-form}) it is supported on principal ideals, a condition we detect using a sum over class group characters:
\begin{equation}\label{detect-principal} 1_{[\mf{a}] = 1_{\classgroup}}(\mf{a}) = h_K^{-1}\sum_{\chi_1 \in \widehat{\classgroup}} \chi_1(\mf{a}).\end{equation}

Define $\mf{m} := (D)$ where $D := 2nP_{S_1} P_{S_2}$. We claim that $\Gamma(S_1, S_2)$ is a union of congruence classes mod $\mf{m}$. To see this, suppose that $\gamma \in \Gamma(S_1, S_2)$ and that $\gamma' \in \gamma + \mf{m}$. Then $\gamma' = \gamma + D \beta$ for some $\beta \in \O_K$. Recalling (from \cref{int-contain}) that $2n \O_K \subseteq\Z[\sqrt{-n}]$, one sees from the choice of $D$ that $D \beta \in P_{S_1}P_{S_2} \Z[\sqrt{-n}]$, and the claim follows. 

We will only care about $1_{\Gamma(S_1, S_2)}(\gamma)$ at values of $\gamma$ which are coprime to $\mf{m}$. Restricted to such values we have an expansion 
 \begin{equation}\label{r-to-dirichlet} 1_{\gamma \in \Gamma(S_1, S_2)} = \sum_{\chi \mdsub{\mf{m}}} c^{S_1, S_2}_{\chi} \chi(\gamma)\end{equation} in Dirichlet characters $\chi$ to modulus $\mf{m}$ for some coefficients $c^{S_1, S_2}_{\chi}$.

 Substituting into \cref{psi1-times-psi2} gives
 \[\varphi_{P_{S_1}} \boxtimes_{\ell} \varphi_{P_{S_2}}(\mf{a}) =  \sum_{\chi \mdsub{\mf{m}}} c^{S_1,S_2}_{\chi} \sum_{u \in \O_K^*} \chi_{\infty}^{(\ell)}(u \alpha) \chi(u\alpha).  \]
 Note that the inner sum vanishes unless $\chi_{\infty}^{(\ell)}, \chi$ satisfy the units consistency condition described at the start of \cref{basic-hecke-sec}, since $\chi_{\infty}^{(\ell)}, \chi$ are both characters and \[ \sum_u \chi_{\infty}^{(\ell)}(u) \chi(u) = \sum_{j = 0}^{|\O^*_K| - 1} (\chi_{\infty}^{(\ell)}(u_0)\chi(u_0))^j,\] where $u_0$ is a generator for the group of units. If the units consistency condition is satisfied then it follows from \cref{hecke-lift} that $\chi_{\infty}^{(\ell)}\chi$ is the restriction to principal ideals of some Gr\"o{\ss}encharakter $\psi$ to modulus $\mf{m}$, with frequency $\ell$. Therefore, for principal ideals $\mf{a}$, coprime to $\mf{m}$, we have
\begin{equation}\label{coords-in-chars-2} 
 \varphi_{P_{S_1}} \boxtimes_{\ell} \varphi_{P_{S_2}}(\mf{a}) =  |\O^*_K| \sum_{\psi}  c^{S_1,S_2}_{\psi} \psi(\mf{a}),\end{equation} where $\psi$ ranges over a complete set of Gr\"o{\ss}encharaktere $\md{\mf{m}}$ with frequency $\ell$ modulo equivalence by class group characters, and we have abused notation by writing $c^{S_1,S_2}_{\psi} = c^{S_1,S_2}_{\chi}$. Finally, combining with \cref{detect-principal} we obtain an expression valid for \emph{all} ideals coprime to $\mf{m}$ (not just principal ones) namely
\begin{equation} \label{cutoff-in-Heckes}  \varphi_{P_{S_1}} \boxtimes_{\ell} \varphi_{P_{S_2}}(\mf{a}) =  |\O^*_K| h_K^{-1} \sum_{\psi} c^{S_1,S_2}_{\psi} \psi(\mf{a}),\end{equation}for all $\mf{a}$ coprime to $\mf{m}$, where now the sum is over all Gr\"o{\ss}encharaktere to modulus $\mf{m}$ with frequency $\ell$, and again we have abused notation by writing $c^{S_1,S_2}_{\psi}$ for $c^{S_1,S_2}_{\chi}$, where $\chi$ is the Dirichlet character associated to $\psi$.

For use later on note that we have the bound
 \begin{equation}\label{c-chi-bd} |c^{S_1,S_2}_{\psi}| \le c^{S_1,S_2}_{\psi_0} \le 1 ,\end{equation} where $\psi_0$ is the principal character; this follows from the proof of \cref{r-to-dirichlet}, $c^{S_1,S_2}_{\chi}$ being the average of $1_{\gamma \in \Gamma(S_1, S_2)} \overline{\chi(\gamma)}$ over $\gamma \in (\O_K/\mf{m})^*$. The right-hand bound in \cref{c-chi-bd} is crude and will be improved later.

\subsection{Contribution of the principal character}\label{nstar-12}
From this point on we will use the quantities $n_*$ (squarefree part of $n$), $r$ (defined to be $\sqrt{n/n_*}$) and $\omega \in \{\frac{1}{2}, 1\}$ (depending on the value of $n_*$ mod $4$, see \cref{omega-def}).

We now turn to the key computation which will lead to the constant in our asymptotic formula, which is that of the coefficient of the principal character $\psi_0$ to modulus $\mf{m}$, namely
\begin{equation}\label{princ-char} \sigma(S_1, S_2) := |\O^*_K| h_K^{-1} c^{S_1,S_2}_{\psi_0}.\end{equation} 

Note that this section is only relevant in the case $\ell = 0$, since if $\ell \neq 0$ then the sum \cref{cutoff-in-Heckes} does not contain the principal character. Assume, then, that $\ell = 0$.
The idea is to sum \cref{r-to-dirichlet} for $\gamma$ in a set of representatives for $(\O_K/\mf{m})^*$. By orthogonality of Dirichlet characters, this implies that 
\begin{equation}\label{chi0-first} \# \{ \gamma \in \Gamma(S_1, S_2) /\mf{m} , \;\gcd(\gamma, \mf{m}) = 1 \} = |(\O_K/\mf{m})^*| c_{\psi_0}^{S_1,S_2}.\end{equation}
We have (recalling the definition \cref{rd1d2-def} of $\Gamma(S_1, S_2)$)
\begin{equation}\label{gamma-s1s2-again}  \Gamma(S_1, S_2)  = \{ \alpha = x + ry \sqrt{-n_*} : x, y \in \Z, P_{S_1} \mid x, P_{S_2} \mid y\} .\end{equation}
Using this we can almost immediately observe that, in certain cases, $\sigma(S_1, S_2)$ is zero.  In the following lemma, and for the rest of the section, set
\[ T := \{ p \le Q : p \mid n\}.\]

\begin{lemma}\label{es1s2-small-lemma}
Suppose that $\sigma(S_1, S_2) \neq 0$. Then $S_1 \cap S_2  = S_1 \cap T = \emptyset$.
\end{lemma}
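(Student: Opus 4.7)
My approach is to unwind the definitions and then contradict coprimality in each forbidden case. By \cref{chi0-first}, $\sigma(S_1, S_2) \neq 0$ is equivalent to the existence of some $\gamma = x + y\sqrt{-n} \in \Gamma(S_1, S_2)$, i.e.\ with $P_{S_1} \mid x$ and $P_{S_2} \mid y$, such that $\gcd(\gamma, \mf{m}) = 1$, where $\mf{m} = (D)$ and $D = 2nP_{S_1}P_{S_2}$. So I will show the contrapositive: a prime $p$ in either $S_1 \cap S_2$ or $S_1 \cap T$ would force every such $\gamma$ to share a prime ideal factor with $\mf{m}$.

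First, suppose $p \in S_1 \cap S_2$. Then $p \mid x$ and $p \mid y$, so $p \mid \gamma$ in $\O_K$. Since $p \mid P_{S_1} \mid D$, any prime ideal $\mf{p}$ of $\O_K$ above $p$ contains both $\gamma$ and $\mf{m}$, contradicting coprimality. This handles the first condition.

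Next, suppose $p \in S_1 \cap T$, so $p \mid x$ and $p \mid n$. The key observation is that $(\sqrt{-n})^2 = -n \in (p)$, so for any prime ideal $\mf{p}$ of $\O_K$ containing $(p)$, primality forces $\sqrt{-n} \in \mf{p}$. Hence $y\sqrt{-n} \in \mf{p}$ for all $y$, and combined with $x \in (p) \subseteq \mf{p}$ we get $\gamma \in \mf{p}$. Since $p \mid D$, we have $\mf{m} \subseteq (p) \subseteq \mf{p}$, giving the same contradiction to coprimality.

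This plan has no real obstacle — the only genuinely content-bearing step is the elementary ideal-theoretic observation that $\sqrt{-n}$ lies in every prime above a rational prime $p \mid n$, which is what breaks the symmetry between $S_1$ and $S_2$ (the "imaginary part" $y\sqrt{-n}$ is automatically in such a $\mf{p}$, so only the "real part" being nonzero mod $\mf{p}$ can save coprimality, and this is precisely what fails when $p \in S_1$).
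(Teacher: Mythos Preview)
Your proof is correct and follows essentially the same approach as the paper: show that in each forbidden case, every $\gamma \in \Gamma(S_1, S_2)$ shares a prime ideal factor with $\mf{m}$. Your treatment of the $S_1 \cap T$ case is in fact slightly cleaner than the paper's: rather than splitting into subcases according to whether $p \mid r$ or $p \mid n_*$ (the latter using ramification explicitly), you argue directly that $(\sqrt{-n})^2 = -n \in (p)$ forces $\sqrt{-n} \in \mf{p}$ for any prime $\mf{p}$ above $p$, which handles both subcases uniformly.
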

\begin{proof}
If $p \in S_1 \cap S_2$ then clearly $p$ divides both $\mf{m}$ and every element of $\Gamma(S_1, S_2)$, so in this case we have $\sigma(S_1, S_2) = 0$ by \cref{princ-char,chi0-first}.

Alternatively suppose $p \in S_1 \cap T$.  We have either (i) $p \mid r$ or (ii) $p \mid n_*$ (or both). In case (i), $p$ divides both $\mf{m}$ and every element of $\Gamma(S_1, S_2)$, and we conclude as before. In case (ii), $p$ ramifies (since $n_*$ divides the discriminant $\Delta$; see \cref{section2}), thus $p = \mf{p}^2$ for some prime ideal $\mf{p}$. We must then have $\mf{p} \mid \sqrt{-n_*}$. Since $\mf{p} \mid p \mid P_{S_1}$, we see that $\mf{p}$ divides both $\mf{m}$ and every element of $\Gamma(S_1, S_2)$, and we conclude as before.
\end{proof}

Assume henceforth that $S_1 \cap S_2 = S_1 \cap T = \emptyset$. To progress in this case, we must evaluate the two sides of \cref{chi0-first}. Looking first at the right-hand side, we have 
\begin{equation}\label{chinese-rem}
 |(\O_K/\mf{m})^*|  =  (N\mf{m})  \prod_{\mf{p} \mid \mf{m}} \Big(1 - \frac{1}{N\mf{p}}\Big) = (N\mf{m}) \prod_{p \mid \mf{m}} \Big(1 - \frac{1}{p}\Big) \bigg(1 - \frac{\legendre{\Delta}{p}}{p}\bigg),
 \end{equation} where $\mf{p}$ ranges over the distinct prime ideals dividing $\mf{m}$, and $p$ ranges over the distinct rational primes dividing $\mf{m}$. (To see the second expression, consider the possible splitting types of $p$, with reference to the remarks in \cref{section2}.)
 
Using the fact that $\O_K = \Z[\sqrt{-n_*}]$ in the case $\omega = 1$ and $\Z[\frac{1}{2}(1 + \sqrt{-n_*})]$ in the case $\omega = \frac{1}{2}$, one may check that a complete set of representatives for $\Gamma(S_1, S_2)$ mod $\mf{m}$ is
\[ \Gamma(S_1, S_2) = \{ \alpha = x + ry \sqrt{-n_*} : 0 \le x < 2n P_{S_1} P_{S_2}, \; 0 \le y < 2\omega  nP_{S_1} P_{S_2}/ r, \; P_{S_1} \mid x, \; P_{S_2} \mid y\},\] or equivalently
\begin{equation}\label{gs1s2-explicit} \Gamma(S_1, S_2) = \{ \alpha = aP_{S_1} + br P_{S_2} \sqrt{-n_*} : 0 \le a < 2n P_{S_2}, \; 0 \le b < 2\omega n P_{S_1}/r\}.\end{equation}

\begin{lemma}
Suppose that $S_1 \cap S_2 = S_1 \cap T = \emptyset$. Then the elements of \cref{gs1s2-explicit} having a common factor with $\mf{m}$ are precisely:
\begin{enumerate}
\item Those with $p \mid a$ for some $p \in S_2 \cup T$;
\item Those with $p \mid b$ for some $p \in S_1$.
\end{enumerate}
\end{lemma}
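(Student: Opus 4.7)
The plan is to check, prime by prime, exactly when some prime ideal $\mf{p}$ dividing $\mf{m} = (2nP_{S_1}P_{S_2})$ divides a general element $\alpha = aP_{S_1} + brP_{S_2}\sqrt{-n_*}$ of the claimed set of representatives. Such primes $\mf{p}$ lie above rational primes $p$ with $p \mid 2nP_{S_1}P_{S_2}$, i.e.\ $p \in S_1 \cup S_2 \cup T$. Note that the hypotheses $S_1 \cap S_2 = S_1 \cap T = \emptyset$ do not rule out the overlap $S_2 \cap T$, so in the analysis below it is cleaner to split as $S_1$ versus $S_2 \cup T$.

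For $p \in S_1$, the hypotheses give $p \nmid 2nP_{S_2}$, and in particular $p \nmid r$ and $p \nmid n_*$ (since $n = n_* r^2$). Hence $p$ does not ramify, and $\sqrt{-n_*}$ has norm $n_*$ coprime to $p$, so no prime $\mf{p}$ above $p$ divides $\sqrt{-n_*}$. Since $p \mid P_{S_1}$ kills the first term, $\mf{p} \mid \alpha$ iff $\mf{p} \mid brP_{S_2}\sqrt{-n_*}$; the factors $r, P_{S_2}, \sqrt{-n_*}$ are all coprime to $\mf{p}$, so this happens iff $\mf{p} \mid b$, which for rational $b$ is equivalent to $p \mid b$.

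For $p \in S_2 \cup T$, I claim the second term $brP_{S_2}\sqrt{-n_*}$ is always divisible by every prime $\mf{p}$ above $p$. Indeed, if $p \in S_2$ then $p \mid P_{S_2}$; and if $p \in T$, then $p \mid n = n_* r^2$, so either $p \mid r$ (giving divisibility directly) or $p \mid n_*$, in which case $p$ ramifies as $(p) = \mf{p}^2$ and $\mf{p} \mid \sqrt{-n_*}$. Consequently $\mf{p} \mid \alpha$ iff $\mf{p} \mid aP_{S_1}$; and since $S_1 \cap S_2 = S_1 \cap T = \emptyset$ forces $p \nmid P_{S_1}$, this reduces to $\mf{p} \mid a$, equivalently $p \mid a$ (using again that $a$ is a rational integer, so $\mf{p} \cap \Z = p\Z$).

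Combining the two cases gives exactly the stated dichotomy: a representative $\alpha$ shares a common ideal factor with $\mf{m}$ iff $p \mid b$ for some $p \in S_1$ or $p \mid a$ for some $p \in S_2 \cup T$. No substantive obstacle is expected; the only mild care needed is in the ramified subcase of $p \in T$ with $p \mid n_*$, where one uses that $\mf{p} \mid \sqrt{-n_*}$, and in verifying that $\mf{p} \mid m$ for $m \in \Z$ reduces to $p \mid m$ regardless of the splitting behaviour of $p$.
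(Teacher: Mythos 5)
Your proof is correct and follows essentially the same route as the paper's: split on $p\in S_1$ versus $p\in S_2\cup T$, use the coprimality supplied by $S_1\cap S_2=S_1\cap T=\emptyset$, and handle the ramified subcase $p\mid n_*$ via $\mf{p}\mid\sqrt{-n_*}$. The only cosmetic difference is that you package the three subcases of $p\in S_2\cup T$ into a single claim that the term $brP_{S_2}\sqrt{-n_*}$ is always divisible by $\mf{p}$, whereas the paper treats $p\in S_2$, $p\mid n_*$, $p\mid r$ sequentially; the substance is identical.
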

\begin{remark}
Note that, in our main theorem, $n$ \emph{is} even. 
\end{remark}
\begin{proof}
The set of primes factors of $\mf{m}$ is $S_1 \cup S_2 \cup T$ (here we use the assumption that $n$ is even in \cref{main-term-sharps}).

Suppose that $\mf{p}$ is a prime ideal common factor of $\alpha = aP_{S_1} + br P_{S_2} \sqrt{-n_*}$ and $\mf{m}$. Let $p$ be the rational prime above $\mf{p}$. Then we have exactly one of (i) $p \in S_2 \cup T$ or  (ii) $p \in S_1$. 

In case (i), we break in further cases based on whether $p\in S_2$, $p \mid n_{\ast}$, or $p \mid r$. Suppose first that $p \in S_2$. Then $\mf{p} \mid p \mid P_{S_2}$, whilst $\mf{p}$ is coprime to $P_{S_1}$ since $S_1, S_2$ are disjoint. Thus the condition that $\mf{p} \mid \alpha$ is then precisely that $\mf{p} \mid a$, or equivalently $p \mid a$. Next suppose (still in case (i)) that $p \mid n_*$. Then $p$ is ramified and $p = \mf{p}^2$, and $\mf{p} \mid \sqrt{-n_*}$. Thus $\mf{p} \mid a P_{S_1}$ which, since we are assuming $p \notin S_1$, means that $\mf{p} \mid a$ and hence $p \mid a$ is again the condition.
Finally suppose (still in case (i)) that $p \mid r$, so $\mf{p} \mid r$. Then, since $\mf{p} \nmid P_{S_1}$, we immediately see that the condition is again $p \mid a$.

In case (ii), since $p \in S_1$ we have $\mf{p} \mid P_{S_1}$ and so $\mf{p} \mid br P_{S_2}\sqrt{-n_*}$. Now $\mf{p}$ is coprime to $r$ (since $S_1 \cap T = \emptyset$), to $P_{S_2}$ (since $S_1 \cap S_2 = \emptyset$) and to $\sqrt{-n_*}$ (since otherwise $p \mid n$, as explained above, but this contradicts $S_1 \cap T = \emptyset$).
Therefore $p \mid b$, as claimed.

All the above implications are reversible, as may easily be checked.
\end{proof}

As a consequence of this lemma and \cref{gs1s2-explicit} it follows that 
\begin{equation}\label{lhs-local} \# \{ \gamma \in \Gamma(S_1, S_2) /\mf{m} , \;\gcd(\gamma, \mf{m}) = 1 \}
 = \frac{4 \omega n^2 P_{S_1} P_{S_2} }{ r} \prod_{p \mid \mf{m}} \Big(1 - \frac{1}{p}\Big) = \frac{ \omega N\mf{m}}{ rP_{S_1} P_{S_2}} \prod_{p \mid \mf{m}} \Big(1 - \frac{1}{p}\Big).\end{equation}
 Comparing \cref{lhs-local,chinese-rem,chi0-first,princ-char} gives
 \begin{equation}\label{sig-first} \sigma(S_1, S_2) = \frac{\omega |\O^*_K|}{ r h_K P_{S_1} P_{S_2}} \prod_{p \mid \mf{m}} \bigg( 1 - \frac{\legendre{\Delta}{p}}{p}\bigg)^{-1}.\end{equation}
 It is convenient to write this as

\begin{equation}\label{sigma-s1-s2} \sigma(S_1, S_2) = \frac{ \omega |\O^*_K| P_{T \setminus S_2} }{ rh_K} \prod_{p \in T \cup S_1 \cup S_2}\nu(p),\end{equation}
where 
\begin{equation}\label{nu-def} \nu(p) := (p - \legendre{\Delta}{p})^{-1},
\end{equation}
and recall that this is under the assumption that $S_1 \cap S_2 = S_1 \cap T = \emptyset$ (otherwise \cref{es1s2-small-lemma} applies).

\subsection{Applying the prime ideal theorem}
Now we return to our main task of proving \cref{main-term-sharps-weighted}. Recall the expansion \cref{in-ex} and the definition \cref{es1s2-def} of the terms $E(S_1, S_2)$ appearing there, that is to say 
\begin{equation}\label{817-rpt}
E(S_1, S_2) := \sum_{\mf{a} : N\mf{a} \le X} \varphi_{P_{S_1}} \boxtimes_{\ell} \varphi_{P_{S_2}}(\mf{a}) \Lambda_K(\mf{a}).\end{equation}
Moreover, in \cref{cutoff-in-Heckes} we obtained an expansion for $\varphi_{P_{S_1}} \boxtimes_{\ell} \varphi_{P_{S_2}}(\mf{a})$ in characters, in the case that $\mf{a}$ is coprime to $\mf{m} = \mf{m}^{S_1, S_2} = (2n P_{S_1} P_{S_2})$. Combining this with \cref{817-rpt} gives
\[
    E(S_1, S_2)  = \frac{|\O^*_K|}{h_K} \sum_{\psi} c^{S_1, S_2}_{\psi} \sum_{\substack{\mf{a}: N\mf{a} \le X \\ \gcd(\mf{a}, \mf{m}^{S_1,S_2}) = 1}} \psi(\mf{a})\Lambda_K(\mf{a})  + O(\log X),
\]
where the sum is over Gr\"o{\ss}encharaktere $\psi$ to modulus $\mf{m}$ with frequency $\ell$, and where the $O(\log X)$ error term comes from those $\mf{a}$ which are not coprime to $\mf{m}^{S_1, S_2}$, the contribution of these being
\[ \ll \sum_{\mf{p}^j \mid \mf{m}^{S_1,S_2}}\log N\mf{p} \ll \log X.\]

In the following $c_1, c_2, c_3 > 0$ are absolute constants. Applying \cref{main-analytic-input} gives

\begin{equation}\label{es-1} E(S_1, S_2) =  \frac{|\O^*_K|}{h_K} \bigg( X c_{\psi_0}^{S_1, S_2}  -  \frac{X^{\beta}}{\beta}  c^{S_1,S_2}_{\psi^*} \bigg) + O \bigg( X e^{-c_1\sqrt{\log X}}\Big(1 +  \sum_{\psi} |c^{S_1, S_2}_{\psi}|\Big) \bigg), \end{equation}
where here all Gr\"o{\ss}encharaktere $\psi$ are to modulus $\mf{m} = \mf{m}^{S_1, S_2}$ and have frequency $\ell$, $\psi_0$ is the principal character to modulus $\mf{m}$ (which is only present if $\ell = 0$) and the `Siegel term'  (the one involving $X^{\beta}/\beta$) is only present if $\ell = 0$ and $\mf{d}^* \mid \mf{m}$, which is the condition for there to be some Gr\"o{\ss}encharakter $\psi_*$ to modulus $\mf{m}$ with frequency $\ell$ which is induced from the exceptional charakter $\chi_{\mf{d}^*}$. 

In particular, the whole main term here is absent unless $\ell = 0$.

Recall (see \cref{princ-char}) that we gave the name $\sigma(S_1, S_2)$ to the coefficient $|\O^*_K| h_K^{-1} c^{S_1, S_2}_{\psi_0}$ appearing here, and derived expressions for it (\cref{es1s2-small-lemma,sigma-s1-s2}). Thus, from \cref{es-1} (and bounding the error term there fairly trivially using \cref{c-chi-bd} and the bound $N\mf{m} = (2n P_{S_1} P_{S_2})^2  \ll \exp (O(\log^{1/8} X))$), we derive from \cref{es-1} that 
\begin{equation}\label{es-2} E(S_1, S_2) =   X\sigma(S_1, S_2)   -  \frac{|\O_K^*|}{h_K}\frac{X^{\beta}}{\beta} c^{S_1,S_2}_{\psi^*}  + O \big( X e^{-c_2\sqrt{\log X}}\big). \end{equation}
Now it is time to substitute this into the formula \cref{in-ex} for the expression of interest in \cref{main-term-sharps-weighted}. Doing this, we obtain

\begin{equation} \sum_{\mf{a} : N\mf{a} \le X}  \Lambda_{\cramer}^{\sharp}\boxtimes_{\ell}  \Lambda_{\cramer}^{\sharp}(\mf{a}) \Lambda_K(\mf{a})  = E_{\main} + E_{\siegel} + E_{\textnormal{error}},\label{lam-sharp-three-terms} \end{equation}
 where
 \begin{equation}\label{t-main-def} E_{\main} := X \prod_{p \in \mc{P}_Q} \Big(1 - \frac{1}{p}\Big)^{-2} \sum_{\substack{S_1, S_2 \subseteq \mc{P}_Q \\ |S_1|, |S_2| \le t }} (-1)^{|S_1| + |S_2|} \sigma(S_1, S_2), \end{equation}

 \begin{equation}\label{t-siegel-def} E_{\siegel} \ll (\log Q)^2 X^{\beta} \sum_{\substack{S_1, S_2 \subseteq \mc{P}_Q \\ |S_1|, |S_2| \le t }}  \sum_{\substack{\psi^* \mdsub{\mf{m}^{S_1, S_2}} \\ \psi_* \sim \chi_{\mf{d}^*}}} |c_{\psi^*}^{S_1, S_2}|, \end{equation}
 and
\begin{equation}\label{t-error} E_{\textnormal{error}} \ll X(\log Q)^2 \binom{|\mc{P}_Q|}{t}^2 e^{-c_2 \sqrt{\log X}} \ll Xe^{-c_3 \sqrt{\log X}} . \end{equation} 
The term $E_{\textnormal{error}}$ is acceptable in \cref{main-term-sharps-weighted}. This in fact completes the proof of \cref{main-term-sharps-weighted} in the case $\ell \neq 0$, since in this case both $E_{\main}$ and $E_{\siegel}$ are absent.

Suppose from now on that $\ell = 0$. In the remaining sections, we evaluate $E_{\main}$ asymptotically, and show that $E_{\siegel}$ is small; this will then conclude the proof of \cref{main-term-sharps-weighted} in all cases.

\subsection{Estimating the Siegel term}

In this subsection we estimate the Siegel term $E_{\siegel}$. Write $d_* := N\mf{d}^*$. First note that a pair $S_1, S_2$ of subsets of $\mc{P}_Q$ only contributes to the sum if $d_* \mid (2n P_{S_1} P_{S_2})^2 = N\mf{m}^{S_1, S_2}$, since $\mf{d}^* \mid \mf{m}$ for any exceptional modulus $\mf{m}$. 

To bound the contribution of these terms, we first upgrade the crude bound \cref{c-chi-bd} using the knowledge about $c^{S_1,S_2}_{\psi_0}$ gained in previous sections. Specifically, from \cref{c-chi-bd,princ-char,sig-first}  we have
\[ |c_{\psi^*}^{S_1, S_2}| \le c_{\psi_0}^{S_1, S_2} \ll \frac{\log Q}{P_{S_1} P_{S_2}}.\] It follows that
\begin{equation}\label{siegel-2} E_{\siegel} \ll (\log Q)^3 X^{\beta}  \sum_{\substack{S_1, S_2 \subseteq \mc{P}_Q \\ d_* \mid (2n P_{S_1} P_{S_2})^2}} \frac{1}{P_{S_1} P_{S_2}}. \end{equation}
To bound this sum, note that the condition $d_* \mid (2n P_{S_1} P_{S_2})^2$ implies that $d_1 \mid P_{S_1}$ and $d_2 \mid P_{S_2}$ for some divisors $d_1, d_2$ of $d_*$ with $d_1 d_2 \ge \sqrt{d_*/4n^2}$. Therefore 
\begin{align*} E_{\siegel} &\ll (\log Q)^3 X^{\beta} \tau(d_*)^2  \sup_{\substack{d_1|d_{\ast}, d_2|d_{\ast}\\d_1d_2\ge \sqrt{d_*/4n^2}}}\sum_{\substack{S_1, S_2 \subseteq\mc{P}_Q \\ d_1 \mid P_{S_1}, d_2 \mid P_{S_2}}} \frac{1}{P_{S_1} P_{S_2}} \ll (\log Q)^3 X^{\beta} \tau(d_*)^2 \cdot (\log Q)^2 \cdot d_{\ast}^{-1/2}\\ & \ll (\log X) X^{\beta} d_*^{2\eta -1/2} \ll X (\log X) X^{-C_{\eps} d_*^{-\eps}} d_*^{2\eta -1/2},\end{align*}
where in the last step we applied Siegel's theorem \cref{siegel-thm}, and $\eta > 0$ is any positive constant coming from the divisor bound $\tau(d_*) \ll_{\eta} d_*^{\eta}$.
Therefore, computing the maximum over $d_*$ of the preceding expression, we see that
\begin{equation}\label{final-seigel} E_{\siegel} \ll_{\eta,\eps} X (\log X)^{1 - \frac{\frac{1}{2} - 2\eta}{\eps}} \ll \frac{X}{\log X}\end{equation} no matter the value of $d_*$, taking any $\eps,\eta < \frac{1}{8}$ for the second bound. 

\subsection{Evaluation of \texorpdfstring{$E_{\main}$}{}}

We now come to the final task, which is the asymptotic evaluation of $E_{\main}$, whose definition is \cref{t-main-def}. First, note that by \cref{es1s2-small-lemma} we may restrict the sum to $S_1 \cap S_2 = S_1 \cap T = \emptyset$, that is to say

\begin{equation}\label{t-main-def-rest} E_{\main} := X \prod_{p \in \mc{P}_Q} \Big(1 - \frac{1}{p}\Big)^{-2} \sum_{\substack{S_1, S_2 \subseteq \mc{P}_Q \\ S_1 \cap S_2 = S_1 \cap T = \emptyset \\ |S_1|, |S_2| \le t }} (-1)^{|S_1| + |S_2|} \sigma(S_1, S_2) .\end{equation}

For the remaining terms in the sum we have the formula \cref{sigma-s1-s2} and the accompanying definition \cref{nu-def} of $\nu(p)$, which in particular gives the bound
\begin{equation}\label{sig-s-bd} \sigma(S_1, S_2) \ll \prod_{p \in S_1 \cup S_2} \frac{1}{p-1} .\end{equation}
Using this, we show using another Brun sieve-type computation that we may remove the constraints $|S_1|, |S_2| \le t$ in \cref{t-main-def-rest} with little penalty. Indeed, by \cref{sig-s-bd} the error in doing this is 
\begin{align*} \ll X (\log Q)^2  \sum_{\substack{S_1, S_2 \subseteq \mc{P}_Q \\ |S_2| > t \\ S_1 \cap S_2 = \emptyset}} \prod_{p \in S_1 \cup S_2} \frac{1}{(p-1)} & \ll X(\log Q)^2 \prod_{p \in \mc{P}_Q} \Big(1 + \frac{1}{p-1}\Big) \cdot \sum_{\substack{S \subseteq \mc{P}_Q \\ |S| > t}} \prod_{p \in S}\frac{1}{p-1} \\ & \ll X (\log Q)^3 \sum_{\substack{S \subseteq \mc{P}_Q \\ |S| > t}} \prod_{p \in S}\frac{1}{p-1} \ll X(\log X)^{-10},\end{align*} where the last step follows from \cref{large-t-bd}. Thus
\begin{equation}\label{t-main-complete} E_{\main} = X \prod_{p \in \mc{P}_Q} \Big(1 - \frac{1}{p}\Big)^{-2} \sum_{\substack{S_1, S_2 \subseteq \mc{P}_Q \\ S_1 \cap S_2 = S_1 \cap T = \emptyset }} (-1)^{|S_1| + |S_2|} \sigma(S_1, S_2)  + O(X (\log X)^{-10}).\end{equation}
The main expression here is of a rather algebraic nature and we can evaluate it using \cref{sigma-s1-s2}. To do this, set
\[ S'_1 := S_1, \quad S'_2 := S_2 \setminus T,\quad S'_3 := T \setminus S_2, \quad S'_4 := T \cap S_2.\] Note that these sets are all disjoint and that $S'_2, S'_4$ give a partition of $S_2$, whilst $\bigcup_{i = 1}^4 S'_i$ is a partition of $S_1 \cup S_2 \cup T$. Then we obtain from \cref{sigma-s1-s2} that 
\begin{align}\nonumber
\frac{rh_K}{\omega|\O^*_K|} \sum_{\substack{S_1, S_2 \subseteq \mc{P}_Q \\ S_1 \cap S_2 = S_1 \cap T = \emptyset}} (-1)^{|S_1| + |S_2|}\sigma(S_1, S_2) & = \sum_{\substack{S'_1, S'_2 \subseteq \mc{P}_Q \setminus T, S'_1 \cap S'_2 = \emptyset \\ S'_3 \subseteq T, S'_4 = T \setminus S'_3 }} (-1)^{|S'_1| + |S'_2| + |S'_4|} P_{S'_3}\prod_{p \in \bigcup_{i=1}^4 S'_i} \nu(p) \\ & = \prod_{p \in \mc{P}_Q \setminus T} (1 - 2 \nu(p)) \cdot \prod_{p \in T} (p-1)\nu(p) \nonumber \\ & = \nu(2) \cdot \prod_{3 \le p \le Q} (1 - 2 \nu(p)) \cdot \prod_{\substack{p \mid n \\ p \ge 3}}
 \frac{(p-1)\nu(p)}{1 - 2 \nu(p)}.\label{big-prod-form}\end{align}
 (We are using here the fact that $n$ is even, so $2 \in T$; note that $1 - 2 \nu(p) \ne 0$ for $p \ge 3$.)
 
 Therefore from \cref{t-main-complete} we have
\begin{equation}\label{T-main-abs-conv} E_{\main} = X \frac{\omega rh_K}{|\O^*_K|} \cdot 4 \nu(2) \cdot  \prod_{3 \le p \le Q}\Big(1 - \frac{1}{p}\Big)^{-2} (1 - 2 \nu(p)) \cdot \prod_{\substack{p \mid n \\ p \ge 3}} \frac{(p-1)\nu(p)}{1 - 2 \nu(p)} + O(X(\log X)^{-10}).\end{equation}
The product over $p$ here is absolutely convergent, with each term having size $1 - O(p^{-2})$, and so we may extend it to all $p$ with a multiplicative loss of $1 + O(1/Q)$, the effect of which may comfortably be absorbed into the $O(X (\log X)^{-10})$ error term.

Finally, we apply the class number formula \cref{class-number}, which in our notation gives
\begin{equation}\label{class-number-truncate} \frac{2\pi h_K}{|\O_K^*| |\Delta|^{1/2}} =  \prod_{p} p \nu(p).\end{equation} This, the formula $\Delta = -4\omega^2 n_*$ and \cref{T-main-abs-conv} combine to give

 \begin{equation}\label{tmain-evaluated}
 E_{\main} =  X\frac{\pi}{\sqrt{n}}\cdot 2 \cdot \bigg(\prod_{3 \le p \le Q} \Big(1 - \frac{1}{p}\Big)^{-2} \frac{1 - 2 \nu(p)}{p \nu(p)} \bigg) \cdot    \prod_{\substack{p \mid n \\ p \ge 3}}
 \frac{(p-1)\nu(p)}{1 - 2 \nu(p)} + O(X (\log X)^{-10}).
 \end{equation}
 We claim that the main expression here is precisely $\frac{\pi}{\sqrt{n}}\kappa_n X$ with $\kappa_n$ as in \cref{kappa-def}, which is exactly the constant in \cref{prop:main}. We verify this one prime $p$ at a time (the contribution of the prime $p = 2$ being $2$). Here, once more, we use the fact that $n$ is even. 

 If $\legendre{-n}{p} = 1$ then $\legendre{\Delta}{p} = 1$ and $p \nmid n$, so the contribution from $p$ is
 \[ \Big(1 - \frac{1}{p}\Big)^{-2} \frac{1 - 2 \nu(p)}{p \nu(p)} = \frac{p(p-3)}{(p-1)^2}.\] 
 If $\legendre{-n}{p} = -1$ then $\legendre{\Delta}{p} = -1$ and $p \nmid n$, so the contribution from $p$ is
 \[ \Big(1 - \frac{1}{p}\Big)^{-2} \frac{1 - 2 \nu(p)}{p \nu(p)} = \frac{p}{p-1}.\] 
 If $\legendre{-n}{p} = 0$ then $p \mid n$, and in this case we also get a contribution from $p$ of $p/(p-1)$, both when $p \ge 3$ and $p = 2$.
 This verifies the claim, and therefore
 \[ E_{\main} = \frac{\pi \kappa_n}{\sqrt{n}}X  + O(X (\log X)^{-10}).\]
Combining the preceding estimate with \cref{final-seigel,t-error,lam-sharp-three-terms}, we obtain the conclusion of \cref{main-term-sharps-weighted} in the case $\ell = 0$. Recall that the case $\ell \neq 0$ has already been established. \vspace*{8pt}

This concludes the proof of all the statements in the paper. 

\appendix
\section{Properties of the Gowers and Gowers--Peluse norms}\label{appendixA}

In this section, we collect some basic properties of Gowers--Peluse norms which were defined in \cref{def:box}. Let $g : \Z \rightarrow \C$ be a finitely-supported function. Recall that $\Delta_h g(x) = g(x) \overline{g(x+h)}$ and that $\Delta_{(h,h')} g(x) = g(x + h) \overline{g(x+h')}$ as in \cref{difference} and \cref{double-difference} respectively. We begin by noting that
\begin{equation}\label{gowers-pos}
\sum_{x,h \in \Z} \Delta_h g(x) = \Big| \sum_x g(x) \Big|^2 \ge 0
\end{equation}
and that, for any probability measure $\mu$ on $\Z$, 
\begin{equation}\label{gowers-peluse-pos}
  \E_{h,h' \sim \mu}  \sum_{x \in \Z} \Delta_{(h,h')}g(x) = \sum_{x\in \Z} \Big| \E_{h \sim \mu} g(x + h) \Big|^2 \ge 0.
\end{equation}

We first record that the Gowers--Peluse norm is nonnegative and that we have a version of the Gowers--Cauchy--Schwarz inequality. To the latter end, if we have a collection $(f_{\omega})_{\omega \in \{0,1\}^k}$ of functions, we define the Gowers--Peluse inner product
\begin{equation}\label{gpip} \langle (f_{\omega})_{\omega \in \{0,1\}^k}\rangle_{U_{\GP}[N;\mu_1,\dots, \mu_k]} := \frac{1}{N} \sum_{x \in \Z} \E_{h_i, h'_i \sim \mu_i} \prod_{\omega \in \{0,1\}^k} \mc{C}^{|\omega|} f_{\omega} (x + (\mathbf{1} - \omega)\cdot h + \omega \cdot h'),\end{equation} where here $\mathbf{1} - \omega = (1 - \omega_i)_{i = 1}^k$ and $h = (h_i)_{i = 1}^k$, $h' = (h'_i)_{i = 1}^k$. Note in particular that if $f_{\omega} = f$ for all $\omega$ then
\[ \langle (f_{\omega})_{\omega \in \{0,1\}^k}\rangle_{U_{\GP}[N;\mu_1,\dots, \mu_k]} = \Vert f \Vert_{U_{\GP}[N;\mu_1,\dots, \mu_k]}^{2^k},\] with notation as in \cref{def:box}. 

\begin{lemma}\label{lem:GCS}
Let $\mu_1,\ldots,\mu_k : \Z \rightarrow [0,1]$ be probability measures, let $N\ge 1$, and suppose that $f : \Z \rightarrow \C$ is a function. Then for any function $f : \Z \rightarrow \C$ we have
\begin{equation}\label{GP-pos}\snorm{f}_{U_{\GP}[N;\mu_1,\ldots,\mu_k]}^{2^k} \ge 0.\end{equation}
Moreover if $(f_{\omega})_{\omega \in \{0,1\}^k}$ are functions from $\Z$ to $\C$ then we have that 
\begin{equation}\label{gpcs} \langle (f_{\omega})_{\omega \in \{0,1\}^k}\rangle_{U_{\GP}[N;\mu_1,\dots, \mu_k]} \le  \prod_{\omega \in \{0,1\}^k} \Vert f_{\omega} \Vert_{U_{\GP}[N;\mu_1,\dots, \mu_k]}.\end{equation}

\end{lemma}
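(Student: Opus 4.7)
The plan is to prove both statements together by induction on $k$, via the iterated Cauchy--Schwarz argument that underlies the Gowers--Cauchy--Schwarz inequality. Positivity \cref{GP-pos} will fall out as the special case $f_\omega = f$ of the same argument that yields \cref{gpcs}, so I would treat them in parallel rather than separately.

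For the base case $k = 1$, simply rearranging the sum in \cref{gpip} gives
\[
\langle f_0, f_1 \rangle_{U_{\GP}[N;\mu_1]} = \frac{1}{N}\sum_{x \in \Z}\paren{\E_{h\sim \mu_1} f_0(x+h)} \overline{\paren{\E_{h' \sim \mu_1} f_1(x+h')}},
\]
and the usual Cauchy--Schwarz inequality in $\ell^2(\Z)$ then bounds this by $\snorm{f_0}_{U_{\GP}[N;\mu_1]}\snorm{f_1}_{U_{\GP}[N;\mu_1]}$; specialising to $f_0 = f_1 = f$ yields both positivity and the $k = 1$ case of \cref{gpcs} at once.

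For the inductive step, the idea is to peel off one coordinate at a time. Writing $\omega = (\omega', \omega_k)$ with $\omega' \in \{0,1\}^{k-1}$, I would split the $2^k$-fold product in \cref{gpip} according to the value of $\omega_k$ and extract the $h_k, h_k'$-averaging outside the $x$-sum, rewriting the inner product as
\[
\frac{1}{N}\sum_{x \in \Z} \E_{h_i,h_i' \sim \mu_i,\, i < k} \paren{\E_{h_k \sim \mu_k} F(x, h_k)}\overline{\paren{\E_{h_k' \sim \mu_k} G(x, h_k')}},
\]
where $F$ is built from the functions $\{f_{(\omega', 0)}\}_{\omega'}$ together with the shift by $h_k$ and $G$ from $\{f_{(\omega', 1)}\}_{\omega'}$ together with the shift by $h_k'$ (with the conjugation parity on the $G$ side flipped to account for the complex-conjugate factor inside $\Delta_{(h_k,h_k')}$). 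Cauchy--Schwarz in the $x$-variable, absorbing the $h_i, h_i'$-averages for $i < k$ into the measure, then bounds the modulus squared by the product of two genuine $(k-1)$-dimensional Gowers--Peluse inner products, each having its $2^{k-1}$ entries formed by ``doubling up'' the functions $\{f_{(\omega',0)}\}_{\omega'}$ (respectively $\{f_{(\omega',1)}\}_{\omega'}$) with conjugation parities dictated by the expansion of $F \overline G$. The inductive hypothesis applied to each factor, combined with iterating this peeling procedure over the remaining $k-1$ coordinates, yields \cref{gpcs}.

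Positivity \cref{GP-pos} is then immediate from the very first Cauchy--Schwarz step with all $f_\omega = f$: in that case $F = G$, so the $h_k, h_k'$-average at fixed $x$ equals $|\E_{h_k \sim \mu_k} F(x, h_k)|^2 \ge 0$, and averaging over $x$ and the remaining $h_i, h_i'$ preserves nonnegativity. The only real obstacle in carrying out the argument fully is notational bookkeeping, namely tracking which $f_\omega$ ends up at which vertex of the discrete cube $\{0,1\}^{k-1}$ after each round of Cauchy--Schwarz and verifying that the conjugation parities remain compatible with the Gowers--Peluse inner product convention in \cref{gpip}. This is precisely the standard proof of the Gowers--Cauchy--Schwarz inequality (cf.\ \cite[Appendix B]{green-tao-linear}), and no new ideas are needed to accommodate general probability measures $\mu_i$ in place of uniform measures on $[N]$, since each coordinate is averaged independently.
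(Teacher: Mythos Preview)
Your proposal is correct and takes essentially the same approach as the paper: induction on $k$, peeling off the $k$th coordinate via Cauchy--Schwarz over the combined $(x,h_i,h_i')_{i<k}$ variables, then invoking the inductive hypothesis on the resulting lower-order inner products. The paper presents positivity \cref{GP-pos} as a direct consequence of \cref{gowers-peluse-pos} rather than as a by-product of the inductive argument, but your derivation via the first Cauchy--Schwarz step with all $f_\omega = f$ is equally valid. One small point you gloss over: after Cauchy--Schwarz and expanding the square, each factor is $\E_{h_k,h_k'\sim\mu_k}$ of a $(k-1)$-dimensional inner product in the functions $\Delta_{(h_k,h_k')}f_{(\tilde\omega,\eps)}$, and after applying the inductive hypothesis you still need a H\"older inequality over $h_k,h_k'$ (with exponents $2^{k-1}$) to separate the resulting product $\prod_{\tilde\omega}\|\Delta_{(h_k,h_k')}f_{(\tilde\omega,\eps)}\|$ into the individual $U_{\GP}[N;\mu_1,\ldots,\mu_k]$-norms; the paper makes this step explicit.
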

\begin{proof}
Statement \cref{GP-pos} is immediate from \cref{gowers-peluse-pos} and the definition \cref{gp-explicit-def}, taking 
\[ g = \E_{\substack{h_i, h'_i \sim \mu_i \\ 1 \le i \le k-1}} \Delta_{(h_1, h'_1)} \cdots \Delta_{(h_{k-1}, h'_{k-1})} f,\qquad \mu = \mu_k,\] in \cref{gowers-peluse-pos}.
For \cref{gpcs}, we proceed by induction on $k$. First note that by splitting $\omega = (\tilde\omega, \eps)$ with $\tilde\omega \in \{0,1\}^{k-1}$ and $\eps \in \{0,1\}$, and writing $\tilde{h} = (h_i)_{i=1}^{k-1}$ and $\tilde{h}' = (h'_i)_{i=1}^{k-1}$,
we may write the Gowers--Peluse inner product $\langle (f_{\omega})_{\omega \in \{0,1\}^k}\rangle^{2^k}_{U_{\GP}[N;\mu_1,\dots, \mu_k]}$   as \[ \frac{1}{N} \cdot \sum_{x\in \Z}\E_{\substack{h_i,h_i'\sim \mu_i\\1\le i\le k-1}}\prod_{\eps \in \{0,1\}}\bigg(\E_{h_k\sim \mu_k}\prod_{\tilde\omega\in\{0,1\}^{k-1}} \mc{C}^{|\tilde{\omega}| + \eps} f_{(\tilde\omega,\eps)}(x + (\mathbf{1}-\tilde\omega)\cdot \tilde{h} + \tilde\omega\cdot \tilde{h}' +  h_k)\bigg). \]
By Cauchy--Schwarz, this is bounded above by
\[ \prod_{\eps \in\{0,1\}}\bigg(\frac{1}{N} \cdot\sum_{x\in \Z}\E_{\substack{h_i,h_i'\sim \mu_i\\1\le i\le k-1}}\Big|\E_{h_k\sim \mu_k}\prod_{\tilde{\omega}\in\{0,1\}^{k-1}}\mc{C}^{|\tilde{\omega}| + \eps}f_{(\tilde{\omega},\eps)}(x + (\mathbf{1}-\tilde{\omega})\cdot \tilde{h} + \tilde{\omega}\cdot \tilde{h'} + h_k)\Big|^{2}\bigg)^{1/2},\] which upon expanding the square and introducing a dummy variable $h'_k$ may be rewritten as 
\[ \prod_{\eps \in\{0,1\}}\bigg(\frac{1}{N} \cdot\sum_{x\in \Z}\E_{\substack{h_i,h_i'\sim \mu_i\\1\le i\le k-1}}\E_{h_k,h_k' \sim \mu_k}\prod_{\tilde{\omega}\in\{0,1\}^{k-1}}\Delta_{(h_k,h_k')}\mc{C}^{|\tilde{\omega}|} f_{(\tilde{\omega},\eps)}(x + (\mathbf{1}-\tilde{\omega})\cdot \tilde{h} + \tilde{\omega}\cdot \tilde{h'})\bigg)^{1/2}.\]
By induction (that is, by \cref{gpcs} in the case $k - 1$) this is bounded by
\[ \prod_{\eps \in\{0,1\}} \bigg(\E_{h_k,h_k'\sim \mu_k}\prod_{\tilde{\omega}\in \{0,1\}^{k-1}}\snorm{\Delta_{(h_k,h_k')}f_{(\tilde{\omega},\eps)}}_{U_{\GP}[N;\mu_1,\ldots,\mu_{k-1}]}\bigg)^{1/2} . \] Using H\"older's inequality, this is at most
\[ \prod_{\eps \in\{0,1\}} \prod_{\tilde{\omega}\in \{0,1\}^{(k-1)}}\bigg(\E_{h_k,h_k' \sim \mu_k}\snorm{\Delta_{(h_k,h_k')}f_{(\tilde{\omega},\eps)}}_{U_{\GP}[N;\mu_1,\ldots,\mu_{k-1}]}^{2^{k-1}}\bigg)^{1/{2^{k}}},\] which equals
\[ \prod_{\eps \in\{0,1\}} \prod_{\tilde{\omega}\in \{0,1\}^{(k-1)}} 
 \big\Vert f_{(\tilde{\omega},\eps)} \big\Vert_{U_{\GP}[N;\mu_1,\ldots,\mu_{k}]} = \prod_{\omega \in \{0,1\}^k} \Vert f_{\omega} \Vert_{U_{\GP}[N;\mu_1,\dots, \mu_k]}.\] 
 This concludes the proof.
 \end{proof}

The next lemma asserts a kind of monotonicity of Gowers--Peluse norms with respect to the sequence of measures. It is essentially identical to \cite[Lemma~3.5 (iii)]{KKL24}, but formulated using measures.
\begin{lemma}\label{lem:prop-box}
Fix $\delta\in (0,1/2)$, $N\ge 1$ and let $f:\Z\to\C$ be a $1$-bounded function supported on $[\pm N]$.
Let $\mu_1,\ldots,\mu_k$ be probability measures supported on $[\pm N]$. Then
    \[\snorm{f}_{U_{\GP}[N;\mu_1,\ldots,\mu_k]}^{2^k}\ge \frac{1}{2k+3} \snorm{f}_{U_{\GP}[N;\mu_1,\ldots,\mu_{k-1}]}^{2^{k}}.\]\end{lemma}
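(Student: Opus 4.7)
The plan is to apply the Cauchy--Schwarz inequality twice: once to the spatial sum (exploiting the finite support of the lower-order difference) and once to the average over $h_1, h_1', \ldots, h_{k-1}, h_{k-1}'$ (as a form of Jensen's inequality). Write $T := \snorm{f}_{U_{\GP}[N;\mu_1,\ldots,\mu_{k-1}]}^{2^{k-1}}$, which is nonnegative by \cref{GP-pos}. First, I would isolate the innermost difference operator by setting
\[F_{\vec h, \vec{h'}}(x) := \Delta_{(h_1, h_1')} \cdots \Delta_{(h_{k-1}, h_{k-1}')} f(x),\]
so that unfolding $\Delta_{(h_k, h_k')}$ yields
\[\snorm{f}_{U_{\GP}[N; \mu_1,\ldots, \mu_k]}^{2^k} = \frac{1}{N} \E_{\vec h, \vec{h'}} \sum_x \bigl|\E_{h_k \sim \mu_k} F_{\vec h, \vec{h'}}(x + h_k)\bigr|^2.\]
Since $f$ is supported on $[\pm N]$ and each $h_i, h_i'\in[\pm N]$, the function $F_{\vec h, \vec{h'}}$ is $1$-bounded and supported on an interval of length at most $2N + 1$ (the intersection of $2^{k-1}$ shifted copies of $[\pm N]$, one for each corner $\omega \in \{0,1\}^{k-1}$). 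Consequently, the function $G(x) := \E_{h_k \sim \mu_k}F_{\vec h, \vec{h'}}(x + h_k)$ has support contained in a set of size at most $4N+1 \le (2k+3) N$.

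Next, for each fixed $(\vec h, \vec{h'})$, the standard $\ell^2$ Cauchy--Schwarz bound $\sum_x |G(x)|^2 \ge |\on{supp}(G)|^{-1} \bigl|\sum_x G(x)\bigr|^2$ combined with the translation-invariance identity $\sum_x G(x) = \sum_y F_{\vec h, \vec{h'}}(y)$ gives
\[\snorm{f}_{U_{\GP}[N; \mu_1,\ldots, \mu_k]}^{2^k} \ge \frac{1}{(2k+3) N^2} \E_{\vec h, \vec{h'}} \Bigl|\sum_y F_{\vec h, \vec{h'}}(y)\Bigr|^2.\]
Finally, Jensen's inequality (equivalently, Cauchy--Schwarz for the product probability measure $\mu_1 \times \cdots \times \mu_{k-1}$) yields
\[\E_{\vec h, \vec{h'}} \Bigl|\sum_y F_{\vec h, \vec{h'}}(y)\Bigr|^2 \ge \Bigl|\E_{\vec h, \vec{h'}} \sum_y F_{\vec h, \vec{h'}}(y)\Bigr|^2 = (NT)^2,\]
where the final equality again invokes \cref{GP-pos} applied to the $(k-1)$-measure norm in order to identify $\E_{\vec h, \vec{h'}} \sum_y F_{\vec h, \vec{h'}}(y)$ with the nonnegative quantity $NT$. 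Combining the two displays gives the claimed lower bound.

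The only step that requires genuine care is the uniform support bound on $F_{\vec h, \vec{h'}}$. Here one must exploit the fact that the $2^{k-1}$ constraints $y + \omega \cdot \vec{h'} + (\mathbf{1} - \omega)\cdot \vec h \in [\pm N]$ cut out an \emph{intersection} of intervals of length $2N$ (hence an interval of length at most $2N$), rather than naively unioning them, which would yield the much worse bound $2kN$. Everything else is routine Cauchy--Schwarz bookkeeping.
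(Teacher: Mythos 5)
Your proof is correct and follows essentially the same two-step Cauchy--Schwarz argument as the paper: first in $x$ using the finite support, then in $(\vec h, \vec h')$ via Jensen. The one minor difference is your support bound: you observe that $F_{\vec h,\vec h'}$ lives on an \emph{intersection} of $2^{k-1}$ translates of $[\pm N]$ and hence on an interval of length $\le 2N+1$, which after averaging in $h_k$ gives $|\on{supp}(G)| \le 4N+1$ (in fact $k$-independent, so you could take constant $\frac15$); the paper instead uses the cruder bound $|x|\le (k+1)N$, giving support of size $\le (2k+2)N+1$, but both are $\le (2k+3)N$ so the final constant is the same.
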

\begin{proof}
We have
\[ \snorm{f}_{U_{\GP}[N;\mu_1,\ldots,\mu_k]}^{2^k} =\frac{1}{N} \E_{\substack{h_i,h_i'\sim \mu_i\\1\le i\le k-1}}\sum_{x\in \Z}\Big|\E_{h_k\sim \mu_k}\Delta_{(h_1,h_1')}\cdots \Delta_{(h_{k-1},h_{k-1}')}f(x+h_k)\Big|^2.\]
Observe that, since $f$ is supported on $[\pm N]$, the sum over $x$ is supported on $|x| \le (k+1) N$. By Cauchy--Schwarz, it follows that
\begin{align*}
 \snorm{f}_{U_{\GP}[N;\mu_1,\ldots,\mu_k]}^{2^k}
&\ge ((2k+3) N^2)^{-1} \E_{\substack{h_i,h_i'\sim \mu_i\\1\le i\le k-1}}\Big|\sum_{x\in \Z}\E_{h_k\sim \mu_k}\Delta_{(h_1,h_1')}\cdots \Delta_{(h_{k-1},h_{k-1}')}f(x+h_k)\Big|^2\\
&= ((2k+3) N^2)^{-1}  \E_{\substack{h_i,h_i'\sim \mu_i\\1\le i\le k-1}}\Big|\sum_{x\in \Z}\Delta_{(h_1,h_1')}\cdots \Delta_{(h_{k-1},h_{k-1}')}f(x)\Big|^2\\
&\ge (2k+3)^{-1} \Big| \frac{1}{N}\cdot \E_{\substack{h_i,h_i'\sim \mu_i\\1\le i\le k-1}} \sum_{x\in \Z}\Delta_{(h_1,h_1')}\cdots \Delta_{(h_{k-1},h_{k-1}')}f(x)\Big|^2\\
& = (2k+3)^{-1}  \snorm{f}_{U_{\GP}[N;\mu_1,\ldots,\mu_{k-1}]}^{2^{k}}.
\end{align*}
\end{proof}

\section{Proof of concatenation estimates}\label{appendixB}

We give a complete proof of \cref{lem:concat-main} in this section. Our proof follows that in \cite[Section~6]{KKL24}, which in turn closely follows \cite[Section 2]{kuca}, with certain modifications. The reader may wish to review the basic notation for probability measures in \cref{sec:notation}. In this section, all probability measures are additionally assumed symmetric, that is to say $\mu(x) = \mu(-x)$. 

Managing notation is problematic in the arguments in this section. To ease matters at least a little, we write $\Vert f \Vert^{\bullet}_{U_{\GP}[N;\Omega]}$, we mean $\Vert f \Vert^{2^d}_{U_{\GP}[N;\Omega]}$, where $d = |\Omega|$. (That is, the Gowers--Peluse norm is always raised to the `obvious' power.)

A convenient piece of notation is to write $\binom{[N]}{r}$ for the collection of $r$-element subsets of $[N]$, and more generally $\binom{[M, N]}{r}$ for the collection of $r$-element subsets of the discrete interval $[M, N]$.

We begin by showing that the general case of \cref{lem:concat-main} follows from the case $m = 2$. This is a minor modification of \cite[Corollary 2.5]{kuca} or \cite[Corollary 6.4]{KKL24}.

\begin{proof}[Proof of \cref{lem:concat-main}, assuming the case $m = 2$]
For the proof, it is convenient to restate the hypothesis and conclusion of \cref{lem:concat-main} in a slightly more abstract form. We define a \emph{measure tuple} $\mc{T} = (\Omega, I, s)$ to be the following data: $I$ is an indexing set, $s \ge 1$ is an integer, and for each $i \in I$, we have a sequence $\Omega_i = (\mu_{1i},\dots, \mu_{si})$ of symmetric probability measures on $\Z$. Given such a tuple, and given $t, m \ge 1$, we define the tuple $\wedge^{m, t} \mc{T} = (\Omega', I', s')$, where $I' = I^t$, $s' = s \binom{t}{m}$, and 
\[ \Omega'_{i_1,\dots, i_t} := \big(\conv_{r \in R} \mu_{ji_r}\big)_{j \in [s], R \in \binom{[t]}{m}},\] where we place an arbitrary order here in order to create a sequence rather than a just a set. 

With this notation, the hypothesis of \cref{lem:concat-main} is that we have a measure tuple $\mc{T} = (\Omega, I, s)$, with all the measures in each $\Omega_i$ supported on $[\pm N]$, and a $1$-bounded function $f : \Z \rightarrow \C$ such that \begin{equation}\label{i-dash-hyp} \E_{i\in I}\snorm{f}^{\bullet}_{U_{\GP}[N;\Omega_i]}\ge \delta.\end{equation}
The conclusion for a given value of $m$, which we call \cref{lem:concat-main}$(m)$, is that there exists $t = t_0(m, s)$ such that, if we write $\mc{T}' = \wedge^{m, t}\mc{T} = (\Omega', I', s')$, we have
\[\E_{i' \in I'}\snorm{f}_{ U_{\GP}[N; \Omega'_{i'}]}
^{\bullet}\ge \delta^{O_{m,s}(1)}.\] For technical reasons which will be apparent shortly, we work with the equivalent statement
\begin{equation}\label{i-dash-conclu}\E_{i' \in I'}\snorm{f}_{ U_{\GP}[m N; \Omega'_{i'}]}
^{\bullet}\ge \delta^{O_{m,s}(1)}.\end{equation} 
(The equivalence is clear from the definition of the Gowers--Peluse norm \cref{gp-explicit-def}; the factor of $m$ may be absorbed into a power $\delta^{O_{m}(1)}$.)

Suppose we have established \cref{lem:concat-main}$(m)$ and \cref{lem:concat-main}$(2)$. Set $t := t_0(m, s)$ and $s' = s \binom{t}{m}$. Thus, we have \cref{i-dash-conclu} for this value of $m$. As can be seen from the similarity in form between \cref{i-dash-hyp,i-dash-conclu}, we can feed this into \cref{lem:concat-main}$(2)$, taking the value $N' = m N$ in this application of the lemma. Note that the underlying measures, which are all $m$-fold convolutions of the $\mu_{ji}$, are all supported on $[\pm N']$ and so the lemma applies.

Setting $t' := t_0(2,s')$, the conclusion is that if we write $\wedge^{2,t'} \mc{T}' = (\Omega'', I'', s'') $ then 
\begin{equation}\label{i-dash-conclu-2}\E_{i'' \in I''}\snorm{f}_{ U_{\GP}[N'; \Omega''_{i''}]}
^{\bullet}\ge \delta^{O_{m,s}(1)}.\end{equation}
Unpacking the definitions, we see that $I'' = (I^t)^{t'}$, and if $i'' = \big((i_u)_v\big)_{u \in [t], v \in [t']} \in I''$ then
\[ \Omega''_{i''} = \Big(\conv_{a \in A} \conv_{b \in B} \mu_{j, (i_{b})_a} \Big)_{j \in [s], A \in \binom{[t']}{2}, B \in \binom{[t]}{m}}.\] 
We enlarge these collections of measures to 
\[ \tilde\Omega''_{i''} =  \big(\conv_{r \in R} \mu_{j, i_r}\Big)_{j \in [s], R \in \binom{[tt']}{2m}}, \] here abusing notation by identifying $(I^t)^{t'}$ with $I^{tt'}$, and noting the natural inclusion $\binom{[t']}{2} \times \binom{[t]}{m} \hookrightarrow \binom{[tt']}{2m}$. By \cref{lem:prop-box}, this enlargement cannot decrease the Gowers--Peluse norms by more than a factor $O_{m, s}(1)$, arising from the inconsequential factors of $2k+3$ in that lemma. Thus, \cref{i-dash-conclu-2} implies that 
\begin{equation}\label{i-dash-conclu-3}\E_{i'' \in I''}\snorm{f}_{ U_{\GP}[N'; \tilde\Omega''_{i''}]}
^{\bullet}\ge \delta^{O_{m,s}(1)}.\end{equation}
However, unpicking the notation we see that $\wedge^{2m, tt'} \mc{T} = (\tilde\Omega'', I'', s'')$, so \cref{i-dash-conclu-3} is precisely the conclusion of \cref{lem:concat-main} with $m$ replaced by $2m$, and with $N$ replaced by $N'$. Much as before, we can trivially change $N'$ to $N$; in fact, this increases the Gowers--Peluse norm.

We have therefore shown that \cref{lem:concat-main}$(m)$ and \cref{lem:concat-main}$(2)$ imply \cref{lem:concat-main}$(2m)$. By induction the whole of \cref{lem:concat-main} (for all powers of two $m$) follows from the (as yet unestablished) base case $m = 2$. Moreover, we have shown that we can take
\begin{equation}\label{b4-iter} t_0(2 m, s) = t_0\bigg(2, s \binom{t_0(m, s)}{m}\bigg) t_0(m, s).
\end{equation}
\end{proof}

\begin{remark}\label{rmk:height}
   We will show below that we can take $t_0(2,s) = 2^s$. In the main part of the paper, the Gowers norm we require is the $U^k$-norm with $k = 2 + 6 \binom{t_0(4,3)}{4}$ ; by repeated use of \cref{b4-iter} one may see that this is roughly on the order of $2^{347}$, that is to say surprisingly close to a googol. If one wanted control of Type II sums in \cref{prop:typeii-to-gowers} up to $L \le X^{1/2 - \kappa}$ using a Gowers $U^k$-norm then, via this argument, $k$ would need to be a tower of twos of height $\sim \log(1/\kappa)$). 
\end{remark}

We are now free to focus exclusively on the case $m = 2$ of \cref{lem:concat-main}, and consequently it is somewhat natural to use the language of graphs. Suppose throughout the following that we have sequences of measures $\Omega_i = (\mu_{1i}, \dots, \mu_{si})$, $i \in I$, as in the statement of \cref{lem:concat-main}.

\begin{definition}\label{graph-system-def}
Let $t \ge 1$ be an integer. Then a \emph{graph system} $\Gamma$ of level $t$ is the following data. For each $j \in [s]$ we have a set $V_j \subseteq [t]$ of vertices and a set $E_j \subseteq \binom{[t]}{2}$ of edges. (The edges do not have to be between the vertices in $V_j$.)  To a graph system of level $t$, and for any tuple $(i_1,\dots, i_t) \in I^t$, we associate the following collection $\Omega^{\Gamma}_{i_1,\dots, i_t}$ of probability measures: $\mu_{ji_v}$ for $v \in V_j$, and $\mu_{ji_v} \ast \mu_{ji_w}$ for $(v,w) \in E_j$, $j = 1,\dots, s$.  
\end{definition}

With this definition, the case $m = 2$ of \cref{lem:concat-main} (with $t_0(2,s) = 2^s$) can be rephrased as follows.

\begin{lemma}\label{main-concat-graph}
Let $\delta \in (0, \frac{1}{2})$, and let $s, I, \Omega_i$ be as above. Let $f : \Z \rightarrow \C$ be a 1-bounded function such that 
\begin{equation}\label{assump-again} \E_{i \in I} \Vert f \Vert^{\bullet}_{U_{\GP}[N; \Omega_i]} \ge \delta. \end{equation}
Then
\begin{equation}\label{completed} \E_{i_1,\dots, i_{M} \in I} \Vert f \Vert^{\bullet}_{U_{\GP}[N; \Omega^{\Gamma}_{i_1,\dots, i_{M}}]} \gg \delta^{O_s(1)}, \end{equation} 
where $M = 2^s$ and $\Gamma$ is the graph system at level $M := 2^s$ such that $V_j = \emptyset$ and $E_j = \binom{[M]}{2}$ for all $j \in [s]$.
\end{lemma}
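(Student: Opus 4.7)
The proof follows the Kuca/KKL approach by induction on $s$, with base case $s=1$ (level $M=2$). In the base case, using symmetry of each $\mu_i$, Parseval gives $\|f\|_{U_{\GP}[N;\mu_i]}^2 = \frac{1}{N}\|\mu_i \ast f\|_2^2 = \frac{1}{N}\int_{\R/\Z}|\widehat{\mu_i}(\xi)|^2|\widehat f(\xi)|^2\,d\xi$. Setting $\Phi(\xi) := \E_{i \in I}|\widehat{\mu_i}(\xi)|^2 \ge 0$, the hypothesis becomes $\frac{1}{N}\int \Phi|\widehat f|^2\,d\xi \ge \delta$, and the conclusion $\E_{i_1, i_2}\|f\|_{U_{\GP}[N;\mu_{i_1}\ast\mu_{i_2}]}^2 \gg \delta^{O(1)}$ (using multiplicativity $\widehat{\mu_{i_1}\ast\mu_{i_2}} = \widehat{\mu_{i_1}}\widehat{\mu_{i_2}}$ and independence of $i_1, i_2$) becomes $\frac{1}{N}\int \Phi^2|\widehat f|^2\,d\xi \gg \delta^{O(1)}$. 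This follows from Cauchy--Schwarz on the integral together with the bound $\int |\widehat f|^2\,d\xi \ll N$ (valid since $f$ is $1$-bounded and essentially supported on $[\pm O(N)]$), yielding the exponent $2$.

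For the inductive step $s-1 \to s$, the plan is to use the factorization
\[
\|f\|_{U_{\GP}[N;\mu_{1i},\ldots,\mu_{si}]}^{2^s} = \E_{h, h' \sim \mu_{si}}\|\Delta_{(h, h')}f\|_{U_{\GP}[N;\mu_{1i},\ldots,\mu_{s-1,i}]}^{2^{s-1}}
\]
to peel off coordinate $s$. This turns the hypothesis into a statement about the $(s-1)$-measure Gowers--Peluse norm of the parametric family $\{\Delta_{(h, h')}f\}$, averaged over the joint index $(i, h, h')$. Applying the inductive hypothesis at level $s-1$ (treating $(i, h, h')$ as a composite outer index) yields all pairwise convolutions of the first $s-1$ coordinates across $2^{s-1}$ fresh copies of the index set. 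A final iteration of the base-case Plancherel/Cauchy--Schwarz argument, applied in parallel across all $s$ coordinates via the independent factorization of the Fourier transforms $|\widehat{\mu_{ji}}|^2$, doubles the level once more to $M = 2^s$ and simultaneously produces the complete edge set $\binom{[2^s]}{2}$ for every coordinate.

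The principal technical obstacle is the parametric dependence of $\Delta_{(h, h')}f$ on the difference parameters: the inductive hypothesis as stated concerns a single function, not a family. To resolve this I would invoke the Gowers--Cauchy--Schwarz inequality (Lemma A.1) to decouple $f$ from $(h,h')$ before the inductive application, combining this with a pigeonhole/Markov step to localize on pairs $(h,h')$ for which the inner norm is large. A secondary but significant issue is that the final doubling step must simultaneously expand the edge sets for coordinates $1,\ldots,s-1$ from $\binom{[2^{s-1}]}{2}$ to $\binom{[2^s]}{2}$ while also creating the edges for coordinate $s$; this works because, under Plancherel, each coordinate's measure-average enters the Fourier side as an independent factor $\Phi_j(\xi_j)$, so a parallel Cauchy--Schwarz on each coordinate converts all existing $\Phi_j$'s into $\Phi_j^2$'s at once. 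Monotonicity (Lemma A.2) absorbs combinatorial constants, and tracking the losses through the recursion gives the cumulative bound $\delta^{O_s(1)}$.
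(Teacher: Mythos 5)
Your base case ($s=1$, $M=2$) is correct: since $\mu$ is symmetric, $\snorm{f}_{U_{\GP}[N;\mu]}^2 = \frac{1}{N}\snorm{\mu\ast f}_2^2 = \frac{1}{N}\int|\widehat\mu|^2|\widehat f|^2$, and Cauchy--Schwarz against $\int|\widehat f|^2 \ll N$ upgrades $\int\Phi|\widehat f|^2$ to $\int\Phi^2|\widehat f|^2$, which is what the $K_2$ edge gives. The peeling identity for the inductive step is also correct.

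However, the inductive step as described has a genuine gap, and I don't think your proposed fixes close it. Two issues. First, after peeling off coordinate $s$ you have an average $\E_{i}\E_{h,h'\sim\mu_{si}}\snorm{\Delta_{(h,h')}f}^{\bullet}_{U_{\GP}[N;\mu_{1i},\dots,\mu_{(s-1)i}]}\ge\delta$, but the inductive hypothesis at level $s-1$ concerns a single function, not the family $\{\Delta_{(h,h')}f\}_{h,h'}$. Re-indexing by the composite outer index $(i,h,h')$ does not help: the measures $\mu_{ji}$ depend only on $i$, while the function changes with $(h,h')$. Gowers--Cauchy--Schwarz (\cref{lem:GCS}) can bound an inner product of differing functions by a product of norms of those same functions, but it does not \emph{remove} the dependence of $\Delta_{(h,h')}f$ on $(h,h')$; and pigeonholing to a fixed $(h,h')$ leaves you with a conclusion about $\Delta_{(h,h')}f$ rather than $f$, which you cannot directly fold back. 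Second, the ``parallel Cauchy--Schwarz on each coordinate'' you describe for the final doubling step relies on a Fourier factorization that does not hold for $s\ge 2$: the $U_{\GP}$ integrand over $s$ coordinates does have factors $\prod_j\Phi_j(\xi_j)$, but the frequency variables $\xi_1,\dots,\xi_s$ are coupled through the $2^s$ copies of $\widehat f$ (e.g.\ for $s=2$ one finds $\int\widehat f(\alpha+\beta+\xi)\overline{\widehat f(\alpha+\xi)}\,\overline{\widehat f(\beta+\xi)}\,\widehat f(\xi)\,\Phi_1(\alpha)\Phi_2(\beta)$), so you cannot apply a one-dimensional Cauchy--Schwarz in each $\xi_j$ separately. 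This is exactly why the $U^k$ and $U_{\GP}$ norms for $k\ge 2$ admit no simple power-of-$|\widehat f|$ representation.

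The paper's proof sidesteps both obstacles. Rather than inducting on $s$, it increments the level $t$ one step at a time (from $1$ to $2^s$), using the single-coordinate duplication lemma (\cref{key-dup-step}). The crucial structural point is that the Cauchy--Schwarz in that lemma is applied \emph{before} averaging over the peeled parameters $(h_k,h'_k)$ and uses the shift invariance of the $U_{\GP}$ norm at the end, which is precisely what removes the parametric dependence that your approach cannot handle. The graph-system formalism (\cref{graph-system-def,duplication-def}) then tracks the bookkeeping as edges accumulate. You may want to re-examine \cref{key-dup-step} and observe how, in the passage around \cref{b332}, the functions $\Delta_{h_k}f$, $\Delta_{h'_k}\ol{f}$, $\Delta_{(h_k,h'_k)}f$ appear as \emph{distinct} entries of a Gowers--Peluse inner product at fixed $(h_k,h'_k)$; this is the mechanism by which the paper decouples the coordinates one at a time without needing a high-level induction on $s$.
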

Thus our remaining task is to prove \cref{main-concat-graph}. The proof idea is as follows: we note that the assumption \cref{assump-again} can be written 
\begin{equation}\label{assump-again-again} \E_{i_1 \in I^1} \Vert f \Vert^{\bullet}_{U_{\GP}[N; \Omega^{\Gamma^1}_{i_1}]} \ge \delta \end{equation} where $V^1_j = \{1\}$ and $E^1_j = \emptyset$. Thus, our aim is to move from \cref{assump-again-again} (with the graphs `vertex-complete' but `edge-empty') to the conclusion \cref{completed} (with the graphs `vertex-empty' but `edge-complete'). We will do this by incrementing the level parameter of the graph system from $1$ to $M = 2^s$ in single steps.

The following key lemma is the driver for this process. A version of this lemma where the $\mu_{si}$ are uniform measures on subgroups in an ambient abelian group appears in the work of Kuca \cite[Lemma~2.2]{kuca}. A slight variant of the precise lemma we prove appears as \cite[Lemma~6.1]{KKL24}.

\begin{lemma}\label{key-dup-step}
Fix $\delta\in (0,1/2)$, $k\ge 1$, and $I$ an indexing set. For each $i \in I$, let $\nu_{1i},\dots, \nu_{ki}$ be symmetric probability measures supported on $[\pm N]$. Let $f:\Z\to \C$ be $1$-bounded such that $\on{supp}(f)\subseteq [\pm N]$. Suppose that
\[\E_{i\in I}\snorm{f}^{\bullet}_{U_{\GP}[N;\nu_{1i},\dots, \nu_{ki}]}\ge \delta.\]
Then 
\[\E_{i,i'\in I}\snorm{f}^{\bullet}_{U_{\GP}[N;\nu_{1i}, \dots, \nu_{(k-1) i},\nu_{1i'},\dots, \nu_{(k-1)i'} ,\nu_{ki} \ast \nu_{ki'}]}\ge \delta^{O_k(1)}. \]
\end{lemma}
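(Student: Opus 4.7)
My plan is to imitate, in a careful but somewhat more involved way, the archetypal Cauchy--Schwarz/Fourier proof of the $k=1$ version of this statement, where the analogous assertion ``$\E_i \snorm{f}^2_{U_{\GP}[N;\nu_i]} \ge \delta$ implies $\E_{i,i'} \snorm{f}^2_{U_{\GP}[N;\nu_i*\nu_{i'}]} \gtrsim \delta^2$'' is a short exercise using Parseval: writing $\delta N \le \int |\hat f|^2 \E_i|\hat\nu_i|^2\, d\xi$, one applies Cauchy--Schwarz on the integral (using $\snorm{f}_2^2 \lesssim N$ from $1$-boundedness and support on $[\pm N]$), followed by the pointwise inequality $(\E_i|\hat\nu_i|^2)^2 \le \E_{i,i'}|\widehat{\nu_i*\nu_{i'}}|^2$.

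\textbf{Step 1 (Unfold the last difference).} Using the formula $\E_{h,h'\sim\mu}\Delta_{(h,h')}F(y) = |F*\mu(y)|^2$ valid for symmetric probability measures $\mu$, rewrite
\[
\snorm{f}^{2^k}_{U_{\GP}[N;\nu_{1i},\ldots,\nu_{ki}]} = \E_{\vec h \sim \nu_i^{<k}} \frac{1}{N}\snorm{\Delta_{\vec h}f * \nu_{ki}}_2^2,
\]
where $\Delta_{\vec h} = \Delta_{(h_1,h_1')}\cdots\Delta_{(h_{k-1},h_{k-1}')}$. Analogously unfold the conclusion, using that for the convolved measure the sampled increment factors: $r - r' \sim (\nu_{ki}*\nu_{ki'})*(\nu_{ki}*\nu_{ki'}) = (\nu_{ki}*\nu_{ki}) * (\nu_{ki'}*\nu_{ki'})$.

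\textbf{Step 2 (Cauchy--Schwarz in $i$).} Because the quantities $Y_i := \snorm{f}^{2^k}_{U_{\GP}[N;\nu_{1i},\ldots,\nu_{ki}]}$ are non-negative (by \cref{lem:GCS}), $\E_i Y_i \ge \delta$ gives $\E_{i,i'} Y_i Y_{i'} \ge \delta^2$, and so
\[
\delta^2 \le \E_{i,i'}\, \E_{\vec h\sim\nu_i^{<k},\,\vec g\sim\nu_{i'}^{<k}} \frac{1}{N^2}\snorm{\Delta_{\vec h}f*\nu_{ki}}_2^2\, \snorm{\Delta_{\vec g}f*\nu_{ki'}}_2^2.
\]

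\textbf{Step 3 (Promotion to the convolved measure: the main obstacle).} The task now is to bound the product in the integrand, for each fixed $(i,i',\vec h,\vec g)$ and on average, by the quantity on the right of the conclusion, namely $\tfrac{1}{N}\snorm{\Delta_{\vec h}\Delta_{\vec g}f * \nu_{ki}*\nu_{ki'}}_2^2$. The key algebraic identity is the commutativity $\Delta_{\vec h}\Delta_{\vec g}f = \Delta_{\vec g}[\Delta_{\vec h}f]$, which compiles the two separate functions into a single one $\Delta_{\vec h, \vec g}f$ whose autocorrelations are the desired quantity. Fourier-analytically, the product of the two Gowers--Peluse norms is $\int|\widehat{\Delta_{\vec h}f}|^2|\hat\nu_{ki}|^2 \cdot \int |\widehat{\Delta_{\vec g}f}|^2|\hat\nu_{ki'}|^2$; by Cauchy--Schwarz on the integral and $|\hat\nu| \le 1$ this is at most a constant times the desired single integral involving $|\widehat{\nu_{ki}*\nu_{ki'}}|^2$, provided one can control the mixed term $\int|\widehat{\Delta_{\vec h}f}|^2|\widehat{\Delta_{\vec g}f}|^2$. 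This last control is where the bulk of the technical work lies: one either invokes the Gowers--Cauchy--Schwarz inequality of \cref{lem:GCS} to identify the mixed term with a Gowers--Peluse inner product of $\Delta_{\vec h}f, \Delta_{\vec g}f$, and their shifts, or else iterates Cauchy--Schwarz in the $\vec h,\vec g$ variables. The $1$-boundedness of $f$ and its support in $[\pm N]$ (so that $\snorm{\Delta_{\vec h}f}_2^2 \ll N$) are what prevent the accumulated losses from growing faster than a power of $\delta^{-1}$.

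\textbf{Step 4 (Bookkeeping).} Assemble the inequalities to obtain
\[
\E_{i,i'}\snorm{f}^{\bullet}_{U_{\GP}[N;\nu_{1i},\ldots,\nu_{(k-1)i},\nu_{1i'},\ldots,\nu_{(k-1)i'},\nu_{ki}*\nu_{ki'}]} \ge \delta^{O_k(1)},
\]
tracking the exponent: each Gowers--Peluse norm in Steps 1--2 carries exponent $2^k$, their product $2^{k+1}$, and after the conversion in Step 3 we obtain the desired $2^{2k-1}$ on a Gowers--Peluse norm with $2k-1$ measures, as in the statement. The anticipated main difficulty is the combinatorial reorganization in Step 3: one must verify that the $2^{k+1}$ shifted copies of $f$ arising from the two Gowers--Peluse expressions can be arranged to form a single $(2k-1)$-dimensional Gowers--Peluse cube with the convolved measure in the $k$th direction, with only an $O_k(1)$ loss absorbed into the exponent of $\delta$.
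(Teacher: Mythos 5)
Your Step 2 is not a Cauchy--Schwarz step; it is simply the identity $(\E_i Y_i)^2 = \E_{i,i'} Y_i Y_{i'}$, where $Y_i$ is the (already fully summed) Gowers--Peluse norm to the power $2^k$. Crucially, this does \emph{not} merge the underlying $x$-variables: each factor $\snorm{\Delta_{\vec h}f*\nu_{ki}}_2^2$ and $\snorm{\Delta_{\vec g}f*\nu_{ki'}}_2^2$ carries its own independent sum over $x$. By contrast, the paper rewrites the hypothesis as $\sum_x f(x) \cdot (\cdots) \ge \delta N$ and applies Cauchy--Schwarz \emph{in $x$}, exploiting $|f(x)|\le 1$ and $\operatorname{supp}(f)\subseteq[\pm N]$; this gives $\sum_x \big|\E_i \sum_{\vec h}(\cdots)\big|^2 \gg \delta^2 N$, a single $x$-sum in which the $i$- and $i'$-derivatives are applied to $f$ \emph{at the same base point}. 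That configuration is precisely a $(2k-2)$-dimensional Gowers--Peluse inner product (plus two outer $h_k,h_k'$ variables that get folded into $\nu_{ki}\ast\nu_{ki'}$), and the Gowers--Cauchy--Schwarz inequality \cref{gpcs} then finishes the argument. Your setup never produces this structure.

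Consequently Step 3, which you acknowledge as "the main obstacle," is a genuine gap, and I do not believe it can be closed as conceived. The needed estimate is of the shape
\[
\tfrac{1}{N^2}\snorm{\Delta_{\vec h}f*\nu_{ki}}_2^2\snorm{\Delta_{\vec g}f*\nu_{ki'}}_2^2 \;\le\; C_k\Big(\tfrac{1}{N}\snorm{\Delta_{\vec h}\Delta_{\vec g}f*(\nu_{ki}\ast\nu_{ki'})}_2^2\Big)^{\alpha}
\]
with $\alpha\le 1$, but there is no pointwise route to it: multiplying two $L^2$-norms over disjoint dummy $x$-variables does not produce the pointwise product $\Delta_{\vec h}f\cdot\Delta_{\vec g}f=\Delta_{\vec h}\Delta_{\vec g}f$, and in the Fourier picture $\widehat{\Delta_{\vec h}\Delta_{\vec g}f}$ is a \emph{convolution} of $\widehat{\Delta_{\vec h}f}$ and $\widehat{\Delta_{\vec g}f}$, not a product, so neither Cauchy--Schwarz nor $|\hat\nu|\le 1$ relates $\int|\widehat{\Delta_{\vec h}f}|^2|\hat\nu_{ki}|^2\cdot\int|\widehat{\Delta_{\vec g}f}|^2|\hat\nu_{ki'}|^2$ to $\int|\widehat{\Delta_{\vec h}\Delta_{\vec g}f}|^2|\hat\nu_{ki}|^2|\hat\nu_{ki'}|^2$. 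Your $k=1$ warm-up succeeds precisely because $\hat f$ factors out of the $\vec h$-average and the argument reduces to a one-variable $L^1$-vs-$L^2$ Cauchy--Schwarz; for $k\ge 2$ the $\vec h$-dependence of $\Delta_{\vec h}f$ breaks that factorisation. The honest route is the paper's: apply Cauchy--Schwarz through the base-point $f(x)$, recognise the resulting inner sum as a Gowers--Peluse inner product of $2^{2k-2}$ functions (with $\Delta_{h_k}f$, $\Delta_{h_k'}\overline{f}$, and cutoffs on appropriate vertices of the cube), and invoke \cref{gpcs}.
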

\begin{proof} 
Written out, the hypothesis is 
\[ \E_{i \in I}\sum_{x' \in \Z} \sum_{h'_1,\dots, h'_k \in \Z} \sum_{h''_1,\dots, h''_k \in \Z}\Big(\prod_{j = 1}^k \nu_{ji}(h'_j) \nu_{ji}(h''_j)  \Big)\Delta_{(h'_1, h''_1)} \cdots \Delta_{(h'_k, h''_k)} f(x') \ge \delta N.\]
Substituting $h_j := h''_j - h'_j$ and $x := x' + \sum_{i = 1}^k h'_i$, this is equivalent to
\begin{equation}\label{b3-first}\E_{i\in I}\sum_{x\in \Z}\sum_{h_1,\ldots,h_k\in \Z}\bigg(\prod_{j=1}^{k}\nu^{(2)}_{ji}(h_j)\bigg)\Delta_{h_1}\Delta_{h_2}\cdots \Delta_{h_k}f(x)\ge \delta N,\end{equation} where here $\nu^{(2)}_{ji} = \nu_{ji} \ast \nu_{ji}$.
In what follows, for $t \ge 1$ and $\ell = (\ell_1,\dots, \ell_t) \in \Z^t$ we denote 
\[(\Delta_{\ell_1}\cdots\Delta_{\ell_t})^{\ast}f(x) = \prod_{\omega\in \{0,1\}^t\setminus \{0\}}\mc{C}^{|\omega|}f(x+  \omega \cdot \ell),\] where as usual $\mc{C}$ denotes complex conjugation. With this notation, \cref{b3-first} becomes
\[\sum_{x\in \Z}f(x)\cdot \E_{i\in I}\sum_{h_1,\ldots,h_k\in \Z}\bigg(\prod_{j=1}^{k}\nu^{(2)}_{ji}(h_j)\bigg)(\Delta_{h_1}\Delta_{h_2}\cdots \Delta_{h_k})^{\ast}f(x)\ge \delta N.\]
Via Cauchy--Schwarz, we have that
\begin{align*}
\sum_{x\in \Z}&~\E_{i,i'\in I}\sum_{\substack{h_1,\ldots,h_k\in \Z\\h_1',\ldots,h_k'\in \Z}}\bigg(\prod_{j=1}^{k}\nu^{(2)}_{ji}(h_j)\nu^{(2)}_{ji'}(h_j')\bigg)(\Delta_{h_1}\Delta_{h_2}\cdots \Delta_{h_k})^{\ast}f(x)(\Delta_{h_1'}\Delta_{h_2'}\cdots \Delta_{h_k'})^{\ast}\overline{f}(x)\gg \delta^2 N.
\end{align*}
We may write this as
\begin{align*}
\sum_{x\in \Z}&~\E_{i,i'\in I}\sum_{\substack{h_1,\ldots,h_k\in \Z\\h_1',\ldots,h_k'\in \Z}}\bigg(\prod_{j=1}^{k}\nu^{(2)}_{ji}(h_j)\nu^{(2)}_{ji'}(h_j')\bigg) \times \\
& f(x+h_k)\overline{f(x+h_k')}(\Delta_{h_1}\Delta_{h_2}\cdots \Delta_{h_{k-1}})^{\ast}(\Delta_{h_k}f)(x)(\Delta_{h_1'}\Delta_{h_2'}\cdots \Delta_{h_{k-1}'})^{\ast}(\Delta_{h_k'}\ol{f})(x)\gg \delta^2 N.
\end{align*}
We now isolate the terms corresponding to $\nu^{(2)}_{ki}$ and $\nu^{(2)}_{ki'}$; in particular the above is equivalent to
\begin{align}\nonumber
& \E_{i,i'\in I}\sum_{h_k,h_k'\in \Z}\nu^{(2)}_{ki}(h_k)\nu^{(2)}_{ki'}(h_k')\sum_{x\in \Z}~\sum_{\substack{h_1,\ldots,h_{k-1}\in \Z\\h_1',\ldots,h_{k-1}'\in \Z}}\bigg(\prod_{j=1}^{k-1}\nu^{(2)}_{ji}(h_j)\nu^{(2)}_{ji'}(h_j')\bigg) \times \\
&\; \; f(x+h_k)\ol{f(x+h_k')}(\Delta_{h_1}\Delta_{h_2}\cdots \Delta_{h_{k-1}})^{\ast}\Delta_{h_k}f(x)(\Delta_{h_1'}\Delta_{h_2'}\cdots \Delta_{h_{k-1}'})^{\ast}\Delta_{h_k'}\ol{f}(x)\gg \delta^2 N.
\label{b332}\end{align}

For fixed $i,i', h_k, h'_k$, we now interpret the inner sum over $x, h_1,\dots, h_{k-1}, h'_1,\dots, h'_{k-1}$ as a Gowers--Peluse inner product of dimension $2k - 2 = (k-1) + (k-1)$ with respect to the measures $\nu_{1i}, \dots, \nu_{(k-1)i}, \nu_{1i'},\dots, \nu_{(k-1)i'}$. This is the key idea of the proof. As preparation, consider an arbitrary such inner product
\begin{equation}\label{gow-p} \langle (F_{\omega, \omega'})_{\omega, \omega' \in \{0,1\}^{k-1}} \rangle_{U_{\GP}[N; \nu_{1i}, \dots, \nu_{(k-1)i}, \nu_{1i'},\dots, \nu_{(k-1)i'}]}\end{equation} for some functions $F_{\omega, \omega'}$ (here of course we identify $\{0,1\}^{2k - 2}$ with the product of two copies of $\{0,1\}^{k-1}$). Written out in full, this is
\[ \frac{1}{N} \sum_{\substack{x' \in \Z\\ a_j, b_j \in \Z \\ a'_j, b'_j \in \Z}} \prod_{j = 1}^{k-1} \nu_{ji}(a_j) \nu_{ji}(b_j) \nu_{ji'}(a'_j) \nu_{ji'} (b'_j) \!\! \prod_{\substack{\omega \in \{0,1\}^{k-1} \\ \omega' \in \{0,1\}^{k-1}}} \!\! \mc{C}^{|\omega| + |\omega'|} F_{\omega, \omega'} (x' + (\mathbf{1} - \omega) \cdot a + \omega \cdot b + (\mathbf{1} - \omega') \cdot a' + \omega' \cdot b').\]
Substitute $h_j := b_j - a_j$, $h'_j := b'_j - a'_j$, $x := x' + \mathbf{1}\cdot a + \mathbf{1} \cdot a'$, and we see that this equals
\[ \frac{1}{N} \sum_{x \in \Z} \sum_{\substack{h_1,\ldots,h_{k-1}\in \Z\\h_1',\ldots,h_{k-1}'\in \Z}} \Big(\prod_{j = 1}^{k-1} \nu^{(2)}_{ji}(h_j) \nu^{(2)}_{ji'}(h'_j)\Big) \prod_{\omega, \omega' \in \{0,1\}^{k-1}} \mc{C}^{|\omega| + |\omega'|} F_{\omega, \omega'} (x + \omega \cdot h + \omega' \cdot h') .\] (The computation here is very similar to the one leading to \cref{b3-first}.) One may now see that the inner sum in \cref{b332} can be written as $N$ times the Gowers--Peluse inner product \cref{gow-p}, where $F_{\mathbf{0}, \mathbf{0}} = \Delta_{(h_k, h'_k)}f $, $F_{\omega,\omega'}(x) = \Delta_{h_k}f$ when only $\omega'$ is zero, $F_{\omega,\omega'}(x) = \Delta_{h_k'}\ol{f}(x)$ when only $\omega$ is zero and all the other $F_{\omega, \omega'}$ are $1_{|x| \le 10k N}$. Note that, due to the support properties of $f$ and the measures $\nu$, the insertion of these cutoffs is harmless and makes no difference to the expressions; note also that $h_k, h'_k$ are being considered as fixed.

From this rewriting of the inner sum in \cref{b332} and the inequality \cref{gpcs}, it follows that 
\begin{align*}
\E_{i,i'\in I}&\sum_{h_k,h_k'\in \Z}\nu^{(2)}_{ki}(h_k)\nu^{(2)}_{ki'}(h_k')\snorm{\Delta_{(h_k,h_k')}f}^{\bullet}_{U_{\GP}[N;\nu_{1i},\ldots,\nu_{(k-1)i},\nu_{1i'},\ldots,\nu_{(k-1)i'}]}\gg \delta^{O_k(1)}.
\end{align*}
Via shift invariance of the Gowers--Peluse norm, we have 
\begin{align*}
\E_{i,i'\in I}&\sum_{t\in \Z}(\nu^{(2)}_{ki}\ast \nu^{(2)}_{ki'})(t) \snorm{\Delta_{t}f}^{\bullet}_{U_{\GP}[N;\nu_{1i},\ldots,\nu_{(k-1)i},\nu_{1i'},\ldots,\nu_{(k-1)i'}]}\gg \delta^{O_k(1)}.
\end{align*}
This is equivalent to 
\begin{align*}
\E_{i,i'\in I}&\sum_{t_1,t_2\in \Z}(\nu_{ki}\ast \nu_{ki'})(t_1) (\nu_{ki}\ast \nu_{ki'})(t_2) \snorm{\Delta_{(t_1,t_2)}f}^{\bullet}_{U_{\GP}[N;\nu_{1i},\ldots,\nu_{(k-1)i},\nu_{1i'},\ldots,\nu_{(k-1)i'}]}\gg \delta^{O_k(1)}.
\end{align*}
Finally, this is equivalent to 
\[\E_{i,i'\in I}\snorm{f}^{\bullet}_{U_{\GP}[N;\nu_{1i},\ldots,\nu_{(k-1)i},\nu_{1i'},\ldots,\nu_{(k-1)i'} \nu_{ki} \ast \nu_{ki'}]}\ge \delta^{O_k(1)}, \]
as desired. 
\end{proof}

We return now to the proof of \cref{main-concat-graph}, which is an equivalent rephrasing of the case $m = 2$ of the main result of interest, \cref{lem:concat-main}. The task now is to explain how repeated use of \cref{key-dup-step} leads to the proof of that result. In this we follow the arguments of \cite[pages 10--11]{kuca} or the proof of \cite[Proposition 6.2]{KKL24}. Here is the statement of the steps we will take.

\begin{definition}\label{duplication-def}
Suppose we have a graph system $\Gamma$ of level $t$ (see \cref{graph-system-def}). Let $k \in [s]$ and let $u \in [t]$, and suppose that $u \in V_k$. Then the \emph{duplication} of $\Gamma$ along $(k,u)$ is the graph system $\tilde\Gamma$ at level $t+1$ defined as follows:
\begin{itemize}
\item if $j \in [s] \setminus \{k\}$ then include all of $V_j \setminus \{u\}$ in $\tilde V_j$;
\item if $j \in [s] \setminus \{k\}$ and if $u \in V_j$, include both $u$ and $t+1$ in $\tilde V_j$;
\item Set $\tilde V_k = V_k \setminus \{u\}$;
\item If $j \in [s]$ then include all edges $(v,w) \in E_j$ with $w \neq u$ in $\tilde E_j$;
\item If $j \in [s]$ and if $(v,u) \in E_j$, include both $(v,u)$ and $(v,t+1)$ in $\tilde E_j$;
\item Include $(u,t+1)$ in $\tilde E_k$.
\end{itemize}
An \emph{enlargement} of $\Gamma$ is simply any graph system of level $t$ obtained by adding further edges (on the same underlying vertex set $[t]$).
\end{definition}

\begin{lemma}
Let $\delta \in (0, \frac{1}{2})$, and let $s, I, \Omega_i$ be as above. Let $t \ge 1$ be an integer, and suppose that $\Gamma$ is a graph system at level $t$. Let $f : \Z \rightarrow \C$ be a 1-bounded function such that 
\begin{equation}\label{iter-step-t} \E_{i_1,\dots, i_t \in I} \Vert f \Vert^{\bullet}_{U_{\GP}[N; \Omega^{\Gamma}_{i_1,\dots, i_t}]} \ge \delta. \end{equation}
Then, if $\tilde \Gamma$ is obtained from $\Gamma$ by duplication and enlargement we have 
\begin{equation}\label{iter-step-t1} \E_{i_1,\dots, i_{t+1} \in I} \Vert f \Vert^{\bullet}_{U_{\GP}[N; \Omega^{\tilde\Gamma}_{i_1,\dots, i_{t+1}}]} \ge \delta^{O(1)}. \end{equation}
\end{lemma}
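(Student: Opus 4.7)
The proof decomposes the transition $\Gamma \to \tilde{\Gamma}$ into a \emph{duplication} step $\Gamma \to \Gamma^{\flat}$ (the duplication of $\Gamma$ along $(k,u)$), handled by \cref{key-dup-step}, followed by an \emph{enlargement} step $\Gamma^{\flat} \to \tilde{\Gamma}$, handled by the monotonicity lemma \cref{lem:prop-box}.

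For the duplication, fix all indices $(i_v)_{v\ne u}$ and write $i:=i_u$. Partition $\Omega^{\Gamma}_{i_1,\ldots,i_t}$ into the measures \emph{involving $u$} --- namely $\mu_{j,i}$ for $j$ with $u \in V_j$, and $\mu_{j,i_v}\ast\mu_{j,i}$ for edges $(v,u) \in E_j$ with $v\ne u$ --- enumerated as $\nu_{1,i},\ldots,\nu_{K,i}$ with the distinguished entry $\nu_{K,i}:=\mu_{k,i}$ placed last (permitted since $u \in V_k$), and the remaining $m$ measures $\rho_1,\ldots,\rho_m$ not involving $u$, which are constant in $i$. The key observation is the factorization identity
\[\snorm{f}^{2^{m+K}}_{U_{\GP}[N;\rho_1,\ldots,\rho_m,\nu_{1,i},\ldots,\nu_{K,i}]}=\E_{h,h'}\snorm{g_{h,h'}}^{2^{K}}_{U_{\GP}[N;\nu_{1,i},\ldots,\nu_{K,i}]},\]
where $h,h'\in\Z^m$ are sampled via $h_j,h'_j\sim\rho_j$ independently, and $g_{h,h'}:=\Delta_{(h_1,h'_1)}\cdots\Delta_{(h_m,h'_m)}f$ is $1$-bounded. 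This identity is immediate from commutativity of the difference operators and the definition \cref{gp-explicit-def}.

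Given \cref{iter-step-t}, I apply \cref{key-dup-step} \emph{pointwise} in $(h,h')$ and the frozen indices to the function $g_{h,h'}$ with measures $\nu_{1,i},\ldots,\nu_{K,i}$, producing a new index $i_{t+1}$ and a lower bound $\delta(h,h';(i_v)_{v\ne u})^{O(1)}$ in each instance, where $\delta(\cdot)$ denotes the corresponding local value of the inner norm. Combining with the power-mean inequality $\E X^p\ge(\E X)^p$ (valid for $p\ge 1$ and $X\ge 0$) and averaging over $(h,h')$ and the frozen indices preserves a lower bound of $\delta^{O(1)}$. Reversing the factorization recovers a Gowers--Peluse norm of $f$ in which each $\rho$ appears once, each non-distinguished $\nu$ appears twice (indexed by $i_u$ and $i_{t+1}$ respectively), and $\mu_{k,i_u}\ast\mu_{k,i_{t+1}}$ appears as the new edge measure for $(u,t+1)\in \tilde{E}_k$; a direct check shows this is exactly $\Omega^{\Gamma^{\flat}}_{i_1,\ldots,i_{t+1}}$ in the notation of \cref{duplication-def}.

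For the enlargement step, one has $\Omega^{\Gamma^{\flat}}_{i_1,\ldots,i_{t+1}}\subseteq \Omega^{\tilde{\Gamma}}_{i_1,\ldots,i_{t+1}}$ as sequences of measures, with the latter containing $O_s(1)$ additional edge measures. Iterated application of \cref{lem:prop-box} in the direction of \emph{inserting} a measure costs at most a polynomial factor in $\delta$ each time, yielding \cref{iter-step-t1}. The main technical obstacle will be verifying the factorization identity and matching the resulting combinatorics of measures precisely to the prescription of \cref{duplication-def}; once this bookkeeping is handled, the rest is a routine composition of the two black-box tools \cref{key-dup-step} and \cref{lem:prop-box}. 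One minor side technicality is that convolutions of measures on $[\pm N]$ are supported on $[\pm 2N]$, so intermediate applications of these black-box tools may need to be made at a scale inflated by an $O_s(1)$ factor; the resulting change in the normalising constant $1/N$ in \cref{gp-explicit-def} is itself $O_s(1)$ and can be absorbed into the $\delta^{O(1)}$ bound.
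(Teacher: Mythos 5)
Your proof is correct and takes essentially the same route as the paper: both isolate the measures at the duplicated vertex from the rest, apply \cref{key-dup-step} conditionally on the frozen derivative data, average, match the resulting measures to the duplicated graph system, and then invoke \cref{lem:prop-box} for the enlargement. Your presentation makes explicit two steps the paper carries out implicitly, namely the factorization identity $\snorm{f}^{\bullet}_{U_{\GP}[N;\rho_1,\dots,\rho_m,\nu_{1,i},\dots,\nu_{K,i}]}=\E_{h,h'}\snorm{g_{h,h'}}^{\bullet}_{U_{\GP}[N;\nu_{1,i},\dots,\nu_{K,i}]}$ and the power-mean inequality used to pass from the conditional bounds to the averaged one.
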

\begin{proof}
The notion of duplication (\cref{duplication-def}) is symmetric with respect to permuting $[s]$ and $[t]$, so we may assume that the duplication is with respect to $(s,t)$. In particular, $t \in V_s$.

We start with \cref{iter-step-t} and write out the Gowers--Peluse norm in terms of derivatives. Thus, for a fixed index tuple $(i_1,\dots, i_t)$, we have an average 
\[ \frac{1}{N} \sum_{x \in \Z} \E_{h_{(v,w)}, h'_{(v,w)} \sim \mu_{ji_v} \ast \mu_{ji_w}} \E_{h_v, h'_v \sim \mu_{j i_v}}  \big\{ \Delta_{h_{(v,w)}, h'_{(v,w)}} \big\}_{(v,w) \in E_j, j \in [s]}  \big\{ \Delta_{(h_v, h'_v)}\big\}_{v \in V_j , j \in [s]} f(x),\] where here the notation means that we take \emph{all} the indicated derivatives (the order being immaterial), and the $h, h'$-averages are over all these derivatives. (For instance, $\E_{h_i \sim \mu_i} \{ \Delta_{h_i} \}_{i \in [3]}$ means the same as $\E_{h_1 \sim \mu_1, h_2 \sim \mu_2, h_3 \sim \mu_3} \Delta_{h_1}\Delta_{h_2}\Delta_{h_3}$.) Now take the average over indices $(i_1,\dots, i_t)$, and rearrange this so that only terms involving $i_t$ are on the inside. To notate this, it is convenient to write $E'_j$ for the subset of $E_j$ consisting of edges $(v,t)$, and $V'_j$ for the subset of $V_j$ which is either $\{t\}$ if $t \in V_j$, or $\emptyset$ otherwise. Thus we get
\[ \bigavg_{\substack{i_1,\dots, i_{t-1} \in I \\ h_{(v,w)}, h'_{(v,w)} \sim \mu_{ji_v} \ast \mu_{ji_w} \\ h_v, h'_v \sim \mu_{j i_v}}}  \bigavg_{i_t \in I} \Big\Vert  \Big\{ \Delta_{h_{(v,w)}, h'_{(v,w)}} \Big\}_{\substack{(v,w) \in E_j\setminus E'_j \\ j \in [s]}} \Big\{ \Delta_{h_v, h'_v} f \Big\}_{\substack{v \in V_j \setminus V'_j \\ j \in [s]}}\Big\Vert^{\bullet}_{U_{\GP}[N; \tilde \Omega_{i_1,\dots, i_t}]} \gg \delta^{O(1)}, \]
where here $\tilde\Omega_{i_1,\dots, i_t}$ are the measures which do involve $i_t$, namely the $\mu_{ji_v} \ast \mu_{ji_t}$ for $(v,t) \in E_j$ for all $j \in [s]$, and the $\mu_{ji_t}$ for $j$ such that $t \in V_j$. In particular, since $t \in V_s$, we see that the measure $\mu_{si_t}$ itself lies in $\tilde\Omega_{i_1,\dots, i_t}$.

We now apply \cref{key-dup-step} with the $\nu$'s being the elements of $\tilde\Omega_{i_1,\dots, i_t}$ and with $\nu_{ki}$ in that lemma (that is, the measure to be `duplicated') being  $\mu_{si_t}$, and with $N$ replaced by $2N$ so that the support condition holds. Writing $i_{t+1}$ for the resulting $i'$ variables, the conclusion is then that 

\[ \bigavg_{\substack{i_1,\dots, i_{t-1} \in I \\ h_{(v,w)}, h'_{(v,w)} \sim \mu_{ji_v} \ast \mu_{ji_w} \\ h_v, h'_v \sim \mu_{j i_v}}}  \!\!\!\!  \bigavg_{i_t, i_{t+1} \in I}\Big\Vert 
\Big\{ \Delta_{h_{(v,w)}, h'_{(v,w)}} \Big\}_{\substack{(v,w) \in E_j\setminus E'_j \\ j \in [s]}} \Big\{ \Delta_{h_v, h'_v} f \Big\}_{\substack{v \in V_j \setminus V'_j \\ j \in [s]}}
\Big\Vert^{\bullet}_{U_{\GP}[2N; \tilde \Omega^*_{i_1,\dots, i_t, i_{t+1}}]} \gg \delta^{O(1)}, \] where now $\Omega^*_{i_1,\dots, i_t, i_{t+1}}$ consists of:
\begin{itemize}
\item $\mu_{ji_v} \ast \mu_{ji_t}$ and $\mu_{ji_v} \ast \mu_{ji_{t+1}}$ for $(v,t) \in E_j$ for all $j \in [s]$;
\item $\mu_{ji_t}$ and $\mu_{ji_{t+1}}$ for $j \in [s-1]$ such that $t \in V_j$;
\item $\mu_{si_t} \ast \mu_{si_{t+1}}$.   
\end{itemize}
Folding the outer derivatives back into the Gowers--Peluse norm notation (and replacing $2N$ by $N$, which loses only a factor of 2) one sees that 
\[ \E_{i_1,\dots, i_{t+1} \in I} \Vert  f  \Vert^{\bullet}_{U_{\GP}[N;\Omega^{\tilde\Gamma}]} \gg  \delta^{O(1)},\]
where $\tilde\Gamma$ (of level $t+1$) consists of the following edges and vertices:
\begin{itemize}

\item $\tilde V_j$ contains $V_j \setminus \{t\}$ for $j \in [s-1]$;
\item $\tilde V_j$ contains $t$ and $t+1$ if $t \in V_j$ for $j \in [s-1]$;
\item $\tilde V_s = V_s \setminus \{t\}$;
\item $\tilde E_j$ contains $(v,w) \in E_j$ for $w < t$, for all $j \in [s]$;
\item $\tilde E_j$ contains $(v,t), (v, t+1)$ if $(v,t) \in E_j$, for $j \in [s]$;
\item $\tilde E_s$ contains $(t, t+1)$.
\end{itemize}
One may check that $\tilde\Gamma$ is precisely the duplication of $\Gamma^t$ along $(s,t)$; indeed the six bullets above are in 1-1 correspondence with those in \cref{duplication-def}, if one takes $(k,u) = (s,t)$ there.

Any enlargement of $\tilde\Gamma$ cannot decrease the Gowers--Peluse norm by more than harmless losses of $O(1)$ factors, by \cref{lem:prop-box}.
\end{proof}

We now show how, by a sequence of duplications and enlargements, we may move from an edge-empty graph to an edge-complete one. This essentially combines the proof of \cite[Proposition~6.2,~Corollary~6.3]{KKL24} into a double induction.

\begin{lemma}
 There is a sequence of graph systems $\Gamma^t$, $t = 1,2,3\dots, 2^s$ such that $\Gamma^1$ is the `vertex-complete, edge-empty' system in \cref{assump-again-again}, and $\Gamma^{2^s}$ is the `edge-complete, vertex-empty' system in \cref{completed}, and moreover $\Gamma^{t+1}$ is obtained from $\Gamma^t$ by a duplication and an enlargement.   
\end{lemma}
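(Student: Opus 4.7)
The plan is to proceed by induction on $s$, producing an explicit sequence of $2^s - 1$ duplications that empties every $V_j$, and then invoking a single enlargement at the final step to fill in all missing edges of $\binom{[2^s]}{2}$ in each $E_j$. For the base case $s = 1$, a single duplication along $(1,1)$ sends the level-$1$ system $V_1=\{1\},\ E_1=\emptyset$ to the level-$2$ system $V_1=\emptyset,\ E_1=\{(1,2)\}=\binom{[2]}{2}$; no enlargement is needed, and $2^1 - 1 = 1$ step suffices.

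For the inductive step I would use a divide-and-conquer procedure. The opening duplication is along $(s, 1)$: this empties $V_s$, replaces vertex $1$ by $\{1, 2\}$ in each $V_j$ with $j < s$, and adds $(1,2)$ to $E_s$. I would then carry out two ``sub-phases'' of $2^{s-1} - 1$ duplications each, mirroring the inductive sequence for $s-1$ applied first to vertex $1$ and then to vertex $2$. The left sub-phase uses only pairs $(k, u)$ with $k \in [s-1]$ and $u$ in the ``left block'' $L := \{1, 3, 4, \ldots, 2^{s-1}+1\}$ (new vertices are appended to $L$ as they are introduced); the right sub-phase does the analogous thing on the ``right block'' $R := \{2, 2^{s-1}+2, \ldots, 2^s\}$. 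The arithmetic $1 + 2(2^{s-1} - 1) = 2^s - 1$ gives the required total count, and the level advances to exactly $2^s$.

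The main obstacle is verifying that each sub-phase is genuinely isomorphic to a run of the $s-1$ problem. The key observation is that a duplication along $(k, u)$ with $u \in L$ only modifies which elements of $L$ lie in $V_j$: vertex $2$ (and later every vertex of $R$) is never inserted into, removed from, or duplicated in any $V_j$ during the left sub-phase, since $u \neq 2$ at every step. Consequently the ``reduced'' data $(V_j \cap L)_{j \in [s-1]}$ obeys exactly the transition rules of the $s-1$ problem starting from $V_j \cap L = \{1\}$, and the inductive hypothesis (ignoring its enlargements, which are irrelevant to the $V_j$) supplies a sequence of $2^{s-1} - 1$ duplications emptying it. At the end of the left sub-phase we therefore have $V_j = \{2\}$ for $j \in [s-1]$ and $V_s = \emptyset$, which is precisely the starting configuration for the symmetric right sub-phase on vertex $2$. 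Since enlargement is purely additive, no constraint on the intermediate edge sets is needed: the final enlargement simply completes every $E_j$ to $\binom{[2^s]}{2}$.
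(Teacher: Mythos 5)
Your proof is correct, but it takes a genuinely different route from the paper's. The paper writes down a closed-form formula for $\Gamma^t$ indexed by $t = 2^r + \ell$: the systems process the graphs $j = s, s-1, \dots, 1$ one at a time, removing vertices $2^r, 2^r-1,\dots,1$ from $V_{s-r}$ during round $r$ and using the enlargement at each step to keep the edge sets in canonical form. You instead proceed by induction on $s$ with a divide-and-conquer on vertices: an opening duplication along $(s,1)$ splits the problem into a left block $L$ (containing vertex $1$ and everything introduced during the left sub-phase) and a right block $R$ (containing vertex $2$ and everything introduced during the right sub-phase), and each sub-phase faithfully simulates the $(s-1)$-problem because a duplication along $(k,u)$ with $u\in L$ never adds a new vertex to $V_j\cap R$, never removes any vertex of $R$ from any $V_j$, and $V_s$ remains empty. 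The resulting sequence of duplications is different from the paper's (e.g.\ for $s=2$ you duplicate along $(2,1),(1,1),(1,2)$ where the paper duplicates along $(2,1),(1,2),(1,1)$), and you defer all non-trivial enlargement to a single final step, which is permitted since each intermediate step may take a trivial enlargement. The tradeoff is that the paper's explicit formula is quick to verify mechanically once written down, while your recursive scheme is arguably more conceptual (a structural induction rather than a formula to check) at the cost of carefully maintaining the block invariants; both achieve the required count $1 + 2(2^{s-1}-1)=2^s-1$ and end at the edge-complete, vertex-empty system at level $2^s$.
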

\begin{proof} For $r = 0,1,2,\dots$ and for $0 \le \ell < 2^r$ write $t = t(r,\ell) := 2^r + \ell$, and define the graph system $\Gamma^{t}$ in terms of its edges and vertices, as follows: 
\[ E_{s - r + 1}^t = \cdots = E_s^t = \binom{[t]}{2},\quad E_{s-r}^t = \binom{[2^r - \ell + 1, t]}{2}, \quad  E_1^t = \cdots = E_{s-r - 1}^t =  \emptyset,\] and
\[ V_1^t = \cdots = V_{s - r-1}^t = [t], \quad V_{s - r}^t = [2^r - \ell], \quad V_{s-r+1}^t = \dots = V_s^t = \emptyset.\] 
Note in particular that when $t = 1$ (and so $r = 0$ and $\ell = 0$) these graphs are exactly the graphs $\Gamma^1$ in \cref{assump-again-again}.
Duplicate $\Gamma^t$ along $(s - r, 2^r - \ell)$. The resulting graph then has
\[E_{s - r + 1}^{t+1} = \cdots = E_s^{t+1} = \binom{[t+1]}{2}, \quad E_{s-r}^{t+1} = \binom{[2^r - \ell + 1, t]}{2} \cup \{ (2^r - \ell, t+1)\}, \] and $E_1^t = \cdots = E_{s-r - 1}^t =  \emptyset$, and
\[ V_{1}^{t+1} = \cdots = V_{s-r - 1}^{t+1} = [t+1],\quad V_{s-r}^{t+1} = [2^r - \ell -1] , \quad  \quad V_{s-r+1}^t = \dots = V_s^t = \emptyset.\]  Enlarging $E_{s-r}^{t+1}$ to $\binom{[2^r - \ell , t+1]}{2}$ then gives $\Gamma^{t+1}$. (Note that this is true for $0 \le \ell \le 2^r - 2$ and also when $\ell = 2^r - 1$, when $t + 1= t(r+1,0)$.)

\end{proof}

\section{A large sieve bound in several dimensions}\label{appendixC}

In this appendix we give a brief discussion of the higher-dimensional large sieve, for which it is hard to find a suitable reference.

\begin{proposition}\label{multi-ls}
Let $\theta_j$ be $\delta$-well spaced points in the $\ell^{\infty}$ distance on $(\R/\Z)^k$,  let $(a_n)_{n \in [N]^k}$ be a sequence of complex numbers, and write 
\begin{equation}\label{s-theta-def} S(\theta) := \sum_{n \in [N]^k} a_n e (\theta \cdot n).\end{equation}
Then
\[ \sum_j |S(\theta_j)|^2 \le  (N^{1/2} + \delta^{-1/2})^{2k} \Vert a \Vert^2,\]
where $\Vert a \Vert := \big( \sum_n a_n \big)^{1/2}$.
\end{proposition}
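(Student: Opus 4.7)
The plan is to prove this by induction on $k$, with the 1-dimensional Montgomery--Vaughan large sieve \cite[Equation (7.38)]{IK-book} as the base case. That inequality gives $\sum_j |S(\theta_j)|^2 \le (N + \delta^{-1}) \|a\|^2$, and this is at most $(N^{1/2} + \delta^{-1/2})^2 \|a\|^2$ since $(N^{1/2} + \delta^{-1/2})^2 = N + \delta^{-1} + 2\sqrt{N/\delta}$.

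For the inductive step with $k \ge 2$, I would write $\theta_j = (\phi_j, \psi_j)$ with $\phi_j \in (\R/\Z)^{k-1}$ and $\psi_j \in \R/\Z$, and decompose
\[S(\phi, \psi) = \sum_{m=1}^N T_m(\phi) \, e(\psi m), \qquad T_m(\phi) := \sum_{n' \in [N]^{k-1}} a_{(n', m)} e(\phi \cdot n').\]
The natural idea is to partition indices $j$ by the $\psi$-coordinate: cover $\R/\Z$ by arcs $A_r$ of length $<\delta$ (at most $O(\delta^{-1})$ arcs) and set $J_r = \{j : \psi_j \in A_r\}$. The key geometric observation is that for distinct $j, j' \in J_r$ the $\psi$-coordinates differ by less than $\delta$, hence, since the full $\theta_j$'s are $\delta$-spaced in $\ell^\infty$, the $\phi$-coordinates satisfy $|\phi_j - \phi_{j'}|_\infty \ge \delta$. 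Thus $\{\phi_j : j \in J_r\}$ is $\delta$-spaced in $\ell^\infty$ on $(\R/\Z)^{k-1}$, and the inductive hypothesis is available on each slab.

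The main obstacle is that the within-slab application of the inductive hypothesis requires evaluating the $(k-1)$-dimensional polynomial at a \emph{fixed} value of $\psi$, while the actual evaluations $S(\phi_j, \psi_j)$ have varying $\psi_j$ across the slab. The cleanest way to handle this (and to preserve the sharp constant) is to pass to the dual formulation: by duality with respect to the inner product $\langle a, a'\rangle = \sum_{n \in [N]^k} a_n \overline{a'_n}$, the inequality is equivalent to
\[\sum_{n \in [N]^k} \Big|\sum_j b_j e(-\theta_j \cdot n)\Big|^2 \le (N^{1/2} + \delta^{-1/2})^{2k} \|b\|^2.\]
For this dual form, one uses a product majorant: take the Beurling--Selberg majorant $F_i(\theta)$ in each coordinate direction, designed so that $F_i \ge 1_{[-\delta/2, \delta/2]}$ on $\R/\Z$ with $\widehat{F}_i$ supported in $[-N, N]$ and $\int F_i \le N^{1/2}\delta^{1/2} + 1$ (or similar); the tensor product $F(\theta) = \prod_i F_i(\theta_i)$ then satisfies $\sum_j |\langle b, e(\theta_j \cdot)\rangle|^2 \le \int F \cdot (\text{mass})$, yielding the required product bound.

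The final step is to unwind the duality to obtain the stated inequality. The whole argument is a routine $k$-variable adaptation of the standard proof of the 1D large sieve via duality and majorant functions; the only real subtlety is ensuring the majorant has the right product form, which is automatic from the tensor product construction of the Beurling--Selberg majorant.
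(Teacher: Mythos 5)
Your proposal ends up gesturing at a genuinely different route from the paper, which simply cites Huxley (who in turn relies on a trigonometric construction of Bombieri--Davenport); a self-contained tensor-product Beurling--Selberg argument in the dual form \emph{does} work for the $\ell^\infty$ spacing used here, and in fact yields the sharper constant $(N+\delta^{-1})^k \le (N^{1/2}+\delta^{-1/2})^{2k}$. Your preliminary observation — that the naive slab decomposition of the induction step fails because $\psi_j$ varies within a slab — is also correct. So the high-level strategy is sound.

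However, as written, the key step is misdescribed, which I count as a real gap. In the dual form one bounds $\sum_{n\in[N]^k}\bigl|\sum_j b_j e(n\cdot\theta_j)\bigr|^2$, so the object to be majorised is the indicator $1_{[N]^k}$ of the $n$-range — not a $\delta$-arc. The correct ingredient is the classical Beurling--Selberg majorant $B$ of $1_{[\frac12,N+\frac12]}$ on $\R$ with $\widehat{B}$ supported in $[-\delta,\delta]$ and $\int_\R B = N+\delta^{-1}$; you instead majorise $1_{[-\delta/2,\delta/2]}$ on $\R/\Z$ with bandwidth $N$, and the claimed bound $\int F_i \le N^{1/2}\delta^{1/2}+1$ is neither what Beurling--Selberg gives ($\approx \delta + N^{-1}$ for that configuration) nor what one needs. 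With the corrected majorant, setting $B(n) = \prod_{i=1}^k B_i(n_i)$, one expands
\[
\sum_{n\in[N]^k}\Bigl|\sum_j b_j e(n\cdot\theta_j)\Bigr|^2 \le \sum_{n\in\Z^k} B(n)\Bigl|\sum_j b_j e(n\cdot\theta_j)\Bigr|^2 = \sum_{j,j'} b_j\overline{b_{j'}}\sum_{m\in\Z^k}\widehat B(\theta_j-\theta_{j'}+m),
\]
by Poisson summation, and for $j\ne j'$ the $\ell^\infty$-spacing forces some coordinate of $\theta_j-\theta_{j'}+m$ to have absolute value $\ge\delta$, killing the corresponding factor of $\widehat B = \prod_i\widehat{B_i}$; only the diagonal $\widehat B(0)=(N+\delta^{-1})^k$ survives. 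You should drop the ``induction on $k$'' framing entirely (it is abandoned and adds nothing), state the majorant correctly, and actually perform the Poisson step rather than waving at ``$\int F \cdot(\text{mass})$''.
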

\begin{proof}
See \cite[Theorem 1]{huxley}. We note that, for our purposes (where we take $k = 2$), it would suffice to have $\ll_k$ instead of $\le$ in this inequality. If one's aim is a result of this type, the need for special trigonometric constructions as used in \cite{huxley} (who quotes a construction of Bombieri and Davenport \cite{bombieri-davenport}) can be avoided and replaced with cruder bounds. See the remarks in \cite[p177--178]{IK-book}.  Note also that the proof of \cite[Theorem 7.7]{IK-book} (who use Hilbert's inequality, following Montgomery and Vaughan \cite{montgomery-vaughan-paper}) does not adapt so straightforwardly to the high-dimensional case. For further discussion see \cite[Chapter 4]{kowalski-ls}.
\end{proof}

\begin{corollary}\label{ls-cor}
With notation as above, we have
\[ \sum_{q_1,\dots, q_k \in [N^{1/2}] } \sideset{}{'}\sum_{a_i \mdsublem{q_i}} \big|S\big((\frac{a_i}{q_i})_{i = 1}^k\big)\big|^2 \le (2N)^k \Vert a \Vert^2,\] where the dash means that the sum is taken over $a_i$ coprime to $q_i$.
\end{corollary}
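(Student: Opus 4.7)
The plan is to iterate the classical one-dimensional large sieve $k$ times, once for each frequency coordinate, rather than applying \cref{multi-ls} directly. A direct application would take $\delta = 1/N$ (which is the minimum $\ell^\infty$-spacing of the Farey tuples $(a_i/q_i)_i$, since if two such tuples differ in coordinate $i$ then $|a_i/q_i - a_i'/q_i'| \ge 1/(q_i q_i') \ge 1/N$), and would yield the weaker constant $(2\sqrt{N})^{2k} = (4N)^k$. The product structure of the frequencies $[N]^k$ permits a sharper tensor argument.

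The basic input is the standard 1D large sieve (e.g.\ \cite[Theorem~7.7]{IK-book}): for any sequence $(c_n)_{n\in[N]}$,
\[ \sum_{q \le Q} \sideset{}{'}\sum_{a \,\mathrm{mod}\, q} \Big| \sum_{n\in [N]} c_n e(an/q)\Big|^2 \;\le\; (N - 1 + Q^2) \sum_n |c_n|^2. \]
Taking $Q = N^{1/2}$ gives a constant $\le 2N$.

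First I would fix $(a_2/q_2,\dots,a_k/q_k)$ and view $S\big((a_i/q_i)_{i=1}^k\big)$ as a one-variable Fourier sum in $n_1 \in [N]$ with coefficients
\[ c^{(1)}_{n_1} := \sum_{n_2,\dots,n_k \in [N]} a_n \, e(a_2 n_2/q_2 + \cdots + a_k n_k/q_k). \]
Applying the 1D large sieve to the outer $(q_1,a_1)$-sum gives
\[ \sum_{q_1 \le N^{1/2}} \sideset{}{'}\sum_{a_1 \,\mathrm{mod}\, q_1} |S|^2 \;\le\; 2N \sum_{n_1 \in [N]} |c^{(1)}_{n_1}|^2. \]
Next, holding $n_1$ and $(a_3/q_3,\dots,a_k/q_k)$ fixed, one views $c^{(1)}_{n_1}$ as a 1D Fourier sum in $n_2$ and applies the 1D large sieve to the $(q_2,a_2)$-sum, gaining another factor of $2N$ while peeling off another coordinate. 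Iterating $k$ times produces
\[ T \;\le\; (2N)^k \sum_{n \in [N]^k} |a_n|^2 \;=\; (2N)^k \|a\|^2, \]
which is the claimed bound.

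I do not anticipate any serious obstacle: the argument is a clean induction on $k$. The only mild technicality is keeping track of which variables are summed at each stage, but since each step treats the currently-frozen variables as parameters and applies an honest 1D large sieve to a genuine Fourier series on $[N]$, the bookkeeping is routine and the constants multiply cleanly.
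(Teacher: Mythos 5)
Your proof is correct and takes a genuinely different route from the paper's. The paper deduces the corollary directly from its multidimensional spacing lemma (\cref{multi-ls}), observing that the $k$-tuples $(a_i/q_i)_i$ are $N^{-1}$-well spaced in $\ell^\infty$; applying the stated bound $(N^{1/2}+\delta^{-1/2})^{2k}$ with $\delta=1/N$ yields $(4N)^k$, which still suffices since the paper only ever uses this with an unspecified $\ll_k$ constant. Your tensor argument, iterating the classical one-dimensional large sieve $\le (N-1+Q^2)\,\Vert c\Vert^2$ coordinate by coordinate and exploiting the product structure of the Farey set, is more elementary (it never invokes \cref{multi-ls} or a multidimensional duality/spacing argument at all) and in fact produces the sharper constant $(2N)^k$ as stated in the corollary, whereas the paper's proof as written only gives $(4N)^k$. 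Your observation about this discrepancy is accurate. The one small presentational point: the iteration is cleanest phrased as induction on $k$ — after the first application of the 1D large sieve you arrive at $2N\sum_{n_1}\sum_{q_2,\dots,q_k}\sideset{}{'}\sum_{a_2,\dots,a_k}|c^{(1)}_{n_1}|^2$, and for each fixed $n_1$ the inner sum is exactly the $(k-1)$-dimensional version of the statement applied to the sequence $(a_{n_1,n_2,\dots,n_k})_{n_2,\dots,n_k}$ — but the bookkeeping is routine, as you say, and the argument goes through.
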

\begin{proof}
This follows immediately from  \cref{multi-ls} and the fact that the points $(\frac{a_i}{q_i})_{i = 1}^k$ are $N^{-1}$-well spaced in the $\ell^{\infty}$ distance on $(\R/\Z)^k$. 
\end{proof}

\begin{lemma}\label{large-sieve-higher}
For each prime $p$, suppose that we have a set $\Omega_p \subseteq(\Z/p\Z)^k$ of density $\alpha_p$. Then 
\begin{equation}\label{large-sieve-est} \# \{ |x|, |y| \le N  : \prod_{p} 1_{\Omega_p^c}(x,y)= 1 \} \le (2N)^k \big( \sum_{\substack{q \le N^{1/2} \\ \mu^2(q) = 1}} h(q) \big)^{-1},\end{equation}
where $h$ is the multiplicative function satisfying $h(p) = \alpha_p/(1 - \alpha_p)$.
\end{lemma}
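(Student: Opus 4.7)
\textbf{Proof plan for \cref{large-sieve-higher}.} The proof follows the standard Montgomery--Selberg large sieve approach, adapted to the multi-dimensional setting. Write $\mc{A}$ for the set on the left-hand side, set $Q := N^{1/2}$, and for $\theta \in (\R/\Z)^k$ let $S(\theta) := \sum_{n \in \mc{A}} e(\theta \cdot n)$. Note $\|a\|^2 = |\mc{A}|$ and $S(0) = |\mc{A}|$.

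\emph{Step 1 (single-prime inequality).} For each prime $p$ and each $\theta_0 \in (\R/\Z)^k$, I claim
\[ \sum_{c \in (\Z/p\Z)^k,\, c \ne 0} |S(\theta_0 + c/p)|^2 \ge h(p) |S(\theta_0)|^2. \]
Indeed, writing $n_r := \sum_{n \in \mc{A},\, n \equiv r \mdsub{p}} e(\theta_0 \cdot n)$ for $r \in (\Z/p\Z)^k$, the assumption that $\mc{A}$ avoids $\Omega_p$ mod $p$ gives $n_r = 0$ for $r \in \Omega_p$, so by Cauchy--Schwarz $|S(\theta_0)|^2 = |\sum_r n_r|^2 \le (p^k - |\Omega_p|) \sum_r |n_r|^2$. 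The orthogonality relation $\sum_{c \in (\Z/p\Z)^k} |S(\theta_0 + c/p)|^2 = p^k \sum_r |n_r|^2$ then gives the claim after isolating the $c = 0$ term.

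\emph{Step 2 (inductive extension to squarefree $q$).} Call $a \in (\Z/q\Z)^k$ \emph{primitive} mod $q$ if $a \not\equiv 0 \md{p}$ in $(\Z/p\Z)^k$ for every prime $p \mid q$. By induction on $\omega(q)$, I would show
\[ \sum_{a \in (\Z/q\Z)^k \text{ primitive}} |S(a/q)|^2 \ge h(q) |\mc{A}|^2 \]
for every squarefree $q$. The base case $q = 1$ is trivial. For the inductive step, write $q = q_0 p$ with $\gcd(q_0, p) = 1$, use CRT to identify $(\Z/q\Z)^k$ with $(\Z/q_0\Z)^k \times (\Z/p\Z)^k$ (so that $a/q$ is a shift of $a_0/q_0$ by $c/p$ for appropriate $c$, and primitivity mod $q$ translates to primitivity mod $q_0$ combined with $c \ne 0$), then apply Step 1 with $\theta_0 = a_0/q_0$ and sum over primitive $a_0$.

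\emph{Step 3 (summing over $q$).} Summing Step 2 over squarefree $q \le Q$ gives
\[ \sum_{\substack{q \le Q \\ \mu^2(q) = 1}} \sum_{a \text{ prim mod } q} |S(a/q)|^2 \ge |\mc{A}|^2 \sum_{\substack{q \le Q \\ \mu^2(q) = 1}} h(q). \]

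\emph{Step 4 (upper bound via \cref{ls-cor}).} The main obstacle is to bound the left-hand side of Step 3 by $(2N)^k|\mc{A}|$ using \cref{ls-cor}, whose primitivity condition ``$\gcd(a_i, q_i) = 1$ for each $i$'' is \emph{stronger} than mine. The key observation is that for each primitive pair $(q, a)$ with $q$ squarefree, if for each coordinate $i$ I write $a_i/q$ in lowest terms as $\tilde a_i/\tilde q_i$, then $\tilde q_i \mid q$ and, because primitivity mod $q$ demands that for every $p \mid q$ some $a_i$ is nonzero mod $p$, a direct $p$-adic valuation check shows $\mr{lcm}_i(\tilde q_i) = q$. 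In particular $\tilde q_i \le q \le Q$ for each $i$, and the assignment $(q,a) \mapsto ((\tilde a_i, \tilde q_i))_i$ is injective onto componentwise-reduced tuples indexed by \cref{ls-cor}. Since $|S(a/q)| = |S((\tilde a_i/\tilde q_i))|$, this gives
\[ \sum_{\substack{q \le Q \\ \mu^2(q) = 1}} \sum_{a \text{ prim}} |S(a/q)|^2 \le \sum_{q_1,\dots,q_k \le Q} \sideset{}{'}\sum_{a_i \mdsublem{q_i}} |S((a_i/q_i))|^2 \le (2N)^k |\mc{A}|. \]
Combining with Step 3 gives $|\mc{A}| \sum_{q \le Q} \mu^2(q) h(q) \le (2N)^k$, which is the desired bound. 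The only nontrivial step is the coincidence-of-denominators argument in Step 4, but the valuation check is short.
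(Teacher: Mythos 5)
Your proof is correct and takes essentially the same route as the paper, which simply cites the standard argument from Iwaniec--Kowalski (Theorem 7.14 via Lemma 7.15) and asserts that it goes over verbatim. Steps 1--3 are the usual Montgomery--Selberg lower-bound induction, and you have correctly identified and resolved the one point that does \emph{not} go over verbatim in $k>1$ dimensions: the notion of ``primitive mod $q$'' that arises in the induction (some coordinate nonzero mod every $p\mid q$) is weaker than the coordinate-wise coprimality indexed by \cref{ls-cor}, and your lcm-of-denominators observation in Step 4 correctly gives an injection from the former parametrization into the latter without changing $|S(\cdot)|$. That is exactly the bookkeeping point the paper elides with ``essentially verbatim.''

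One small caveat, shared by the paper's own one-line proof: \cref{ls-cor} is stated for coefficients indexed by $[N]^k$, while $\mc{A}\subseteq[\pm N]^k$, a box of side $2N+1$. Translating (which leaves $|S(\theta)|$ unchanged) and applying the corollary at scale $2N+1$ gives a bound of the shape $(C N)^k$ rather than exactly $(2N)^k$. This is immaterial for the paper's application (the remark in the proof of \cref{multi-ls} already concedes that $\ll_k$ would suffice), but if you wanted the stated constant exactly you would need to track the interval lengths more carefully.
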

\begin{proof}
This is a routine generalisation of \cite[Theorem 7.14]{IK-book} to several variables, using \cref{ls-cor} in place of \cite[Theorem 7.11]{IK-book}; note that \cite[Lemma 7.15]{IK-book}, which is the main argument, goes over essentially verbatim to the multidimensional setting.
    \end{proof}

\section{Number-theoretic bounds}\label{appendixD}

In this appendix we collect some more-or-less standard bounds of a number-theoretic flavour.

\begin{lemma}\label{lem:divisor-moment}
For $Y \ge 1$ and for integer $m \ge 1$, we have $\sum_{|y| \le Y} \tau(y)^{m} \ll_{m} Y (\log Y)^{2^m-1}$.
\end{lemma}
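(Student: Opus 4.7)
The plan is to reduce this to a standard bound on higher-order divisor functions. Recall that $\tau_k(n)$ denotes the $k$-fold divisor function, i.e., the number of ordered factorizations $n = a_1 \cdots a_k$. The two ingredients are (i) the pointwise inequality $\tau(n)^m \le \tau_{2^m}(n)$ for all $n \ge 1$, and (ii) the classical bound $\sum_{n \le Y} \tau_k(n) \ll_k Y (\log Y)^{k-1}$.

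For (i), since both sides are multiplicative, it suffices to verify the bound at prime powers. At $n = p^a$ we have $\tau(p^a) = a+1$, and $\tau_k(p^a) = \binom{a+k-1}{k-1}$. An easy induction on $m$ shows $(a+1)^m \le \binom{a + 2^m - 1}{2^m - 1}$: for the inductive step one squares $\tau(p^a)^m \le \tau_{2^m}(p^a)$ and uses Vandermonde to control $\tau_{2^m}(p^a)^2$ in terms of $\tau_{2^{m+1}}(p^a)$. Alternatively, one can simply iterate the elementary bound $\tau(n)^2 \le \tau_4(n)$, which reduces to $(a+1)^2 \le \binom{a+3}{3}$ on prime powers (check at $a = 0, 1$ directly and note the ratio is increasing in $a$).

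For (ii), the bound $\sum_{n \le Y} \tau_k(n) \ll_k Y (\log Y)^{k-1}$ is standard and can be proved, e.g., by induction on $k$ via the hyperbola method: writing $\tau_k(n) = \sum_{ab = n} \tau_{k-1}(a)$, splitting at $\sqrt{n}$, and applying the inductive bound. The base case $k = 2$ is the usual divisor sum.

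Combining these, we get $\sum_{1 \le n \le Y} \tau(n)^m \le \sum_{n \le Y} \tau_{2^m}(n) \ll_m Y (\log Y)^{2^m - 1}$. Since $\tau$ is defined symmetrically by $\tau(-n) = \tau(n)$ (and $\tau(0) = 0$ by the stated convention), the sum over $|y| \le Y$ is at most twice this, yielding the claimed bound. No step is really a significant obstacle; the only mild care is in verifying the pointwise inequality at prime powers.
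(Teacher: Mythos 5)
The paper gives no proof of this lemma at all: it simply cites it as standard, pointing to \cite[Equation~(1.80)]{IK-book}. Your approach --- bounding $\tau^m$ pointwise by the $2^m$-fold divisor function $\tau_{2^m}$ and then invoking $\sum_{n\le Y}\tau_k(n)\ll_k Y(\log Y)^{k-1}$ --- is a perfectly legitimate and standard way to actually prove the estimate, and the structure of your argument is sound.

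However, your justification of the pointwise inequality $\tau(n)^m\le\tau_{2^m}(n)$ has a genuine bug. You propose the inductive step: square $\tau^m\le\tau_{2^m}$ to get $\tau^{2m}\le\tau_{2^m}^2$, then ``use Vandermonde to control $\tau_{2^m}^2$ in terms of $\tau_{2^{m+1}}$.'' But $\tau_j(n)^2\le\tau_{2j}(n)$ is \emph{false} in general: at $n=p$ one has $\tau_j(p)^2=j^2$ while $\tau_{2j}(p)=2j$, so the claimed bound fails as soon as $j\ge 3$. What Vandermonde (or the multiset-injection argument at prime powers) actually gives is $\tau_j(n)\tau_k(n)\le\tau_{jk}(n)$, hence $\tau_{2^m}^2\le\tau_{2^{2m}}$, which overshoots the target. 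The correct single-step induction is $\tau^{m+1}=\tau^m\cdot\tau_2\le\tau_{2^m}\cdot\tau_2\le\tau_{2^{m+1}}$, using $\tau_j\tau_k\le\tau_{jk}$ with $(j,k)=(2^m,2)$. Your ``alternative'' route of iterating $\tau^2\le\tau_4$ has the same issue: on its own it only reaches exponents $m$ that are powers of two, and filling in the intermediate $m$ again requires $\tau_j\tau_2\le\tau_{2j}$. So the lemma and your overall reduction are correct, but the proof of the pointwise bound should be replaced by the $\tau_j\tau_k\le\tau_{jk}$ inequality; alternatively you can just cite the standard moment bound, as the paper does.
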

\begin{proof}
Here we take $\tau(0) := 0$. This result is standard; see e.g. \cite[Equation~(1.80)]{IK-book}. 
\end{proof}

\begin{lemma}\label{repdivisor}
Let $n \ge 1$ be fixed and let $r(t)$ be the number of representations of $t$ by $x^2 + ny^2$ with $x, y \in \Z$. Then $r(t) \le 6\tau(t)$.
\end{lemma}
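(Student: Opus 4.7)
The plan is to translate the problem into counting principal ideals in the ring of integers $\O_K$ of $K = \Q(\sqrt{-n})$. Specifically, each representation $t = x^2 + ny^2$ corresponds bijectively to an element $\gamma = x + y\sqrt{-n} \in \Z[\sqrt{-n}]$ with $N_{K/\Q}(\gamma) = t$. Since $\Z[\sqrt{-n}] \subseteq \O_K$, each such $\gamma$ generates a principal integral ideal $(\gamma)$ in $\O_K$ of norm $t$.

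The first step is to bound how many distinct $\gamma \in \Z[\sqrt{-n}]$ can generate the same ideal. Two generators of the same principal ideal in $\O_K$ differ by a unit in $\O_K^*$, so any given ideal has at most $|\O_K^*|$ generators in $\O_K$ (and hence at most $|\O_K^*|$ generators in $\Z[\sqrt{-n}]$). As recorded at the beginning of \cref{section2}, $|\O_K^*| \le 6$ in all cases (with equality when the squarefree part of $n$ is $3$).

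The second step is to bound the number of integral ideals of $\O_K$ of norm $t$ by $\tau(t)$. This is standard: the counting function $\mf{a} \mapsto 1$ summed over ideals of a given norm is multiplicative, and for an imaginary quadratic field one has
\[ \#\{\mf{a} \subseteq \O_K : N\mf{a} = t\} = \sum_{d \mid t} \legendre{\Delta}{d},\]
where $\legendre{\Delta}{\cdot}$ is the Kronecker symbol associated to $K$ (see \cref{section2}), which takes values in $\{-1, 0, 1\}$. Hence this quantity is bounded above by $\tau(t)$.

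Combining the two steps yields $r(t) \le |\O_K^*| \cdot \tau(t) \le 6\tau(t)$, as desired. The only mildly subtle point is verifying that the inclusion $\Z[\sqrt{-n}] \subseteq \O_K$ does not cause trouble (i.e.\ that we do not need to worry about generators in $\O_K \setminus \Z[\sqrt{-n}]$), but this is immediate since we are only bounding the count from above: we simply allow ourselves to forget the constraint $\gamma \in \Z[\sqrt{-n}]$ when passing from elements to ideals.
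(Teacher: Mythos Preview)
Your proof is correct and follows essentially the same approach as the paper: map representations to elements of $\Z[\sqrt{-n}] \subseteq \O_K$, bound the number of generators of a principal ideal by $|\O_K^*| \le 6$, and then bound the number of ideals of norm $t$ by $\tau(t)$. The only cosmetic difference is that you invoke the closed-form identity $\#\{\mf{a} : N\mf{a} = t\} = \sum_{d \mid t} \legendre{\Delta}{d} \le \tau(t)$, whereas the paper argues locally at each prime (split, inert, ramified) to reach the same bound.
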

\begin{proof} We work with ideals in $K = \Q(\sqrt{-n})$. We have $t = x^2 + ny^2$ if and only if $N((x + y\sqrt{-n})) = t$. Since a principal ideal is generated by $|\O^*_K| \le 6$ different elements of $\O_K$, we see that $r(t)$ is bounded above by $6$ times the number of ideals of norm $t$. To count the number of such ideals, factor $t = p_1^{a_1} \cdots p_k^{a_k}$ as a product of prime powers. The number of ideals of norm $p^a$ is either $a+1$ (if $p$ splits in $K$) or $1_{2 \mid a}$ (if $p$ is inert in $K$) or $1$ if $p$ ramifies. In all cases, the number of such ideals is at most $a+1$, and so the number of ideals of norm $t$ is $\le (a_1 + 1) \cdots (a_k + 1) = \tau(t)$.
\end{proof}

We need the following classical estimate on the number of ideals of norm up to a given threshold.
\begin{lemma}\label{num-ideals-lem}
For any number field $K$, the number of ideals in $\O_K$ of norm at most $X$ is $C_K X + O(X^{1 - [K:\Q]^{-1}})$.
\end{lemma}
\begin{proof}
Results of this form date back to work of Dedekind, Weber, and Hecke and take the form $C_K X + O_K(X^{1 - [K:\Q]^{-1}})$. Here $C_K=|\O^*_K|^{-1} |\Delta|^{-1/2}2^{r_1 + r_2} \pi^{r_2} R$ where $R$ is the regulator and $r_1, r_2$ the number of real and pairs of conjugate complex embeddings. For a sketch proof, see \cite[Theorem~1]{heilbronn}. 
\end{proof}

Finally, we need an analogue of Mertens' `second theorem' in number fields.
\begin{lemma}\label{number-field-mertens}
Let $K$ be any number field. Then there is some constant $M_K$ such that 
\[ \sum_{\mf{p} : N\mf{p} \le X} (N\mf{p})^{-1}  =  \log \log X  + M_K + O_K(\frac{1}{\log X}).\]
\end{lemma}
\begin{proof}
This may be derived via partial summation from \cref{main-analytic-input}; however a rather more elementary treatment is possible. We briefly repeat the proof given in \cite[Section~10.1]{DH08}, which is a minor elaboration of the well-known proof over the integers.

Let $[X]! = \prod_{N\mf{n}\le X}\mf{n}$ and $R(n) = \sum_{N\mf{n} \le n}1$. By partial summation and \cref{num-ideals-lem}, we have that 
\begin{align}\nonumber \log N([X]!) =  \sum_{n\le X}(R(n) - R(n-1)) \log n & =  R(X) \log X - \sum_{n<X}R(n)\log(n/(n+1)) \\ & = C_K X\log X + O_K(X).\label{factorial-1}\end{align}
Furthermore, by factorization into prime ideals and a further application of \cref{num-ideals-lem}, we have that 
\begin{align*} \log N([X]!) & = \sum_{\mf{p} : N\mf{p} \le X} R\big( \frac{X}{N\mf{p}}\big)\log(N\mf{p}) \\ & = C_K X \sum_{\mf{p} : N\mf{p} \le X} \frac{\log(N\mf{p})}{N\mf{p}}  + O_K\Big(\sum_{\mf{p} : N\mf{p} \le X} \Big(\frac{X}{N\mf{p}}\Big)^{1-[K:\Q]^{-1}}\log(N\mf{p})\Big) .\end{align*}
To bound the error term, we sum over rational primes $p$ with $\mf{p}$ as a factor; since such a prime has $p \le N\mf{p}$, and there are at most $[K : \Q]$ prime ideals associated to each rational prime, we have
\[\sum_{\mf{p} : N\mf{p} \le X} \Big(\frac{X}{N\mf{p}}\Big)^{1-[K:\Q]^{-1}}\log(N\mf{p}) \ll X^{1-[K:\Q]^{-1}} \sum_{p\le X} \frac{\log p}{p^{1-[K:\Q]}} \ll X, \]
where in the final line we have used Chebyshev's estimate that $\sum_{n\le p\le 2n}\log p\le \log\binom{2n}{n}\ll n$ and dyadic summation. Comparing with \cref{factorial-1}, it follows that
\[\sum_{\mf{p} : N\mf{p} \le X} \frac{\log(N\mf{p})}{N\mf{p}} = \log X + O(1),\] which is (a weak form of) Mertens' `first theorem' over number fields.
The desired result then follow via partial summation, using the identity
\[ \sum_{N \mf{p} \le X} \frac{1}{N\mf{p}} = \frac{1}{\log X} \sum_{N\mf{p} \le X} \frac{\log (N\mf{p})}{N\mf{p}} + \int^X_2 \frac{dy}{y (\log y)^2} \sum_{N\mf{p} \le y} \frac{\log (N\mf{p})}{N\mf{p}}\] and the fact that $\frac{1}{y \log y}$ is the derivative of $\log \log y$. 
\end{proof}

\bibliographystyle{amsplain0}
\bibliography{main.bib}

\end{document}